\newcolumntype{P}[1]{>{\centering\arraybackslash}p{#1}}
\numberwithin{equation}{section}
\theoremstyle{plain}
\newtheorem*{theorem*}{Theorem}
\newtheorem*{Example*}{Example}
\newtheorem{theorem}{Theorem}
\numberwithin{theorem}{section}
\newtheorem{proposition}[theorem]{Proposition}
\newtheorem{lemma}[theorem]{Lemma}
\newtheorem{corollary}[theorem]{Corollary}
\theoremstyle{definition}
\newtheorem{definition}[theorem]{Definition}
\newtheorem{remark}[theorem]{Remark}
\newtheorem{example}[theorem]{Example}
\newcommand{\A}{\mathbb{A}}
\newcommand{\N}{\mathbb{N}}
\renewcommand{\P}{\mathbb{P}}
\newcommand{\Z}{\mathbb{Z}}
\newcommand{\R}{\mathbb{R}}
\newcommand{\C}{\mathbb{C}}
\newcommand{\T}{\mathbb{T}}
\newcommand{\V}{\mathbb{V}}
\newcommand{\BB}{\mathcal{B}}
\newcommand{\II}{\mathcal{I}}
\newcommand{\CC}{\mathcal{C}}
\newcommand{\VV}{\mathcal{V}}
\newcommand{\LL}{\mathcal{L}}
\newcommand{\EE}{\mathcal{E}}
\newcommand{\FF}{\mathcal{F}}
\newcommand{\OO}{\mathcal{O}}
\newcommand{\PP}{\mathcal{P}}
\newcommand{\TT}{\mathcal{T}}
\newcommand{\K}{\mathbb{K}}
\def\bd{{\boldsymbol{d}}}
\def\bm{{\boldsymbol{m}}}
\def\bn{{\boldsymbol{n}}}
\def\bs{{\boldsymbol{s}}}
\def\bone{{\boldsymbol{1}}}
\DeclareMathOperator{\Trop}{Trop}
\newcommand{\pr}{{\rm pr}}
\newcommand{\codim}{\operatorname{codim}}
\newcommand{\cont}{\operatorname{Cont}}
\newcommand{\conv}{\operatorname{conv}}
\newcommand{\defect}{\operatorname{def}}
\newcommand{\DC}{\operatorname{DC}}
\newcommand{\DL}{\operatorname{DL}}
\newcommand{\DD}{\operatorname{DD}}
\newcommand{\gDD}{\operatorname{gDD}}
\newcommand{\rank}{\operatorname{rank}}
\newcommand{\image}{\operatorname{im}}
\newcommand{\Vol}{\operatorname{Vol}}
\newcommand{\rowspan}{\operatorname{rowspan}}
\newcommand{\vertices}{\operatorname{vert}}
\newcommand{\mT}{\mathsmaller{\mathsf{T}}}
\newcommand{\mR}{\mathsmaller{\mathbb{R}}}
\newcommand{\mBW}{\mathsmaller{\mathrm{BW}}}
\newcommand{\mED}{\mathsmaller{\mathrm{ED}}}
\def\stackbelow#1#2{\underset{\displaystyle\overset{\displaystyle\shortparallel}{#2}}{#1}}% for creating a vertical equal sign that might function similarly to an underbrace
\def\l@subsection{\@tocline{2}{0pt}{2.5pc}{5pc}{}}
\newcommand{\nocontentsline}[3]{}
\let\origcontentsline\addcontentsline
\newcommand\stoptoc{\let\addcontentsline\nocontentsline}
\newcommand\resumetoc{\let\addcontentsline\origcontentsline}
\title[Osculating Geometry and Higher-Order Distance Loci]{Osculating Geometry and Higher-Order Distance Loci}
\author{Sandra Di Rocco}
\author{Kemal Rose}
\address{Department of Mathematics, KTH Royal Institute of Technology, SE-100 44 Stockholm, Sweden}
\email{dirocco@kth.se}
\email{kemalr@kth.se}
\author{Luca Sodomaco}
\address{Max Planck Institute for Mathematics in the Sciences, Leipzig, Germany}
\email{luca.sodomaco@mis.mpg.de}
\begin{document}

\begin{abstract}
We discuss the problem of optimizing the distance function from a given point, subject to polynomial constraints. A key algebraic invariant that governs its complexity is the {\em Euclidean distance degree}, which pertains to first-order tangency.

We focus on the {\em data locus} of points possessing at least one critical point of the distance function that is normal to a higher-order osculating space. We study the {\em higher-order distance degree} of a morphism as an intersection-theoretic invariant involving jet bundles and higher-order polar loci. Our approach builds on foundational definitions and results developed by Piene, particularly regarding higher-order polar loci. We give closed formulas for generic maps, Veronese embeddings, and toric embeddings. We place particular emphasis on the Bombieri-Weyl metric, revealing that the chosen metric profoundly influences both the degree and birationality of the higher-order projection maps. Additionally, we introduce a tropical framework that represents these degrees as stable intersections with Bergman fans, facilitating effective combinatorial computation in toric settings.
\end{abstract}

\maketitle

{
\hypersetup{linkcolor=black}
\tableofcontents
}

\section{Introduction}

\subsection*{Motivation and Background}
Metric algebraic geometry investigates the geometry of algebraic varieties through notions of distance and approximation \cite{breiding2024metric}. At its core lies the problem of identifying points on a variety that are closest to a given point in ambient space, typically with respect to the Euclidean norm.
A key measure of the intrinsic complexity of these problems is the {\em Euclidean distance degree (EDD)}. This algebraic invariant counts the number of complex critical points of the squared Euclidean distance function when restricted to a given variety. Introduced in the foundational work \cite{DHOST}, the EDD is central in the theory of optimization over algebraic varieties and has catalyzed recent advances at the interface of geometry and computation.

The classical EDD reflects first-order geometry: critical points are governed by tangent spaces and orthogonality conditions between the variety and the direction vector from a data point. However, many applications across data science, signal processing, and numerical optimization require a more refined notion of proximity, one that accounts for curvature, torsion, or higher-order contact. See Example \ref{example: 4-view variety} for a concrete example in Computer Vision.
In manifold learning \cite{mordohai2010dimensionality,fefferman2016testing,fefferman2018fitting,amari2024adversarial,kiani2025hardness}, for instance, trajectory planning \cite{BrysonHo,zha2002optimal}, nonlinear regression, and geometric data fitting \cite{cazals2005estimating,seber1989nonlinear}, it is not sufficient to minimize the Euclidean distance merely; how a model surface curves toward the data is crucial for both interpretability and accuracy. Such problems require critical configurations that exhibit higher-order contact, modeled via osculating spaces rather than tangents alone.
This motivates the study of algebraic invariants that generalize Euclidean distance degrees and reflect higher-order tangency. Foundational work in this direction was developed by Piene in \cite{piene2022higher}, where she introduces and studies higher-order polar loci and reciprocal polar loci in a broad algebro-geometric setting. Her results are instrumental to the developments presented in this paper. We focus on the case of nonsingular varieties, which often allows for local computations and more streamlined arguments. In particular, in Sections \ref{sec: polar}, \ref{sec: higher-order normal bundles}, and \ref{sec: osculating eigenvectors}, we adopt and adapt some of Piene's definitions and results within our framework. Further related recent contributions include \cite{brandt2024voronoi,horobet2024critical,breiding2025critical}.

\subsection*{Geometric Setup and State of the Art.} We start by fixing an $(n+1)$-dimensional real vector space $V_\mR$ and a positive definite quadratic form $q\colon V_\mR\to\R$. We denote by $Q$ the quadric hypersurface defined by $q.$ For any data point $u\in V_\mR$, we denote by $d_u$ the distance function from $u$, defined by $d_u(v)\coloneqq\sqrt{q(u-v)}$ for all $v\in V_\mR$. For any fixed subset $S\subseteq V_\mR$ such that $u\notin S$, it is natural to study the set of points $x\in S$ attaining a local minimum of $d_u$ restricted to $S$. In this paper, we restrict ourselves to subsets $S$ that are affine cones over real algebraic varieties. As a prototypical example, we focus on affine cones over projective toric varieties \cite{Higher_duality_and_toric}, and in particular varieties of rank at most one symmetric tensors, seen as homogeneous polynomials \cite{banach1938uber}. More precisely, we let $Y_\mR$ be the image of a real morphism $f\colon X_\mR\to\P(V_\mR)=\P_{\mR}^n$ and $S=C(Y_\mR)$ be the affine cone over $Y_\mR$. Firstly, despite being naturally a problem defined over the real numbers, the algebraic study of the locus of critical points of $d_u$ requires working over the field of complex numbers. Secondly, we observe that a similar study may be carried out for more general affine varieties that are not cones. However, we restrict ourselves to affine cones over projective varieties to apply classical results of intersection theory in projective space. Motivated by these facts, we consider the complex vector space $V\coloneqq V_\mR\otimes\C$, the Zariski closure $Y$ of $Y_\mR$ in $\P(V)=\P^n$, and its affine cone $C(Y)\subseteq V$. In particular, we will always work with morphisms $f\colon X\to\P^n$, where $Y=f(X)$. Additionally, since the squared distance function $d_u^2$ is polynomial, we consider it as a polynomial function $d_u^2\colon V\to\C$. We emphasize that the real quadratic form $q$ induces only a bilinear symmetric form over $V$ and not a Hermitian inner product. We denote by $Q$ the nonsingular quadric hypersurface in $\P^n$ defined by the quadratic form $q$.

A generic complex data point typically has a nonsingular critical point for the distance function restricted to a reduced variety. Introducing additional conditions on the nature of critical points leads to richer algebraic structures known as {\em data loci}. For example, {\em conditional ED data loci} encode data points with at least one critical point on a prescribed subvariety \cite{horobet2017data,horobet2022data,dirocco2024Relative}. Other well-studied data loci are {\em $\varepsilon$-offsets}, representing points with a critical point at distance $\varepsilon$, {\em bisector hypersurfaces}, defined by data having two equidistant critical points \cite{horobet2019offset,ottaviani2020distance}, and {\em Voronoi cells}, consisting of all data points closest to a fixed point lying on a given variety \cite{cifuentes2022voronoi}.

In this work, we introduce and study the {\em higher-order distance degree $\DD_k(f,Q)$} and {\em higher-order distance loci} $\DL_k(f,Q)$ of a morphism $f\colon X\to\P^n$ with respect to a fixed nonsingular quadric hypersurface $Q\subseteq\P^n$. This invariant describes the algebraic complexity for $f$ to satisfy a prescribed osculating condition. In the following, we summarize the main steps that motivate our definition.

For a fixed integer $k \ge 1$ and a point $p \in X$, the {\em $k$th osculating space} $\T_p^k(f) \subseteq \P^n$ is defined as the projective span of all partial derivatives of the components of $f$ at $p$ up to order $k$, equivalently, the image of the $k$th jet map at $p$. This space captures all infinitesimal directions of the morphism $f$ at $p$ up to order $k$ and is the smallest projective linear subspace with this property.  For $k = 1$, the osculating space $\T_p^1(f)$ coincides with the projective tangent space $\T_p(f)$, which has dimension $m = \dim X$ at nonsingular points. If $f$ is an embedding, the tangent space has constant dimension by definition. However, for $k \ge 2$, the dimension of the osculating spaces $\T_p^k(f)$ may vary. We say that a morphism $f\colon X \to \P^n$ is {\em globally $k$-osculating} if the dimension of $\T_p^k(f)$ is constant across all points of $X$ (see Definition~\ref{def: globally k-osculating}).

\begin{figure}[ht]
\centering
\begin{overpic}[width=0.45\textwidth]{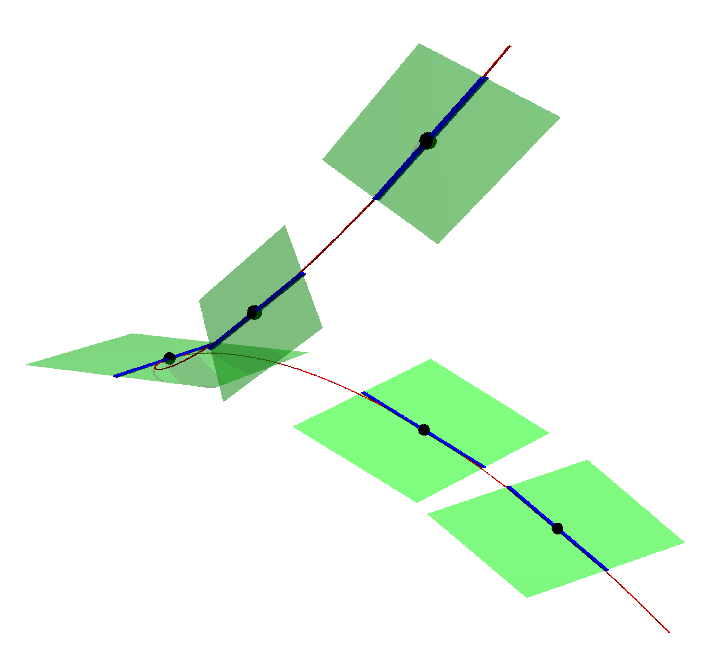}
\put (79,20) {\scriptsize{$f(p_1)$}}
\put (62.3,20) {\scriptsize{$\T_{p_1}(f)$}}
\put (67,12) {\scriptsize{$\T_{p_1}^2(f)$}}
\put (61,34) {\scriptsize{$f(p_2)$}}
\put (42,34) {\scriptsize{$\T_{p_2}(f)$}}
\put (48,25) {\scriptsize{$\T_{p_2}^2(f)$}}
\put (48,58) {\small{$f(X)$}}
\end{overpic}
\caption{Tangent lines and osculating planes of the twisted cubic image of the Veronese embedding $f=\nu_1^3\colon\P^1\hookrightarrow\P^3$ in the affine chart $\{u_0=1\}\cong\R^3$.}\label{fig: osculating spaces}
\end{figure}

In particular, a globally $1$-osculating parametrization is an immersion. 
Globally $k$-osculating morphisms are particularly well-suited to geometric analysis, as they admit a natural system of invariants known as {\em higher-order polar classes} $p_{k,i}(f)$ for $i \in \{0, \dots, m\},$ related to characteristic classes. Introduced in more generality by Piene in \cite{piene2022higher}, these classes are subvarieties of codimension $i$ in $X$ that encode the geometric behavior of the $k$th-order osculating spaces. In Sections~\ref{sec: osculating} and~\ref{sec: polar}, we recall the geometric framework of osculating functions and the main definitions and results on higher-order polar geometry from \cite{piene2022higher}, which forms the foundation for our main results.

Section \ref{sec: higher-order normal bundles} recalls the notion of higher-order normality with respect to a quadric $Q$. Given a morphism $f\colon X \to \P^n$ and a point $u \in V$, we say that $[v] = f(p) \in f(X)$ is {\em critical of order $k$} for the squared distance function $d_u^2$ if
\[
[\nabla d_u^2(v)] \in \N_p^k(f,Q) \coloneqq \langle \T_p^k(f)^\perp, f(p) \rangle\,,
\]
where $\langle S \rangle$ denotes the projective span of a subset $S \subseteq \P^n$, and $\T_p^k(f)^\perp$ is the {\em polar subspace} of the $k$th-order osculating space $\T_p^k(f)$ with respect to the quadric $Q$, see \eqref{eq: perp}. We refer to $\N_p^k(f,Q)$ as the {\em $k$th-order normal space} of $f$ at $p$.

The central object of study is the {\em $k$th-order distance locus} $\DL_k(f,Q)$, defined as the projection to the second factor $\P^n$ of the incidence variety
\[
\DC_k(f,Q) \coloneqq \overline{\left\{([v], [u]) \in \P^n \times \P^n \,\middle|\, [v] \in f(X) \text{ is critical of order } k \text{ for } d_u^2\right\}}\subseteq\P^n \times \P^n\,.
\]

For $k = 1$, this construction recovers the classical {\em projective ED correspondence} $\DC(f,Q)\coloneqq\DC_1(f,Q)$ and the {\em Euclidean distance degree} $\DD(f,Q)\coloneqq\DD_1(f,Q)$, defined as the degree of the surjective projection $\pr_2|_{\DC(f,Q)}\colon\DC(f,Q)\to\P^n=\DL_1(f,Q)$; see \cite[Theorem 4.4]{DHOST}.
The description of higher-order distance loci $\DL_k(f,Q)$ becomes increasingly intricate for $k \geq 2$: indeed, for generic $[u] \in \P^n$, a critical point of order $k$ may not exist on $f(X)$. Hence, unlike the case $k=1$, the data locus $\DL_k(f,Q)$ forms a proper algebraic subvariety of $\P^n$, reflecting the rarity and structural complexity of higher-order critical interactions.

To quantify its complexity, we consider the surjective morphism $\varphi_{2,k}\colon\DC_k(f,Q)\to\DL_k(f,Q)$ induced by $\pr_2$ and we define the {\em $k$th-order distance degree} of $(f,Q)$ as
\[
\DD_k(f,Q) \coloneqq \deg\DL_k(f,Q) \cdot \deg\varphi_{2,k}\,,
\]
as formalized in Definition~\ref{def: kth order ED degree}. In particular, when the pair $(f,Q)$ is in {\em general $k$-osculating position} (see Definition~\ref{def: general k osculating position}), the morphism $\varphi_{2,k}$ is generically finite over its image and $\DD_k(f,Q)$ attains its maximum value, called the {\em generic $k$th-order distance degree of $f$} and denoted by $\gDD_k(f)$ (see Definition \ref{def:generic DD}). In particular, this invariant is independent of $Q$.

\subsection*{Main Results.} 

The main result of Section \ref{sec: osculating eigenvectors} provides a closed formula for the $k$th-order distance degree of a generic polynomial map. Its proof is given in Section \ref{proof2}.

\begin{theorem}\label{thm: generic ED degree general polynomial map}
Let $f_0,\dots,f_n$ be $n+1$ generic polynomials in $\C[x_0,\dots,x_m]_d$ and let $f\colon\P^m\to\P^n$ be the associated morphism. Consider a nonnegative integer $k\le d$ and assume that $n>\binom{m+k}{k}-1$.
Then $f$ has generic $k$-osculating dimension $\binom{m+k}{k}-1$. Furthermore, for any nonsingular quadric hypersurface $Q\subseteq\P^n$ that intersects $f(\P^m)$ transversally, the morphism $\varphi_{2,k}$ is birational and
\[
\DD_k(f,Q) = \deg\DL_k(f,Q) = \sum_{i=0}^m \binom{\binom{m+k}{k}}{i}(d-k)^id^{m-i}\,.
\]
\end{theorem}

This result relies on an interesting interplay between generic and special metric properties. We begin by exploring the connection between higher-order osculating optimization and a concrete, widely studied problem in data approximation: the so-called best (symmetric) rank-one tensor approximation \cite{banach1938uber,eckart1936approximation,desilva2008tensor,ottaviani2014exact}.
In Definition \ref{def: kth order eigenpoint}, we introduce {\em higher-order eigenvectors} of symmetric tensors, a generalization of the notions introduced in \cite{lim2005singular,Qi05}. In Proposition \ref{prop: critical points BW distance}, we relate them to higher-order critical points of the nearest-point problem to the Veronese variety with respect to the Bombieri-Weyl quadratic form $Q_\mBW$, see Definition \ref{def: BW inner product}.
Furthermore, we compute the higher-order distance degrees of Veronese embeddings with respect to the Bombieri-Weyl quadratic form, and examine how the geometry of the higher-order distance loci $\DL_k(f,Q)$ varies with the choice of metric. In Section \ref{proof4}, we prove the following result.

\begin{theorem}\label{thm: projection data locus Veronese BW is either birational or of degree 2}
Let $\nu_m^d\colon\P^m\hookrightarrow\P(\C[x_0,\dots,x_m]_d)$ be the degree-$d$ Veronese embedding of $\P^m$, and equip $\R[x_0,\dots,x_m]_d$ with the Bombieri-Weyl inner product. For all $k\le d$,
\begin{enumerate}
    \item If $(m,k)=(1,d-1)$, then $\deg\varphi_{2,d-1}=2$ and $\deg\DL_{d-1}(\nu_1^d,Q_\mBW)=d-1$.
    \item If $(m,k)\neq (1,d-1)$, then $\varphi_{2,k}$ is birational and $\DD_k(\nu_m^d,Q_\mBW)=\deg\DL_k(\nu_m^d,Q_\mBW)$ equals the coefficient of the monomial $h_1^mh_2^{\binom{m+k}{k}-m-1}$ in the expansion of
    \begin{equation}\label{eq: kth order ED degree BW}
    (-1)^{\binom{m+k}{k}-1}\sum_{j=0}^\infty(-1)^j(dh_1+h_2+k(d-k)h_1^2+kh_1h_2)^j \in \frac{\Z[h_1,h_2]}{\langle h_1^{m+1},h_2^{\binom{m+d}{d}}\rangle}\,.
    \end{equation}
\end{enumerate}
Furthermore, for all $k\le d$, if $(\nu_m^d,Q)$ is in general $k$-osculating position, then $\varphi_{2,k}$ is birational and
\begin{equation}\label{eq: generic degree data locus Veronese}
\deg\DL_k(\nu_m^d,Q) = \gDD_k(\nu_m^d) = \sum_{i=0}^m \binom{\binom{m+k}{k}}{i}(d-k)^id^{m-i}\,.
\end{equation}
\end{theorem}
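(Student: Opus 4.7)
The formula (\ref{eq: generic degree data locus Veronese}) for generic $Q$ in general $k$-osculating position is an immediate consequence of Theorem \ref{thm: generic ED degree general polynomial map}. The Veronese $\nu_m^d$ is globally $k$-osculating for all $k \leq d$ with $\dim \T^k_p(\nu_m^d) = \binom{m+k}{k} - 1$ at every point, and although its coordinates are monomials rather than generic polynomials of degree $d$, the intersection-theoretic argument in Section \ref{proof2} uses only the common degree $d$ and the general $k$-osculating position hypothesis. Its proof therefore applies verbatim, yielding birationality of $\varphi_{2,k}$ via Proposition \ref{prop: kth order ED degree sum of kth order polar degrees} and the stated formula.

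For claim (2), my plan is to compute $[\DC_k(\nu_m^d, Q_\mBW)]$ as an intersection-theoretic class on $\P^m \times \P^n$. The Bombieri--Weyl inner product is $U(m+1)$-invariant, and at $p = [\ell]$ it induces the orthogonal decomposition
\[
\T^k_p(\nu_m^d) = \bigoplus_{j=0}^k \ell^{d-j}\cdot \Sym^j(\ell^\perp),
\]
where $\ell^\perp \subset V_1$ denotes the BW-complement of $\C\ell$. Globalized over $\P^m$, this splits the osculating bundle into pieces built from twists of symmetric powers of the tautological quotient bundle. The BW critical-point condition at $(p, [u])$ is that the orthogonal projection of $u$ onto $\T^k_p(\nu_m^d)$ is a scalar multiple of $f(p)$, which I would express as the vanishing of a section of a rank-$(N-1)$ quotient bundle on $\P^m \times \P^n$ (with $N = \binom{m+k}{k}$). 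After Chern class bookkeeping, I expect to identify $[\DC_k(\nu_m^d, Q_\mBW)]$ with $(-1)^{N-1}$ times the Segre class of the rank-$2$ bundle whose total Chern class factors as $(1+kh_1)(1+(d-k)h_1+h_2)$; the factorization reflects the BW-equivariant separation into the radial $\ell^{d-j}$ and tangential $\Sym^j(\ell^\perp)$ contributions above. Extracting the coefficient of $h_1^m h_2^{N-m-1}$ then yields $\DD_k(\nu_m^d, Q_\mBW)$, and combined with the birationality of $\varphi_{2,k}$ that follows from the generic uniqueness of higher-order eigenvectors in Proposition \ref{prop: critical points BW distance}, this equals $\deg \DL_k(\nu_m^d, Q_\mBW)$.

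The exceptional case $(m,k) = (1, d-1)$ of claim (1) is handled by a direct parametric computation: for the rational normal curve $\nu_1^d(\P^1) \subset \P^d$, the $(d-1)$-st osculating space at $[\ell^d]$ is a hyperplane and its BW-orthogonal complement reduces to $\C(\ell^\perp)^d$, so the involution $\ell \leftrightarrow \ell^\perp$ exchanges two $(d-1)$-st order eigenvectors for each generic data point, forcing $\deg \varphi_{2,d-1} = 2$; the locus $\DL_{d-1}(\nu_1^d, Q_\mBW)$ is then identified explicitly as a hypersurface of degree $d-1$ in $\P^d$. The main obstacle in this plan is the Chern class computation in the second paragraph: the sub-bundles generated by $\T^k_p(\nu_m^d)^\perp$ and by $\C \cdot f(p)$ inside $V \otimes \OO_{\P^m}$ become linearly dependent along the isotropic quadric $\{q_\mBW(\ell, \ell) = 0\}$, so their algebraic sum is not locally free. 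Either restricting to the transverse open locus and saturating, or performing an excess intersection analysis on the isotropic locus, will be required to arrive at the clean factorization $(1+kh_1)(1+(d-k)h_1+h_2)$ underlying formula (\ref{eq: kth order ED degree BW}).
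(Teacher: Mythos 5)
Your argument for the ``Furthermore'' clause is circular. Formula~\eqref{eq: generic degree data locus Veronese} together with the birationality of $\varphi_{2,k}$ for generic $Q$ \emph{cannot} be deduced from Theorem~\ref{thm: generic ED degree general polynomial map}: the proof of that theorem in Section~\ref{proof2} factors a generic map $f$ as $\pi\circ\nu_m^d$, reduces to the Veronese via Lemma~\ref{lem: technical lemma}, and then invokes the present Theorem~\ref{thm: projection data locus Veronese BW is either birational or of degree 2} to conclude that $\deg\varphi_{2,k}(\nu_m^d,Q')=1$. So Theorem~\ref{thm: generic ED degree general polynomial map} is a downstream consequence of this statement, not an input to it. (It also does not even apply when $k=d$, since the hypothesis $n>\binom{m+k}{k}-1$ fails.) What the paper actually does for generic $Q$ is: for $(m,k)\neq(1,d-1)$, a degeneration/openness argument through an incidence variety over the space of quadrics, bootstrapping from the already-established Bombieri--Weyl case; for $(m,k)=(1,d-1)$, a direct computation showing $\EE_{d-1}(\nu_1^d,Q)\cong\OO_{\P^1}(d)^{\oplus 2}$ so that the associated tautological line bundle is very ample and hence $\varphi_{2,d-1}$ is birational.

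In part~(2), your plan has two unresolved components. First, birationality: Proposition~\ref{prop: critical points BW distance} only characterizes $k$th order critical points as normalized eigenpairs; it says nothing about uniqueness, so ``generic uniqueness of higher-order eigenvectors'' is not available to you. The paper closes this by exhibiting an explicit data point $[f]=[x_0^d+g]$ with $g\in\C[x_1,\ldots,x_m]_d$ generic, whose only order-$k$ eigenvector is $e_0$ (an induction on $m$ through the block structure of $M_k(x,f)$), giving a reduced one-point fiber. Second, the degree formula: you correctly identify that your bundle-theoretic route breaks down along the isotropic quadric (where the normal and radial directions become dependent), but you leave that as an ``obstacle.'' The paper sidesteps it entirely by not trying to realize $\N_p^k$ as a locally free orthogonal complement: it writes the critical condition as $\rank M_k(x,f)\le 1$ for the explicit $\binom{m+k}{k}\times 2$ matrix with columns $\nabla_kf(x)$ and $x^k$, interprets that as a degeneracy locus of a map $\OO_Z^{\binom{m+k}{k}}\to\OO_Z(k,0)\oplus\OO_Z(d-k,1)$ on $Z=\P^m\times\P^n$, and applies Porteous' formula. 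The factorization $(1+kh_1)(1+(d-k)h_1+h_2)$ is simply $c_t$ of that rank-$2$ target bundle, reflecting the bidegrees of the two columns; it is not an $SU$-equivariant splitting of the osculating bundle as you suggest. To repair your write-up, replace the bundle-splitting heuristic with this degeneracy-locus computation (Proposition~\ref{prop: kth order ED degree BW}) and add the explicit witness for one-point fibers. Your treatment of $(m,k)=(1,d-1)$ agrees with the paper.
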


When $k=1$, Equation \eqref{eq: kth order ED degree BW} specializes to the distance degree $\DD(\nu_m^d,Q_\mBW)$ computed in \cite[Corollary 8.7]{DHOST}, a consequence of \cite[Theorem 5.6]{cartwright2013number}, see also \cite[Theorem 3.4]{oeding2013eigenvectors} and \cite[Theorem 12]{friedland2014number} for related formulas in a more general setting.

Another goal of this paper is to derive closed formulas for higher-order distance degrees. This is closely connected to the study of characteristic classes of the image bundle associated with the morphism $j_k$ in \eqref{eq: morphism jk}. In general, studying these invariants is technically involved. In Sections~\ref{sec: formulas for k-regular embeddings} and \ref{sec: tropical}, we address this challenge using two different approaches.

In Section~\ref{sec: formulas for k-regular embeddings}, we focus on the case of {\em $k$-regular embeddings}, that is, embeddings $f \colon X \hookrightarrow \P^n$ whose $k$th-order osculating spaces attain the maximal possible dimension globally. For such morphisms, we derive explicit formulas that are particularly well-suited to computational applications. We demonstrate their use in the context of embeddings of varieties of dimension at most three, with special focus on toric embeddings. 

In Section~\ref{sec: tropical}, we remove the assumption of $k$-regularity and use a tropical approach, 
which is suited to both combinatorial and computational perspectives.
We study the tropicalization of the higher-order conormal variety $\Trop W_k(f,Q)$ associated with a morphism $f$. This viewpoint offers a powerful framework for analyzing the behavior of higher-order critical loci under degenerations and for computing higher-order polar degrees combinatorially. We show that higher-order polar degrees can be expressed in terms of stable intersections between $\Trop W_k(f,Q)$ and Bergman fans $\operatorname{Berg}(M^{n-j+1}_{n+1}\times M^{j+m_k-m+2}_{n+1})$ corresponding to uniform matroids, obtaining the following tropical formula, whose proof is given in Section \ref{proof5}.

\begin{theorem}\label{theorem: tropical description of higher-order multidegrees}
Let $f$ be a monomial $k$-osculating embedding of generic $k$-osculating dimension $m_k$.
The generic $k$th-order distance degree of $f$ is the sum of the degrees
\[
\gDD_k(f) = \sum_{j = 0}^{ m+n-m_k-1 }
\int \Trop W_k(f,Q) \cdot \operatorname{Berg}(M^{n-j+1}_{n+1}\times M^{j+m_k-m+2}_{n+1})\,.
\]
\end{theorem}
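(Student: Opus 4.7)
The plan is to reduce the computation of $\gDD_k(f)$ to a sum of bidegrees of the $k$th order conormal variety $W_k(f,Q) \subseteq \P^n \times \P^n$ and then recast each bidegree as a tropical stable intersection against a Bergman fan of a product of uniform matroids. First, combining Proposition~\ref{prop: kth order ED degree sum of kth order polar degrees} with the standard identification of higher-order polar degrees as the bidegrees of the conormal variety (as spelled out in Sections~\ref{sec: polar} and \ref{sec: higher-order normal bundles}), I would decompose
\[
\gDD_k(f) = \sum_{j=0}^{m+n-m_k-1} \delta_j(W_k(f,Q)),
\]
where $\delta_j$ is the bidegree of $W_k(f,Q)$ whose two factor codimensions are indexed so as to match the ranks $n-j+1$ and $j+m_k-m+2$ of the two uniform matroids appearing in the statement.

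Second, each bidegree $\delta_j$ is, by definition, the intersection number of $W_k(f,Q)$ with a product $L_1 \times L_2$ of generic linear subspaces of the appropriate codimensions. By the Ardila--Klivans theorem, $\Trop L_1$ and $\Trop L_2$ are precisely the Bergman fans of the corresponding uniform matroids $M^{n-j+1}_{n+1}$ and $M^{j+m_k-m+2}_{n+1}$. The tropical intersection theorem (Osserman--Payne, cf.\ the Sturmfels--Tevelev multiplicity formula) then identifies the algebraic intersection number with the tropical stable intersection
\[
\int \Trop W_k(f,Q) \cdot \operatorname{Berg}\bigl(M^{n-j+1}_{n+1} \times M^{j+m_k-m+2}_{n+1}\bigr),
\]
provided both tropical cycles are pure of the expected dimensions and their stable intersection consists of isolated tropical points with the expected multiplicities.

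Third, the monomial hypothesis on $f$ makes these purity and transversality conditions accessible. Partial derivatives of monomials up to order $k$ are again monomials (with explicit multinomial coefficients), so the $k$th order osculating flag is spanned by monomial sections, and the incidence conditions cutting out $W_k(f,Q)$ reduce to binomial-type relations after eliminating auxiliary coordinates via Euler-type identities for homogeneous monomials. Hence $\Trop W_k(f,Q)$ is a balanced polyhedral complex of dimension $m+n-m_k$, and the algebraic--tropical correspondence applies cone by cone. Summing the stable intersection numbers over $j \in \{0,\ldots,m+n-m_k-1\}$ then reproduces $\gDD_k(f)$, yielding the claimed identity.

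The hard part will be twofold: verifying that $\Trop W_k(f,Q)$ is pure of the expected dimension, and matching the tropical multiplicities at the stable intersection points to the algebraic ones. Purity should follow from the Cohen--Macaulay behavior of binomial ideals (Eisenbud--Sturmfels) once a sufficiently nice initial degeneration of the defining ideal of $W_k(f,Q)$ is produced; multiplicity matching should follow from the Sturmfels--Tevelev formula, which in the uniform matroid case reduces to a combinatorial count over matroid polytope cones. A secondary bookkeeping subtlety is the precise indexing shift between polar degrees and matroid ranks, which must be verified against a small example such as the Veronese case of Theorem~\ref{thm: projection data locus Veronese BW is either birational or of degree 2} before the summation over $j$ can be carried out confidently.
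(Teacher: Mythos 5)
Your proposal follows essentially the same route as the paper's proof: writing $\gDD_k(f)$ as the sum of the multidegrees of $W_k(f,Q)$ via Propositions~\ref{prop: kth order ED degree sum of kth order polar degrees} and~\ref{prop: properties kth polar degrees}, realizing each multidegree as the intersection with a generic $L_1\times L_2$ whose tropicalization is the Bergman fan of a product of uniform matroids, and invoking the Osserman--Payne correspondence (equivalently the Sturmfels--Tevelev multiplicity formula) to identify it with the stable intersection against $\Trop W_k(f,Q)$. The purity and balancing issues you flag as the ``hard part'' are automatic from the Structure Theorem for tropicalizations of irreducible varieties, so no Cohen--Macaulay or initial-degeneration argument is needed.
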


When $f$ is a toric embedding, we further make these results effective and give a combinatorial description of $\Trop W_k(f,Q)$ in Proposition \ref{prop: description of conormal variety as Y_{C,D}}.
We accompany our theoretical findings with an easy-to-use \verb|Julia|-based software implementation, which enables the computation of higher-order distance degrees and polar degrees in this toric setting, similar to results from \cite{SturmfelsHelmer}.

Finally in Section~\ref{sec: affine} we discuss the case of affine morphisms. We adapt the higher-order distance construction to the affine setting and illustrate how the resulting loci and degrees can be computed in concrete examples. This also provides additional evidence for possible applications, see Proposition~\ref{prop: hessian}.

In the following, we present an example that provides a clear illustration of how we can explicitly compute and visualize all our constructions, including osculating spaces, higher-order normal bundles, polar loci, and distance degrees. Additionally, it connects our intersection-theoretic framework with issues related to low-rank tensor approximation, which we revisit in Section \ref{sec: osculating eigenvectors}.

\subsection*{Illustrative Example} Consider the space $V=(\C^2)^*=\C[x_0,x_1]_1$ of linear forms in $x=(x_0,x_1)$, $X=\P(V)=\P^1$, and the morphism $f\colon \P^1\to\P(S^3V)\cong\P^3$ defined by $f([\ell])\coloneqq[\ell^3]$ for every $\ell\in V$. Its image $f(\P^1)$ is a nonsingular twisted cubic in $\P^3$. In coordinates we write $\ell(x)=t_0x_0+t_1x_1$, hence $\ell^3(x)=\sum_{i=0}^3\binom{3}{i}(t_0x_0)^{3-i}(t_1x_1)^i$ and we define $f([t_0 : t_1])\coloneqq[t_0^3 : t_0^2 t_1 : t_0 t_1^2 : t_1^3]$.
In particular, $f$ corresponds to the Veronese embedding $\nu_1^3\colon\P^1\hookrightarrow\P^3$, and its image consists of all cubic binary forms of rank at most one.
Working in the affine patch $\{t_0\neq 0\}$ and using the local coordinate $t=\frac{t_1}{t_0}$, the map $f$ reads as $t\mapsto(1,t,t^2,t^3)$. Given $p=[1:t]$, we consider the matrix
\[
A_p^{(2)}(f) =
\begin{pNiceMatrix}[last-col=5]
1&t&t^2&t^3&f\\[4pt]
0&1&2\,t&3\,t^2&\frac{\partial f}{\partial t}\\[4pt]
0&0&1&3\,t&\frac{1}{2}\frac{\partial^2 f}{\partial t^2}
\end{pNiceMatrix}
\]
whose rows correspond to all partial derivatives of order at most $2$ of the components of $f$. On the one hand $\T_p^2(f)=\P(\rowspan A_p^{(2)}(f))$. One verifies that $\dim \rowspan A_p^{(2)}(f)=3$, hence $\dim \T_p^2(f) = 2$. In Figure \ref{fig: osculating spaces}, we display the tangent and second-order osculating spaces of $f$ at some points.
On the other hand, the right kernel of $A_p^{(2)}(f)$ is one-dimensional and is generated by the vector $\eta=(-t^3, 3\,t^2, -3\,t, 1)^\mT$.

Consider a quadric hypersurface $Q\subseteq\P^3$ defined by a positive-definite quadratic form in $S^3\R^2$ and let $M_Q$ be the associated positive-definite symmetric matrix. Then the $2$nd-order distance locus $\DL_2(f,Q)$ is equal to the union of the normal spaces
\[
\N_p^2(f,Q)=\langle\T_p^2(f)^\perp,f(p)\rangle=\langle[M_Q\eta],f(p)\rangle
\]
for every $p\in\P^1$, in particular it is a projective surface in $\P^3$. Applying Proposition \ref{prop: kth order ED degree sum of kth order polar degrees} and Corollary \ref{corol: generic kth ED degree Veronese}, we have $\gDD_2(f) = \mu_{2,0}(f)+\mu_{2,1}(f) = 3+3 = 6$.

Firstly, let $Q_\mED$ be the standard Euclidean quadric of equation $\sum_{i=0}^3 u_i^2=0$, hence $M_{Q_\mED}$ is the identity matrix. Observe that $Q_\mED\cap f(X)$ is transversal. We verified in \verb|Macaulay2| \cite{GS} that $\DL_2(f,Q_\mED)$ is a surface of degree $6=\gDD_2(f)$ (displayed in light blue on the left-hand side of Figure \ref{fig: EDD 2nd order}) and that $\varphi_{2,2}$ is a birational morphism.
If instead we consider $Q_\mBW\colon\sum_{i=0}^3 \binom{3}{i}u_i^2=0$, corresponding to the {\em Bombieri-Weyl} inner product in $S^3\R^2$, then $Q_\mBW\cap f(X)$ is not transversal and two interesting properties hold: firstly $\DD_2(f,Q_\mBW)=4<6=\gDD_2(f)$, equivalently there is a ``$2$nd-order ED defect'' equal to $2$, a behaviour similar to the classical ED defects studied in \cite{maxim2020defect}. Secondly, the morphism $\varphi_{2,2}$ is not birational, indeed $\DL_2(f,Q_\mBW)$ is the nonsingular quadric surface of equation $u_1^2-u_0u_2+u_2^2-u_1u_3 = 0$ (displayed in yellow on the right-hand side of Figure \ref{fig: EDD 2nd order}), where $[u_0:u_1:u_2:u_3]$ are homogeneous coordinates in $\P^3$, while $\deg\varphi_{2,2}=2$. The smaller value of $\DD_2(f,Q_\mBW)$ follows because each $2$nd-order normal line $\N_p^2(f,Q_\mBW)$ meets the twisted cubic $f(X)$ at another point $f(p')$ such that $p=[v]$, $p'=[v']$, and the vectors $v,v'$ are orthogonal, implying that $\N_p^2(f,Q_\mBW)=\N_{p'}^2(f,Q_\mBW)$. We explain this graphically on the right-hand side of Figure \ref{fig: EDD 2nd order}, in particular with the points $p=[1:1]$ and $p'=[1:-1]$.
The \verb|Macaulay2| code to compute these data loci is available in \cite{github}.

\begin{figure}[ht]
\centering
\begin{overpic}[width=0.5\textwidth]{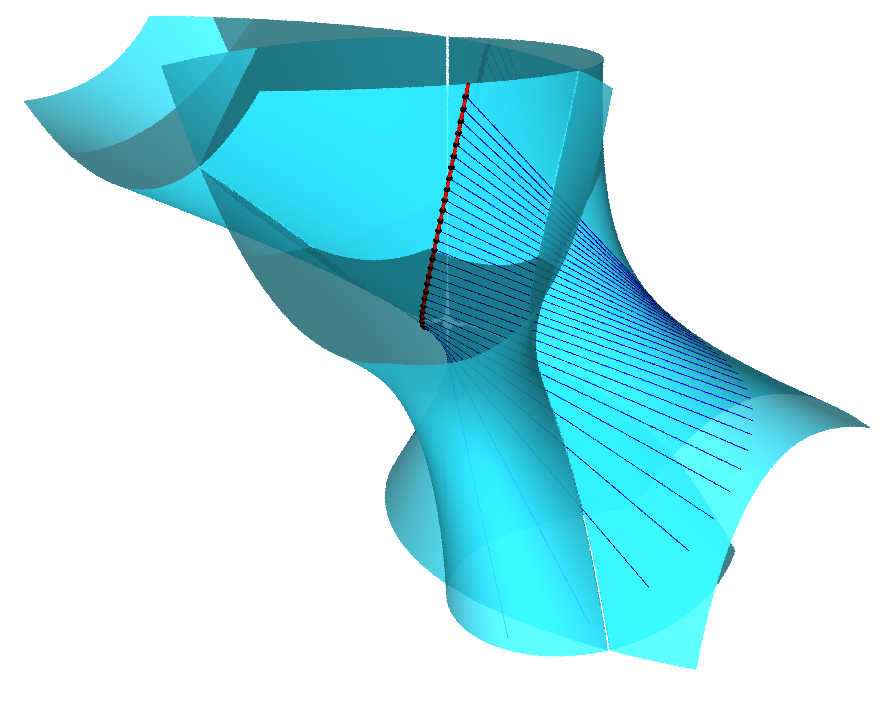}
\put (53,10) {$\DL_2(f,Q_\mED)$}
\put (39,63) {$f(X)$}
\end{overpic}
\begin{overpic}[width=0.4\textwidth]{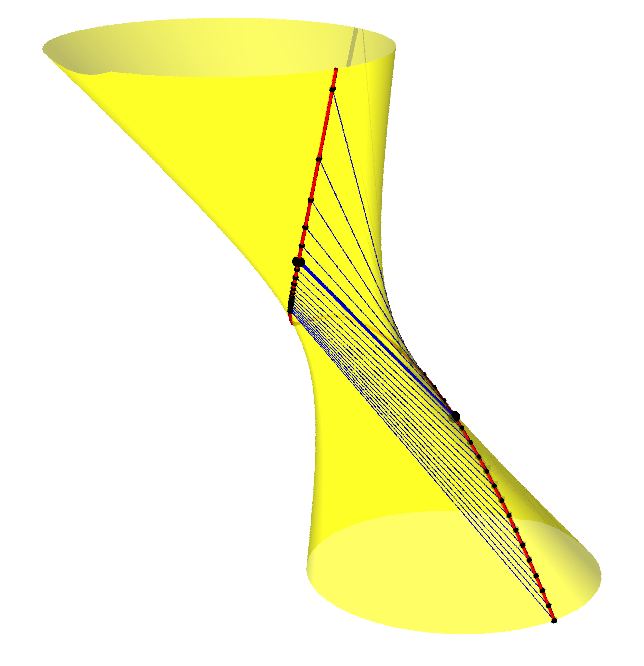}
\put (70,38) {\small{$f(p)$}}
\put (32,61) {\small{$f(p')$}}
\put (36,85) {$f(X)$}
\put (8,73) {$\DL_2(f,Q_\mBW)$}
\put (60,60) {\scriptsize{$\stackbelow{\N_p^2(f,Q_\mBW)}{\N_{p'}^2(f,Q_\mBW)}$}}
\end{overpic}
\caption{Second-order distance loci of the Veronese embedding $f=\nu_1^3\colon\P^1\hookrightarrow\P^3$, with respect to the standard (left) and Bombieri-Weyl (right) quadratic form in $S^3\R^2$, displayed in the real affine chart $\{u_0=1\}$.}\label{fig: EDD 2nd order}
\end{figure}

\section*{Acknowledgements}

We gratefully acknowledge the support from the Swedish Research Council (Vetenskapsr{\aa}det), the Verg Foundation, the Brummer \& Partners Math Data Lab, and the Knut and Alice Wallenberg Foundation. We are grateful to Ragni Piene for her valuable feedback, which has helped us improve both the accuracy and clarity of this paper.

\section{Osculating functions}\label{sec: osculating}

{\bf Notation.} Let $V$ be an $(n+1)$-dimensional vector space over $\C$, or more generally over $\K,$ an algebraically closed field of characteristic zero. We denote by $\P^n$ the projective space $\P(V)$, while $(\P^n)^\vee$ denotes the dual projective space of hyperplanes in $\P^n$. Given a multi-index $\alpha=(\alpha_1,\dots,\alpha_m)\in\N^m$, we define $|\alpha|\coloneqq \alpha_1+\cdots+\alpha_m$, and for any vector $x=(x_1,\dots,x_m)$, we call $x^\alpha=x_1^{\alpha_1}\cdots x_m^{\alpha_m}$.

Let $X$ be a nonsingular irreducible variety over $\K$ and let $f\colon X\to\P^n$ be a morphism.
It is determined by the subset $V=f^*(H^0(\P^n,\OO_{\P^n}(1))\subseteq H^0(X,\OO_X(1))$ where $\OO_X(1)\coloneqq f^*(\OO_{\P^n}(1)).$ In particular $f(p)=(\sigma_0(p),\dots,\sigma_n(p))$ for a given basis $(\sigma_0,\dots,\sigma_n)$ of $V.$ 
If $V=H^0(X,\OO_X(1))$ the morphism $f$ is said to be linearly normal.
Let $p\in X$ and $\sigma\in V$. Recall that any $\sigma\in V\subseteq H^0(X,\OO_X(1))$ can be represented in a neighborhood of $p$ as a polynomial in local coordinates $(x_1,\dots,x_m)$.
The {\em $k$th jet} of $\sigma$ at $p$ is the $\binom{m+k}{k}$-tuple
\[
j_{k,p}(\sigma)\coloneqq\left(\frac{1}{|\alpha|!}\frac{\partial^{|\alpha|}\sigma}{\partial x^\alpha}(p)\right)_{\substack{\alpha\in\N^m\\0\le|\alpha|\le k}}\in \PP^k_p(\OO_X(1))\coloneqq H^0\left(X,\OO_X(1)\otimes\frac{\OO_{X,p}}{\mathfrak{m}_{p}^{k+1}}\right)\cong \K^{\binom{m+k}{k}}\,,
\]
where $\mathfrak{m}_p$ denotes the maximal ideal at $p$. The vector space $\PP^k_p(\OO_X(1))$ is the fiber at $p$ of a vector bundle $\PP^k(\OO_X(1))$, called the {\em $k$th jet bundle} of the line bundle $\OO_X(1)$. Jet sheaves, also called sheaves of principal parts, are defined as follows. Consider the two projections $\pr_i\colon X\times X\to X$ for $i=1,2.$ The principal part sheaf of order $k$ is the coherent sheaf given by:
\[
\PP^k(\OO_X(1)) \coloneqq (\pr_1)_*\left(\pr_2^*(\OO_X(1))\otimes \frac{\OO_{X\times X}}{\II_{\Delta}^{k+1}}\right)\,,
\]
where $\Delta$ denotes the diagonal in $X\times X.$  Since the variety $X$ is nonsingular these sheaves are locally free of rank $\binom{m+k}{k}$, see \cite[\S 16]{grothendieck1967elements}.

Gluing together the maps $j_{k,p}$ yields a morphism of vector bundles on $X$:
\begin{equation}\label{eq: morphism jk}
j_k\colon V\otimes\OO_X\to \PP^k(\OO_X(1))\,.
\end{equation}

Throughout the remainder of this paper, we adopt the shorthand $\PP^k(f) \coloneqq \PP^k(\OO_X(1)).$

\begin{definition}\label{def: osculating space}
For every point $p\in X$, the {\em $k$th osculating space of $f$ at $f(p)\in f(X)$} is
\[
\T_p^k(f) \coloneqq \P(\image j_{k,p})\,.
\]
\end{definition}
If $f$ is a closed embedding, by convention we identify $X$ with its image $f(X)$ and write $p$ instead of $f(p)$. Moreover, when $f$ is a closed embedding, the first osculating space of $X$ at the point $p$ coincides with the projective tangent space of $f$ at $p$:
\[
\T_p^1(f) = \T_p(f)\,.
\]
Furthermore, osculating spaces form an ascending sequence of inclusions:
\begin{equation}\label{eq: osc seq}
    \{p\} \subseteq \T_p(f)  = \T_p^1(f) \subseteq \cdots \subseteq \T_p^k(f) \subseteq \cdots \subseteq \P^n\,.
\end{equation}

For a generic $p\in X$, the rank of $j_{k,p}$ is constant. This motivates the following definition.

\begin{definition}\label{def: globally k-osculating}
Given $f\colon X\to\P^n$, let $X_k^{\circ}\subseteq X$ be the dense open subset of $X$ of points $p$ such that $\rank j_{k,p}$ is constant. We define the {\em generic $k$-osculating dimension} of $f$ as $m_k\coloneqq\rank j_{k,p}-1$ for any $p\in X_k^{\circ}$.
We say that the morphism $f$ is {\em globally $k$-osculating} if $X_k^{\circ}=X$. In this case, $m_k$ is referred to as the {\em $k$-osculating dimension} of $f$.
\end{definition}
\begin{remark}
The generic $1$-osculating dimension of a morphism $f\colon X\to\P^n$ is $m_1=m$. Indeed, the open dense subset $X_1^\circ\subseteq X$ coincides with the nonsingular locus of $X$, and $\T_p^1(f)=\T_p(f)=\P^m$ for every $p\in X_1^\circ$. Furthermore, the morphism $f$ is globally $1$-osculating if and only if it is an immersion.
\end{remark}
    
Let $(x_1,\dots,x_m)$ be a system of local coordinates around the point $p=(0,\dots,0)$. After choosing a basis $(\sigma_0,\dots,\sigma_n)$ of $V$ one can define the $\binom{m+k}{k}\times (n+1)$ matrix
\begin{equation}\label{eq: matrix A}
A^{(k)}_p(f)\coloneqq \left(\frac{1}{|\alpha|!}\frac{\partial^{|\alpha|}\sigma_\beta}{\partial x^\alpha}(p)\right)_{\substack{\alpha\in\N^m\\\ 0\le|\alpha|\le k\\0\le\beta\le n}} =
\left(
\begin{array}{c|c|c|c}
j_{k,p}(\sigma_0)^\mT & j_{k,p}(\sigma_1)^\mT & \cdots & j_{k,p}(\sigma_n)^\mT 
\end{array}
\right)
\end{equation}
obtained by concatenating horizontally the $n+1$ column vectors $j_{k,p}(\sigma_\beta)^\mT$ of length $\binom{m+k}{k}$ for every $\beta\in\{0,\dots,k\}$.
A basis of $\image j_{k,p}$ is given by $m_k+1$ linearly independent vectors in the column span of $A^{(k)}_p(f)$.
Notice that $X_k^{\circ}=\{p\in X\mid \rank A_p^{(k)}(f)=m_k+1\}$ and that this identity is independent of the choice of a basis of $V.$

\begin{definition}\label{def: k-regular}
The morphism $f\colon X \to\P^n$ is {\em $k$-regular} if $X_k^{\circ}=X$ and if the vector bundle morphism $j_k$ in \eqref{eq: morphism jk} is surjective, equivalently $m_k+1=\binom{m+k}{k}$.
\end{definition}

\begin{proposition}
Let $f\colon X\to\P^n$ be a globally $k$-osculating morphism for some $k\ge 1$. Then it is $\ell$-osculating for all $\ell\le k.$
Similarly, if $f$ is $k$-regular, then it is $\ell$-regular for all $\ell\le k$.
\end{proposition}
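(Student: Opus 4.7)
The cornerstone of both statements is the canonical factorization of jet maps through truncations of principal parts. For every $\ell \le k$, the natural quotient
\[
\OO_{X\times X}/\II_\Delta^{k+1} \twoheadrightarrow \OO_{X\times X}/\II_\Delta^{\ell+1}
\]
induces a surjection of vector bundles $\pi_\ell^k \colon \PP^k(f) \twoheadrightarrow \PP^\ell(f)$, and by the universal property of jets we obtain a commutative triangle
\[
\begin{tikzcd}
V \otimes \OO_X \arrow[r,"j_k"] \arrow[dr,"j_\ell"'] & \PP^k(f) \arrow[d,"\pi_\ell^k"] \\
& \PP^\ell(f)
\end{tikzcd}
\qquad\text{so that}\qquad j_\ell = \pi_\ell^k \circ j_k.
\]
Thus both statements reduce to understanding the rank behaviour of the composition $\pi_\ell^k \circ j_k$.

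The $k$-regular claim is then essentially a one-line argument: $k$-regularity means $j_k$ is surjective; composed with the surjection $\pi_\ell^k$, we obtain that $j_\ell$ is surjective, hence $\rank j_{\ell,p} = \binom{m+\ell}{\ell} = N_\ell$ at every point, which is the definition of $\ell$-regularity. By (downward) induction on $k$, it suffices to do the case $\ell = k-1$, and this gives the result for every $\ell \le k$.

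For the globally $k$-osculating claim the strategy is similar but more delicate. The constant-rank hypothesis forces $E := \image j_k$ to be a vector subbundle of $\PP^k(f)$ of rank $m_k+1$. Writing $K := \ker \pi_\ell^k$, which is a subbundle of rank $N_k - N_\ell$, the factorization gives
\[
\image j_\ell \;=\; \pi_\ell^k(E) \;\cong\; E / (E \cap K),
\]
so pointwise
\[
\rank j_{\ell,p} \;=\; (m_k + 1) - \dim\bigl(E_p \cap K_p\bigr).
\]
Hence globally $\ell$-osculating is equivalent to $E \cap K$ being a subbundle of $\PP^k(f)$, and again by induction it suffices to handle $\ell = k-1$, where $K = \Sym^k \Omega_X^1 \otimes \OO_X(1)$ via the standard exact sequence $0 \to \Sym^k\Omega_X^1 \otimes \OO_X(1) \to \PP^k(f) \to \PP^{k-1}(f) \to 0$.

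The main obstacle is establishing the constant rank of $E \cap K$. The generic rank of this intersection is $(m_k+1) - (m_{k-1}+1) = m_k - m_{k-1}$ (by definition of $m_{k-1}$ on the dense open $X_{k-1}^\circ$), and the dimension of the intersection is upper semicontinuous, so it can only jump up on a closed subset. I would attempt to rule out such jumps by exploiting the fact that $E$ is not an arbitrary subbundle of $\PP^k(f)$ but comes globally from the trivial bundle $V \otimes \OO_X$ via the jet map, which itself respects the truncation filtration; this compatibility, combined with lower semicontinuity of $\rank j_{k-1}$ and the already-established constancy of $\rank j_k$, should pin down $\dim(E_p \cap K_p)$ to its generic value. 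I expect this to be the delicate step, and careful attention must be paid to whether any additional genericity hypothesis on $f$ (beyond being a morphism) is tacitly in force in the paper, since without further structure the intersection of two subbundles need not be a subbundle.
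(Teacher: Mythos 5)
Your treatment of the $k$-regular half is correct and is in substance the same as the paper's: the paper observes that $A^{(\ell)}_p(f)$ is the top block of $A^{(k)}_p(f)$, so full row rank of the latter forces full row rank of the former; this is exactly your factorization $j_\ell=\pi^k_\ell\circ j_k$ through the truncation surjection $\PP^k(f)\twoheadrightarrow\PP^\ell(f)$, read in local coordinates.

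For the globally $k$-osculating half there is a genuine gap, and you have put your finger on exactly where it is. The identity $\rank j_{\ell,p}=(m_k+1)-\dim(E_p\cap K_p)$ is a correct reformulation, but it only translates the statement: proving that $\dim(E_p\cap K_p)$ is constant is the same as proving that $f$ is globally $\ell$-osculating, and semicontinuity goes the wrong way (it only says that $\rank j_{\ell,p}$ can drop on a closed subset, which is precisely what must be excluded). Nothing in the compatibility of $j_k$ with the truncation filtration rules such a drop out, and the paper does not supply the missing argument either: at this very point its proof simply declares that a point where $A^{(\ell)}_p(f)$ loses rank while $A^{(k)}_p(f)$ keeps its generic rank ``is impossible'', with no justification. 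Your caution about tacit extra hypotheses is well founded, because with the definitions exactly as stated the implication can fail: a smooth plane cubic $C\subseteq\P^2$ has $\rank j_{3,p}=3$ at every point (all its flexes are ordinary in characteristic zero), hence is globally $3$-osculating, yet $\rank j_{2,p}$ drops from $3$ to $2$ at the nine flexes, so it is not globally $2$-osculating; similarly the morphism $[t_0:t_1]\mapsto[t_0^2:t_1^2]$ has $\rank j_{2,p}\equiv 2$ while $\rank j_{1,p}$ drops at the two ramification points. So the constancy of $\dim(E_p\cap K_p)$ cannot be established in the stated generality; the first claim needs an additional hypothesis (for instance, building constancy of $\rank j_{\ell,p}$ for all $\ell\le k$ into the definition, or conditions forcing the osculating flag to degenerate only simultaneously in all orders), whereas the $\ell$-regularity claim is unconditionally fine in both your argument and the paper's.
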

\begin{proof}
Assume that $f\colon X\to\P^n$ is a globally $k$-osculating morphism for some $k\ge 1$, and let $\ell\le k$. Notice that   
\[
A_p^{(k)}(f) = 
\begin{pmatrix}
A_p^{(\ell)}(f) \\ B_p(f)
\end{pmatrix}
\]
where $B_p(f)$ is an $\left(\binom{m+k}{k}-\binom{m+\ell}{\ell}\right)\times (n+1)$ matrix for every $p\in X$. Assuming that there exists a special point $p$ where $A_p^{(\ell)}$ does not have the maximal generic rank while $A_p^{(k)}$ has the maximal generic rank is impossible.
Moreover, if $A_p^{(k)}(f)$ has maximal rank, then $A_p^{(\ell)}(f)$ will also have maximal rank.
\end{proof}

\begin{remark}
A globally $1$-osculating morphism is always $1$-regular. For $k\ge 2$, this is no longer the case. See Example \ref{ex: blowup P1xP1 one point} below.
\end{remark}

\subsection{A gallery of toric examples}\label{subsec: gallery of toric examples}

For varieties endowed with a maximal dimensional torus action, i.e., toric varieties, the torus action leads to remarkably simple criteria dictated by the size of the associated polytope.

Let $f\colon X\to\P^n$ be a nonsingular, globally $k$-osculating, torus-equivariant embedding of a toric variety $X$ of dimension $m$. 
Then $f$ is a monomial map which over the algebraic torus is given by $x\mapsto (x^{a_0},\dots,x^{a_n})$ for a subset $A=\{a_0,\dots,a_n\}\subseteq\Z^m$.
Recall that $P(f)=\conv(A)\subset\R^m$ is a full-dimensional polytope whose vertices are in one-to-one correspondence with the fixed points by the torus action. In particular the variety $X$ has an affine cover $X=\bigcup_{v\in\vertices P(f)} U_v$ where $U_v\subseteq\C^m$ is a Zariski open subset containing $p_v$ for every $v\in\vertices P(f)$. Since $X$ is nonsingular, for every vertex $v$ the first lattice points along the edges containing $v$ form a lattice basis $\BB_v$ for which $(v)_{\BB_v}=0$. Moreover there is an associated affine patch $U_v\ni p_v$ and local coordinates $(x_1,\dots,x_m)$ for which $p_v=0$ and 
\[
f|_{U_v}\colon x\mapsto (x^{(a_0)_{\BB_v}},\cdots,x^{(a_n)_{\BB_v}})
\]
Let $p=\bone$ be the generic point in the torus.
For every $k\ge 1$, we denote by $A^{(k)}$ the matrix $A_{\bone}^{(k)}(f)$ defined in \eqref{eq: matrix A}. We note that the construction of these matrices is also explained in \cite[Section 2.3]{dickenstein2024interpolation}.

The matrix $A^{(k)}$ determines the generic $k$-osculating rank. More generally, for every fixed point $p_v\in X$, we denote by $A_v^{(k)}$ the matrix $A_{p_v}^{(k)}(f)$. The following result characterizes globally $k$-osculating toric embeddings. 

\begin{proposition}\label{prop: characterization of globally osculating toric varieties}
A nonsingular toric embedding $f$ is globally $k$-osculating of osculating dimension $m_k$ if and only if $\dim\T_{p_v}^k(f)=\rank A_v^{(k)}-1=\rank A^{(k)}-1$ for every fixed point $p_v$.
\end{proposition}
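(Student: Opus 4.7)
The equality $\dim\T_p^k(f) = \rank A_p^{(k)}(f) - 1$ is an immediate consequence of Definition~\ref{def: osculating space} at every point $p \in X$, since the columns of $A_p^{(k)}(f)$ span $\image j_{k,p}$. Moreover, $\rank A^{(k)}$ is the rank of $j_k$ at the generic torus point $\bone$; because $T \subseteq X$ is a dense orbit on which the rank is constant, this common value must equal $m_k + 1$. Thus the proposition reduces to the equivalence: $f$ is globally $k$-osculating of osculating dimension $m_k$ if and only if $\rank A_v^{(k)} = m_k + 1$ at every $T$-fixed point $p_v$. The forward implication is then immediate from Definition~\ref{def: globally k-osculating}.

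\textbf{Reverse direction.} The key ingredient is upper semicontinuity of the rank of the morphism $j_k\colon V\otimes \OO_X \to \PP^k(\OO_X(1))$ combined with the torus action. For an arbitrary point $q \in X$, I would pick a vertex $v \in \vertices P(f)$ with $q \in U_v$ and use the local coordinates on $U_v \cong \C^m$ coming from the basis $\BB_v$. The diagonal one-parameter subgroup $\lambda(t)=(t,\ldots,t)$ then satisfies $\lim_{t \to 0}\lambda(t)\cdot q = p_v$. Since $f$ is torus-equivariant, $\OO_X(1)$ admits a $T$-linearization, and the principal part sheaf $\PP^k(\OO_X(1))$ inherits one from the $T$-invariance of the diagonal $\Delta \subseteq X\times X$. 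Hence $j_k$ is $T$-equivariant, and its rank is constant on every $T$-orbit. Upper semicontinuity then yields
\[
\rank j_{k,p_v} \,\le\, \rank j_{k,\lambda(t)\cdot q} \,=\, \rank j_{k,q}\,.
\]
By hypothesis $\rank j_{k,p_v} = m_k+1$, and since $m_k+1$ is the maximum rank attained on $X$, we must have $\rank j_{k,q} = m_k+1$, so $q \in X_k^\circ$. As $q$ was arbitrary, $X_k^\circ = X$, so $f$ is globally $k$-osculating.

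\textbf{Main obstacle.} The only delicate technicality is making precise the $T$-equivariance of $j_k$ and of the jet bundle $\PP^k(\OO_X(1))$, which ensures both that the rank is actually invariant along orbits and that it drops in a controlled way at the limit point $p_v$. This follows from the functoriality of principal parts applied to the $T$-linearization of $\OO_X(1)$ and from the $T$-invariance of $\Delta \subseteq X \times X$, but it is worth a short verification so that the semicontinuity step is rigorous. Everything else is standard.
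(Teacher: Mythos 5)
Your proof is correct and takes essentially the same route as the paper: the paper simply notes that the locus where the $k$th osculating dimension drops is closed and torus-invariant, hence would contain a fixed point if nonempty, and your one-parameter-subgroup degeneration of an arbitrary $q\in U_v$ to $p_v$ combined with semicontinuity of the rank of $j_k$ is exactly that argument made explicit. (Minor terminology: the rank of a bundle map is \emph{lower} semicontinuous -- equivalently the kernel dimension is upper semicontinuous -- but the displayed inequality $\rank j_{k,p_v}\le\rank j_{k,q}$ you use is the correct consequence.)
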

\begin{proof}
From the description above, it is clear that $m_k+1=\rank A^{(k)}.$ Note that if the locus of points where the $k$th osculating space has a lower dimension than $m_k$ is nonempty, the torus action ensures that it contains fixed points. It follows that to ensure global $k$th osculation is enough that the $k$th osculation dimension at each fixed point is $m_k.$
\end{proof}  

\begin{example}\label{ex: blowup P2 three points}
Consider $\pi\colon X\to\P^2,$ the blow-up of $\P^2$ at the three points and the embedding $f_1\colon X\to \P^6$, a Del Pezzo surface of degree six defined by the global sections of the line bundle $\OO_X(1)=\pi^*\OO_{\P^2}(3)-E_1-E_2-E_3$, where $E_i$ are the exceptional divisors. On an affine patch around the fixed point $p_v$, where $v$ is indicated in Figure \ref{fig: blowup P2 three points}, the monomial map is:
\[
(x_1,x_2) \mapsto (1,x_1,x_2,x_1x_2,x_1x_2^2,x_1^2x_2,x_1^2x_2^2)\,.
\]
Consider $k=2$. In this case $\rank A^{(2)}=6$, and thus the generic $2$-osculating dimension of $f_1$ is $m_2=5$, but $f_1$ is not globally $2$-osculating since $\dim\T_{p_v}^2(f_1)=3$ for every fixed point $p_v\in X$.
\hfill$\diamondsuit$

\begin{figure}
\centering
\begin{tikzpicture}[scale=1]
\draw[line width=0.5pt, dashed] (-0.5,0) -- (2.5,0);
\draw[line width=0.5pt, dashed] (-0.5,1) -- (2.5,1);
\draw[line width=0.5pt, dashed] (-0.5,2) -- (2.5,2);
\draw[line width=0.5pt, dashed] (0,-0.5) -- (0,2.5);
\draw[line width=0.5pt, dashed] (1,-0.5) -- (1,2.5);
\draw[line width=0.5pt, dashed] (2,-0.5) -- (2,2.5);
\fill[cyan!20,nearly transparent] (1,0) -- (2,0) -- (2,1) -- (1,2) -- (1,2) -- (0,2) -- (0,1);
\draw[line width=1pt] (1,0) -- (2,0);
\draw[line width=1pt] (2,1) -- (2,0);
\draw[line width=1pt] (2,1) -- (1,2);
\draw[line width=1pt] (1,2) -- (0,2);
\draw[line width=1pt] (0,2) -- (0,1);
\draw[line width=1pt] (0,1) -- (1,0);
\draw[line width=0.5pt, red, ->] (0,2) -- (0,0);
\draw[line width=0.5pt, red, ->] (0,2) -- (2,2);
\node at (-0.7,2.2) {\scriptsize{\textcolor{red}{$v=(0,0)$}}};
\node at (-0.4,1.2) {\scriptsize{$(1,0)$}};
\node at (0.6,2.2) {\scriptsize{$(0,1)$}};
\node at (0.6,1.2) {\scriptsize{$(1,1)$}};
\node at (0.6,0.2) {\scriptsize{$(2,1)$}};
\node at (1.6,1.2) {\scriptsize{$(1,2)$}};
\node at (1.6,0.2) {\scriptsize{$(2,2)$}};
\fill[red] (0,2) circle (2pt);
\fill (0,1) circle (2pt);
\fill (1,0) circle (2pt);
\fill (2,0) circle (2pt);
\fill (2,1) circle (2pt);
\fill (1,1) circle (2pt);
\fill (1,2) circle (2pt);
\end{tikzpicture}
\caption{The polygon and the vectors of exponents of the monomial map representing the embedding $f_1$ of Example \ref{ex: blowup P2 three points}.}\label{fig: blowup P2 three points}
\end{figure}
\end{example}

\begin{example}\label{ex: blowup P1xP1 one point}
Consider $\pi\colon X\to\P^1\times\P^1,$ the blow-up of $\P^1\times\P^1$ at one point and the embedding $f_2\colon X\to \P^{14}$, a surface of degree $17$ defined by the global sections of the line bundle $\OO_X(1)=\pi^*\OO_{\P^1\times\P^1}(3,3)-E$, where $E$ is the exceptional divisor. On the affine patches around the fixed points $p_v$ and $p_w$, where $v$ and $w$ are indicated in Figure \ref{fig: blowup P1xP1 one point}, the monomial maps are respectively
\begin{align*}
(x_1,x_2) &\mapsto (1,x_1,x_1^2,x_1^3,x_2,x_1x_2,x_1^2x_2,x_1^3x_2,x_2^2,x_1x_2^2,x_1^2x_2^2,x_1^3x_2^2,x_2^3,x_1x_2^3,x_1^2x_2^3)\\
(y_1,y_2) &\mapsto (1,y_1,y_1^2,y_2,y_1y_2,y_1^2y_2,y_1^3y_2,y_1y_2^2,y_1^2y_2^2,y_1^3y_2^2,y_1^4y_2^2,y_1^2y_2^3,y_1^3y_2^3,y_1^4y_2^3,y_1^5y_2^3)\,.
\end{align*}
Consider $k=2$. One verifies that $\rank A_v^{(2)}=6$ and $\rank A_w^{(2)}=5$, therefore the embedding $f_2$ is not globally $2$-osculating.\hfill$\diamondsuit$

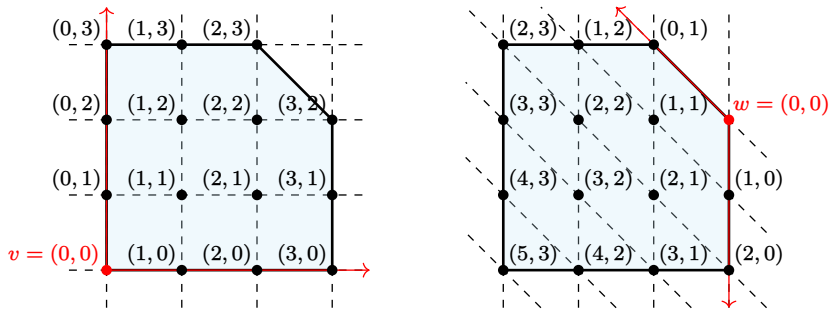
\begin{figure}
\centering
\begin{tikzpicture}[scale=1]
\draw[line width=0.5pt, dashed] (-0.5,0) -- (3.5,0);
\draw[line width=0.5pt, dashed] (-0.5,1) -- (3.5,1);
\draw[line width=0.5pt, dashed] (-0.5,2) -- (3.5,2);
\draw[line width=0.5pt, dashed] (-0.5,3) -- (3.5,3);
\draw[line width=0.5pt, dashed] (0,-0.5) -- (0,3.5);
\draw[line width=0.5pt, dashed] (1,-0.5) -- (1,3.5);
\draw[line width=0.5pt, dashed] (2,-0.5) -- (2,3.5);
\draw[line width=0.5pt, dashed] (3,-0.5) -- (3,3.5);
\fill[cyan!20, nearly transparent] (0,0) -- (3,0) -- (3,2) -- (2,3) -- (0,3) -- (0,0);
\draw[line width=1pt] (0,0) -- (3,0);
\draw[line width=1pt] (3,0) -- (3,2);
\draw[line width=1pt] (3,2) -- (2,3);
\draw[line width=1pt] (2,3) -- (0,3);
\draw[line width=1pt] (0,3) -- (0,0);
\draw[line width=0.5pt, red, ->] (0,0) -- (3.5,0);
\draw[line width=0.5pt, red, ->] (0,0) -- (0,3.5);
\node at (2.6,2.2) {\scriptsize{$(3,2)$}};
\node at (2.6,1.2) {\scriptsize{$(3,1)$}};
\node at (2.6,0.2) {\scriptsize{$(3,0)$}};
\node at (1.6,3.2) {\scriptsize{$(2,3)$}};
\node at (1.6,2.2) {\scriptsize{$(2,2)$}};
\node at (1.6,1.2) {\scriptsize{$(2,1)$}};
\node at (1.6,0.2) {\scriptsize{$(2,0)$}};
\node at (0.6,3.2) {\scriptsize{$(1,3)$}};
\node at (0.6,2.2) {\scriptsize{$(1,2)$}};
\node at (0.6,1.2) {\scriptsize{$(1,1)$}};
\node at (0.6,0.2) {\scriptsize{$(1,0)$}};
\node at (-0.4,3.2) {\scriptsize{$(0,3)$}};
\node at (-0.4,2.2) {\scriptsize{$(0,2)$}};
\node at (-0.4,1.2) {\scriptsize{$(0,1)$}};
\node at (-0.7,0.2) {\scriptsize{\textcolor{red}{$v=(0,0)$}}};
\fill (3,2) circle (2pt);
\fill (3,1) circle (2pt);
\fill (3,0) circle (2pt);
\fill (2,0) circle (2pt);
\fill (2,1) circle (2pt);
\fill (2,2) circle (2pt);
\fill (2,3) circle (2pt);
\fill (1,0) circle (2pt);
\fill (1,1) circle (2pt);
\fill (1,2) circle (2pt);
\fill (1,3) circle (2pt);
\fill[red] (0,0) circle (2pt);
\fill (0,1) circle (2pt);
\fill (0,2) circle (2pt);
\fill (0,3) circle (2pt);
\end{tikzpicture}
\hspace{1cm}
\begin{tikzpicture}[scale=1]
\draw[line width=0.5pt, dashed] (3.5,1.5) -- (1.5,3.5);
\draw[line width=0.5pt, dashed] (3.5,0.5) -- (0.5,3.5);
\draw[line width=0.5pt, dashed] (3.5,-0.5) -- (-0.5,3.5);
\draw[line width=0.5pt, dashed] (2.5,-0.5) -- (-0.5,2.5);
\draw[line width=0.5pt, dashed] (1.5,-0.5) -- (-0.5,1.5);
\draw[line width=0.5pt, dashed] (0.5,-0.5) -- (-0.5,0.5);
\draw[line width=0.5pt, dashed] (0,-0.5) -- (0,3.5);
\draw[line width=0.5pt, dashed] (1,-0.5) -- (1,3.5);
\draw[line width=0.5pt, dashed] (2,-0.5) -- (2,3.5);
\draw[line width=0.5pt, dashed] (3,-0.5) -- (3,3.5);
\fill[cyan!20, nearly transparent] (0,0) -- (3,0) -- (3,2) -- (2,3) -- (0,3) -- (0,0);
\draw[line width=1pt] (0,0) -- (3,0);
\draw[line width=1pt] (3,0) -- (3,2);
\draw[line width=1pt] (3,2) -- (2,3);
\draw[line width=1pt] (2,3) -- (0,3);
\draw[line width=1pt] (0,3) -- (0,0);
\draw[line width=0.5pt, red, ->] (3,2) -- (1.5,3.5);
\draw[line width=0.5pt, red, ->] (3,2) -- (3,-0.5);
\node at (3.7,2.2) {\scriptsize{\textcolor{red}{$w=(0,0)$}}};
\node at (3.4,1.2) {\scriptsize{$(1,0)$}};
\node at (3.4,0.2) {\scriptsize{$(2,0)$}};
\node at (2.4,3.2) {\scriptsize{$(0,1)$}};
\node at (2.4,2.2) {\scriptsize{$(1,1)$}};
\node at (2.4,1.2) {\scriptsize{$(2,1)$}};
\node at (2.4,0.2) {\scriptsize{$(3,1)$}};
\node at (1.4,3.2) {\scriptsize{$(1,2)$}};
\node at (1.4,2.2) {\scriptsize{$(2,2)$}};
\node at (1.4,1.2) {\scriptsize{$(3,2)$}};
\node at (1.4,0.2) {\scriptsize{$(4,2)$}};
\node at (0.4,3.2) {\scriptsize{$(2,3)$}};
\node at (0.4,2.2) {\scriptsize{$(3,3)$}};
\node at (0.4,1.2) {\scriptsize{$(4,3)$}};
\node at (0.4,0.2) {\scriptsize{$(5,3)$}};
\fill[red] (3,2) circle (2pt);
\fill (3,1) circle (2pt);
\fill (3,0) circle (2pt);
\fill (2,0) circle (2pt);
\fill (2,1) circle (2pt);
\fill (2,2) circle (2pt);
\fill (2,3) circle (2pt);
\fill (1,0) circle (2pt);
\fill (1,1) circle (2pt);
\fill (1,2) circle (2pt);
\fill (1,3) circle (2pt);
\fill (0,0) circle (2pt);
\fill (0,1) circle (2pt);
\fill (0,2) circle (2pt);
\fill (0,3) circle (2pt);
\end{tikzpicture}
\caption{The polygon and the vectors of exponents of the monomial maps representing the embedding $f_2$ of Example \ref{ex: blowup P1xP1 one point}.}\label{fig: blowup P1xP1 one point}
\end{figure}
\end{example}

A special class of toric varieties that are always globally $k$-osculating consists of the Segre-Veronese embeddings of a product of projective spaces. Consider two $r$-tuples of positive integers $\bm=(m_1,\dots,m_r)$ and $\bd=(d_1,\dots,d_r)$, with $m_1\le\cdots\le m_r$. Define $d\coloneqq d_1+\cdots+d_r$. Let $V_1,\dots,V_r$ be vector spaces of dimensions $m_1+1\le\cdots\le m_r+1$, and consider the product $\P^\bm=\prod_{i=1}^r\P(V_i)$. The line bundle $\OO_{\P^\bm}(\bd)$ induces an embedding
\begin{equation}\label{eq: def Segre-Veronese embedding}
    \sigma\nu_\bm^\bd\colon\P^\bn\hookrightarrow\P\left(\bigotimes_{i=1}^r\mathrm{Sym}^{d_i}V_i\right)\,,
\end{equation}
whose image $SV_\bn^\bd=\sigma\nu_\bm^\bd(\P^\bn)$ is the {\em Segre-Veronese variety}. 
When $r=1$, we use the notation $\bm=m$, $\bd=d$, $\nu_m^d\coloneqq\sigma\nu_m^d$, and $V_m^d\coloneqq SV_m^d$ is the {\em Veronese variety}.

A complete description of the osculating spaces of Segre-Veronese varieties is given in \cite[Proposition 2.5]{araujo2019defectivity}.
In particular, it is shown that, for all $\bn$ and $\bd$, and for all integer $k\ge 1$, the Segre-Veronese embedding $\sigma\nu_\bm^\bd$ is globally $k$-osculating of $k$-osculating dimension
\begin{equation}\label{eq: osculating dimension SV}
    m_k = 
    \begin{cases}
        \sum_{i=1}^k\sum_{|\bs|=i\,,\,\bs\le\bd}\binom{n_1+s_1-1}{s_1}\cdots\binom{n_r+s_r-1}{s_r} & \text{if $1\le k\le |\bd|$}\\
        \prod_{\ell=1}^r\binom{n_\ell+d_\ell}{d_\ell}-1 & \text{if $k>|\bd|$.}
    \end{cases}
\end{equation}

The following is an immediate corollary of Proposition \ref{prop: characterization of globally osculating toric varieties} and also of \cite[Theorem 4.2, Proposition 4.5]{dirocco1999generation}. It characterizes the $k$-regularity of Segre-Veronese embeddings. For additional references, see \cite{massarenti2019defectivity,catalisano2002secant,bernardi2007osculating,ballico2003secant}.
\begin{corollary}\label{corol: when Segre-Veronese k-regular}
Let $\bm$ and $\bd$ be two $r$-tuples of positive integers. The Segre-Veronese embedding $\sigma\nu_\bm^\bd$ in \eqref{eq: def Segre-Veronese embedding} is $k$-regular if and only if $k\le\min\bd$. 
\end{corollary}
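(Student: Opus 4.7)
The plan is to reduce $k$-regularity to the numerical identity $m_k + 1 = \binom{m+k}{k}$, where $m \coloneqq m_1+\cdots+m_r = \dim\P^\bm$, and then to compare the right-hand side against the closed formula \eqref{eq: osculating dimension SV}. By Definition \ref{def: k-regular}, this equation is exactly the condition of $k$-regularity, so the whole question is combinatorial.

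For sufficiency, I would assume $k \le \min\bd$ and observe that for every $\bs\in\N^r$ with $|\bs|\le k$ we automatically have $s_j \le |\bs| \le k \le d_j$ for each $j$, so the constraint $\bs\le\bd$ appearing in \eqref{eq: osculating dimension SV} can be dropped. Evaluating the resulting unrestricted sum via the generating-function identity $\prod_{j=1}^r (1-t)^{-m_j} = (1-t)^{-m}$ and reading off the coefficient of $t^i$ yields
\[
\sum_{|\bs|=i}\prod_{j=1}^r \binom{m_j+s_j-1}{s_j} = \binom{m+i-1}{i},
\]
after which the hockey-stick identity delivers $m_k + 1 = \binom{m+k}{k}$. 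For necessity I would argue that the strict inequality $m_k + 1 < \binom{m+k}{k}$ kicks in as soon as $k > \min\bd$. In the range $\min\bd < k \le |\bd|$, the constraint $\bs\le\bd$ excludes the multi-index $\bs = k\,e_\ell$ (where $e_\ell$ is the $\ell$-th standard basis vector and $\ell$ is any index realizing $d_\ell = \min\bd$), which would contribute the positive term $\binom{m_\ell+k-1}{k}\ge 1$ to the unrestricted sum; the constrained sum is therefore strictly smaller. In the saturated range $k > |\bd|$, the formula \eqref{eq: osculating dimension SV} makes $m_k + 1$ independent of $k$ while $\binom{m+k}{k}$ is strictly increasing in $k$, so the strict inequality propagates from the previous case (or, if $r=1$ and thus $\min\bd = |\bd|$, from a direct check at $k = d+1$).

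The argument is essentially a bookkeeping exercise combining \eqref{eq: osculating dimension SV} with a standard power-series identity, so no step is truly difficult. If anything, the most delicate point is verifying that the threshold at which the constraint $\bs\le\bd$ becomes binding is exactly $k = \min\bd + 1$; the test multi-index $\bs = k\,e_\ell$ above makes this transition transparent.
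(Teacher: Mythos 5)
Your argument is correct, but it takes a genuinely different route from the paper. The paper gives no computation at all: it presents the corollary as an immediate consequence of Proposition \ref{prop: characterization of globally osculating toric varieties} together with \cite[Theorem 4.2, Proposition 4.5]{dirocco1999generation} --- in effect, one checks at a torus-fixed point that the local monomial parametrization of $\sigma\nu_\bm^\bd$ consists exactly of the monomials $x^{\bs}$ with $\bs\le\bd$, so all jets of order $\le k$ are spanned there if and only if $k\le\min\bd$. You instead argue directly from the closed formula \eqref{eq: osculating dimension SV}, and the bookkeeping is sound: when $k\le\min\bd$ the constraint $\bs\le\bd$ is vacuous, the convolution identity $\sum_{|\bs|=i}\prod_j\binom{m_j+s_j-1}{s_j}=\binom{m+i-1}{i}$ plus the hockey-stick identity give $m_k+1=\binom{m+k}{k}$; for $\min\bd<k\le|\bd|$ the excluded index $\bs=k\,e_\ell$ (with $d_\ell=\min\bd$) makes the constrained sum strictly smaller; and for $k>|\bd|$ the left-hand side stabilizes at $\prod_\ell\binom{m_\ell+d_\ell}{d_\ell}$ while $\binom{m+k}{k}$ strictly increases, with the $r=1$ case checked directly. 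One small point to state explicitly: Definition \ref{def: k-regular} requires both $X_k^{\circ}=X$ and $m_k+1=\binom{m+k}{k}$, so your reduction to the purely numerical identity is legitimate only because $\sigma\nu_\bm^\bd$ is globally $k$-osculating for every $k$, as the paper records via \cite[Proposition 2.5]{araujo2019defectivity} --- the same source that supplies \eqref{eq: osculating dimension SV}, so you are implicitly using it anyway. What your approach buys is a self-contained combinatorial verification from the dimension formula; what the paper's citation buys is brevity and a geometric explanation (jet spannedness at the fixed points) of why the threshold sits exactly at $\min\bd$.
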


\section{Higher-order polar classes}\label{sec: polar}

The definition of polar classes is classical in Algebraic Geometry, going back to Severi \cite{severi1902intersezioni} and Todd \cite{todd1937arithmetical}.
A generalization to higher-order polar classes has been recently given by Piene in \cite{piene2022higher}. We follow her definitions and results, adapting them to our setting.

\begin{definition}{\cite[Section 3]{piene2022higher}}\label{def: kth order polar classes}
Let $f\colon X\to\P^n$ be a morphism of generic $k$-osculating dimension $m_k$ and let $0\le i\le \dim X=m.$ The {\em $i$th polar class of $f$ of order $k$} is the class $p_{k,i}(f)=[P_{k,i}(f,L)]$ where
\begin{equation}\label{eq: def higher-order polar loci}
\text{$P_{k,i}(f,L)\coloneqq\overline{\{x\in X_k^{\circ}\mid\dim(\T_p^k(f)\cap L)>m_k-\codim(L)\}}$ for all $i\in\{0,\dots,\dim X=m\}$}\end{equation}
and $L\subseteq\P^n$ is any linear subspace of dimension $n-m_k+i-2.$
\end{definition}

\begin{definition}{\cite[\S 2]{Higherdual}}\label{def: higher conormal variety}
Let $p\in X_k^{\circ}$ and $H\in(\P^N)^\vee.$ Then $H$ is {\em $k$-osculating at $f(p)$} if  $H\supset \T_p^k(f).$ The $k$th {\em conormal variety} of $f\colon X\to\P^n$ is the Zariski closure $W_k(f)\coloneqq\overline{W_k^{\circ}(f)}$, where:
\begin{equation}
    W_k^{\circ}(f) \coloneqq \{(x,H)\in\P^n\times(\P^n)^\vee\mid\text{$x=f(p)$ for some $p\in X_k^{\circ}$ and $H$ is $k$-osculating at $x$}\}\,.
\end{equation}
\end{definition}

When a morphism is globally $k$-osculating, the treatment of polar varieties can be conveniently related to characteristic classes of vector bundles.

With notation as above, assume that the morphism $f\colon X\to\P^n$ is globally $k$-osculating. Then the image $\image j_k$ and the kernel $\ker j_k$ of the morphism $j_k$ in \eqref{eq: morphism jk} are also vector bundles over $X$ of ranks $m_k+1$ and $n-m_k$ respectively.
We then obtain the following short exact sequence of vector bundles over $X$:
\begin{equation}\label{eq: k-jet exact sequence}
0\to \ker j_k \hookrightarrow V\otimes\OO_X \twoheadrightarrow \image j_k \to 0\,.
\end{equation}

Moreover, higher-order polar loci are closely related to higher-order conormal and dual varieties, which have been introduced in the earlier work by Piene \cite{Higherdual}.

\begin{remark}
Since $\dim\T_p^k(f)=m_k$ for every $p\in X_k^{\circ}$, we have
\begin{equation}\label{eq: dim conormal}
\dim W_k(f)=n-1+m-m_k\,.
\end{equation}
Let the morphism $f$ be globally $k$-osculating. Then, using \eqref{eq: morphism jk},
\[
W_k^{\circ}(f)=W_k(f)=\P((\ker j_k)^\vee)\subseteq X\times(\P^n)^\vee\,.
\]
In particular, if $k=1$, then $W(f)=W_1(f)$ is the classical conormal variety of dimension $n-1$ in $\P^n\times(\P^n)^\vee$.
\end{remark}

Let $\pr_1$ and $\pr_2$ be the projections of $\P^n\times(\P^n)^\vee$ to the factors $\P^n$ and $(\P^n)^\vee$, respectively.
\begin{definition}{\cite[\S 2]{Higherdual}}\label{def: dual variety X}
The {\em dual variety of order $k$} of $f\colon X\to\P^N$ is $X_k^\vee\coloneqq \pr_2(W_k(f))$.
\end{definition}

Consider the surjective morphisms $\pi_{1,k}\colon W_k(f)\to X$ and $\pi_{2,k}\colon W_k(f)\to X_k^\vee$ induced by the projections $\pr_1$ and $\pr_2$. More precisely $\pi_{i,k}=\pr_i\circ \iota_k$ for $i=1,2$, where $\iota_k\colon W_k(f)\hookrightarrow\P^n\times(\P^n)^\vee$ is the inclusion.
Given $H\in X_k^\vee$, the {\em contact locus of $H$ of order $k$} is
\begin{equation}\label{eq: def contact locus order k}
    \cont_k(H,X)\coloneqq\pi_{1,k}(\pi_{2,k}^{-1}(H))=\overline{\{p\in X_k^{\circ}\mid H\supset \T_p^k(f)\}}\,.
\end{equation}
For a generic $H\in X_k^\vee$, the dimension of $\cont_k(H,X)$
is equal to the dimension of a generic fiber of $\pi_{2,k}$. From \eqref{eq: dim conormal} it follows that 
\[
\text{$\dim X_k^\vee=n-(m_k-m+1)-\dim\cont_k(H,X)$ for a generic hyperplane $H\in X_k^\vee$.}
\]
Generally, one expects that $\dim X_k^\vee=n-(m_k-m+1)$, or equivalently, that the generic contact locus is zero-dimensional.

\begin{definition}\label{def: kth dual defect}
The {\em dual defect of order $k$} of the  morphism $f\colon X\to\P^n$ is defined as
\begin{equation}
\text{$\defect_k(f)\coloneqq\dim\cont_k(H,X)$ for a generic $H\in(\P^n)^\vee$.}    
\end{equation}
We say that $X$ is {\em dual defective of order $k$} if $\defect_k(f)>0$. Otherwise, $X$ is {\em dual nondefective of order $k$}.
\end{definition}

\begin{example}\label{ex: multidegrees blowup P2 three points}
Consider the embedding $f_1\colon X\to\P^6$ in Example \ref{ex: blowup P2 three points}. The conormal variety $W(f_1)$ has dimension $n-1=5$. We verified that the dual variety $X^\vee\subseteq(\P^6)^\vee$ is a hypersurface of degree $12$, in particular $\defect(f_1)=\defect_1(f_1)=0$. For $k=2$, we already observed in Example \ref{ex: blowup P2 three points} that $m_2=\rank A^{(2)}-1=5$ generically, hence the identity \eqref{eq: dim conormal} gives $\dim W_2(f_1)=6-1+2-5=2$. Furthermore, we verified that $\dim X_2^\vee=2$ and $\deg X_2^\vee=6$, in particular $\defect_2(f_1)=0.$ \hfill$\diamondsuit$
\end{example}

The defect $\defect_k(f)$ is governed by nonvanishing higher-order polar classes. 
The following results are proven in \cite[Theorem 3.5]{piene2022higher}. We give a self-contained argument in the simpler case of globally osculating morphisms on nonsingular varieties.
\begin{proposition}{\cite[Theorem 3.5]{piene2022higher}}
\label{prop: compare k-order defect and number of vanishing higher polar degrees}
Let $f\colon X\to\P^n$ be a globally $k$-osculating morphism. The following holds.
\begin{enumerate}
    \item $c_i(\image j_k)=[P_{k,i}(f,L)]$ for a generic subspace $L\subseteq\P^n$ of dimension $n-m_k+i-2$.
    \item $c_i(\image j_k)=0$ for $i>m-\defect_k(f)$.
    \item $c_{m-\defect_k(f)}(\image j_k)\neq 0$ and $\deg X_k^\vee=\deg(c_{m-\defect_k(f)}(\image j_k))$.
\end{enumerate}
\end{proposition}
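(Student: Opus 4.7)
\emph{Proof sketch.} My strategy is to prove (1) by a Thom--Porteous computation on $X$, and to prove (2)--(3) by reinterpreting the polar locus through the conormal correspondence $W_k(f)$ combined with a higher-order Ein-type linearity statement for the contact loci.

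For (1), let $L^\perp\subseteq V$ denote the annihilating subspace in $V$ of the projective subspace $L\subseteq\P^n$, so that $\dim L^\perp=m_k-i+2$. Using the surjection in \eqref{eq: k-jet exact sequence}, form the composite bundle map
\[
\phi_L\colon L^\perp\otimes\OO_X\ \hookrightarrow\ V\otimes\OO_X\ \twoheadrightarrow\ \image j_k.
\]
A direct linear-algebra calculation gives $\dim(\T_p^k(f)\cap L)=(i-2)+\dim(\ker j_{k,p}\cap L^\perp)$, so that the condition $\dim(\T_p^k(f)\cap L)\ge i-1$ is equivalent to $\ker\phi_{L,p}\neq 0$, i.e.\ to the rank-drop locus $\{\rank\phi_L\le m_k-i+1\}$. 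Since the source is trivial of rank $m_k-i+2$ and the target has rank $m_k+1$, the Thom--Porteous formula for the single-row partition $(i)$ identifies the class of this rank-drop locus with $c_i(\image j_k)=p_{k,i}(f)$; Kleiman transversality for the $\mathrm{GL}(V)$-action on the Grassmannian of $L^\perp$'s guarantees the expected codimension $i$ for generic $L$.

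For (2)--(3), observe that $\dim(\T_p^k(f)\cap L)\ge i-1$ is equivalent to $\T_p^k(f)$ and $L$ lying inside a common hyperplane of $\P^n$. Setting $L^\ast\coloneqq\{H\in(\P^n)^\vee\mid H\supseteq L\}$, a linear subspace of dimension $m_k-i+1$, we therefore have $P_{k,i}(f,L)=\pi_{1,k}(\pi_{2,k}^{-1}(L^\ast))$. If $i>m-\defect_k(f)$, then $\dim L^\ast=m_k-i+1<\codim_{(\P^n)^\vee}X_k^\vee=m_k-m+1+\defect_k(f)$, so $L^\ast\cap X_k^\vee=\emptyset$ for generic $L$, whence $P_{k,i}(f,L)=\emptyset$ and part (1) yields $p_{k,i}(f)=0$, proving (2).

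For (3), when $i=m-\defect_k(f)$ a generic $L^\ast$ meets $X_k^\vee$ transversally in $\deg X_k^\vee$ points $H_1,\ldots,H_{\deg X_k^\vee}$, producing the non-empty decomposition
\[
P_{k,m-\defect_k(f)}(f,L)\ =\ \bigcup_{j=1}^{\deg X_k^\vee}\cont_k(H_j,X),
\]
which immediately yields $p_{k,m-\defect_k(f)}(f)\neq 0$; taking projective degrees gives $\mu_{k,m-\defect_k(f)}(f)=\deg X_k^\vee\cdot\deg\cont_k(H,X)$ for a generic $H\in X_k^\vee$. The main obstacle is the higher-order analogue of Ein's classical theorem: for $\defect_k(f)>0$ a generic contact locus $\cont_k(H,X)$ is a linear subspace of $\P^n$ and hence has degree one. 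Once this linearity is established, $\mu_{k,m-\defect_k(f)}(f)=\deg X_k^\vee$ follows at once.
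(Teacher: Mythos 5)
Your parts (1) and (2) are essentially correct and follow the same route as the paper: computing the class of the rank-drop locus of $L^\perp\otimes\OO_X\to\image j_k$ by Thom--Porteous is the same computation the paper carries out via the Griffiths--Harris representation of $c_i(\image j_k)$ by degeneracy loci of $m_k-i+2$ generic sections of the globally generated bundle $\image j_k$, and your vanishing argument in (2) is the paper's dimension count against $X_k^\vee$ (phrased through $L^\ast=L^\vee$ and the identity $P_{k,i}(f,L)=\pi_{1,k}(\pi_{2,k}^{-1}(L^\vee))$, which the paper also uses in its Proposition on polar degrees versus multidegrees).

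The genuine gap is in (3). You correctly arrive at $\mu_{k,m-\defect_k(f)}(f)=\deg X_k^\vee\cdot\deg\cont_k(H,X)$ for a general $H\in X_k^\vee$, but the remaining step --- that the general contact locus has degree one --- is exactly the nontrivial content of the equality $\mu_{k,m-\defect_k(f)}(f)=\deg X_k^\vee$, and you do not prove it: you label it ``the main obstacle'' and appeal to a ``higher-order analogue of Ein's theorem'' that is established nowhere in your argument. This cannot be treated as a routine citation: Ein's theorem and the linearity of general contact loci in the classical case $k=1$ rest on reflexivity/biduality of the conormal variety, which fails for $k\ge 2$, so a genuinely different justification is required. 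Moreover the difficulty is not confined to $\defect_k(f)>0$ as your phrasing suggests: even when $\defect_k(f)=0$ one must show that a general $k$-osculating hyperplane osculates at a single point (equivalently that the general fiber of $\pi_{2,k}\colon W_k(f)\to X_k^\vee$ is a single reduced point), otherwise $\mu_{k,m}(f)$ equals $\deg X_k^\vee$ times the cardinality of that fiber. The paper's proof of (3) stays inside the generic-section degeneracy picture --- the span of the $m_k-m+\defect_k(f)+2$ chosen sections is a linear subspace of $(\P^n)^\vee$ of dimension $\codim X_k^\vee$ meeting $X_k^\vee$ in $\deg X_k^\vee$ reduced points --- and the statement itself is the one it imports from Piene's Theorem 3.5; your route reaches the same final hurdle but leaves it open, so as written the proof of (3) is incomplete.
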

\begin{proof}
Recall that $\image j_k$ is globally generated by the sections of $V\otimes \OO_X$ via the vector bundle map $j_k$, and $\rank\image j_k=m_k+1$. Using \cite[\S III.3, pp. 411--414]{griffiths1978principles}, for every integer $i\ge 0$, the $i$th Chern class $c_i(\image j_k)$ is the rational class $[D(j_k(\sigma_1),\dots,j_k(\sigma_{m_k-i+2}))]$ of the degeneracy locus
\[
   D(j_k(\sigma_1),\dots,j_k(\sigma_{m_k-i+2})) \coloneqq \{p\in X\mid j_k(\sigma_1)(p)\wedge\cdots\wedge j_k(\sigma_{m_k-i+2})(p)=0\}
\]
for the generic global sections $j_k(\sigma_1),\dots,j_k(\sigma_{m_k-i+2})\in H^0(X,\image j_k).$
This is equivalent to requiring that $\dim(\T_p^k(f)\cap L)>i-2=m_k-\codim(L)$ for a generic linear space $L$ of codimension $m_k-i+2$.

Consider the $k$th dual variety $X_k^\vee$ introduced in Definition \ref{def: dual variety X}, in particular
\[
X_k^\vee=\overline{\{H\in(\P^n)^\vee \mid \text{$\T_p^k(f)\subseteq H$ for some $p\in X_k^{\circ}$}\}}\,.
\]
Note that a generic linear subspace $L$ of dimension $\codim(X_k^\vee)-1=m_k-m+\defect_k(f)$ does not meet $X_k^\vee$ and hence $j_k(\sigma)\neq 0$ for a generic linear combination $\sigma=\sum_{i=1}^{m_k-m+\defect_k(f)+1}\lambda_i\sigma_i.$ 
From the argument above it follows that $p_{k,m-\defect_k(f)+1}(f)=c_{m-\defect_k(f)+1}(\image j_k)=0.$ Similarly one sees that $p_{k,m-\defect_k(f)}(f)=c_{m-\defect_k(f)}(\image j_k)\neq 0$ and that $\deg(c_{m-\defect_k(f)}(\image j_k))=\deg X_k^\vee$.
\end{proof}

Let $H$ and $H'$ be hyperplanes in $\P^n$ and $(\P^n)^\vee$, respectively, and let $[H]$ and $[H']$ be the corresponding generating classes in the Chow rings $A^*(\P^n)$ and $A^*((\P^n)^\vee)$. We use the shorthand
\begin{equation}\label{eq: notation hyperplane classes}
h=\pr_1^*([H])=[H\times(\P^n)^\vee]\,,\quad h'=\pr_2^*([H'])=[\P^n\times H']\,.
\end{equation}
Recalling \eqref{eq: dim conormal}, the class $[W_k(f)]\in A^*(\P^n\times(\P^n)^\vee)$ can be written as
\begin{equation}\label{eq: def multidegrees conormal}
[W_k(f)] =  \sum_{i=0}^{n-m_k+m-1}\delta_{k,i}(f)h^{n-i}(h')^{m_k-m+1+i}\,.
\end{equation}
The numbers $\delta_{k,i}(f)$ are the multidegrees of $W_k(f)$.
Given $0\le i\le n-m_k+m-1$, let $L_1\subseteq\P^n$ and $L_2\subset(\P^n)^\vee$ be generic subspaces of dimensions $n-i$ and $m_k-m+1+i$, respectively. In particular $[L_1]=[H]^i$ and $[L_2]=[H']^{n-m_k+m-1-i}$. Then $\delta_{k,i}(f)$ is the cardinality of the finite intersection $W_k(f)\cap(L_1\times L_2)$, or equivalently
\begin{equation}\label{eq: interpretation multidegrees delta_i}
    \delta_{k,i}(f)=\int \pr_1^*([L_1])\cdot \pr_2^*([L_2])\cdot[W_k(f)]=\int h^i(h')^{n-m_k+m-1-i}[W_k(f)]\,.
\end{equation}

For every $0\le j\le m$, we define the {\em $k$th-order polar degree of $f$} as the degree $\mu_{k,j}(f)$ of the $j$th polar class or order $k$, $p_{k,j}(f),$ which we introduced in Definition \ref{def: kth order polar classes}.
We can now prove a higher-order version of the celebrated formula by Kleiman in \cite[Prop. (3), p. 187]{kleiman1986tangency}. In particular, we show a correspondence between the $k$th-order polar degrees $\mu_{k,j}(f)$ and the multidegrees $\delta_{k,j}(f)$ of the rational equivalence class $[W_k(f)]$. 

\begin{proposition}\label{prop: properties kth polar degrees}
Consider a morphism $f\colon X\to\P^n,$ that is globally $k$-osculating of $k$-osculating dimension $m_k<n$. For all $0\le j\le \dim X=m$, we have $\mu_{k,j}(f)=\delta_{k,m-j}(f)$.
\end{proposition}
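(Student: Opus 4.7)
The assumption that $f$ is globally $k$-osculating is exactly what turns $W_k(f) = \P((\ker j_k)^\vee)$ into a genuine projective bundle $\pi_{1,k}\colon W_k(f)\to X$, and this is the structural fact I will exploit. Concretely, $\ker j_k$ is locally free of rank $n-m_k$, so $W_k(f)$ is a $\P^{n-m_k-1}$-bundle over $X$, consistent with $\dim W_k(f) = n+m-m_k-1$ from \eqref{eq: dim conormal}. The idea is then to compute the multidegree \eqref{eq: interpretation multidegrees delta_i} by pushing down along $\pi_{1,k}$, recognize the pushforwards as Segre classes, and convert Segre classes of $\ker j_k$ to Chern classes of $\image j_k$ via the short exact sequence \eqref{eq: k-jet exact sequence}.

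First I would check that under the embedding $W_k(f)\hookrightarrow X\times(\P^n)^\vee$ induced by the surjection $V^\vee\otimes\OO_X\twoheadrightarrow(\ker j_k)^\vee$, the tautological class $\xi = c_1(\OO_{W_k(f)}(1))$ equals the restriction of $h'$, while the restriction of $h$ pulls back to $\pi_{1,k}^*c_1(\OO_X(1))$ (since $X\hookrightarrow\P^n$ via $f$). Plugging these identifications into \eqref{eq: interpretation multidegrees delta_i} and applying the projection formula gives
\[
\delta_{k,i}(f) \;=\; \int_X c_1(\OO_X(1))^i\cdot (\pi_{1,k})_*\xi^{n-m_k+m-1-i}.
\]

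Next, I would invoke the standard Segre-class formula for the projective bundle $W_k(f) = \P((\ker j_k)^\vee)$, whose relative dimension is $r-1$ with $r = \rank\ker j_k = n-m_k$. For every integer $j\ge 0$,
\[
(\pi_{1,k})_*\,\xi^{\,r-1+j} \;=\; s_j(\ker j_k),
\]
so specializing to $j = m-i$ yields $(\pi_{1,k})_*\xi^{n-m_k+m-1-i} = s_{m-i}(\ker j_k)$. Finally, the exact sequence \eqref{eq: k-jet exact sequence} gives $c(\ker j_k)\cdot c(\image j_k) = 1$, hence $s(\ker j_k) = c(\image j_k)$ termwise, so $s_{m-i}(\ker j_k) = c_{m-i}(\image j_k) = p_{k,m-i}(f)$. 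Setting $i = m-j$ gives
\[
\delta_{k,m-j}(f) \;=\; \int_X c_1(\OO_X(1))^{m-j}\cdot p_{k,j}(f) \;=\; \deg p_{k,j}(f) \;=\; \mu_{k,j}(f),
\]
since $p_{k,j}(f)$ has codimension $j$ in $X$.

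The only real subtlety, and the step that deserves the most care in the write-up, is the compatibility of projective-bundle conventions: identifying $\P((\ker j_k)^\vee)$ as sitting inside $X\times\P(V^\vee)$ via the natural surjection (so that the tautological quotient line bundle restricts correctly), and matching this with the convention under which the Segre-class pushforward formula $(\pi_{1,k})_*\xi^{r-1+j} = s_j(\ker j_k)$ is stated. Once that bookkeeping is in place, no further hypotheses are used: the computation is an application of the projection formula together with the Whitney relation for \eqref{eq: k-jet exact sequence}, and the assumption $m_k < n$ simply ensures that $\ker j_k$ has positive rank so that $W_k(f)$ is an honest projective bundle rather than an empty object.
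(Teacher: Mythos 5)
Your proof is correct, and it genuinely differs from the paper's argument. The paper routes through the higher-order polar \emph{loci}: it invokes the geometric identity $P_{k,j}(f)=\pi_{k,1}(\pi_{k,2}^{-1}(L_k^\vee))$ (equation \eqref{eq: property polar locus}, credited to Piene and to \cite{dirocco2024Relative}), and then grinds out a chain of projection-formula manipulations to rewrite $\int [H]^{m-j}\cdot[P_{k,j}(f)]$ as $\int h^{m-j}(h')^{n-1-m_k+j}\cdot[W_k(f)]$. You bypass the polar-loci interpretation entirely and instead exploit the projective-bundle structure $W_k(f)=\P((\ker j_k)^\vee)$: identify $\xi$ with $h'$ via the surjection $V^\vee\otimes\OO_X\twoheadrightarrow(\ker j_k)^\vee$, push down by the Segre-class formula $(\pi_{1,k})_*\xi^{r-1+j}=s_j(\ker j_k)$, and convert Segre to Chern via the Whitney relation for \eqref{eq: k-jet exact sequence}. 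This is essentially the same formal calculus the paper itself uses in its proof of Proposition \ref{prop: kth order ED degree sum of kth order polar degrees}, applied to $\ker j_k$ instead of $\EE_k(f,Q)^\vee$, so your route makes the two propositions' proofs more uniform. What you lose is the geometric by-product: the paper's route manifestly says that a generic $L_k^\vee$ slice of $W_k(f)$ projects to a representative of $p_{k,j}(f)$, which is exactly the content it then leans on in Proposition \ref{prop: compare k-order defect and number of vanishing higher polar degrees}. Your flagged subtlety about projective-bundle conventions is the right one to worry about; with the paper's convention ($\P(\EE)_x=\P(\EE_x^\vee)$, so $W_k(f)$ parametrizes lines in $\ker j_k$), the pushforward does land on $s_j(\ker j_k)$ rather than $s_j((\ker j_k)^\vee)$, so no stray sign appears.
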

\begin{proof}
Since the morphism $f$ is globally $k$-osculating of $k$-osculating dimension $m_k$, in particular $X$ is nonsingular and the variety $W_k(f)$ has the structure of a projective bundle over $X$, hence it is also a nonsingular variety.
Consider the surjective morphisms $\pi_{k,1}\colon W_k(f)\to X$, $\pi_{k,2}\colon W_k(f)\to X_k^\vee$. Let $L\subseteq\P^n$ be a generic projective subspace of dimension $n-m_k+j-2$. Then $L_k^\vee=L^\vee$ and $\dim L_k^\vee=n-1-\dim L=m_k-j+1$. In particular $[L_k^\vee]=[H']^{n-1-m_k+j}$, using the notation in \eqref{eq: notation hyperplane classes}.
Following \cite[Proposition 3.4]{piene2022higher} and following a similar approach in \cite[Lemma 3.24]{dirocco2024Relative}, one can show that
\begin{equation}\label{eq: property polar locus}
    P_{k,j}(f)=\pi_{k,1}(\pi_{k,2}^{-1}(L_k^\vee))\,.
\end{equation}
Let $\iota_k\colon W_k(f)\hookrightarrow\P^n\times(\P^n)^\vee$ be the inclusion. Then
\begin{align*}
\mu_{k,j}(f) &= \deg P_{k,j}(f) = \int [H]^{m-j}\cdot[P_{k,j}(f)]\\
&\stackrel{\mathclap{(*)}}{=} \int [H]^{m-j}\cdot[\pi_{1,k}(\pi_{2,k}^{-1}(L_k^\vee))] = \int [H]^{m-j}\cdot \pi_{1,k*}(\pi_{2,k}^*([L_k^\vee]))\\
&= \int [H]^{m-j}\cdot \pi_{1,k*}(\pi_{2,k}^*([H']^{n-1-m_k+j}))\stackrel{\mathclap{(**)}}{=} \int \pi_{1,k*}(\pi_{1,k}^*([H]^{m-j})\cdot\pi_{2,k}^*([H']^{n-1-m_k+j}))\\
&= \int \pi_{1,k*}(\pi_{1,k}^*([H])^{m-j}\cdot\pi_{2,k}^*([H'])^{n-1-m_k+j}) = \int \pi_{1,k*}(\iota_k^*(h^{m-j})\cdot \iota_k^*((h')^{n-1-m_k+j}))\\
&= \int \pi_{1,k*}(\iota_k^*(h^{m-j}\cdot (h')^{n-1-m_k+j})) = \int \pi_{1,k*}(h^{m-j}\cdot (h')^{n-1-m_k+j}\cdot[W_k(f)])\\
&\stackrel{\mathclap{(***)}}{=} \int h^{m-j}\cdot (h')^{n-1-m_k+j}\cdot[W_k(f)] = \delta_{k,m-j}(f)\,.
\end{align*}
In $(*)$, we applied \eqref{eq: property polar locus}. In $(**)$, we applied the general projection formula in \cite[Theorem 3.2]{fulton1998intersection}. In $(***)$, we used the fact that $h^{m-j}(h')^{n-1-m_k+j}[W_k(f)]$ is a zero-dimensional cycle, so the degree is preserved under projection.
\end{proof}

\section{Higher-order distance loci}\label{sec: higher-order normal bundles}
In this section, we introduce a {\em higher-order distance degree}, \cite{DHOST}. Furthermore, we define the {\em higher-order distance loci}. These concepts and their geometry strongly depend on the choice of an underlying metric. Our ambient space is an $(n+1)$-dimensional vector space $V$, and $x_0,\dots,x_n$ are homogeneous coordinates in $\P(V)\cong\P^n$. To introduce a notion of ``normality'', we fix a nonsingular quadric hypersurface $Q=\V(q)\subseteq\P^n$, where $q\in\mathrm{Sym}^2V^*$. The hypersurface $Q$ is the {\em isotropic quadric} associated with $q$. A standard choice is to consider $q(x) = q_\mED(x) = \sum_{i=0}^n x_i^2,$ but there is no need to restrict to the standard choice.
The quadric $Q$ induces the {\em polarity} or {\em reciprocity} map
\begin{equation}\label{eq: polarity}
\begin{matrix}
    \partial_q\colon & \P^n & \longrightarrow & (\P^n)^\vee\\
    & p=[p_0:\cdots:p_n] & \longmapsto & \left[\frac{\partial q}{\partial x_0}(p):\cdots:\frac{\partial q}{\partial x_n}(p)\right]\,.
\end{matrix}
\end{equation}

Let $L\subseteq\P^n$ be a subspace. We define
\begin{equation}\label{eq: perp}
    L^\perp \coloneqq \partial_q(L)^\vee = \left\{y\in\P^n\ \bigg|\ \text{$\sum_{i=0}^n\frac{\partial q}{\partial x_i}(p)y_i=0$ for every $p\in L$}\right\}\,.
\end{equation}
Observe that if $\dim L=r$, then $\dim L^\perp=n-r-1$. In particular, if $p=[p_0:\cdots:p_n]\in\P^n$, then $p^\perp$ is the polar hyperplane $H_p\subseteq\P^n$ of equation $\sum_{i=0}^n\frac{\partial q}{\partial x_i}(p)x_i=0$. Furthermore $(\P^n)^\perp=\emptyset$.

\begin{definition}\cite{piene2022higher}\label{def: kth order normal space}
Consider a morphism $f\colon X\to\P^n$, a nonsingular quadric hypersurface $Q\subseteq\P^n$, and let 
$p\in X_k^{\circ}.$ The {\em $k$th-order normal space of $(f,Q)$ at $p$} is 
\[
    \N_p^k(f,Q) \coloneqq \langle(\T_p^k(f))^\perp,f(p)\rangle \subseteq \P^n\,.
\]
When $k=1$, we call it the {\em normal space of $(f,Q)$ at $p$} and we denote it by $\N_p(f,Q)$.
\end{definition}

In particular, if $f\colon X\to\P^n$ is globally $1$-osculating, then the normal space of $(f,Q)$ at $p$ is $\N_p(f,Q) = \langle\T_p(f)^\perp,p\rangle$.

\begin{remark}
Given a point $p\in\P^n\setminus Q$ and the inclusion $\iota\colon \{p\}\hookrightarrow\P^n$, the normal space of $(\iota,Q)$ is $\N_p(\iota,Q)=\langle H_p, p\rangle=\P^n$. Furthermore, when $f=\mathrm{id}\colon\P^n\to\P^n$, the normal space of $(\mathrm{id},Q)$ at $p\in\P^n$ is $\N_p(\mathrm{id},Q)=\langle(\P^n)^\perp,p\rangle=\{p\}.$ The analog of the sequence \eqref{eq: osc seq} for higher-order normal spaces is
\[
\{p\}\subseteq \N_p^k(f,Q) \subseteq \cdots \subseteq \N_p^2(f,Q)\subseteq \N_p(f,Q) \subseteq \P^n\,.
\]
\end{remark}

\begin{remark}
Observe that, as $p$ varies in $X$, the dimension of $\N_p^k(f,Q)$ might change. 
Even if we assume that the map $f$ is $k$-osculating, the dimension of the span $\langle(\T_p^k(f))^\perp,p\rangle$ drops if $p\in \T_p^k(f)^\perp,$ which happens if $f(X)$ intersects $Q$ nontransversally.
\end{remark}

\begin{definition}{\cite[p. 245]{piene2022higher}}\label{def: kth distance correspondence}
Consider a morphism $f\colon X\to\P^n$ and a nonsingular quadric hypersurface $Q\subseteq\P^n$. The {\em $k$th-order distance correspondence of $(f,Q)$} is the incidence correspondence
\[
\DC_k(f,Q) \coloneqq \overline{\left\{([v],[u])\in\P^n\times\P^n \mid \text{$[v]=f(p)$ for some $p\in X_k^{\circ}$ and $[\nabla d_{u}(v)]\in \N_p^k(f,Q)$}\right\}}\,.
\]
\end{definition}

The following lemma is proven similarly to the first part of \cite[Theorem 4.1]{DHOST}.

\begin{lemma}\label{lem: higher-order proj ED correspondence is irreducible}
Consider a morphism $f\colon X\to\P^n$ and a nonsingular quadric hypersurface $Q\subseteq\P^n$. Assume that $X$ is irreducible of dimension $m$ and that $f(X)\not\subseteq Q$. Then $\DC_k(f,Q)$ is irreducible of dimension $m+n-m_k$ in $\P^n\times\P^n$. The projection of $\DC_k(f,Q)$ onto the first factor $\P^n$ is locally trivial over $f(X_k^{\circ})\setminus Q$ with fibers of dimension $n-m_k+1$.
\end{lemma}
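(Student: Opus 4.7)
The plan is to prove this by realizing $\DC_k(f,Q)$, up to closure, as the image of a projective bundle over a dense open subset of $X$, in the spirit of \cite[Theorem~4.1]{DHOST} which treats the $k=1$ case. First I would restrict to the locus $U\coloneqq X_k^\circ\setminus f^{-1}(Q)\subseteq X$, which is open, dense, and hence irreducible of dimension $m$ thanks to the hypothesis $f(X)\not\subseteq Q$ together with the density of $X_k^\circ$. On $U$ the jet morphism $j_k$ of \eqref{eq: morphism jk} has constant rank $m_k+1$, so the short exact sequence \eqref{eq: k-jet exact sequence} restricts to an exact sequence of vector bundles on $U$. Using the polarity determined by the nonsingular quadric $Q$ on the dual of $\image j_k$, and adjoining the tautological line subbundle pointwise spanned by $f(p)$, I would construct a rank-$(n-m_k+1)$ vector subbundle $\hat\NN^k\subseteq V\otimes\OO_U$ whose fiber at $p$ is the affine cone over the projective normal space $\N_p^k(f,Q)=\langle\T_p^k(f)^\perp,f(p)\rangle$; constancy of the rank depends on both $p\in X_k^\circ$ and the non-isotropy $f(p)\notin Q$.

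Projectivizing gives a $\P^{n-m_k}$-bundle $\pi\colon\P(\hat\NN^k)\to U$, so $\P(\hat\NN^k)$ is irreducible of dimension $m+n-m_k$. The natural morphism $\Phi\colon\P(\hat\NN^k)\to\P^n\times\P^n$, $(p,[u])\mapsto(f(p),[u])$, factors through $\DC_k(f,Q)$ by the very definition of the latter, and the image coincides with the dense defining subset $\{(f(p),[u]):p\in U,\ [u]\in\N_p^k(f,Q)\}$. Since $f|_U$ is generically finite onto $f(U)$, the morphism $\Phi$ is generically finite onto its image, so $\DC_k(f,Q)=\overline{\Phi(\P(\hat\NN^k))}$ inherits both irreducibility and the dimension $m+n-m_k$ from the projective bundle $\P(\hat\NN^k)$. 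Restricting further to the open sublocus of $U$ on which $f$ is etale onto its image, and transporting Zariski-local trivializations of $\pi$ along $\Phi$, then yields the claimed local triviality of the first projection over $f(X_k^\circ)\setminus Q$, with generic fibers being projective linear subspaces of affine (cone) dimension $n-m_k+1$.

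The main technical hurdle is the constant-rank verification for $\hat\NN^k$ on $U$: constancy of the osculating dimension coming from $p\in X_k^\circ$ alone does not suffice, since an isotropy $f(p)\in Q$ would force $f(p)\in\T_p^k(f)^\perp$ and drop the rank of the sum $\T_p^k(f)^\perp+\langle f(p)\rangle$ by one. It is precisely the exclusion of $f^{-1}(Q)$ in the definition of $U$ that prevents this degeneracy and isolates $f(X_k^\circ)\setminus Q$ as the natural base of local triviality; once this constant-rank step is in place the remainder of the proof is a routine application of the irreducibility and dimension formula for projective bundles combined with the generic finiteness of $\Phi$.
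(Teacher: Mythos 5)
Your proposal follows what is in effect the paper's own proof: the paper simply says the lemma is proven like the first part of \cite[Theorem 4.1]{DHOST}, i.e.\ by exhibiting the correspondence over a good open locus as a (projectivized) rank-$(n-m_k+1)$ bundle, which is exactly your construction of $\P(\hat\NN^k)$ over $U=X_k^\circ\setminus f^{-1}(Q)$; your reading of the fiber dimension $n-m_k+1$ as the cone dimension of the normal spaces is also the only reading consistent with the dimension claim. There is, however, one genuine gap in your irreducibility step. The defining set in Definition~\ref{def: kth distance correspondence} ranges over \emph{all} $p\in X_k^\circ$, including $X_k^\circ\cap f^{-1}(Q)$, so the image of $\Phi$ is strictly smaller than that set, contrary to your assertion that they coincide. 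To get $\DC_k(f,Q)=\overline{\Phi(\P(\hat\NN^k))}$, and hence irreducibility, you must additionally show that the pairs $(f(p),[u])$ with $p\in X_k^\circ\cap f^{-1}(Q)$ lie in the closure of $\Phi(\P(\hat\NN^k))$; otherwise $\DC_k(f,Q)$ could a priori pick up extra lower-dimensional components sitting over $Q$. This containment is true but needs an argument: over such a $p$ the fiber of the defining set is contained in $\N_p^k(f,Q)$ (consistently with \eqref{eq: def distance locus}), and for any curve $p'\to p$ with $p'\in U$, every limit of the $(n-m_k+1)$-dimensional cones $\hat\NN^k_{p'}$ in the Grassmannian contains both the limit of the cones over $\T_{p'}^k(f)^\perp$ (which is the cone over $\T_p^k(f)^\perp$, by constancy of rank on $X_k^\circ$) and $f(p)$; hence it contains the cone over $\N_p^k(f,Q)$, placing the missing locus inside the closure.

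Two smaller points. Your justification of the constant-rank step reverses the implication: $f(p)\in Q$ does not force $f(p)\in\T_p^k(f)^\perp$; what is true is that $f(p)\in\T_p^k(f)^\perp$ forces $q(f(p),f(p))=0$, i.e.\ $f(p)\in Q$, because $f(p)\in\T_p^k(f)$ --- and this is exactly what you need, so removing $f^{-1}(Q)$ does guarantee constant rank on $U$ and your conclusion stands. Finally, you invoke generic finiteness of $f|_U$ without proof; it does not follow from the stated hypotheses (and the dimension claim genuinely fails without it, e.g.\ for a morphism factoring through a lower-dimensional image), but the same assumption is implicit in the paper's statement, so this is a defect of the statement rather than of your argument --- it should be flagged as a hypothesis rather than asserted. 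Relatedly, your local triviality is only obtained over the locus where $f$ is \'etale onto its image, which is the honest version of the last claim when $f$ is not injective on $X_k^\circ$.
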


\begin{definition}\label{def: distance locus}
Consider a morphism $f\colon X\to\P^n$ and a nonsingular quadric hypersurface $Q\subseteq\P^n$. The {\em $k$th-order distance locus of $(f,Q)$} is
\begin{equation}\label{eq: def distance locus}
\DL_k(f,Q)\coloneqq\pr_2(\DC_k(f,Q)) = \overline{ \bigcup_{p \in X_k^{\circ}} \N_p^k(f,Q)}\,,
\end{equation}
where $\pr_2$ is the projection of $\P^n\times\P^n$ onto the second factor. The {\em distance loci sequence} is:
\[
\DL_k(f,Q)\subseteq \DL_{k-1}(f,Q)\subseteq\cdots\subseteq \DL_1(f,Q)=\P^n\,.
\]
\end{definition}

In the following, we denote by $\varphi_{1,k}\colon\DC_k(f,Q)\to f(X)$ and $\varphi_{2,k}\colon\DC_k(f,Q)\to\DL_k(f,Q)$ the surjective morphisms induced by the projections $\pr_1$ and $\pr_2$ of $\P^n\times\P^n$ onto its factors.

\begin{definition}\label{def: kth order ED degree}
Consider a morphism $f\colon X\to\P^n$ and a nonsingular quadric hypersurface $Q\subseteq\P^n$. Assume that the morphism $\varphi_{2,k}$ is generically finite. The {\em $k$th-order distance degree of $(f,Q)$} is
\[
\DD_k(f,Q)\coloneqq\deg\DL_k(f,Q)\cdot\deg\varphi_{2,k}\,,
\]
where $\deg\varphi_{2,k}=\deg\varphi_{2,k}^{-1}(u)$ for a generic $u\in\DL_k(f,Q)$.
\end{definition}

For $k=1$ we use the shorthand $\DC_1(f,Q)=\DC(f,Q)$, while $\varphi_{1,1}=\varphi_1$ and $\varphi_{2,1}=\varphi_2$. In this case $\dim\DC(f,Q)=n$ and $\DL_1(f,Q)=\P^n$. 
Hence, the classical ED degree of $(f,Q)$ introduced in \cite{DHOST} is $\DD_1(f,Q)=\deg\varphi_2$. This is not generally the case for $k\ge 2$, see the running example in the introduction.

We always have the chain of inclusions $f(X)\subseteq \DL_k(f,Q)\subseteq\P^n.$ In fact, there always exists an integer $k\ge 1$ such that $f(X)=\DL_k(f,Q).$ The following lemma examines the two extreme cases: when $f(X)=\DL_k(f,Q)$ and when $\DL_k(f,Q)=\P^n.$

\begin{proposition}\label{prop: linearly normal}
Let $f\colon X\to\P^n$ be a nondegenerate globally $k$-osculating closed embedding and $Q$ a generic quadric. Then
\begin{enumerate}
    \item  There always exists a $k$ for which $f(X)=\DL_k(f,Q)$. Moreover $(X,f)=(\P^m,\nu_m^k)$ is the $k$th Veronese embedding if and only if $f(X)=\DL_k(f,Q)$ and $m_k=\binom{m+k}{k}-1.$
    \item If $\DL_k(f,Q)=\P^n$ for some $k\ge 1$ then $\DL_\ell(f,Q)=\P^n$ for every $\ell\in[k].$
\end{enumerate}
\end{proposition}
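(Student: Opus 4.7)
The plan is to dispatch both parts of the proposition, with Part~(2) being immediate and the reverse direction of the characterization in Part~(1) the substantive content. For Part~(2), I invoke the descending chain recorded in Definition~\ref{def: distance locus}: $\DL_k(f,Q)\subseteq \DL_{k-1}(f,Q)\subseteq\cdots\subseteq \DL_1(f,Q)=\P^n$. If $\DL_k(f,Q)=\P^n$, then every $\DL_\ell$ with $\ell\in\{1,\ldots,k\}$ is sandwiched between $\P^n$ and $\P^n$, forcing $\DL_\ell(f,Q)=\P^n$.

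For the existence claim in Part~(1), I may assume $f$ is non-degenerate (else the ascending chain of osculating spaces stabilizes at a proper subspace and the conclusion fails). Then \eqref{eq: osc seq} stabilizes at $\P^n$ by analytic continuation on the irreducible $X$, so for $k$ large enough $\T_p^k(f)=\P^n$ at every smooth $p$, giving $\T_p^k(f)^\perp=\emptyset$, $\N_p^k(f,Q)=\{f(p)\}$, and $\DL_k(f,Q)=f(X)$. The forward implication of the characterization follows by the same computation for the Veronese: by \eqref{eq: osculating dimension SV} specialized to $r=1$, $n_1=m$, $d_1=k$, one gets $m_k(\nu_m^k)=\binom{m+k}{k}-1=n$, so $\T_p^k(\nu_m^k)=\P^n$ everywhere and $\DL_k(\nu_m^k,Q)=\nu_m^k(\P^m)$.

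For the reverse implication, suppose $\DL_k(f,Q)=f(X)$ and $m_k=\binom{m+k}{k}-1$. Each normal space $\N_p^k(f,Q)$ is a linear subvariety through $f(p)$ contained in the smooth irreducible $f(X)$, so its projective tangent at $f(p)$---which equals $\N_p^k(f,Q)$ itself---lies inside $\T_p(f)\subseteq\T_p^k(f)$. In particular $\T_p^k(f)^\perp\subseteq\T_p^k(f)$. For a generic $Q$, the form $q$ restricts non-degenerately to $\T_p^k(f)$, so $\T_p^k(f)\cap\T_p^k(f)^\perp=\emptyset$; combined with the containment this forces $\T_p^k(f)^\perp=\emptyset$, hence $m_k=n=\binom{m+k}{k}-1$. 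Thus $f$ is $k$-regular, the jet map $j_k$ of \eqref{eq: morphism jk} is an isomorphism between bundles of rank $\binom{m+k}{k}$, and the jet bundle $\PP^k(\OO_X(1))\cong\OO_X^{n+1}$ is trivial.

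From triviality of $\PP^k(\OO_X(1))$ I would conclude $(X,\OO_X(1))\cong(\P^m,\OO_{\P^m}(k))$, so that $V=H^0(\P^m,\OO(k))$ and $f=\nu_m^k$. Computing the first Chern class via the filtration of $\PP^k(\OO_X(1))$ with graded pieces $\Sym^j\Omega_X^1\otimes\OO_X(1)$ for $j=0,\ldots,k$ yields the relation $kK_X+(m+1)\OO_X(1)=0$ in $\operatorname{Pic}(X)\otimes\mathbb{Q}$. If $\OO_X(1)$ admits an ample $k$th root $M$, then $K_X=-(m+1)M$, and the Kobayashi--Ochiai theorem identifies $(X,M)$ with $(\P^m,\OO_{\P^m}(1))$, whence $\OO_X(1)=\OO_{\P^m}(k)$ and $f=\nu_m^k$. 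The main obstacle is deducing the existence of this root $M$ from the full triviality of the jet bundle, since the Chern class relation alone is not sufficient; alternatively one may invoke directly a classification of maximally $k$-osculating embeddings in the spirit of Piene's work.
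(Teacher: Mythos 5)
Your Part (2) is correct and even cleaner than the paper's: sandwiching $\DL_\ell(f,Q)$ between $\DL_k(f,Q)=\P^n$ and $\DL_1(f,Q)=\P^n$ via the descending chain of Definition~\ref{def: distance locus} settles it immediately, whereas the paper first deduces $m_k=\cdots=m_1=m$ and only then concludes. For Part (1), your existence argument and forward implication follow the same route as the paper; your remark that nondegeneracy of $f$ is required for the existence claim (else $\T_p^k(f)$ stabilizes at a proper subspace and $\DL_k(f,Q)$ is a join, not $f(X)$) is a valid caveat that the paper leaves implicit in the assertion ``$m_k=n$ for $k$ large enough.''

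For the reverse implication of Part (1), your argument up to triviality of $\PP^k(\OO_X(1))$ is sound and in fact more explicit than the paper's. You supply the geometric reason why $\DL_k(f,Q)=f(X)$ forces $m_k=n$: each $\N_p^k(f,Q)$ is a linear space through $f(p)$ contained in the smooth $f(X)$, hence lies in $\T_p(f)\subseteq\T_p^k(f)$, so $\T_p^k(f)^\perp\subseteq\T_p^k(f)$; and a generic $Q$ restricts nondegenerately to the generic osculating space, so $\T_p^k(f)\cap\T_p^k(f)^\perp=\emptyset$, killing $\T_p^k(f)^\perp$. The paper's proof passes over this step, asserting directly that $\image j_k=\PP^k(f)=\OO_X^{\oplus(m_k+1)}$. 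From there $j_k$ becomes an isomorphism of bundles of equal rank $\binom{m+k}{k}=n+1$, and $\PP^k(\OO_X(1))$ is trivial, which is exactly where the paper stands. The remaining step is the gap you honestly flag: triviality of the jet bundle does not hand you an ample $k$th root, so Kobayashi--Ochiai cannot be applied directly, and a Chern class relation alone is insufficient. The paper closes this with \cite[Theorem~3.1]{dirocco2001line} (Di Rocco--Sommese, \emph{Line bundles for which a projectivized jet bundle is a product}), which classifies such $(X,\OO_X(1))$ as either $(\P^m,\OO_{\P^m}(k))$ or a morphism of an abelian variety, with the abelian case then excluded because $f$ is a closed embedding. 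Your suggested fallback, a classification of maximally $k$-osculating embeddings, is precisely the right ingredient; only the specific reference is missing.
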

\begin{proof}
(1) Given a closed embedding $f\colon X\to\P^n$ we have $m_k=n$ for a large enough $k.$  It follows that $\rank\image j_k=n+1$ and thus $j_k$ is an isomorphism of vector bundles, imposing $\image j_k\cong\OO_X^{\oplus(n+1)}.$ This implies that the morphism $\varphi_{2,k}$ is birational, hence $\DC_k(f,Q)\cong f(X)$ and $\DL_k(f,Q)=f(X).$
If $(X,f)=(\P^m,\nu_m^k)$ is the Veronese embedding, then $f$ is $k$-regular, namely $\image j_k=\PP^k(f)\cong\OO_X^{\oplus(m_k+1)}$ with $m_k+1=\binom{m+k}{k}$, and the map $j_k$ is an isomorphism. This implies, as above, that $\varphi_{2,k}$ is birational and that its image is $f(X)$. Viceversa if $\DL_k(f,Q)=f(X)$ and $m_k+1=\binom{m+k}{k}$, then $\image j_k=\PP^k(f)=\OO_X^{\oplus(m_k+1)}$ and thus the projectivized jet bundle is a product. Applying \cite[Theorem 3.1]{dirocco2001line}, then $f$ is either the $k$th Veronese embedding $\nu_m^k$ or a morphism of an abelian variety. The last possibility can be excluded since $f$ is assumed to be a closed embedding.

(2) Assume now that $\DL_k(f,Q)=\P^n$. Then $m_k=m_{k-1}=\cdots =m,$ the morphism $\varphi_{2,k}=\cdots =\varphi_{2,1}$ is finite, and $\DL_k(f,Q)=\cdots = \DL_1(f,Q)=\P^n.$  
\end{proof}

\begin{definition}\label{def: general k osculating position}
Consider a morphism $f\colon X\to\P^n$ and a nonsingular quadric hypersurface $Q\subseteq\P^n$. The pair $(f,Q)$ is {\em in general $k$-osculating position} if $f$ is globally $k$-osculating and $f(X)$ intersects $Q$ transversally.
\end{definition}
 
If $(f,Q)$ is in general $k$-osculating position, then $\dim\N_p^k(f,Q)$ is constant at all points, in particular for every $p\in X$
\begin{equation}\label{eq: dimension normal space}
\dim\N_p^k(f,Q)=\dim\T_p^k(f)^\perp+1= n-\dim\T_p^k(f)=n-m_k\,.
\end{equation}
In the following, we recall the construction of higher-order Euclidean normal bundles, see \cite[Section 4]{piene2022higher}. The quadratic form $q$ induces an isomorphism
\[
\varphi_q\colon V\otimes\OO_X \longrightarrow V^\vee\otimes\OO_X\,.
\]
In the following, we assume that $f$ is globally $k$-osculating of $k$-osculating dimension $m_k$, in the sense of Definition \ref{def: globally k-osculating}.
Consider the $k$-jet exact sequence
\begin{equation}\label{eq: k-jet 2}
0\to \ker j_k \hookrightarrow V\otimes\OO_X \twoheadrightarrow \image j_k \to 0\,.
\end{equation}
From the inclusion $\ker j_k\hookrightarrow V\otimes\OO_X$ and the isomorphism $\varphi_q$ we get a quotient $V\otimes\OO_X\twoheadrightarrow (\ker j_k)^\vee$.
Considering also the point map $V\otimes\OO_X\twoheadrightarrow \OO_X(1)$, we get a surjection
\begin{equation}\label{eq: surjection defining kth normal bundle}
    V\otimes\OO_X\twoheadrightarrow(\ker j_k)^\vee\oplus\OO_X(1)\,.
\end{equation}
The following definition is given in more generality in \cite[Proposition 4.1]{piene2022higher}.
\begin{definition}\label{def: kth normal bundle}
Consider a morphism $f\colon X\to\P^n$ and a nonsingular quadric hypersurface $Q\subseteq\P^n$. Assume that $(f,Q)$ is in general $k$-osculating position.
The {\em $k$th-order normal bundle} of $(f,Q)$ is the image $\EE_k(f,Q)\coloneqq (\ker j_k)^\vee\oplus\OO_X(1)$ of the surjection in \eqref{eq: surjection defining kth normal bundle}.
\end{definition}

Following notation as in \cite{fulton1998intersection}, we consider the projectivized bundle $\P(\EE)$ of a vector bundle $\EE$ on $X$ as the projective bundle defined by the quotient spaces of the fibers of $\EE$. This means that $\P(\EE)_x=\P(\EE_x^\vee)$ for all $x\in X$.
Additionally, let $\OO_\EE(1)$ be the tautological line bundle of $\EE$ on $\P(\EE)$.

When $(f,Q)$ is in general $k$-osculating position, the degree $\DD_k(f,Q)$ coincides with the degree of the top higher-order reciprocal polar locus defined in \cite[\S 5]{piene2022higher}, as shown in Corollary 5.2 and Remark 5.3 of \cite{piene2022higher}. In the following proposition, we reproduce the proof of this result in the simplified setting of nonsingular varieties and globally $k$-osculating morphisms, using only the tools developed thus far. This formula is essential in the following sections to derive formulas for the degree and dimension of the data locus $\DL_k(f,Q)$.

\begin{proposition}\label{prop: kth order ED degree sum of kth order polar degrees}
Let $f\colon X \to \P^n$ be a morphism and $Q\subseteq\P^n$ a nonsingular quadric hypersurface. If $(f,Q)$ is in general $k$-osculating position, then the morphism $\varphi_{2,k}$ is generically finite over its image and
\begin{equation}\label{eq: kth order ED degree sum of kth order polar degrees}
\DD_k(f,Q) = \sum_{i=0}^m \mu_{k,i}(f)\,.
\end{equation}
where $\mu_{k,i}(f)$ are the degrees of the $k$th polar classes of $f.$
\end{proposition}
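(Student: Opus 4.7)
My plan is to realize $\DC_k(f,Q)$ as a projective bundle over $X$ and then compute the top intersection number via a projective-bundle pushforward. First, I would identify $\DC_k(f,Q)\cong\P(\EE_k(f,Q))$: in general $k$-osculating position, the quotient $\EE_k(f,Q)=(\ker j_k)^\vee\oplus\OO_X(1)$ is locally free of rank $n-m_k+1$, so in the Grothendieck convention adopted for $\P(\EE)$ the surjection $V\otimes\OO_X\twoheadrightarrow\EE_k(f,Q)$ of \eqref{eq: surjection defining kth normal bundle} induces a closed embedding $\P(\EE_k(f,Q))\hookrightarrow X\times\P^n$ whose fiber over $p$ is the projective normal space $\N_p^k(f,Q)$. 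A fiberwise check, combined with the reformulation of the critical-point condition of Definition \ref{def: kth distance correspondence} as $[u]\in\N_p^k(f,Q)$, would identify the two subvarieties of $X\times\P^n$ and show that $\varphi_{2,k}^*\OO_{\P^n}(1)=\OO_{\EE_k(f,Q)}(1)$.

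Next I would convert the definition of $\DD_k(f,Q)$ into an intersection number. By Lemma \ref{lem: higher-order proj ED correspondence is irreducible}, $\dim\DC_k(f,Q)=m+n-m_k$, so modulo generic finiteness of $\varphi_{2,k}$ one has
\[
\DD_k(f,Q)=\int_{\DC_k(f,Q)}\varphi_{2,k}^*([H])^{m+n-m_k}=\int_{\P(\EE_k(f,Q))}\xi^{m+n-m_k},
\]
with $\xi=c_1(\OO_{\EE_k}(1))$. I would then apply the projective-bundle pushforward $\pi_*(\xi^{r-1+m})=s_m(\EE_k(f,Q)^\vee)$ for $r=n-m_k+1$, obtaining $\DD_k(f,Q)=\int_X s_m(\EE_k(f,Q)^\vee)$. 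To compute this Segre class I would exploit the dual splitting $\EE_k(f,Q)^\vee=\ker j_k\oplus\OO_X(-1)$: multiplicativity on direct sums together with the exact sequence \eqref{eq: k-jet exact sequence} and triviality of $V\otimes\OO_X$ yield $s(\ker j_k)=c(\image j_k)$, while $s(\OO_X(-1))=\sum_{j\ge 0}H^j$. Multiplying and extracting the degree-$m$ part gives
\[
s_m(\EE_k(f,Q)^\vee)=\sum_{i=0}^m c_i(\image j_k)\cdot H^{m-i},
\]
and integrating produces the claimed $\DD_k(f,Q)=\sum_{i=0}^m\mu_{k,i}(f)$. Generic finiteness of $\varphi_{2,k}$ then comes for free, since $\mu_{k,0}(f)=\deg f(X)>0$ forces the top intersection to be positive, which is impossible if the image of $\varphi_{2,k}$ had strictly smaller dimension than its source. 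Independence of $Q$ is immediate: the right-hand side only involves $\image j_k$ and $H$, both intrinsic to $f$.

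The main obstacle will be the first step, namely pinning down the identification $\DC_k(f,Q)\cong\P(\EE_k(f,Q))$. One must carefully track that the quadric $Q$ enters only through the specific quotient $V\otimes\OO_X\twoheadrightarrow\EE_k(f,Q)$ (via the polarity $\varphi_q$) and not through the abstract bundle $\EE_k(f,Q)=(\ker j_k)^\vee\oplus\OO_X(1)$, whose Segre classes are therefore $Q$-independent. A secondary concern is the Grothendieck-versus-Fulton convention for $\P(\EE)$: under the paper's convention the pushforward formula involves $s_m(\EE^\vee)$ rather than $s_m(\EE)$, and making the wrong choice would introduce a spurious sign $(-1)^m$.
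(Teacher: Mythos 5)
Your proposal is correct and follows essentially the same route as the paper's proof: identify $\DC_k(f,Q)$ with the projectivization of the normal bundle $\EE_k(f,Q)=(\ker j_k)^\vee\oplus\OO_X(1)$, push forward the top power of the tautological (equivalently, pulled-back hyperplane) class to get $s_m(\EE_k(f,Q)^\vee)=\sum_{i=0}^m c_i(\image j_k)\,c_1(\OO_X(1))^{m-i}$ via the $k$-jet exact sequence, and deduce generic finiteness from positivity of this number. The only point worth making explicit, as the paper does, is that the terms with $i\ge 1$ are nonnegative because $\image j_k$ is globally generated, so that positivity of the $i=0$ term indeed forces the whole sum to be positive.
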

\begin{proof}
Consider a morphism $f\colon X\to\P^n$ and a nonsingular quadric hypersurface $Q\subseteq\P^n$ such that $(f,Q)$ is in general $k$-osculating position.
Let $\EE_k(f,Q)$ be the $k$th-order normal bundle of $(f,Q)$ introduced in Definition \ref{def: kth normal bundle}. Then $\EE_k(f,Q)$ is a vector bundle on $X$ of rank $n-m_k+1$. The tautological line bundle $\OO_{\EE_k(f,Q)^\vee}(1)$, whose sections are given by $H^0(X,(\ker j_k)^\vee\oplus \OO_X(1))$, is globally generated and hence defines a morphism on $\P(\EE_k(f,Q)^\vee).$ The fact that $(f,Q)$ is in general $k$-osculating position implies that $\P(\EE_k(f,Q))_p = \N_p^k(f,Q)$ for every $p\in X$, hence $\DC_k(f,Q)=\P(\EE_k(f,Q)^\vee)$, and the morphism defined by its global sections coincides with $\varphi_{2,k}.$

Since $\OO_{\EE_k(f,Q)^\vee}(1)$ is globally generated, it is said to be ``big'' if its top self-intersection is positive.
Recall that, for a vector bundle $\EE$ on $X$ of rank $r,$ if $\pi\colon\P(\EE)\to X$ denotes the projection map, then $H^0(\P(\EE),\OO_\EE(1))=H^0(X,\EE^\vee)$ and $c_1(\OO_\EE(1))^{r-1+i}=s_i(\EE)$, where $s_i(\EE)$ is the $i$th Segre class of $\EE$, see \cite{fulton1998intersection}. In the following, we denote by $s(\EE)$ the total Segre class of $\EE$. These facts and equation \eqref{eq: dimension normal space} yield the relation $c_1(\OO_{\EE_k(f,Q)^\vee}(1))^{m+n-m_k}=s_{m}(\EE_k(f,Q)^\vee)$. It follows that
\[
s(\EE^k(f,Q)^\vee) = s((\ker j_k)\oplus\OO_X(-1)) = s(\ker j_k)s(\OO_X(-1)) = c(\ker j_k)^{-1}c(\OO_X(-1))^{-1}\,.
\]
From the $k$-jet exact sequence \eqref{eq: k-jet 2} we derive the identity $c(\ker j_k)^{-1}=c(\image j_k)$, hence
\begin{equation}\label{chern}
c(\ker j_k)^{-1}c(\OO_X(-1))^{-1}=c(\image j_k)\sum_{i=0}^m c_1(\OO_X(1))^i  
\end{equation}
which implies that
\begin{equation}\label{kpolar}
c_1(\OO_{\EE_k(f,Q)^\vee}(1))^{m+n-m_k}=s_m(\EE_k(f,Q)^\vee)=\sum_{i=0}^m c_i(\image j_k)\cdot c_1(\OO_X(1))^{m-i}\,.
\end{equation}
The hypothesis of global $k$-osculation implies that $\OO_X(1)$ is very ample. Hence, the top self intersection $c_1(\OO_X(1))^{m}$ is always positive. For the same reason, the other terms $c_i(\image j_k)\cdot c_1(\OO_X(1))^{m-i}=0$ are equal to zero if $c_i(\image j_k)=0$, otherwise they are positive. Therefore, the top self intersection $c_1(\OO_{\EE_k(f,Q)^\vee}(1))^{m+n-m_k}$ of the tautological line bundle $\OO_{\EE_k(f,Q)^\vee}(1)$ is always positive. For this reason, we conclude that the morphism $\varphi_{2,k}$ is generically finite over its image and
\begin{equation}\label{eq: identity top self-intersection tautological line bundle}
c_1(\OO_{\EE_k(f,Q)^\vee}(1))^{m+n-m_k} = \deg\image\varphi_{2,k}\cdot\deg\varphi_{2,k}^{-1}(u)
\end{equation}
for a generic $u\in\image\varphi_{2,k}=\DL_k(f,Q)$. The right-hand side of \eqref{eq: identity top self-intersection tautological line bundle} is equal to $\DD_k(f,Q)$ by Definition \ref{def: kth order ED degree}.
The formula in \eqref{eq: kth order ED degree sum of kth order polar degrees} follows from \eqref{chern} and \eqref{kpolar}.
\end{proof}

This formula shows that the generic distance degree, which we denote by $\gDD(f),$ is independent of $Q$. Moreover, the formula can now be used as a definition of the generic distance degree for any morphism (not necessarily globally $k$-osculating).

\begin{definition}\label{def:generic DD}
Let $f\colon X\to \P^n$ be a morphism. The {\em generic $k$th distance degree of $f$} is:
\[
\gDD_k(f)\coloneqq\sum_{i=0}^m \mu_{k,i}(f)\,.
\]
\end{definition}

\begin{remark}\label{rem: hyperplane_k2}
It is natural to investigate the behaviour of the generic $k$th-order distance degree under geometric operations such as projections and hyperplane sections. Regarding projections, it was shown in \cite[Proposition~3.2]{piene2022higher} that the $k$th-order polar degrees of a morphism $f'$ and of its composition $f=\pi\circ f'$ with a linear projection $\pi$ are equal under suitable assumptions; see also Lemma \ref{lem: technical lemma}. This yields a result analogous to \cite[Corollary~6.1]{DHOST}.
A similar study for hyperplane sections is more involved. The case $k=1$ is addressed in \cite[Theorem~4.1]{piene1978polar}. There is no generalization to higher-order polar degrees as observed in \cite[Section 3]{piene2022higher}. For $k\ge 2$ restricting higher osculating data to a hyperplane section introduces mixed tangential-normal contributions (equivalently, higher fundamental forms enter). For $k=1$ under suitable genericity assumptions, the difference is given by the degree of the dual variety. For $k\geq 2$ one should not expect a correction governed by a single ``$k$-dual degree'' in general.
We give a closed formula in the case of surfaces and $k=2$ to illustrate the difference.

Let $f\colon S\hookrightarrow\P^n$ be a $3$-regular embedding of a nonsingular projective surface, and write $c(S)=1+c_1+c_2$ and $c(\mathcal O_S(1))=1+L$. The upcoming equations \eqref{eq: gEDD k-regular surface} and \eqref{eq: identity total chern class J_k surface} for $k=2$ yield the closed formulas
\begin{align*}
    \gDD_2(f) &= \int_S 22\,L^2-24\,c_1L+5\,c_1^2+5\,c_2\\
    c_2(\PP_2(f)) &= 15\,L^2-20\,c_1L+5\,c_1^2+5\,c_2\,.
\end{align*}
Moreover, under the hypothesis that $f$ is $3$-regular implies that (see \cite[Proposition 2.4]{lanteri1999higher})
\[
\deg(S_2^\vee)=\int_S c_2(\PP_2(f))\,.
\]
Now let $H\subset\mathbb P^n$ be a general hyperplane and $C=S\cap H$ nonsingular, and assume moreover that the induced embedding $f|_C\colon C\hookrightarrow \mathbb P^{n-1}$ is $2$-regular. Then equation \eqref{eq: gEDD k-regular curve} for $k=2$ reads
\[
\gDD_2(f|_C) = \int_C 4L-3\,c_1(C) = \int_S 7\,L^2-3\,c_1L\,,
\]
where we used adjunction for the nonsingular hyperplane section $C=S\cap H$, namely the exact sequence
$0\to T_C\to T_S|_C\to\mathcal O_C(1)\to 0$, which implies
$c_1(T_C)=(c_1(T_S)-L)|_C$ and hence
$\int_S c_1(T_S)L=\int_C c_1(T_C)+\deg L_C$. Consequently, one obtains the explicit discrepancy
\[
\gDD_2(f)-\gDD_2(f|_C)
=\int_S 15\,L^2-21\,c_1L+5\,c_1^2+5\,c_2
=\int_S c_2(\PP_2(f))-c_1L\,.
\]
Thus, even in the favourable situation of $3$-regular  embeddings, the second dual contribution
$\int_S c_2(\PP_2(f))=\deg(S_2^\vee)$ appears naturally, but does not exhaust the discrepancy: an additional term
$\int_S c_1L$ remains. Closed formulas for higher $k$ and higher dimensions quickly become combinatorially involved. 
For threefolds one can still obtain closed formulas for $k=2$ under suitable regularity assumptions:
Proposition \ref{prop: gEDD k-regular threefold} gives $\mathrm{gDD}_2(f)$ as an explicit polynomial in $L,c_1,c_2,c_3$, and combining
Proposition~\ref{prop: gEDD k-regular surface} for the hyperplane section $S=X\cap H$ with adjunction yields an explicit discrepancy
$\mathrm{gDD}_2(f)-\mathrm{gDD}_2(f|_S)$ in terms of the Chern numbers of $X$; however, the resulting expression involves several independent Chern monomials.
\end{remark}

We conclude this section with the first application of Proposition \ref{prop: kth order ED degree sum of kth order polar degrees} to Veronese embeddings. The coming result generalizes the formula in \cite[Proposition 7.10]{DHOST} for $k=1$.

\begin{corollary}\label{corol: generic kth ED degree Veronese}
Consider the Veronese embedding $\nu_m^d\colon\P^m\hookrightarrow\P^{\binom{m+d}{d}-1}$. Then for every $k\in[d]$
\begin{equation}\label{eq: explicit EDD k polar classes Veronese}
    \gDD_k(\nu_m^d) = \sum_{i=0}^m \binom{\binom{m+k}{k}}{i}(d-k)^id^{m-i}\,.
\end{equation}
\end{corollary}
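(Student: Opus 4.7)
The plan is to combine Proposition \ref{prop: kth order ED degree sum of kth order polar degrees} with an explicit computation of the total Chern class of the $k$-jet bundle of $\OO_{\P^m}(d)$. By Corollary \ref{corol: when Segre-Veronese k-regular}, the Veronese morphism $\nu_m^d$ is $k$-regular for every $k\le d$, so $j_k$ is surjective and $\image j_k=\PP^k(\OO_{\P^m}(d))$. For a generic quadric $Q$, the pair $(\nu_m^d,Q)$ is in general $k$-osculating position, so Proposition \ref{prop: kth order ED degree sum of kth order polar degrees} (in particular, equation \eqref{kpolar} from its proof) yields
\[
\gDD_k(\nu_m^d)=\sum_{i=0}^m\int_{\P^m}c_i(\PP^k(\OO_{\P^m}(d)))\cdot c_1(\OO_{\P^m}(d))^{m-i}\,.
\]
Writing $h$ for the hyperplane class on $\P^m$, so that $c_1(\OO_{\P^m}(d))=dh$ and $\int_{\P^m}h^m=1$, the corollary reduces to establishing the Chern-class identity
\[
c(\PP^k(\OO_{\P^m}(d)))=(1+(d-k)h)^{\binom{m+k}{k}}\quad\text{in }A^*(\P^m)\,.
\]

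The main step is to prove this identity, which I would carry out in $K(\P^m)$, using that Chern classes factor through $K$-theory. The principal parts filtration of the jet bundle has graded pieces $\Sym^j\Omega^1_{\P^m}\otimes\OO_{\P^m}(d)$ for $j=0,\ldots,k$, giving
\[
[\PP^k(\OO_{\P^m}(d))]=\sum_{j=0}^k[\Sym^j\Omega^1_{\P^m}]\cdot[\OO_{\P^m}(d)]\quad\text{in }K(\P^m)\,.
\]
The Euler sequence yields $[\Omega^1_{\P^m}\oplus\OO_{\P^m}]=[\OO_{\P^m}(-1)^{m+1}]$. Since symmetric powers are well-defined $\lambda$-operations on $K$-theory, this implies
\[
\sum_{i=0}^j[\Sym^i\Omega^1_{\P^m}]=[\Sym^j(\OO_{\P^m}(-1)^{m+1})]=\binom{m+j}{j}[\OO_{\P^m}(-j)]\,.
\]
Induction on $j$ then gives $[\Sym^j\Omega^1_{\P^m}]=\binom{m+j}{j}[\OO_{\P^m}(-j)]-\binom{m+j-1}{j-1}[\OO_{\P^m}(-j+1)]$ for $j\ge 1$. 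Substituting into the expression above, multiplying by $[\OO_{\P^m}(d)]$, and summing over $0\le j\le k$ telescopes to
\[
[\PP^k(\OO_{\P^m}(d))]=\binom{m+k}{k}[\OO_{\P^m}(d-k)]\,,
\]
from which the desired Chern-class identity follows immediately.

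Extracting $c_i(\PP^k(\OO_{\P^m}(d)))=\binom{\binom{m+k}{k}}{i}(d-k)^ih^i$ and substituting back into the formula for $\gDD_k(\nu_m^d)$ yields the claimed closed expression. The only technical subtlety is the fact that symmetric powers descend to operations on $K$-theory classes, so that the identity $[\Omega^1_{\P^m}\oplus\OO_{\P^m}]=[\OO_{\P^m}(-1)^{m+1}]$ suffices to force $[\Sym^j(\Omega^1_{\P^m}\oplus\OO_{\P^m})]=[\Sym^j(\OO_{\P^m}(-1)^{m+1})]$ without requiring the Euler sequence to split; this is a standard consequence of the $\lambda$-ring structure on $K(\P^m)$.
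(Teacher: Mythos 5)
Your proof is correct, and its overall skeleton is the same as the paper's: invoke Proposition \ref{prop: kth order ED degree sum of kth order polar degrees} (with a generic quadric to be in general $k$-osculating position), reduce to the identity $c(\PP^k(\OO_{\P^m}(d)))=(1+(d-k)h)^{\binom{m+k}{k}}$, and integrate against powers of $dh$. The difference lies in how that identity is justified. The paper simply uses the classical splitting $\PP^k(\OO_{\P^m}(d))\cong\OO_{\P^m}(d-k)^{\oplus\binom{m+k}{k}}$, quoted as a known fact about jet bundles of line bundles on $\P^m$. You instead prove only the weaker statement $[\PP^k(\OO_{\P^m}(d))]=\binom{m+k}{k}[\OO_{\P^m}(d-k)]$ in $K(\P^m)$, using the principal-parts filtration with graded pieces $\Sym^j\Omega^1_{\P^m}\otimes\OO_{\P^m}(d)$ (the same exact sequences the paper records in \eqref{eq: two short exact sequences}) together with the Euler sequence and the $\lambda$-ring structure; since the total Chern class is well defined on $K$-theory, this suffices. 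Your route is self-contained and avoids citing the splitting theorem, at the cost of the (correctly flagged) technical point that symmetric powers descend to $K$-theory classes; the paper's route is shorter but relies on an external structural result. Both are valid, and your telescoping computation, the resulting Chern classes $c_i=\binom{\binom{m+k}{k}}{i}(d-k)^ih^i$, and the final integration against $(dh)^{m-i}$ all check out.
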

\begin{proof}
The generic $k$th-order distance degree of $\nu_m^d$ is, by Proposition \ref{prop: kth order ED degree sum of kth order polar degrees}, equal to
\begin{equation}\label{eq: EDD k polar classes Veronese}
    \gDD_k(\nu_m^d) = \sum_{i=0}^m \mu_{k,i}(\nu_m^d) = \sum_{i=0}^m\int_X c_i(\image j_k)\,.
\end{equation}
Since $\nu_m^d$ is $k$-regular for all $k\in[d]$, we have $\image j_k=\PP^k(\nu_m^d)=\OO_{\P^m}(d-k)^{\oplus\binom{m+k}{k}}$. Hence, letting $h=c_1(\OO_{\P^m}(1))$, we have $c(\PP^k(\nu_m^d)) = (1+(d-k)h)^{\binom{m+k}{k}} = \sum_{i=0}^m\binom{\binom{m+k}{k}}{i}(d-k)^ih^i$, in particular
\[
\int_X c_i(\PP^k(\nu_m^d)) = \int_X \binom{\binom{m+k}{k}}{i}(d-k)^ih^i\cdot (dh)^{m-i}=\binom{\binom{m+k}{k}}{i}(d-k)^id^{m-i}\,,
\]
giving the desired identity.
\end{proof}

\section{Osculating eigenvectors of symmetric tensors}\label{sec: osculating eigenvectors}

We start by setting some preliminary notation. Given a vector $v=(v_0,\dots,v_m)\in\C^{m+1}$, we use the shorthand $v^\alpha$ for the product $v_0^{\alpha_0}\cdots v_m^{\alpha_m}$ for all $\alpha=(\alpha_0,\dots,\alpha_m)\in\N^{m+1}$. Furthermore we define $|\alpha|=\alpha_0+\cdots+\alpha_m$. For all $k\in\N$ we define the vector $v^k\coloneqq (v^\alpha)_{\alpha\in\N^{m+1},|\alpha|=k}\in\C^{\binom{m+k}{k}}$.

A symmetric tensor of order $d$ over $\C^{m+1}$ is an element of the tensor space $\mathrm{Sym}^d(\C^{m+1})^*$, which we identify with the space $\C[x_0,\dots,x_m]_d$ of homogeneous polynomials of degree $d$ in $m+1$ variables. In particular, we write an element $f\in \C[x_0,\dots,x_m]_d$ as
\begin{equation}\label{eq: write f}
f(x_0,\dots,x_m) = \sum_{|\alpha|=d}\binom{d}{\alpha}f_\alpha x^\alpha\,,
\end{equation}
where $\binom{d}{\alpha}=\frac{d!}{\alpha_0!\cdots\alpha_m!}$. In particular, we identify $f$ with the vector $(f_\alpha)_{|\alpha|=d}\in\C^{\binom{m+d}{d}}$.
The monomials $(x^\alpha)_{|\alpha|=d}$ form a basis for the Veronese embedding $\nu_m^d$. 

The geometry induced by a specific quadric hypersurface may differ from the generic behavior. In the following, we analyze the geometry of the Bombieri-Weyl metric.

\begin{definition}\label{def: BW inner product}
Given $f=(f_\alpha)_{|\alpha|=d}$ and $g=(g_\alpha)_{|\alpha|=d}$ with real coordinates, the Bombieri-Weyl inner product between $f$ and $g$ is 
\[
\langle f,g\rangle_\mBW\coloneqq \sum_{|\alpha|=d}\binom{d}{\alpha}f_\alpha g_\alpha\,.
\]    
\end{definition}

In particular, the Bombieri-Weyl norm of $f=(f_\alpha)_{|\alpha|=d}$ is the square root of the quadratic form $q_\mBW(f) \coloneqq \sum_{|\alpha|=d}\binom{d}{\alpha}f_\alpha^2$, while $\sqrt{q_\mBW(f-g)}$ is the Bombieri-Weyl distance between the polynomials $f$ and $g$. In this section, we aim to study the higher-order critical points of the Bombieri-Weyl distance minimization problem
\begin{equation}\label{eq: rank one approx problem}
   \min_{\ell\in(\R^{n+1})^*}q_\mBW(f-\ell^d)\,,\quad f\in \R[x_0,\dots,x_m]_d\,.
\end{equation}
As for the general case, we relax the assumptions that $f$ and $\ell$ have real coefficients; therefore, we study the higher-order critical points of the complex-valued polynomial function $q_\mBW(f-\ell^d)$. Let $q(x)=\sum_{i=0}^mx_i^2$ be the standard Euclidean quadratic form in $(\R^{m+1})^*$ and $S_q^m=\V(q(x)-1)\subseteq\R^{m+1}$ the $m$-dimensional unit sphere associated with $q$. Consider the map
\begin{equation}\label{eq: def lambda times power l}
    S_q^m\times \R \longrightarrow \R[x_0,\dots,x_m]_d\,,\quad (v,\lambda)\mapsto \lambda\langle v, x\rangle^d\,.
\end{equation}
Then the subset $\{\ell^d\mid \ell\in (\C^{m+1})^*\}\subseteq \C[x_0,\dots,x_m]_d$ is the Zariski closure of the image of the map in \eqref{eq: def lambda times power l}. More precisely, it equals the affine cone over the Veronese variety $V_m^d\subseteq\P(\C[x_0,\dots,x_m]_d)$ defined in \eqref{eq: def Segre-Veronese embedding}. Furthermore, two important identities coming from the definition of the Bombieri-Weyl quadratic form are
\begin{align}\label{eq: identity BW quadratic forms}
\begin{split}
    q_\mBW(\langle v, x\rangle^d) &= q(v)^d\quad\forall\,v\in\C^{m+1}\\
    \left\langle\prod_{j=1}^d\langle x,v_j\rangle,\prod_{j=1}^d\langle x,w_j\rangle\right\rangle_\mBW &= \prod_{j=1}^d\langle v_j,w_j\rangle\quad\forall\,v_j,w_j\in\C^{m+1}\,.
\end{split}
\end{align}

\begin{remark}\label{rmk: equivalence optimization problems}
The minimization problem \eqref{eq: rank one approx problem} can be rephrased as
\begin{equation}\label{eq: optimization problem phi}
    \min_{v\in S_q^m}\min_{\lambda\in\R}q_\mBW(f-\lambda\,\ell^d)\,,\quad f\in \R[x_0,\dots,x_m]_d\,,
\end{equation}
where $\ell=\ell(v)\coloneqq\langle x,v\rangle\in(\C^{m+1})^*$.
Let $\langle \ell^d\rangle$ be the line spanned by $\ell^d$. Then the projection of $f$ onto $\langle \ell^d\rangle$ is denoted by $P_{\langle \ell^d\rangle}(f)$. We also consider the orthogonal complement $\langle \ell^d\rangle^\perp$ of $\langle \ell^d\rangle$ with respect to the Bombieri-Weyl inner product. By the Pythagorean identity, we have that
\begin{equation}\label{eq: Pythagoras}
q_\mBW(f)=q_\mBW(P_{\langle \ell^d\rangle}(f))+q_\mBW(P_{\langle \ell^d\rangle^\perp}(f))\,.
\end{equation}
Furthermore, we have that $\min_{\lambda\in\R}q_\mBW(f-\lambda\,\ell^d) = q_\mBW(P_{\langle \ell^d\rangle^\perp}(f))$.
Hence, we can say that the original problem \eqref{eq: optimization problem phi} is now rephrased as
\begin{equation}\label{eq: equivalence optimization problems}
    \min_{v\in S_q^m}q_\mBW(P_{\langle \ell^d\rangle^\perp}(f)) \Longleftrightarrow \max_{v\in S_q^m}q_\mBW(P_{\langle \ell^d\rangle}(f))\,,
\end{equation}
and the equivalence is an immediate consequence of \eqref{eq: Pythagoras}.
The identity \eqref{eq: identity BW quadratic forms} allows us to write $P_{\langle \ell^d\rangle}(f) = \langle f, \ell^d\rangle_\mBW\, \ell^d\quad\forall\,v\in S_q^m$, where $q_\mBW(\ell^d)=1$ thanks to \eqref{eq: identity BW quadratic forms}. But then the second part of \eqref{eq: equivalence optimization problems} is in turn equivalent to
\begin{equation}\label{eq: simplified problem}
    \max_{v\in S_q^m}\langle f, \ell^d\rangle_\mBW\,.
\end{equation}
In particular, $\langle f, \ell^d\rangle_\mBW$ is a polynomial objective function of degree $d$ in the coordinates of $v$. In short, minimizing the Bombieri-Weyl distance between $f$ and the affine cone $CV_m^d$ is equivalent to maximizing the function $\langle f, \ell^d\rangle_\mBW$ for $v\in S_q^m$.
\end{remark}

\begin{lemma}\label{lem: osculating space of a Veronese variety}
The Veronese embedding $\nu_m^d\colon\P^m\hookrightarrow\P(\C[x_0,\dots,x_m]_d)$ is $k$-regular for all $k\le d$. Furthermore, for all $\ell\in(\C^{m+1})^*\setminus\{0\}$ and $k\le d$, we have
\begin{equation}
    \T_{[\ell^d]}^k(\nu_m^d) = \{[\ell^{d-k}g]\mid g\in \C[x_0,\dots,x_m]_k\}\,.
\end{equation}
\end{lemma}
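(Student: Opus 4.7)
The plan is to handle the two assertions in succession. The $k$-regularity of $\nu_m^d$ for every $k \le d$ is the $r = 1$ specialization of Corollary~\ref{corol: when Segre-Veronese k-regular}, which by Definition~\ref{def: k-regular} simultaneously gives $m_k + 1 = \binom{m+k}{k}$ and the surjectivity of the jet morphism $j_k$.

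For the osculating-space identity, the main device is the $\mathrm{GL}(\C^{m+1})$-equivariance of $\nu_m^d$: the natural linear action on $(\C^{m+1})^*$ descends to $\P^m = \P((\C^{m+1})^*)$ and, through $\mathrm{Sym}^d$, to $\P(\C[x_0,\ldots,x_m]_d)$, and $\nu_m^d$ intertwines these two actions. Consequently $k$th osculating spaces transform covariantly, and the candidate right-hand side $\ell^{d-k}\cdot\C[x_0,\ldots,x_m]_k$ also transforms compatibly, since $\C[x_0,\ldots,x_m]_k$ is $\mathrm{GL}$-stable. Choosing $g\in\mathrm{GL}(\C^{m+1})$ sending $\ell$ to $x_0$ therefore reduces the problem to proving
\[
\T_{[x_0^d]}^k(\nu_m^d) = \{[x_0^{d-k} h] \mid h \in \C[x_0,\ldots,x_m]_k\}\,.
\]

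To pin down $\T_{[x_0^d]}^k(\nu_m^d)$ explicitly, I would work in the affine chart of $\P^m$ centered at $[x_0]$, with local parameters $(t_1,\ldots,t_m)$ so that $[x_0 + \sum_i t_i x_i]$ parametrizes the chart and $t=0$ corresponds to the basepoint. Setting $\ell_t = x_0 + \sum_i t_i x_i$, a direct differentiation gives
\[
\frac{1}{|\beta|!}\frac{\partial^{|\beta|}\ell_t^d}{\partial t^\beta}\bigg|_{t=0} = \binom{d}{|\beta|}\, x_0^{d-|\beta|} x_1^{\beta_1}\cdots x_m^{\beta_m}
\]
for every $\beta=(\beta_1,\ldots,\beta_m)\in\N^m$ with $|\beta|\le k$. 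These are pairwise distinct monomials of degree $d$, hence linearly independent, and there are exactly $\binom{m+k}{k}$ of them, matching the regularity-enforced dimension $m_k+1$.

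Finally, factoring each basis vector as $x_0^{d-k}\cdot(x_0^{k-|\beta|} x^\beta)$, I would observe that as $\beta$ ranges over multi-indices with $|\beta|\le k$, the polynomials $x_0^{k-|\beta|} x^\beta$ enumerate every degree-$k$ monomial in $x_0,\ldots,x_m$ and thus form the standard monomial basis of $\C[x_0,\ldots,x_m]_k$. Hence the span of the jet vectors equals $x_0^{d-k}\cdot\C[x_0,\ldots,x_m]_k$, and transporting back by $g^{-1}$ yields the general formula. The only nonroutine ingredient is the equivariance reduction; the differentiation and the monomial count are both immediate.
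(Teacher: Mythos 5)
The paper states this lemma without giving a proof, so there is nothing to compare against directly: your job here was effectively to supply the missing argument, and your argument is correct. The two ingredients you use are exactly the ones the paper itself makes available or implicitly relies on: the $k$-regularity is the $r=1$ case of Corollary~\ref{corol: when Segre-Veronese k-regular}, and the description of $\T_{[\ell^d]}^k(\nu_m^d)$ follows from the $\mathrm{GL}(\C^{m+1})$-equivariance reduction to $\ell=x_0$ together with the jet computation $\frac{1}{|\beta|!}\partial_t^{\beta}\ell_t^d\big|_{t=0}=\binom{d}{|\beta|}x_0^{d-|\beta|}x^\beta$, whose span is visibly $x_0^{d-k}\cdot\C[x_0,\ldots,x_m]_k$ since the monomials $x_0^{k-|\beta|}x^\beta$, $|\beta|\le k$, exhaust the degree-$k$ monomials. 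Two minor remarks: your direct computation already reproves $k$-regularity (the jet matrix has rank $\binom{m+k}{k}$ at one point, hence at every point by homogeneity), so the citation of the corollary is a convenience rather than a necessity; and your description is consistent with the dual formulation \eqref{eq: Lasker} that the paper uses later, which can serve as a cross-check.
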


\begin{definition}\label{def: kth order eigenpoint}
Let $f\in\C[x_0,\dots,x_m]_d$ and $k\le d$. A nonzero vector $v\in\C^{m+1}$ is a {\em normalized eigenvector of $f$ of order $k$} if $q(v)=1$ and there exists $\lambda\in\C$ such that
\begin{equation}\label{eq: normalized eigenpair}
\frac{(d-k)!}{d!}\nabla_kf(v) = \lambda\,v^k\,,
\end{equation}
where $\nabla_kf\coloneqq \left(\frac{\partial^k f}{\partial x^\alpha}\right)_{|\alpha|=k}\in\C^{\binom{m+k}{k}}$.
The value $\lambda$ is the {\em eigenvalue of $f$ of order $k$} associated with $v$. The pair $(v,\lambda)$ is called a {\em normalized eigenpair} of $f$ of order $k$.
\end{definition}

\begin{remark}\label{rmk: even odd}
If $d$ and $k$ are odd or $d$ is odd and $k$ is even, then if $(v,\lambda)$ is a solution of \eqref{eq: normalized eigenpair}, then also $(-v,-\lambda)$ is. Instead, if $d$ is even and $k$ is odd or $d$ and $k$ are even, then if $(v,\lambda)$ is a solution of \eqref{eq: normalized eigenpair}, then also $(v,-\lambda)$ is.
\end{remark}

\begin{lemma}\label{lem: identity BW inner product partial derivatives}
Let $f\in\C[x_0,\dots,x_m]_d$ and $k\le d$. For every $v\in\C^{m+1}$, we define $\ell=\ell(v)\coloneqq\langle x,v\rangle\in(\C^{m+1})^*$. For any $\alpha\in\N^{m+1}$ with $|\alpha|=k$
\[
\langle f,\ell^{d-k}x^\alpha\rangle_\mBW = \frac{(d-k)!}{d!}\frac{\partial^k f}{\partial x^\alpha}(v)\,.
\]
\end{lemma}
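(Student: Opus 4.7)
The plan is to establish a reproducing-kernel property of the Bombieri--Weyl inner product and then differentiate it with respect to $v$. First I will show that for every $f\in\C[x_0,\ldots,x_m]_d$ and every $v\in\C^{m+1}$,
\[
\langle f,\ell(v)^d\rangle_\mBW = f(v)\,.
\]
This follows directly from Definition \ref{def: BW inner product}: by the multinomial theorem, $\ell(v)^d = \sum_{|\beta|=d}\binom{d}{\beta}v^\beta x^\beta$, so in the coordinate system used by the Bombieri--Weyl pairing the polynomial $\ell(v)^d$ has ``$g_\beta$-coordinate'' equal to $v^\beta$. Writing $f=\sum_{|\beta|=d}\binom{d}{\beta}f_\beta x^\beta$ and applying the definition yields $\langle f,\ell(v)^d\rangle_\mBW = \sum_{|\beta|=d}\binom{d}{\beta}f_\beta v^\beta = f(v)$.

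Next, I will fix $f$ and view both sides of this identity as polynomials in $v_0,\ldots,v_m$, and then apply the differential operator $\partial^k/\partial v^\alpha$ with $|\alpha|=k$. On the right-hand side, since $f(v)=\sum_{|\beta|=d}f_\beta v^\beta$ is simply $f$ with $x$ replaced by $v$, I obtain $\partial^k f(v)/\partial v^\alpha = (\partial^k f/\partial x^\alpha)(v)$. On the left-hand side, $\langle-,-\rangle_\mBW$ is bilinear and $\ell(v)^d$ depends polynomially on $v$, so the derivative passes inside the second argument, reducing the computation to $\partial^k\ell(v)^d/\partial v^\alpha$.

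Since $\ell(v)=\sum_i v_i x_i$ is linear in $v$ with $\partial\ell(v)/\partial v_i=x_i$, a short induction on $k$ (or equivalently the multinomial expansion of $\ell(v)^d$ followed by term-by-term differentiation) gives
\[
\frac{\partial^k \ell(v)^d}{\partial v^\alpha} = \frac{d!}{(d-k)!}\,\ell(v)^{d-k}\,x^\alpha\,.
\]
Combining the two computations yields
\[
\frac{d!}{(d-k)!}\,\langle f,\ell(v)^{d-k}x^\alpha\rangle_\mBW = \frac{\partial^k f}{\partial x^\alpha}(v)\,,
\]
which rearranges to the desired formula. I do not anticipate any significant obstacle: once the reproducing-kernel identity is in hand, the argument reduces to the one-line observation that differentiating this identity in $v$ turns $\ell(v)^d$ into a scalar multiple of $\ell(v)^{d-k}x^\alpha$ on the left-hand side and into the partial derivative $(\partial^k f/\partial x^\alpha)(v)$ on the right-hand side.
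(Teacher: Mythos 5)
Your proof is correct, but it takes a genuinely different route from the paper's. The paper argues by direct coordinate expansion: it expands $\ell^{d-k}x^\alpha$ in the monomial basis, evaluates the pairing term by term using the orthogonality relation $\langle\binom{d}{\gamma}x^\gamma,x^{\alpha+\beta}\rangle_\mBW=\delta_{\gamma,\alpha+\beta}$, and then matches the resulting coefficients $\binom{d-k}{\gamma-\alpha}$ against those produced by $\frac{\partial^k x^\gamma}{\partial x^\alpha}(v)$. You instead first isolate the reproducing-kernel identity $\langle f,\ell(v)^d\rangle_\mBW=f(v)$ (a one-line consequence of Definition \ref{def: BW inner product}, and essentially the rank-one case of \eqref{eq: identity BW quadratic forms}) and then apply $\partial^k/\partial v^\alpha$ to both sides, using bilinearity to differentiate inside the second argument of the pairing together with $\frac{\partial^k\ell(v)^d}{\partial v^\alpha}=\frac{d!}{(d-k)!}\ell(v)^{d-k}x^\alpha$; both of these steps are valid, so the lemma follows. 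Your approach is cleaner and more conceptual, avoiding the multinomial bookkeeping and making the apolarity structure of the Bombieri--Weyl form explicit; the paper's computation, on the other hand, is self-contained in coordinates and establishes the weighted-monomial orthogonality that is reused immediately afterwards in the proof of Proposition \ref{prop: critical points BW distance}. One minor slip: with the paper's convention \eqref{eq: write f} one has $f(v)=\sum_{|\beta|=d}\binom{d}{\beta}f_\beta v^\beta$, so when you restate $f(v)$ you dropped the multinomial coefficient; this is harmless, since all you need is that $f(v)$ is $f$ evaluated at $v$.
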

\begin{proof}
Writing $f$ as in \eqref{eq: write f} and applying Definition \ref{def: BW inner product}, we get
\begin{align*}
    \langle f,\ell^{d-k}x^\alpha\rangle_\mBW &= \sum_{|\gamma|=d}\sum_{|\beta|=d-k}\binom{d-k}{\beta}\left\langle\binom{d}{\gamma}x^\gamma,x^{\alpha+\beta}\right\rangle_\mBW f_\gamma v^\beta = \sum_{\substack{|\gamma|=d\\\gamma-\alpha\ge 0}}\binom{d-k}{\gamma-\alpha}f_\gamma v^{\gamma-\alpha}\\
    &= \frac{(d-k)!}{d!}\sum_{\substack{|\gamma|=d\\\gamma-\alpha\ge 0}}\binom{d}{\gamma-\alpha}f_\gamma v^{\gamma-\alpha} = \frac{(d-k)!}{d!}\sum_{\substack{|\gamma|=d\\\gamma-\alpha\ge 0}}\binom{d}{\gamma}f_\gamma\left[\prod_{j=0}^m\frac{\gamma_j!}{(\gamma_j-\alpha_j)!}v^{\gamma-\alpha}\right]\\
    &= \frac{(d-k)!}{d!}\sum_{\substack{|\gamma|=d\\\gamma-\alpha\ge 0}}\binom{d}{\gamma}f_\gamma\left[\frac{\partial^k x^\gamma}{\partial x^\alpha}(v)\right] = \frac{(d-k)!}{d!}\frac{\partial^k f}{\partial x^\alpha}(v)\,,
\end{align*}
namely the desired identity.
\end{proof}

\begin{proposition}\label{prop: critical points BW distance}
Let $f\in\C[x_0,\dots,x_m]_d$ and $k\le d$. For every $v\in\C^{m+1}$, we define $\ell=\ell(v)\coloneqq\langle x,v\rangle\in(\C^{m+1})^*$. A vector $\lambda\,\ell^d$ with $v\in S_q^m$ and $\lambda\in\C\setminus\{0\}$ is a critical point of order $k$ of the function $q_\mBW(f-\lambda\,\ell^d)$ if and only if $(v,\lambda)$ is a normalized eigenpair of $f$ of order $k$.
\end{proposition}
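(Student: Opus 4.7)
The plan is to unfold the higher-order critical point condition at the specific affine vector $w_0 = \lambda\ell^d$ on the cone $C(V_m^d)$ into a Bombieri--Weyl orthogonality relation, and then convert that relation into the componentwise eigenpair equation from Definition \ref{def: kth order eigenpoint} via Lemma \ref{lem: identity BW inner product partial derivatives}. Both directions of the equivalence will follow simultaneously, as every step is an equivalence.

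First, I would identify the $k$th osculating linear subspace of $C(V_m^d)$ at $w_0$. By Lemma \ref{lem: osculating space of a Veronese variety}, this is $\ell^{d-k}\cdot\C[x_0,\ldots,x_m]_k$, with basis $\{\ell^{d-k}x^\alpha : |\alpha|=k\}$. Note that the dilation direction $\ell^d = \ell^{d-k}\cdot\ell^k$ itself lies in this subspace. Interpreting the critical-point-of-order-$k$ condition at the concrete affine point $w_0=\lambda\ell^d$ as orthogonality of the gradient $\nabla d_f^2(w_0) = -2(f-\lambda\ell^d)$ against the full osculating space (with the $\ell^d$-component pinning down the scalar $\lambda$), the condition becomes
\[
\langle f - \lambda\ell^d,\, \ell^{d-k}x^\alpha\rangle_\mBW = 0 \qquad \text{for every } \alpha\in\N^{m+1} \text{ with } |\alpha|=k.
\]

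Next, I would apply Lemma \ref{lem: identity BW inner product partial derivatives} to each side. The first term becomes $\langle f,\ell^{d-k}x^\alpha\rangle_\mBW = \frac{(d-k)!}{d!}\frac{\partial^k f}{\partial x^\alpha}(v)$. For the second, applying the same lemma with $f$ replaced by $\ell^d$ and using the direct computation $\frac{\partial^k \ell^d}{\partial x^\alpha} = \frac{d!}{(d-k)!}\,\ell^{d-k}v^\alpha$ together with the normalization $\ell(v)=q(v)=1$, we obtain $\langle \ell^d,\ell^{d-k}x^\alpha\rangle_\mBW = v^\alpha$. Substituting these yields
\[
\frac{(d-k)!}{d!}\frac{\partial^k f}{\partial x^\alpha}(v) = \lambda\, v^\alpha \qquad \text{for every } |\alpha|=k,
\]
which is precisely the $\alpha$-component of the normalized eigenpair equation $\frac{(d-k)!}{d!}\nabla_k f(v) = \lambda\,v^k$ from Definition \ref{def: kth order eigenpoint}.

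The one subtle point is the correct reading of the critical-point-of-order-$k$ condition at the specific affine vector $\lambda\ell^d$: one must ensure that orthogonality is imposed against the entire linear osculating subspace $\ell^{d-k}\cdot\C[x_0,\ldots,x_m]_k$ rather than only against a complement of $\langle \ell^d\rangle$, so that the scalar $\lambda$ is genuinely determined. The remainder of the argument is then a direct algebraic manipulation using the two lemmas.
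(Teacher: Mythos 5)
Your proposal is correct and follows essentially the same route as the paper: reduce order-$k$ criticality at $\lambda\ell^d$ to Bombieri--Weyl orthogonality against the full osculating space $\ell^{d-k}\,\C[x_0,\ldots,x_m]_k$ (Lemma \ref{lem: osculating space of a Veronese variety}), then use Lemma \ref{lem: identity BW inner product partial derivatives} to turn each condition $\langle f-\lambda\ell^d,\ell^{d-k}x^\alpha\rangle_\mBW=0$ into the eigenpair equation. The only cosmetic difference is that you evaluate $\langle\ell^d,\ell^{d-k}x^\alpha\rangle_\mBW=v^\alpha$ by differentiating $\ell^d$ directly, whereas the paper uses the reproducing identities \eqref{eq: identity BW quadratic forms}; both give the same result.
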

\begin{proof}
Consider the linear form $\ell(v)=\langle x, v\rangle\in(\C^{m+1})^*$.
The tensor $\lambda\,\ell^d$ is critical of order $k$ of $q_\mBW(f-\lambda\,\ell^d)$ if and only if $\langle f-\lambda\,\ell^d, w\rangle_\mBW=0$ for every $w\in\T_{[\ell^d]}^k(\nu_m^d)$. Using Lemma \ref{lem: osculating space of a Veronese variety} and linearity of the inner product, we can simplify this problem by imposing that
\[
\langle f-\lambda\,\ell^d,\ell^{d-k}x^\alpha\rangle_\mBW=0\quad\forall\,\alpha\in\N^{m+1}\,\ |\alpha|=k\,.
\]
For any $\alpha\in\N^{m+1}$ with $|\alpha|=k$, we compute $\langle f,\ell^{d-k}x^\alpha\rangle_\mBW$ and $\langle\ell^d,\ell^{d-k}x^\alpha\rangle_\mBW$. The former is computed in  Lemma \ref{lem: identity BW inner product partial derivatives}. The latter is computed applying Lemma \ref{lem: identity BW inner product partial derivatives} replacing $f$ with $\ell^d$:
\begin{align*}
    \langle\ell^d,\ell^{d-k}x^\alpha\rangle_\mBW &= q(\ell)^{d-k}\langle\ell^k,x^\alpha\rangle_\mBW = \sum_{|\beta|=k}\binom{d}{\alpha}^{-1}v^\beta\left\langle\binom{d}{\beta}x^\beta,\binom{d}{\alpha}x^\alpha\right\rangle_\mBW\\
    &= \sum_{|\beta|=k}\binom{d}{\alpha}^{-1}v^\beta\binom{d}{\alpha}\delta_{\beta,\alpha} = v^\alpha\,,
\end{align*}
where the first and second equalities follow from \eqref{eq: identity BW quadratic forms} and $\delta_{\beta,\alpha}\neq 0$ if and only if $\beta=\alpha$, in which case $\delta_{\beta,\beta}=1$.
Summing up, the point $\lambda\,\ell^d$ with $v\in S_q^m$ is critical of order $k$ of $q_\mBW(f-\lambda\,\ell^d)$ if and only if $\frac{(d-k)!}{d!}\frac{\partial^k f}{\partial x^\alpha}(v) = \lambda\,v^\alpha$ for all $\alpha\in\N^{m+1}$ with $|\alpha|=k$, or equivalently, if and only if $(v,\lambda)$ is a normalized eigenpair of $f$.
\end{proof}

In Proposition \ref{prop: critical points BW distance}, it is fundamental to use the Bombieri-Weyl quadratic form $q_\mBW$.
If we replace $Q_\mBW$ with a generic quadric $Q$, then $\DD_k(\nu_m^d,Q)=\gDD_k(\nu_m^d)$, and the latter is computed in Corollary \ref{corol: generic kth ED degree Veronese}. Our next goal is to compute $\DD_k(\nu_m^d,Q_\mBW)$ instead.

\begin{proposition}\label{prop: kth order ED degree BW}
Let $\nu_m^d$ be the degree-$d$ Veronese embedding of $\P^m$, and equip $\R[x_0,\dots,x_m]_d$ with the Bombieri-Weyl inner product. For all $k\le d$, the $k$th-order distance degree $\DD_k(\nu_m^d,Q_\mBW)$ equals the coefficient of the monomial $h_1^mh_2^{\binom{m+k}{k}-m-1}$ in the expansion of \eqref{eq: kth order ED degree BW}.
Furthermore, the $k$th-order distance locus of $(\nu_m^d,Q_\mBW)$ is irreducible of dimension $m+\binom{m+d}{d}-\binom{m+k}{k}$.
\end{proposition}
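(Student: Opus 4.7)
The strategy is to describe $\DC_k(\nu_m^d, Q_\mBW)$ explicitly as a rank-$1$ degeneracy locus in $\P^m\times \P^n$ (with $n=\binom{m+d}{d}-1$) and to compute its class via Thom--Porteous. Proposition~\ref{prop: critical points BW distance} characterizes the order-$k$ critical points of the Bombieri--Weyl distance from $[f]$ to $V_m^d$ as order-$k$ normalized eigenpairs of $f$, i.e., as solutions to $\nabla_k f(v)=\lambda v^k$. Using the isomorphism $\nu_m^d\colon \P^m\to V_m^d$ to identify the first factor, this shows that $\DC_k(\nu_m^d, Q_\mBW)$ is contained in the \emph{parallelism locus}
\[
\Pi\coloneqq\{([v], [f])\in\P^m\times\P^n : \nabla_k f(v)\wedge v^k=0\}.
\]
Setting $N=\binom{m+k}{k}$, the fiber of $\Pi$ over any $[v]\in\P^m$ is the projective linear subspace $\{[f]:\nabla_k f(v)\in\langle v^k\rangle\}$ of dimension $n-N+1$, so $\Pi$ is an irreducible projective bundle over $\P^m$ of dimension $m+n-N+1$. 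Since $\DC_k$ is itself irreducible of the same dimension $m+n-m_k$ by Lemma~\ref{lem: higher-order proj ED correspondence is irreducible}, I conclude $\DC_k=\Pi$. In particular $\DL_k=\varphi_{2,k}(\DC_k)$ is irreducible, and of dimension $m+\binom{m+d}{d}-\binom{m+k}{k}$ once $\varphi_{2,k}$ is shown to be generically finite.

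To compute $[\DC_k]\in A^*(\P^m\times\P^n)$, I introduce on $\P^m\times\P^n$ the bundle morphism
\[
\alpha\colon \OO(-(d-k),-1)\oplus \OO(-k,0)\longrightarrow \OO^{\oplus N},\qquad (a,b)\mapsto a\,\nabla_k f(v)+b\,v^k,
\]
where the twists on the source are chosen so that $\alpha$ is globally well defined. Since $v^k$ is nowhere zero on $\P^m$, the condition $\nabla_k f(v)\wedge v^k=0$ is exactly $\rank(\alpha)\le 1$, which has the expected codimension $(2-1)(N-1)=N-1$, matching $\codim\DC_k$. Thom--Porteous then yields
\[
[\DC_k] = c_{N-1}(F-E),\qquad c(F-E) = \frac{1}{(1-(d-k)h_1-h_2)(1-k h_1)}.
\]

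By definition, $\DD_k(\nu_m^d,Q_\mBW) = \int [\DC_k]\cdot h_2^{\dim \DC_k}$, which is the coefficient of $h_1^m h_2^{N-1-m}$ in $[\DC_k]$. The sign substitution $h_i\mapsto -h_i$ sends $c(F-E)$ to $1/[(1+(d-k)h_1+h_2)(1+kh_1)]$ and multiplies every degree-$(N-1)$ monomial by $(-1)^{N-1}$, so this coefficient equals that of $h_1^m h_2^{N-1-m}$ in $(-1)^{N-1}/[(1+(d-k)h_1+h_2)(1+kh_1)]$. A direct expansion gives $(1+(d-k)h_1+h_2)(1+kh_1)=1+T$ with $T=dh_1+h_2+k(d-k)h_1^2+kh_1h_2$, recovering exactly the formula \eqref{eq: kth order ED degree BW}. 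The resulting coefficient is a positive integer (a sum of nonnegative terms with leading summand $k^m>0$), which forces $\varphi_{2,k}$ to be generically finite and completes the argument. The main conceptual input is Proposition~\ref{prop: critical points BW distance}, which converts the critical-point condition into the rank-$\le 1$ degeneracy of a concrete $2\times N$ matrix of sections; the remaining step, pinning down the correct line bundle twists on the source of $\alpha$, is the only delicate bookkeeping, after which the calculation is standard intersection theory.
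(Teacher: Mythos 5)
Your proof follows the same strategy as the paper: use Proposition~\ref{prop: critical points BW distance} to identify $\DC_k(\nu_m^d,Q_\mBW)$ with the rank-$\le 1$ degeneracy locus of the $\binom{m+k}{k}\times 2$ matrix $(\nabla_k f(x)\mid x^k)$, then apply Thom--Porteous. The only difference is cosmetic: the paper takes the bundle morphism to be the transpose $\OO_Z^{\oplus N}\to\OO_Z(k,0)\oplus\OO_Z(d-k,1)$ and uses Porteous in the form $(-1)^{N-1}c_{N-1}(\EE-\FF)$, while you work with the dual map $\OO(-(d-k),-1)\oplus\OO(-k,0)\to\OO^{\oplus N}$ and extract $c_{N-1}(F-E)$; these agree after your sign substitution, and your identification of $(1+(d-k)h_1+h_2)(1+kh_1)=1+T$ matches the paper's computation of $c_t(\EE-\FF)$.

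One caveat: your closing assertion that the Porteous coefficient is ``a sum of nonnegative terms with leading summand $k^m$'' is not visible from the alternating expansion \eqref{eq: kth order ED degree BW} and is not actually established; as stated it reads as a claim rather than a proof. (It \emph{is} true that if the coefficient is positive then $\varphi_{2,k}$ must be generically finite, since $(\pr_2)_*[\DC_k]=0$ otherwise, so the logic is sound once positivity is granted.) A cleaner route is the one you sketched first: $\DC_k$ is irreducible of the expected dimension by Lemma~\ref{lem: higher-order proj ED correspondence is irreducible}, so $[\DC_k]$ is the class of an effective irreducible cycle supported on the degeneracy locus, and generic finiteness of $\varphi_{2,k}$ is what Theorem~\ref{thm: projection data locus Veronese BW is either birational or of degree 2} establishes directly by exhibiting an isolated fiber. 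For the purposes of Proposition~\ref{prop: kth order ED degree BW} alone, however, the paper also does not give a detailed argument for this point, so your level of rigor is comparable.
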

\begin{proof}
By Proposition \ref{prop: critical points BW distance}, the $k$th-order critical points of the Bombieri-Weyl distance function from $f$ restricted to $\nu_m^d(\P^m)$ correspond to the $k$th-order eigenvectors of $f$.
Consider the $\binom{m+k}{k}\times 2$ matrix
\begin{equation}\label{eq: matrix defining order-k eigenvectors}
M_k(x,f)\coloneqq
\begin{pmatrix}
    \nabla_k f(x) & x^k
\end{pmatrix}\,.
\end{equation}
We consider the pair $([x],[f])\in Z\coloneqq \P^m\times \P(\C[x_0,\dots,x_m]_d)$. Note that the columns of $M_k(x,f)$ have bidegrees $(k,0)$ and $(d-k,1)$ in the coordinates of $x$ and $f$, respectively. This means that the transpose of $M_k(x,f)$ defines the fiber $\varphi_{[x],[f]}$ over $([x],[f])$ of the vector bundle morphism
\begin{equation}\label{eq: morphism phi}
    \varphi\colon \EE\to\FF\,,\quad \EE \coloneqq \OO_Z^{\binom{m+k}{k}}\,,\quad \FF\coloneqq \OO_Z(k,0)\oplus\OO_Z(d-k,1)\,.
\end{equation}
The degeneracy locus $D(\varphi)\coloneqq\{z\in Z\mid \rank\varphi_z\le 1\}$ coincides with the $k$th-order distance correspondence $\DC_k(\nu_m^d,Q_\mBW)$ introduced in Definition \ref{def: kth distance correspondence}. Observe that $\DC_k(\nu_m^d,Q_\mBW)\neq\emptyset$ and
\begin{align*}
\dim \DC_k(\nu_m^d,Q_\mBW)= m+\binom{m+d}{d}-\binom{m+k}{k}= \dim Z-(\rank\EE-1)(\rank\FF-1)
\end{align*}
coincides with the expected dimension of $D(\varphi)$. Therefore the degree of $D(\varphi)$, or the degree of $\DC_k(\nu_m^d,Q_\mBW)$, can be computed applying Porteous' formula \cite[(4.2), p. 86]{arbarello1985geometry}:
\begin{equation}
\deg D(\varphi) 
= (-1)^{\binom{m+k}{k}-1}c_{\binom{m+k}{k}-1}(\EE-\FF)\,,
\end{equation}
where
\begin{align*}
c_t(\EE-\FF) &= \frac{c_t(\EE)}{c_t(\FF)} = \frac{1}{(1+kh_1)(1+(d-k)h_1+h_2)}\\
&= \frac{1}{1+dh_1+h_2+k(d-k)h_1^2+kh_1h_2}\\
&= \sum_{j=0}^\infty(-1)^j(dh_1+h_2+k(d-k)h_1^2+kh_1h_2)^j\,.
\end{align*}
The $k$th-order distance locus $\DL_k(\nu_m^d,Q_\mBW)=\varphi_{2,k}(\DC_k(\nu_m^d,Q_\mBW))$ is irreducible of dimension $m+\binom{m+d}{d}-\binom{m+k}{k}$ by Lemma \ref{lem: higher-order proj ED correspondence is irreducible}, or of codimension $\binom{m+k}{k}-m-1$. Hence $\DD_k(\nu_m^d,Q_\mBW)$ equals the coefficient of the monomial $h_1^mh_2^{\binom{m+k}{k}-m-1}$ in the previous expansion.
\end{proof}

We determine close formulas for $\DD_k(\nu_m^d,Q_\mBW)$ for small values of $m$.

\begin{corollary}\label{corol: formulas kth order ED degree small n}
For all $k\le d$, we have
\begin{align*}
    \DD_k(\nu_1^d,Q_\mBW) &= k(d-k+1)\\    
    \DD_k(\nu_2^d,Q_\mBW) &= \frac{(d-k)^2}{2}\binom{k+2}{k}^2-\frac{(d-k)(3d-5k)}{2}\binom{k+2}{k}+(d-2k)^2\,.
\end{align*}
\end{corollary}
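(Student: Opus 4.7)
The strategy is to apply Proposition \ref{prop: kth order ED degree BW} directly: for each of $m=1,2$ extract the appropriate coefficient from the generating series
\[
C \coloneqq (-1)^{\binom{m+k}{k}-1}\sum_{j=0}^\infty(-1)^j(dh_1+h_2+k(d-k)h_1^2+kh_1h_2)^j\,,
\]
namely the coefficient of $h_1^m h_2^{\binom{m+k}{k}-m-1}$. In both cases I would first collect terms by $h_1$-degree by writing
\[
dh_1 + h_2 + k(d-k)h_1^2 + kh_1h_2 \;=\; \bigl(d + k(d-k)h_1 + kh_2\bigr)\,h_1 + h_2\,,
\]
and then apply the binomial theorem to the $j$th power, discarding all powers of $h_1$ above what is needed.

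For $m=1$, one needs the coefficient of $h_1 h_2^{k-1}$, so one works modulo $h_1^2$. In that quotient, the term $k(d-k)h_1^2$ disappears, and the binomial expansion of $\bigl((d+kh_2)h_1 + h_2\bigr)^j$ truncates to $h_2^j + j(d+kh_2)h_1 h_2^{j-1}$. Reading off the $h_1h_2^{k-1}$-coefficient, only $j=k$ and $j=k-1$ contribute, giving $(-1)^k[(-1)^k kd + (-1)^{k-1}(k-1)k] = k(d-k+1)$.

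For $m=2$, set $N = \binom{k+2}{2}$; I need the coefficient of $h_1^2 h_2^{N-3}$, so I work modulo $h_1^3$. The binomial expansion of $\bigl((d + k(d-k)h_1 + kh_2)h_1 + h_2\bigr)^j$ truncated to $h_1$-degree $\le 2$ produces three groups of terms (indices $i=0,1,2$). The $i=0$ piece contributes nothing; the $i=1$ piece contributes $jk(d-k)h_1^2 h_2^{j-1}$ after keeping the relevant part of $d + k(d-k)h_1 + kh_2$; the $i=2$ piece needs only the constant-in-$h_1$ part $(d + kh_2)^2 = d^2 + 2dkh_2 + k^2h_2^2$ of $(d+k(d-k)h_1+kh_2)^2$. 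Extracting the $h_2^{N-3}$-coefficient then picks up exactly four summands, at $j = N-1, N-2, N-2, N-3$, and after the global sign $(-1)^{N-1}$ yields
\[
\binom{N-1}{2}d^2 - 2dk\binom{N-2}{2} + k^2\binom{N-3}{2} - (N-2)k(d-k)\,.
\]
The final step is purely algebraic: expand each binomial coefficient in $N$ and regroup the resulting polynomial in $N$ of degree two. I expect the coefficient of $N^2$ to collapse to $\tfrac{1}{2}(d-k)^2$, the coefficient of $N$ to factor as $-\tfrac{1}{2}(d-k)(3d-5k)$, and the constant term to equal $(d-2k)^2$, matching the asserted formula.

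The only genuine obstacle is the bookkeeping for $m=2$: correctly identifying which terms of the expansion of $(d + k(d-k)h_1 + kh_2)^i$ for $i=1,2$ survive modulo $h_1^3$ and contribute to $h_1^2$. Everything else reduces to binomial-coefficient identities. I would double-check the final factorization by specializing at small $(k,d)$ (for instance $k=1$, recovering the classical Bombieri--Weyl ED degree $3d^2 - 6d + 4$ from \cite{DHOST}) to confirm that the sign conventions and index shifts are correct.
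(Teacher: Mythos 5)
Your proposal is correct and follows essentially the same route as the paper's proof: extract the coefficient of $h_1^m h_2^{\binom{m+k}{k}-m-1}$ from the series of Proposition~\ref{prop: kth order ED degree BW}, identify the contributing summands at $j\in\{k-1,k\}$ for $m=1$ and $j\in\{N-3,N-2,N-1\}$ for $m=2$, and simplify; your binomial grouping of the base as $\bigl(d+k(d-k)h_1+kh_2\bigr)h_1+h_2$ is just a repackaging of the paper's multinomial expansion, and your intermediate expression coincides with \eqref{eq: formula EDD k Veronese PP2}, with the final regrouping in $N$ checking out exactly as you predict. One caveat on your proposed sanity check: at $(m,k)=(2,1)$ the formula specializes to $d^2-d+1$, the Cartwright--Sturmfels eigenvector count referenced after Theorem~\ref{thm: projection data locus Veronese BW is either birational or of degree 2}, not $3d^2-6d+4$, so that is the value you should match when verifying.
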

\begin{proof}
Let $m=1$ and $k\le d$. By Proposition \ref{prop: kth order ED degree BW}, $\DD_k(\nu_1^d,Q_\mBW)$ is the coefficient of $h_1h_2^{k-1}$ in the expansion of
\begin{equation}
	(-1)^{k}\sum_{j=0}^\infty(-1)^j(dh_1+h_2+kh_1h_2)^j \in \frac{\Z[h_1,h_2]}{\langle h_1^2,h_2^{d+1}\rangle}\,.
\end{equation}
Observe that
\begin{align*}
	\sum_{j=0}^\infty(-1)^j(dh_1+h_2+kh_1h_2)^j &= \sum_{j=0}^\infty(-1)^j\sum_{i_1+i_2+i_3=j}\binom{j}{i_1,i_2,i_3}d^{i_1}k^{i_3}h_1^{i_1+i_3}h_2^{i_2+i_3}\,.
\end{align*}
To extract the coefficient of $h_1h_2^{k-1}$, we need to impose that $i_1+i_3=1$, hence $(i_1,i_3)\in\{(1,0),(0,1)\}$ and
$i_2=j-1$, therefore, from the inner sum we extract the two summands $jdh_1h_2^{j-1}+jkh_1h_2^j$. The coefficient of $h_1h_2^{k-1}$ is obtained selecting $j\in\{k-1,k\}$, hence
\[
\DD_k(\nu_1^d,Q_\mBW) = (-1)^k[(-1)^{k-1}(k-1)k+(-1)^kkd]=k(d-k+1)\,.
\]

Now consider $m=2$ and $k\le d$. In this case $\DD_k(\nu_2^d,Q_\mBW)$ is the coefficient of $h_1^2h_2^{\binom{k+2}{2}-3}$ in the expansion of
\begin{equation}
	(-1)^{\binom{k+2}{k}-1}\sum_{j=0}^\infty(-1)^j(dh_1+h_2+k(d-k)h_1^2+kh_1h_2)^j \in \frac{\Z[h_1,h_2]}{\langle h_1^3,h_2^{\binom{d+2}{d}}\rangle}\,.
\end{equation}
Observe that
\begin{align*}
	(dh_1+h_2+k(d-k)h_1^2+kh_1h_2)^j = \sum_{|i|=j}\binom{j}{i}d^{i_1}k^{i_3+i_4}(d-k)^{i_3}h_1^{i_1+2i_3+i_4}h_2^{i_2+i_4}\,.
\end{align*}
To extract the coefficient of $h_1^2h_2^{\binom{k+2}{2}-3}$, we need to assume that $i_1+2i_3+i_4=2$. This implies that $(i_1,i_3,i_4)\in\{(2,0,0),(1,0,1),(0,1,0),(0,0,2)\}$. Using $|i|=j$, we obtain $i_2=j-i_1-i_3-i_4$ and the four possible summands
\[
\frac{j(j-1)}{2}d^2h_1^2h_2^{j-2}+j(j-1)dkh_1^2h_2^{j-1}+jk(d-k)h_1^2h_2^{j-1}+\frac{j(j-1)}{2}k^2h_1^2h_2^{j}\,.
\]
Therefore, the coefficient of $h_1^2h_2^{\binom{k+2}{2}-3}$ is obtained selecting the indices $j\in\{\binom{k+2}{2}-3,\binom{k+2}{2}-2,\binom{k+2}{2}-1\}$. Calling $S\coloneqq \binom{k+2}{k}$, we obtain the value
\begin{align}\label{eq: formula EDD k Veronese PP2}
\begin{split}
\DD_k(\nu_2^d,Q_\mBW) &= \frac{(S-3)(S-4)}{2}k^2-(S-2)(S-3)dk-(S-2)k(d-k)\\
&\quad+\frac{(S-1)(S-2)}{2}d^2\\
&= \frac{(d-k)^2}{2}S^2-\frac{(d-k)(3d-5k)}{2}S+(d-2k)^2\,,
\end{split}
\end{align}
where we have already multiplied every summand by the common coefficient $(-1)^{\binom{k+2}{k}-1}$.
\end{proof}

It is interesting to compare the formulas in Corollary \ref{corol: formulas kth order ED degree small n} with
\begin{align*}
    \gDD_k(\nu_1^d) &= d+(k+1)(d-k)\\
    \gDD_k(\nu_2^d) &= \frac{(d-k)^2}{2}S^2+\frac{(d+k)(d-k)}{2}S+d^2\,,
\end{align*}
computed using Corollary \ref{corol: generic kth ED degree Veronese}. For example $\DD_2(\nu_2^3,Q_\mBW)=22<42=\gDD_2(\nu_2^3)$.

\subsection{Proof of Theorem \ref{thm: projection data locus Veronese BW is either birational or of degree 2}}\label{proof4}

The next proof computes the degrees $\deg\varphi_{2,k}$ and $\deg\DL_k(\nu_m^d,Q)$, namely the two factors of $\DD_k(f,Q)$, when either $Q=Q_\mBW$ or $(f,Q)$ is in general $k$-osculating position. It is a necessary step towards the proof of the main Theorem \ref{thm: generic ED degree general polynomial map}.

\begin{proof}[Proof of Theorem \ref{thm: projection data locus Veronese BW is either birational or of degree 2}]
By definition, the Veronese embedding $\nu_m^d\colon\P^m\hookrightarrow\P(\C[x_0,\dots,x_m]_d)$ sends the class $[\ell]\in\P^m$ of the linear form $\ell=\langle v,x\rangle$ for some $v=(v_0,\dots,v_m)\in\C^{m+1}$ to $[\ell^d]\in\P(\C[x_0,\dots,x_m]_d)$. For all $[\ell]\in\P^m$, we have the identity
\begin{equation}\label{eq: Lasker}
\T_{[\ell^d]}^k(\nu_m^d)^\perp = \left\{[f]\in\P(\C[x_0,\dots,x_m]_d)\ \bigg|\ \text{$\frac{\partial^k f}{\partial x^\alpha}(v)=0$ for all $\alpha\in\N^{m+1}$ with $|\alpha|=k$}\right\}\,,
\end{equation}
which is a generalization of a result attributed to Lasker, see \cite[\S1.2, Proposition 1]{ottaviani2013five}, that follows by Lemma \ref{lem: identity BW inner product partial derivatives}.

We consider first the case $(m,k)=(1,d-1)$. Using \eqref{eq: Lasker}, one verifies that $\T_{[\ell^d]}^k(\nu_1^d)^\perp=\{[(\ell^\perp)^{k+1}g]\in\P^d\mid g\in\C[x_0,x_1]_{d-k-1}\}$ for all $k\le d$, where $\ell^\perp = \langle v^\perp,x\rangle$ and $v^\perp=(v_1,-v_0)$. In particular $\T_{[\ell^d]}^{d-1}(\nu_1^d)^\perp=\{[(\ell^\perp)^d]\}$, hence $\N_{[\ell^d]}^{d-1}(\nu_1^d,Q_\mBW)$ is the projective subspace generated by $[\ell^d]$ and $[(\ell^\perp)^d]$. If $v$ is generic, in particular if $v_0^2+v_1^2\neq 0$, then $\N_{[\ell^d]}^{d-1}(\nu_1^d,Q_\mBW)$ is a projective line; otherwise, it coincides with the point $[\ell^d]$.

Now consider a generic point $[f]\in\DL_{d-1}(\nu_1^d,Q_\mBW)$, in particular $f=\lambda\ell^d+\mu(\ell^\perp)^d$ for some $(\lambda,\nu)\in\C^2\setminus\{(0,0)\}$ and $\ell=\langle v,x\rangle$ for some $v=(v_0,v_1)\in\C^2$ such that $v_0^2+v_1^2\neq 0$. From the previous considerations, we conclude that $f\in \N_{[\ell^d]}^{d-1}(\nu_1^d,Q_\mBW)=\N_{[(\ell^\perp)^d]}^{d-1}(\nu_1^d,Q_\mBW)$, implying that $\varphi_{1,d-1}(\varphi_{2,d-1}^{-1}([f]))=\{[\ell^d],[(\ell^\perp)^d]\}$. We conclude that $\deg\varphi_{2,d-1}=2$. Furthermore, by Corollary \ref{corol: formulas kth order ED degree small n} we have $\DD_{d-1}(\nu_1^d,Q_\mBW)=2(d-1)$, hence necessarily $\deg\DL_{d-1}(\nu_1^d,Q_\mBW)=d-1$.

Now assume that $(m,k)\neq(1,d-1)$. To prove our statement, it is enough to find a point $[f]\in\DL_k(\nu_m^d,Q_\mBW)$ such that $\varphi_{2,k}^{-1}([f])$ is zero-dimensional, reduced, and of degree one. Indeed, this would imply that $\varphi_{2,k}^{-1}([f])$ consists of a simple point for all $[f]$ in an open dense subset of $\DL_k(\nu_m^d,Q_\mBW)$, thus yielding the birationality of $\varphi_{2,k}$ over its image $\DL_k(\nu_m^d,Q_\mBW)$. Thanks to Proposition \ref{prop: critical points BW distance}, our claim is equivalent to finding a homogeneous polynomial $f\in\C[x_0,\dots,x_m]_d$ with a unique normalized eigenvector of order $k$.
Fix the standard basis vector $e_0=(1,0,\dots,0)\in\C^{m+1}$, and consider the point $[x_0^d]=[\langle e_0,x\rangle^d]\in \nu_m^d(\P^m)$. Using \eqref{eq: Lasker}, one verifies that
\[
\T_{[x_0^d]}^k(\nu_m^d)^\perp = \left\{[f]\in\P(\C[x_0,\dots,x_m]_d) \mid \deg_{x_0}(f)\le d-k-1\right\}
\]
for all $k\le d-1$, while $\T_{[x_0^d]}^d(\nu_m^d)^\perp = \emptyset$. This means that $\N_{[x_0^d]}^d(\nu_m^d,Q_\mBW)=\{[x_0^d]\}$, hence the morphism $\varphi_{2,k}$ is birational and $\DL_d(\nu_m^d,Q_\mBW)=\nu_m^d(\P^m)$. Now, assume $k\le d-1$. A generic element of $\N_{[x_0^d]}^k(\nu_m^d,Q_\mBW)$ is the class of a polynomial of the form $f=x_0^d+\sum_{i=0}^{d-k-1}x_0^{d-k-1-i}g_i$, where $g_i\in\C[x_1,\dots,x_m]_{k+1+i}$ for all $i\in\{0,\dots,d-k-1\}$. In particular, we consider the point $[f]=[x_0^d+g]\in \N_{[x_0^d]}^k(\nu_m^d,Q_\mBW)$ for some generic $g\in\C[x_1,\dots,x_m]_d$. The $k$th-order eigenvectors of $f$ are the vectors $x\in\C^{m+1}$ such that $\rank M_k(x,f)\le 1$, where $M_k(x,f)$ is defined in \eqref{eq: matrix defining order-k eigenvectors}. Let $\widetilde{x}=(x_1,\dots,x_m)$. For our choice of $f$,
\[
M_k(x,f) =
\left(
\begin{array}{c|ccc|c|ccc|c}
    \frac{d!}{(d-k)!}x_0^{d-k} & 0 & \cdots & 0 & \cdots & 0 & \cdots & 0 & \nabla_kg(\widetilde{x}) \\
    x_0^d & x_0^{d-1}x_1 & \cdots & x_0^{d-1}x_m & \cdots & x_0x_1^d & \cdots & x_0x_m^d & \widetilde{x}^k
\end{array}
\right)^\mT
\]
Observe that the block matrix with rows $\nabla_kg(\widetilde{x})$ and $\widetilde{x}^k$ coincides with $A_k(\widetilde{x},g)$. Since $g\in\C[x_1,\dots,x_m]_d$ is generic and $(m,k)\neq(1,d-1)$, then $\rank M_k(\widetilde{x},g)\le 1$ if and only if $\widetilde{x}=0$. This implies that the unique solution of $\rank M_k(x,f)\le 1$, or the unique $k$th-order eigenvector of $f$, is $x=e_0$. Therefore we have found a point $[f]\in\DL_k(\nu_m^d,Q_\mBW)$ such that $\varphi_{2,k}^{-1}([f])=\{([x_0^d],[f])\}$, hence $\varphi_{2,k}$ is birational over its image $\DL_k(\nu_m^d,Q_\mBW)$. The last part of the statement descends by Proposition \ref{prop: kth order ED degree BW}.

Now we prove the last part of the statement. First, let $(m,k)=(1,d-1)$ with $d\ge 1$ and consider a pair $(\nu_1^d,Q)$ in general $(d-1)$-osculating position. In this case $\image j_{d-1}=\PP^{d-1}(\nu_1^d)=\OO_{\P^1}(1)^{\oplus d}$, hence $\dim\ker j_{d-1}=1$, in particular $\ker j_{d-1}=\OO_{\P^1}(a)$ for some $a\in\Z$. To compute $a$, we use the sequence \eqref{eq: k-jet exact sequence} which in this case simplifies to
\[
0 \to \OO_{\P^1}(a) \to \OO_{\P^1}^{\oplus(d+1)} \to \OO_{\P^1}(1)^{\oplus d} \to 0\,,
\]
from which we obtain the identity $0=c_1(\OO_{\P^1}^{\oplus(d+1)})=c_1(\OO_{\P^1}(a))+c_1(\OO_{\P^1}(1)^{\oplus d})$, giving $a=-d$. As a consequence $\EE_{d-1}(\nu_1^d,Q)=(\ker j_{d-1})^\vee\oplus\OO_{\P^1}(d)=\OO_{\P^1}(d)^{\oplus 2}$, in particular the tautological line bundle $\OO_{\EE_{d-1}(\nu_1^d,Q)^\vee}(1)$, whose sections are given by $H^0(X,(\ker j_{d-1})^\vee\oplus \OO_X(1))=H^0(\P^1,\OO_{\P^1}(d)^{\oplus 2})$, is very ample, therefore the morphism $\varphi_{2,d-1}\colon\P(\EE_{d-1}(\nu_1^d,Q)^\vee)=\DC_{d-1}(\nu_1^d,Q)\to\DL_{d-1}(\nu_1^d,Q)$, induced by $\OO_{\EE_{d-1}(\nu_1^d,Q)^\vee}(1)$, is birational.

Finally we consider the case $(m,k)\neq(1,d-1)$ with $k\le d$. Consider the open subset $\mathcal{U}\subseteq\P(\C[x_0,\dots,x_m]_2)$ of nondegenerate quadratic forms on $\C^{m+1}$ and the incidence variety
\[
\Sigma \coloneqq \overline{\left\{([v],[u],[Q]) \mid \text{$[Q]\in\mathcal{U}$ and $[v]\in \nu_m^d(\P^m)$ is critical of order $k$ for $d_u^2$}\right\}}
\]
together with the morphism $\psi_{23}\colon\Sigma\to\P^n\times\P(\C[x_0,\dots,x_m]_2)$ induced by the projection of $\Sigma$ onto the last two components. For any $[Q]\in\mathcal{U}$ and $[u]\in\DL_k(\nu_m^d,Q)$, we have $\psi_{23}^{-1}([u],[Q])=\varphi_{2,k}^{-1}([u])$. From the first part of the proof we derive that, if $(m,k)\neq (1,d-1)$, then $\psi_{23}^{-1}([u],[Q_\mBW])$ is a point for a generic $[u]\in\DL_k(\nu_m^d,Q_\mBW)$, in particular $\psi_{23}$ is birational. As a consequence, if we define
\[
\mathcal{V}\coloneqq\bigcup_{\substack{Q\in\mathcal{U}\\\text{$\varphi_{2,k}$ birational}}}\DL_k(\nu_m^d,Q)\times\{[Q]\}\subseteq\P^n\times\P(\C[x_0,\dots,x_m]_2)\,,
\]
then $\mathcal{V}\neq\emptyset$ because $\DL_k(\nu_m^d,Q_\mBW)\times\{[Q_\mBW]\}\subseteq\mathcal{V}$. Furthermore $\overline{\mathcal{V}}=\psi_{23}(\Sigma)$, or equivalently $\Sigma=\psi_{23}^{-1}(\overline{\mathcal{V}})$. Now consider the projection $\psi_3\colon\Sigma\to\P(\C[x_0,\dots,x_m]_2)$, which is dominant because $\psi_3^{-1}([Q])\cong\DC_k(\nu_m^d,Q)$ for all $Q\in\mathcal{U}$. Then $\psi_3(\overline{\mathcal{V}})=\P(\C[x_0,\dots,x_m]_2)$, which is equivalent to say that, for a generic $[Q]\in\P(\C[x_0,\dots,x_m]_2)$, the morphism $\varphi_{2,k}$ is birational. The last formula \eqref{eq: generic degree data locus Veronese} for the degree of the data locus $\DL_k(\nu_m^d,Q)$ is obtained from Corollary \ref{corol: generic kth ED degree Veronese}.
\end{proof}

\subsection{Proof of Theorem \ref{thm: generic ED degree general polynomial map}}\label{proof2}

The proof of Theorem \ref{thm: generic ED degree general polynomial map} requires the following technical lemma.

\begin{lemma}\label{lem: technical lemma}
Let $\P(V)\subseteq\P(W)$ be projective spaces of dimensions $n$ and $r$, respectively. Consider the morphism $f=\pi\circ f'\colon X\to\P(V)$, where $f'\colon X\to\P(W)$ has generic $k$-osculating dimension $m_k$ and $\pi \colon \P(W) \dashrightarrow \P(V)$ is a generic projective linear map for some $m_k<n$. The following holds.
\begin{enumerate}
    \item $f$ has generic $k$-osculating dimension $m_k$ and
    \begin{equation}\label{eq: identity gDD f and f'}
    \gDD_k(f) = \gDD_k(f')\,.
    \end{equation}
    \item Let $Q\subseteq\P(V)$ be a quadric hypersurface that intersects $f(X)$ transversally, namely $f(X)\cap Q$ is smooth and disjoint from the singular locus of $X$. Then $Q$ induces a nonsingular quadric hypersurface $Q' \subseteq\P(W)$ that intersects $f'(X)$ transversally and
    \begin{equation}\label{eq: isomorphism between intersection data locus and data locus of projection}
    \DL_k(f, Q) \cong \DL_k(f',Q') \cap \P(V)\,.
    \end{equation}
\end{enumerate}
\end{lemma}
\begin{proof}
Consider the $k$th jet morphisms $j_k \colon V\otimes \OO_X \to \PP^k(f)$ and $j_k' \colon W\otimes\OO_X \to \PP^k(f')$, associated with $f$ and $f'$, respectively. Choose bases $(\sigma_0, \dots, \sigma_n)$ of $V$ and $(\tau_0, \dots, \tau_r)$ of $W$, and write:
\[
f(p) = (\sigma_0(p), \dots, \sigma_n(p))\,,\quad f'(p) = (\tau_0(p), \dots, \tau_r(p))\,.
\]
The generic $k$th osculating dimension of $f$ (and similarly $f'$) corresponds to the rank of the matrices
\begin{align*}
A^{(k)}_p(f) &\coloneqq 
\left(
\begin{array}{c|c|c|c}
j_{k,p}(\sigma_0)^\mT & j_{k,p}(\sigma_1)^\mT & \cdots & j_{k,p}(\sigma_n)^\mT 
\end{array}
\right), \\
A^{(k)}_p(f') &\coloneqq 
\left(
\begin{array}{c|c|c|c}
j_{k,p}(\tau_0)^\mT & j_{k,p}(\tau_1)^\mT & \cdots & j_{k,p}(\tau_r)^\mT 
\end{array}
\right).
\end{align*}
Since $\pi$ is generic, the columns of $A^{(k)}_p(f)$ are generic linear combinations of those of $A^{(k)}_p(f')$. Hence
\[
\rank(A^{(k)}_p(f)) = \min\{n, m_k\} = m_k
\]
for a generic $p\in X$ and $f$ has generic $k$-osculating dimension $m_k$. Furthermore, this shows that at a generic point $p\in X$
\begin{equation}\label{eq: projection tangent}
    \pi(\T_p^k(f'))=\T_p^k(f)\,.
\end{equation}
In \cite{piene2022higher} and \cite{piene1978polar} Piene proved that $\mu_{k,j}(f) = \mu_{k,j}(f')$. This yields the identity \eqref{eq: identity gDD f and f'}.

We now show part $(2)$. Consider the linear map $\Pi \colon W \to V$ associated with $\pi$ and let $\Lambda = \P(\ker\Pi) \subseteq\P(W)$. In particular $\Lambda=\emptyset$ if $n=r$, otherwise $\dim\Lambda=r-n-1$. Furthermore $\Lambda\cap f'(X)=\emptyset$ by genericity of $\pi$.

Let $Q\subseteq\P(V)$ be a nonsingular quadric hypersurface that intersects $f(X)$ transversally. Denote by $\langle\,,\,\rangle_Q\colon V\times V\to\C$ the nondegenerate symmetric bilinear form on $V$ associated with $Q$. We define a nonsingular quadric $Q'\subseteq\P(W)$ whose associated nondegenerate symmetric bilinear form $\langle\,,\,\rangle_{Q'}\colon W\times W\to\C$ extends $\langle\,,\,\rangle_Q$ over $W$ and yields the orthogonal decomposition $W=\ker\Pi\oplus\image\Pi=\ker\Pi\oplus V$, in particular $\langle x,y\rangle_{Q'}=0$ for all $x\in\ker\Pi$ and $y\in V$. We denote by $(\,)^\perp$ and $(\,)^{\perp'}$ the orthogonal complements in $\P(V)$ and $\P(W)$ associated with $Q$ and $Q'$, respectively. Consider a generic point $p\in X$ such that \eqref{eq: projection tangent} holds. We show that
\begin{equation}\label{eq: intersection of normal spaces}
\T^k_p(f')^{\perp'} \cap \P(V) = \T^k_p(f)^{\perp}\,.
\end{equation}
Let $[x]\in \T^k_p(f')^{\perp'} \cap \P(V)$, hence $x\in V$ and $\langle x,y\rangle_{Q'}=0$ for all $[y]\in \T^k_p(f')^{\perp'}$. Writing $y=y'+\Pi(y)$, where $y'\in\ker\Pi$, then
$0 = \langle x,y\rangle_{Q'} = \langle x,y'\rangle_{Q'} + \langle x,\Pi(y)\rangle_{Q'} = \langle x,\Pi(y)\rangle_Q$,
therefore $\langle x,\Pi(y)\rangle_Q=0$ for every $[y]\in \T^k_p(f')^{\perp'}$, or equivalently $[x]\in (\pi(\T_p^k(f')))^\perp=\T_p^k(f)^\perp$. This shows the inclusion $\T^k_p(f')^{\perp'} \cap \P(V) \subseteq \T^k_p(f)^{\perp}$. The other inclusion is based on a dimension count.

The genericity assumptions ensure that, up to modifying $Q$ and the linear map $\pi$, the quadric hypersurface $Q'$ intersects $f'(X)$ transversally. It follows that for generic $p\in X$ the map $\pi$ induces an isomorphism:
\begin{equation}\label{eq: identification normal}
\pi\colon \N_p^k(f',Q') \cap \P(V)\xrightarrow{\sim}\N_p^k(f,Q)\,.    
\end{equation}

Consider the morphism $\varphi_{2,k}(f',Q')\colon\DC_k(f',Q')\to\DL_k(f',Q')$ given in Definition \ref{def: distance locus}.
The generic pointwise identification \eqref{eq: identification normal} lifts canonically to a birational map $\pi^*$ between the closure defining $\DC_k(f,Q)$ and the preimage of $\DL(f',Q')\cap \P(V)$ under $\varphi_{2,k}$:
\[
\pi^*\colon \DC_k(f,Q)\dashrightarrow\varphi_{2,k}(f',Q')^{-1}(\DL(f',Q')\cap \P(V))\,.
\]
 
Let $\alpha\colon \DL(f,Q)\dashrightarrow\DL(f',Q)\cap\P(V)$ be the induced rational map on the images of these projections. 
By tracing the commutative diagram
\[
\begin{tikzcd}
\DC_k(f,Q)\arrow[dashed, r, "\pi^*" above] \arrow[swap,->>]{d}{\varphi_{2,k}(f,Q)} & \varphi_{2,k}(f',Q')^{-1}(\DL(f',Q')\cap \P(V)) \arrow[->>]{d}{\varphi_{2,k}(f',Q')} \\
\DL(f,Q) \arrow[dashed]{r}{\alpha} & \DL(f',Q)\cap\P(V)
\end{tikzcd}
\]
one verifies that  $\alpha$ is a birational map. It follows that $\DL(f,Q)$ and  $\DL(f',Q)\cap\P(V)$ have the same dimension. The birational map $\alpha$ is defined (by construction) by restricting the generic linear projection $\pi.$ Again the genericity of $\pi$ implies that the degrees are preserved. 
\end{proof}

In the following example, we highlight the isomorphism of Lemma \ref{lem: technical lemma}(2).

\begin{example}\label{ex: second order DD quartic}
Let $[t_0:t_1]$ be homogeneous coordinates for $\P^1$. Let $\P(V)\subseteq\P^4$ be the hyperplane of equation $u_4=0$. Consider the morphism $f=[f_0:\cdots:f_3]\colon\P^1\to\P(V)\cong\P^3$ defined over the affine patch $\{t_0\neq 0\}$ with local coordinate $t=\frac{t_1}{t_0}$ as
\[
t\mapsto\left(t(1+t^2+t^3),(1-t)^2(1+t),-(1+t^2)(1-t-t^2),-t(1-t)(1+t)^2\right)\,.
\]
The closure of the image of the previous map is a nonsingular curve of degree $4$ in $\P^3$. Choosing $k=2$, then the right kernel of $A_p^{(2)}(f)$ is one-dimensional and is generated by the column vector
\[
\begin{pmatrix}
(1+t^2)(1-6\,t-10\,t^2+6\,t^3+t^4)\\
-(1-8\,t)(1+t)^2\\
-(1-6\,t-15\,t^2-8\,t^3-3\,t^4+6\,t^5+t^6)\\
(1-6\,t-9\,t^2-8\,t^3+6\,t^4)
\end{pmatrix}\,.
\]
If $Q=Q_\mED$ is the nonsingular quadric threefold in $\P^3$ of equation $\sum_{i=0}^3 u_i^2=0$, then the $2$nd-order normal space $\N_p^2(f,Q_\mED)$ at $p$ is the projective line in $\P^3$ spanned by the column vector above and by $f(p)$. The polynomial components defining $f$ have been chosen generic enough so that $(f,Q_\mED)$ is in general $2$-osculating position. Applying Theorem \ref{thm: generic ED degree general polynomial map} with $d=4$ and $k=2$, we have that $\DL_2(f,Q_\mED)$ is a surface of degree $\sum_{i=0}^1\binom{3}{i}2^i4^{1-i}=4+6=10$ in $\P^3$.

Now consider the projection $\pi\colon\P^4\dasharrow\P(V)$. In particular the vertex of $\pi$ is the point $\Lambda=[0:0:0:0:1]$. Define the new morphism $f'=[f_0':\cdots:f_4']\colon\P^1\to\P^4$ such that $f_i'=f_i$ for all $i\in\{0,\dots,3\}$ and $f_4'(t)=-(1+t)(1-t+t^3)$ with the same choice of $t$ as before. Then $f=\pi\circ f'$ and $f'(\P^1)$ is a rational normal curve in $\P^4$, isomorphic to $f(\P^1)$.
One verifies that the right kernel of $A_p^{(2)}(f')$ is two-dimensional and is generated by the columns of
\[
\begin{pmatrix}
-1+3\,t-9\,t^2-t^3 & -1+9\,t+3\,t^2+t^3\\
1+3t & -1\\
1+6\,t^2+t^3 & -t(6+3\,t+t^2)\\
-1-3\,t^2-t^3 & 3\,t(1+t)\\
t(3-6\,t-t^2) & -1+6\,t+3\,t^2
\end{pmatrix}\,.
\]
Choosing $Q'=Q_\mED'=\V(\sum_{i=0}^4 u_i^2)$, then $Q_\mED'$ extends $Q_\mED$ and $(f',Q_\mED')$ is in general $2$-osculating position. Furthermore $\DL_2(f',Q_\mED')$ is a threefold of degree $10$ in $\P^4$, such that $\DL_2(f',Q_\mED')\cap\P(V)\cong \DL_2(f,Q_\mED)$. The two surfaces $\DL_2(f',Q_\mED')\cap\P(V)$ and $\DL_2(f,Q_\mED)$ are displayed in Figure \ref{fig: isomorphism data loci}.\hfill$\diamondsuit$

\begin{figure}[ht]
\centering
\begin{overpic}[width=0.4\textwidth]{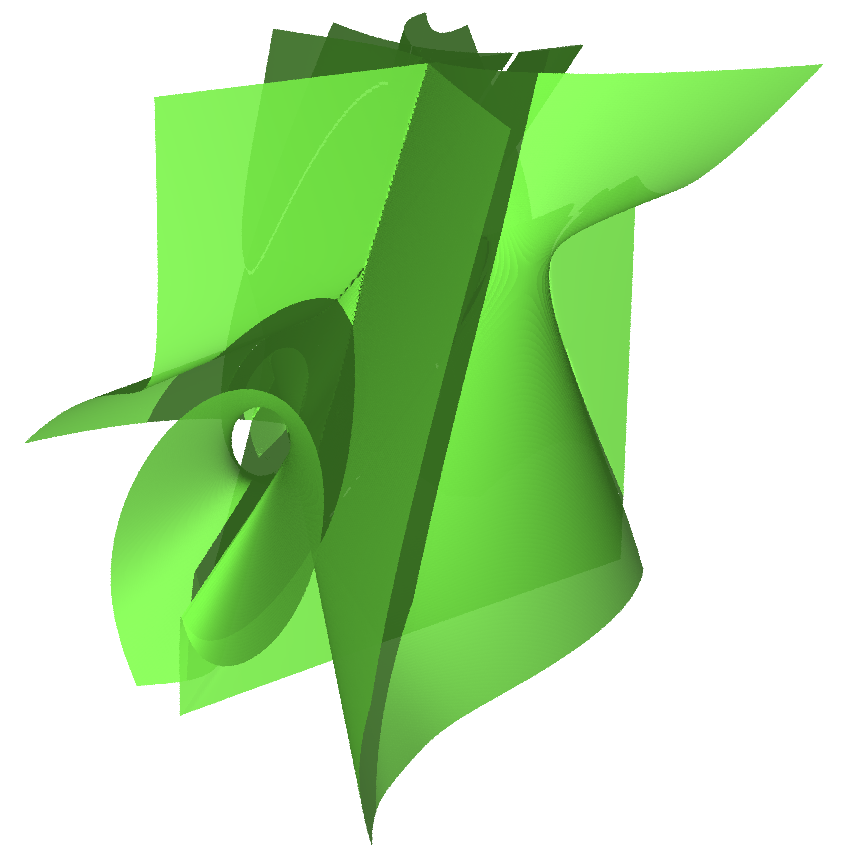}
\end{overpic}
\begin{overpic}[width=0.4\textwidth]{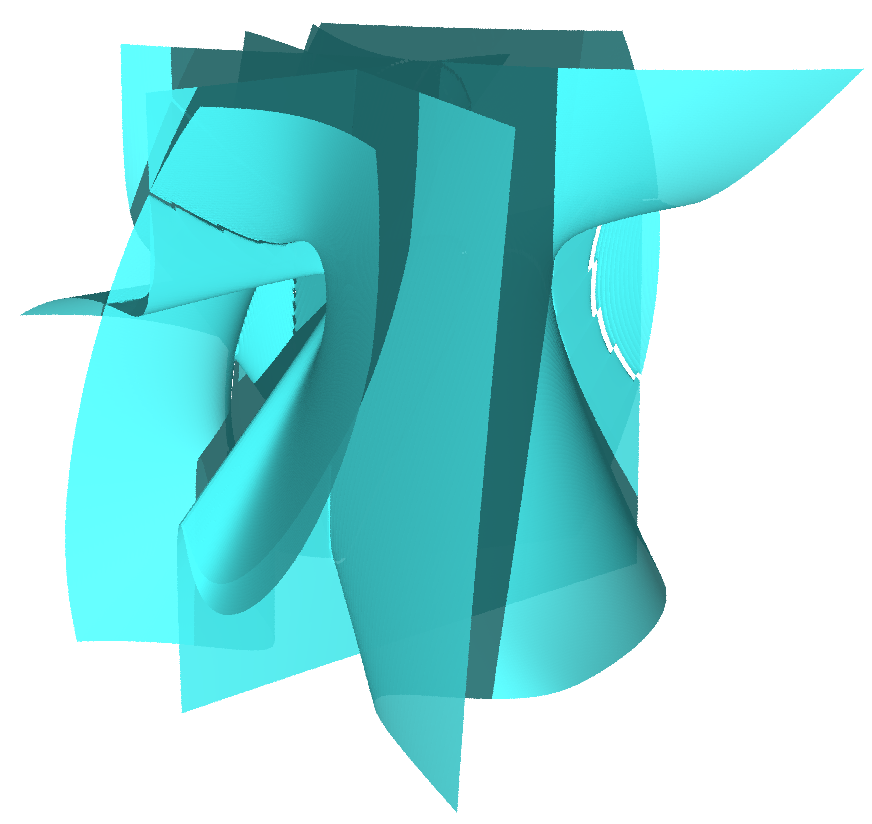}
\end{overpic}
\caption{The projective surfaces $\DL_2(f',Q_\mED')\cap\P(V)$ and $\DL_2(f, Q_\mED)$ in the affine chart $\{u_3=1\}\subseteq\P^3$.}\label{fig: isomorphism data loci}
\end{figure}
\end{example}

We are now ready to prove the main result of this section.

\begin{proof}[Proof of Theorem \ref{thm: generic ED degree general polynomial map}]
Let $f_0,\dots,f_n$ be $n+1$ generic polynomials in $\C[x_0,\dots,x_m]_d$ with $n>\binom{m+k}{k}-1$ and consider a nonnegative integer $k\le d$. 
Consider the morphism $f\colon\P^m\to\P^n$ defined by $f(p)=(f_0(p),\dots,f_n(p)).$ 
Since the polynomials $f_0, \dots, f_n$ are generic, the morphism $f$ can be expressed as the composition $f = \pi \circ \nu_m^d$, where $\nu_m^d \colon \P^m \hookrightarrow \P^{\binom{m+d}{d} - 1}$ denotes the Veronese embedding, and $\pi \colon \P^{\binom{m+d}{d} - 1} \dashrightarrow \P^n$ is a generic linear map.
Applying Lemma \ref{lem: technical lemma}(1) with $f'=\nu_m^d$ and $r=\binom{m+d}{d}-1$, we obtain that $f$ has generic $k$-osculating dimension $\binom{m+k}{k}-1$ and the identity $\gDD_k(f) = \gDD_k(\nu_m^d)$.
Furthermore, by Lemma \ref{lem: technical lemma}(2) there exists a nonsingular quadric $Q' \subseteq\P^{\binom{m+d}{d} - 1}$ that is transversal to $\nu_m^d(\P^m)$ and such that $\DL_k(f,Q) \cong \DL_k(\nu_m^d, Q') \cap \P^n$, in particular $\deg\DL_k(f,Q) = \deg\DL_k(\nu_m^d,Q')$. Consequently, the degrees of the morphisms $\varphi_{2,k}(f,Q) \colon \DC_k(f,Q) \to \DL_k(f,Q)$ and $\varphi_{2,k}(\nu_m^d, Q') \colon \DC_k(\nu_m^d, Q') \to \DL_k(\nu_m^d, Q')$ are equal. The degree of $\varphi_{2,k}(\nu_m^d, Q')$ is one by Theorem \ref{thm: projection data locus Veronese BW is either birational or of degree 2}, where we are assuming that $k\le d$. The final identity follows by \eqref{eq: explicit EDD k polar classes Veronese}.
\end{proof}

\begin{remark}
First, observe that assuming $n=\binom{m+k}{k}-1$ implies that $f$ is isomorphic to the $k$th Veronese embedding of $\P^m$ by Proposition \ref{prop: linearly normal} and $\DL_k(f,Q)=f(X)$.
Secondly, if the hypersurface $Q$ is not transversal to $f(\P^m)$, then the morphism $\varphi_{d,k}$ might fail to be birational, as shown in the second part of the illustrative example in the introduction and at the right-hand side of Figure~\ref{fig: EDD 2nd order}, where we chose the Bombieri-Weyl inner product in the space of real binary cubics.
\end{remark}

\section{Higher-order distance degrees of regular embeddings}\label{sec: formulas for k-regular embeddings}

In this subsection, we apply the formula \eqref{eq: kth order ED degree sum of kth order polar degrees} to compute the generic $k$th-order distance degree of a $k$-regular embedding $f\colon X\hookrightarrow\P^n$. The $k$-regularity of $f$ implies that $\mu_{k,i}(f)$ equals the degree of $c_i(\PP^k(f))$ for all $i$ and $k$.

In Corollary \ref{corol: generic kth ED degree Veronese}, we already computed the generic higher-order distance degree of the Veronese embedding $\nu_m^d\colon\P^n\hookrightarrow\P^{\binom{m+d}{d}-1}$, using that its jet bundles split as a sum of line bundles. More in general, given a $k$-regular embedding $f\colon X\hookrightarrow\P^n$, the short exact sequences
\begin{equation}\label{eq: two short exact sequences}
\begin{gathered}
    0\to \Omega_X\otimes\OO_X(1) \longrightarrow \PP^1(f) \longrightarrow \OO_X(1) \to 0\\
    0\to \mathrm{Sym}^k\Omega_X\otimes\OO_X(1) \longrightarrow \PP^k(f) \longrightarrow \PP^{k-1}(f) \to 0\quad\forall\,k\ge 2
\end{gathered}
\end{equation}
allow us to compute the Chern classes of the vector bundles $\PP^k(f)$ via a recursive formula, provided that the Chern classes of $\mathrm{Sym}^k\Omega_X\otimes\OO_X(1)$ and of $\mathrm{Sym}^k\Omega_X$ are known. Regarding the first ones, the Chern classes of the tensor product between a vector bundle $\FF$ of rank $r$ and a line bundle $\LL$ on a nonsingular variety $X$ are given by the following formula.

\begin{equation}\label{lem: chern classes tensor product with line bundle}
c_i(\FF\otimes\LL) = \sum_{j=0}^i\binom{r-j}{i-j}c_j(\FF)c_1(\LL)^{i-j}\,.
\end{equation}

In the following, we write the total Chern classes of $\TT_X$ and $\OO_X(1)$ as $c(\OO_X(1))=1+L$ and $c(\TT_X)=c(X)=1+\sum_{i=1}^mc_i$. Note that $c_i(\Omega_X)=(-1)^ic_i$ for all $i$. From the first exact sequence in \eqref{eq: two short exact sequences} we get $c(\PP^1(f))=c(\Omega_X\otimes\OO_X(1))\cdot c(\OO_X(1))$. Using Lemma \ref{lem: chern classes tensor product with line bundle} we get for all $i\ge 1$
\begin{align*}
    c_i(\PP^1(f)) &= c_i(\Omega_X\otimes\OO_X(1))+c_{i-1}(\Omega_X\otimes\OO_X(1))\cdot L\\
    &= \sum_{j=0}^i\binom{m-j}{i-j}c_j(\Omega_X)\cdot L^{i-j}+\sum_{j=0}^{i-1}\binom{m-j}{i-1-j}c_j(\Omega_X)\cdot L^{i-j}\\
    &= c_i(\Omega_X)+\sum_{j=0}^{i-1}\left[\binom{m-j}{i-j}+\binom{m-j}{i-1-j}\right]c_j(\Omega_X)\cdot L^{i-j}\\
    &= (-1)^ic_i+\sum_{j=0}^{i-1}\binom{m+1-j}{i-j}(-1)^jc_j\cdot L^{i-j} \\
    &= \sum_{j=0}^i\binom{m+1-j}{i-j}(-1)^jc_j\cdot L^{i-j}\,,
\end{align*}
which corresponds to \cite[\S3]{holme1988geometric}. Then, using the second exact sequence in \eqref{eq: two short exact sequences}, we have $c(\PP^k(f))=c(\mathrm{Sym}^k\Omega_X\otimes\OO_X(1))\cdot c(\PP^{k-1}(f))$, hence
\begin{align*}
    c_i(\PP^k(f)) &= \sum_{\ell=0}^ic_\ell(\mathrm{Sym}^k\Omega_X\otimes\OO_X(1))c_{i-\ell}(\PP^{k-1}(f))\\
    &= \sum_{\ell=0}^i\left[\sum_{j=0}^\ell\binom{\binom{m+k-1}{k}-j}{\ell-j}c_j(\mathrm{Sym}^k\Omega_X)\cdot L^{\ell-j}\right]c_{i-\ell}(\PP^{k-1}(f))\,,
\end{align*}
where we used the fact that $\rank\mathrm{Sym}^k\Omega_X=\binom{m+k-1}{k}$ for all $k\ge 1$. Summing up, we have the following recursive formula:
\begin{equation}\label{eq: recursive formula}
\begin{cases}
    c_i(\PP^1(f)) = \sum_{j=0}^i\binom{m+1-j}{i-j}(-1)^jc_j\cdot L^{i-j}\\
    c_i(\PP^k(f)) = \sum_{\ell=0}^i\left[\sum_{j=0}^\ell\binom{\binom{m+k-1}{k}-j}{\ell-j}c_j(\mathrm{Sym}^k\Omega_X)\cdot L^{\ell-j}\right]c_{i-\ell}(\PP^{k-1}(f))\,.
\end{cases}
\end{equation}

We implemented a \verb|Macaulay2| code \cite{GS} that computes the Chern classes $c_i(\PP^k(f))$ for any input $k$ and $m$ using \eqref{eq: recursive formula}; see \cite{github}. Our code computes the generic $k$th-order distance degree of any $k$-regular embedding $f\colon X\hookrightarrow\P^n$ of a nonsingular projective variety $X$ of dimension $m$.

In the following, we derive closed formulas for the classes $c_i(\PP^k(f))$ for $m\in[3]$, using the following lemma, whose proof is a direct application of the splitting principle \cite[Remark 3.2.3]{fulton1998intersection}. More involved identities can be derived for larger $m$.

\begin{lemma}\label{lem: chern classes symmetric power vector bundle small rank}
Let $\FF$ be a vector bundle on a nonsingular surface $X$. Then for any $k\ge 1$,
\[
c(\mathrm{Sym}^k\FF) =
\begin{cases}
    1+k\,c_1(\FF) & \text{if $\rank\FF=1$}\\[3pt]
    1+\binom{k+1}{2}c_1(\FF)+\binom{k+1}{3}\frac{3k+2}{4}c_1^2(\FF)+\binom{k+2}{3}c_2(\FF) & \text{if $\rank\FF=2$}\\[3pt]
    1+\binom{k+2}{3}c_1(\FF)+\frac{1}{2}\binom{k+3}{3}\binom{k+1}{3}c_1^2(\FF)+\binom{k+3}{4}c_2(\FF) & \\[2pt]
    +\binom{k+3}{5}\frac{5\,k^4+20\,k^3-5\,k^2-50\,k-12}{54}c_1^3(\FF) & \\[2pt]
    +\binom{k+3}{5}\frac{5\,k^2+20\,k+6}{6}c_1(\FF)c_2(\FF)+\binom{k+3}{4}\frac{2\,k+3}{5}c_3(\FF) & \text{if $\rank\FF=3$.}
\end{cases}
\]
\end{lemma}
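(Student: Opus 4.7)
The plan is to apply the splitting principle \cite[Remark 3.2.3]{fulton1998intersection}. Passing to a flag bundle $\pi\colon Y\to X$ on which $\pi^*\FF$ splits as a direct sum of line bundles $\LL_1\oplus\cdots\oplus\LL_r$ with Chern roots $\alpha_i\coloneqq c_1(\LL_i)$, the pullback $\pi^*$ is injective on Chow rings and $\pi^*c_j(\FF)=e_j(\alpha_1,\ldots,\alpha_r)$. Since $\mathrm{Sym}^k(\pi^*\FF)$ decomposes as $\bigoplus_{j_1+\cdots+j_r=k}\LL_1^{\otimes j_1}\otimes\cdots\otimes\LL_r^{\otimes j_r}$, Whitney's formula gives
\[
c(\mathrm{Sym}^k\FF)=\prod_{\substack{j_1+\cdots+j_r=k\\ j_i\ge 0}}\bigl(1+j_1\alpha_1+\cdots+j_r\alpha_r\bigr),
\]
which is symmetric in the $\alpha_i$ and hence expressible in the elementary symmetric functions $e_j(\alpha)=\pi^*c_j(\FF)$. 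It therefore suffices to expand this product and collect coefficients monomial by monomial.

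For rank $1$ the product reduces to the single factor $1+k\alpha_1$, giving the formula immediately. For rank $2$ I would compute the elementary symmetric functions $e_1(\sigma),e_2(\sigma)$ of the $k+1$ linear forms $\sigma_j=j\alpha_1+(k-j)\alpha_2$ via Newton's identity $e_2=\tfrac12(p_1^2-p_2)$. The required power sums reduce to the Faulhaber evaluations $\sum j=\binom{k+1}{2}$, $\sum j^2=\tfrac{k(k+1)(2k+1)}{6}$, and $\sum j(k-j)=\binom{k+1}{3}$; substituting $\alpha_1^2+\alpha_2^2=c_1(\FF)^2-2c_2(\FF)$ and $\alpha_1\alpha_2=c_2(\FF)$ and simplifying should then recover the stated coefficients $\binom{k+1}{2}$, $\binom{k+1}{3}\tfrac{3k+2}{4}$, and $\binom{k+2}{3}$.

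The rank-$3$ case follows the same recipe, using Newton's identities $e_1=p_1$, $e_2=\tfrac12(p_1^2-p_2)$, and $e_3=\tfrac16(p_1^3-3p_1p_2+2p_3)$ to recover $c_1,c_2,c_3$ of $\mathrm{Sym}^k\FF$ from the power sums over the $\binom{k+2}{2}$ linear forms $\sigma_{\boldsymbol j}=j_1\alpha_1+j_2\alpha_2+j_3\alpha_3$ indexed by $3$-part compositions $\boldsymbol j$ of $k$; the conversion to $c_j(\FF)$ then also uses $\alpha_1^3+\alpha_2^3+\alpha_3^3=c_1(\FF)^3-3c_1(\FF)c_2(\FF)+3c_3(\FF)$. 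The principal obstacle is the combinatorial bookkeeping: evaluating each power sum requires the mixed moments $\sum_{|\boldsymbol j|=k}j_1^aj_2^bj_3^c$ for $a+b+c\le 3$. By the $S_3$-symmetry acting on $\boldsymbol j$, these reduce to single-variable sums such as $\sum_{i=0}^k i^a(k-i+1)$ obtained by fixing one coordinate, each a polynomial in $k$ computable from standard Faulhaber formulas. Substituting and gathering contributions by $c_1^3$, $c_1c_2$, and $c_3$ should match the stated rational coefficients; the non-obvious form $\tfrac{5k^4+20k^3-5k^2-50k-12}{54}$, $\tfrac{5k^2+20k+6}{6}$, and $\tfrac{2k+3}{5}$ reflects the cancellations among $p_1^3$, $p_1p_2$, and $p_3$ intrinsic to Newton's identities.
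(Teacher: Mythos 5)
Your proposal is correct and matches the paper's argument: the paper proves this lemma precisely by a direct application of the splitting principle \cite[Remark 3.2.3]{fulton1998intersection}, exactly as you do, with the remaining work being the symmetric-function bookkeeping you outline (and your rank-$1$ and rank-$2$ reductions check out, e.g.\ $\sum_j j(k-j)=\binom{k+1}{3}$ yields the stated $c_2$-coefficients). No gap beyond carrying out the rank-$3$ Faulhaber/Newton computation, which is routine as you indicate.
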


\begin{proposition}\label{prop: gEDD k-regular curve}
Consider a $k$-regular embedding $f\colon C\hookrightarrow\P^n$ of a nonsingular projective curve $C$. Write $c(X)=1+c_1$ and $c(\OO_X(1))=1+L$. Then
\begin{equation}\label{eq: gEDD k-regular curve}
    \gDD_k(f) = \int_C(k+2)L-\binom{k+1}{2}c_1\,.
\end{equation}
\end{proposition}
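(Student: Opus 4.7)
The plan is to apply Proposition \ref{prop: kth order ED degree sum of kth order polar degrees} directly, reducing to the calculation of $c_1(\PP^k(f))$ on the curve $C$. Since $\dim C = 1$, the sum $\gDD_k(f)=\sum_{i=0}^m \mu_{k,i}(f)$ has only two terms, namely $\mu_{k,0}(f)$ and $\mu_{k,1}(f)$. The $k$-regularity of $f$ means $\image j_k=\PP^k(f)$, so $\mu_{k,i}(f)=\int_C c_i(\PP^k(f))\cdot L^{1-i}$. For $i=0$, this is just $\int_C L=\deg_{\P^n}(C)$. Thus everything reduces to computing $\int_C c_1(\PP^k(f))$.

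For the recursion, I will use the second short exact sequence in \eqref{eq: two short exact sequences}:
\[
0\to \mathrm{Sym}^k\Omega_C\otimes\OO_C(1) \longrightarrow \PP^k(f) \longrightarrow \PP^{k-1}(f) \to 0 \quad (k\ge 2),
\]
together with the base case $\PP^0(f)=\OO_C(1)$. The decisive simplification is that on a curve $\Omega_C$ is a line bundle with $c_1(\Omega_C)=-c_1$, so by the first line of Lemma \ref{lem: chern classes symmetric power vector bundle small rank} (or equivalently, because $\mathrm{Sym}^k\Omega_C=\Omega_C^{\otimes k}$) we have $c_1(\mathrm{Sym}^k\Omega_C\otimes\OO_C(1))=L-kc_1$. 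Taking first Chern classes in the exact sequence and iterating gives
\[
c_1(\PP^k(f)) = c_1(\PP^0(f)) + \sum_{j=1}^k (L-jc_1) = (k+1)L - \binom{k+1}{2}c_1.
\]

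Combining the two contributions yields
\[
\gDD_k(f) = \mu_{k,0}(f) + \mu_{k,1}(f) = \int_C L + \int_C \Bigl((k+1)L - \binom{k+1}{2}c_1\Bigr) = \int_C (k+2)L - \binom{k+1}{2}c_1,
\]
which is exactly \eqref{eq: gEDD k-regular curve}. There is no real obstacle: the only subtle point is remembering that the sum $\mu_{k,0}+\mu_{k,1}$ picks up the factor $L$ in the $i=0$ term, which is what upgrades the coefficient of $L$ from $k+1$ to $k+2$. Everything else is a direct application of the established framework.
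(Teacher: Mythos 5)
Your proposal is correct and follows essentially the same route as the paper: decompose $\gDD_k(f)=\mu_{k,0}(f)+\mu_{k,1}(f)$ via Proposition \ref{prop: kth order ED degree sum of kth order polar degrees}, then compute $c_1(\PP^k(f))=(k+1)L-\binom{k+1}{2}c_1$ from the jet-bundle exact sequences \eqref{eq: two short exact sequences}, using that $\mathrm{Sym}^k\Omega_C$ is a line bundle of first Chern class $-kc_1$. The only cosmetic difference is that you telescope the recursion directly (with base $\PP^0(f)=\OO_C(1)$, which is consistent with the paper's first sequence) where the paper runs a formal induction on $k$.
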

\begin{proof}
We claim that, for all $k\ge 1$,
\begin{equation}\label{eq: identity total chern class J_k curve}
c(\PP^k(f)) = 1+(k+1)L-\binom{k+1}{2}c_1\,.
\end{equation}
Assuming the claim is true, then
\[
\gDD_k(f) = \mu_{k,0}(f)+\mu_{k,1}(f) = \int_C L+c_1(\PP^k(f)) = \int_C (k+2)L-\binom{k+1}{2}c_1\,,
\]
which is the desired formula.
To prove the claim, consider the recursive formula \eqref{eq: recursive formula}. The first equation yields $c(\PP^1(f)) = 1 + 2L-c_1$, which is identity \eqref{eq: identity total chern class J_k curve} for $k=1$. Now assume \eqref{eq: identity total chern class J_k curve} true for $k-1$. Using the second exact sequence in \eqref{eq: two short exact sequences}, we obtain that
\begin{align*}
c(\PP^k(f)) &= c(\mathrm{Sym}^k\Omega_C\otimes\OO_C(1))c(\PP^{k-1}(f)) = (1+L+c_1(\mathrm{Sym}^k\Omega_C))\left(1+kL-\binom{k}{2}c_1\right)\\
&= (1+L-kc_1)\left(1+kL-\binom{k}{2}c_1\right) = 1+(k+1)L-\binom{k+1}{2}c_1\,,
\end{align*}
thus proving the identity \eqref{eq: identity total chern class J_k curve} at the step $k$. This completes the proof.
\end{proof}

\begin{example}\label{ex: gDD Veronese embedding projective line}
We apply Proposition \ref{prop: gEDD k-regular curve} for the Veronese embedding $\nu_1^d\colon\P^1\hookrightarrow\P^d$, which is $k$-regular for any $k\le d$. Let $h$ denote the class of a point in $\P^1$. In particular $c(\P^1)=(1+h)^2=1+2h$ and $L=dh$, hence $\gDD_k(\nu_1^d) = \int_{\P^1}(k+2)dh-\binom{k+1}{2}2h = (k+2)d-k(k+1)$.\hfill$\diamondsuit$
\end{example}

\begin{proposition}\label{prop: gEDD k-regular surface}
Consider a $k$-regular embedding $f\colon S\hookrightarrow\P^n$ of a nonsingular projective surface $S$. Write $c(S)=1+c_1+c_2$ and $c(\OO_S(1))=1+L$. Then
\begin{equation}\label{eq: gEDD k-regular surface}
\gDD_k(f) = \int_X\alpha_1\,L^2+\alpha_2\,c_1L+\alpha_3\,c_1^2+\alpha_4\,c_2\,,
\end{equation}
where
\begin{align}\label{eq: coefficients alpha}
\begin{split}
    \alpha_1 &= 1+\binom{k+2}{2}+3\binom{k+3}{4}\,,\quad\alpha_2 = -\binom{k+2}{2}\binom{k+2}{3}\,,\\
    \alpha_3 &= \frac{1}{2}\binom{k+1}{3}\binom{k+3}{3}\,,\quad\alpha_4 = \binom{k+3}{4}\,.
\end{split}
\end{align}
\end{proposition}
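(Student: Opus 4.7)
The plan is to apply Proposition~\ref{prop: kth order ED degree sum of kth order polar degrees} directly: since $f$ is $k$-regular, we have $\image j_k \cong \PP^k(f)$, so the polar degrees are $\mu_{k,i}(f) = \int_S L^{2-i} \cdot c_i(\PP^k(f))$. Hence
\[
\gDD_k(f) = \int_S L^2 + L \cdot c_1(\PP^k(f)) + c_2(\PP^k(f)),
\]
and the proof reduces to computing $c_1(\PP^k(f))$ and $c_2(\PP^k(f))$ in terms of $L$, $c_1$, and $c_2$, then collecting the coefficients to match $\alpha_1,\ldots,\alpha_4$.

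To perform this computation, iterating the second short exact sequence in \eqref{eq: two short exact sequences} (equivalently, unwinding the recursion \eqref{eq: recursive formula}) yields the product formula
\[
c(\PP^k(f)) = c(\OO_S(1)) \cdot \prod_{j=1}^k c(E_j), \qquad E_j \coloneqq \mathrm{Sym}^j\Omega_S \otimes \OO_S(1).
\]
Since $\dim S = 2$, the rank-two case of Lemma~\ref{lem: chern classes symmetric power vector bundle small rank} supplies $c_1(\mathrm{Sym}^j\Omega_S)$ and $c_2(\mathrm{Sym}^j\Omega_S)$ in closed form, and formula~\eqref{lem: chern classes tensor product with line bundle} then produces $c_1(E_j) = (j+1)L - \binom{j+1}{2}c_1$ and $c_2(E_j)$. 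Taking determinants along the filtration and applying two hockey stick identities gives
\[
c_1(\PP^k(f)) = \sum_{j=0}^k c_1(E_j) = \binom{k+2}{2} L - \binom{k+2}{3} c_1,
\]
which already contributes the expected $L^2$ and $c_1 L$ summands to $\gDD_k(f)$, in particular accounting for the $\binom{k+2}{2}$ summand of $\alpha_1$ and for $\alpha_2 = -\binom{k+2}{2}\binom{k+2}{3}$ after multiplication by $L$ and combining with a correction coming from the $c_2$-computation below.

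The main obstacle is the computation of $c_2(\PP^k(f))$. Multiplicativity of Chern classes in short exact sequences gives
\[
c_2(\PP^k(f)) = \sum_{j=0}^k c_2(E_j) + \sum_{0 \leq i < j \leq k} c_1(E_i)\cdot c_1(E_j),
\]
and each coefficient of $L^2$, $c_1 L$, $c_1^2$, $c_2$ must be extracted by combining diagonal and off-diagonal summands. The $c_2$-coefficient is the easy case: it comes only from the diagonal, and the hockey stick identity yields $\alpha_4 = \sum_{j=0}^k \binom{j+2}{3} = \binom{k+3}{4}$. The delicate part is the $L^2$ and $c_1^2$ coefficients: the off-diagonal cross terms introduce sums such as $\sum_{0 \leq i < j \leq k}(i+1)(j+1)$ and $\sum_{i<j}\binom{i+1}{2}\binom{j+1}{2}$, which one handles via the identity $2\sum_{i<j} a_i a_j = \bigl(\sum_i a_i\bigr)^2 - \sum_i a_i^2$ together with further hockey stick summations, while the diagonal contributions involve the more intricate coefficient $\binom{j+1}{3}\tfrac{3j+2}{4}$ from Lemma~\ref{lem: chern classes symmetric power vector bundle small rank}. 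Combining these sums with the $\int_S L^2$ term from $\mu_{k,0}$ and the $L\cdot c_1(\PP^k(f))$ contribution from $\mu_{k,1}$ yields the stated values of $\alpha_1$ and $\alpha_3$.
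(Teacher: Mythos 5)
Your proposal is correct, and it reaches the result with the same basic ingredients as the paper: the reduction $\gDD_k(f)=\int_S L^2+L\,c_1(\PP^k(f))+c_2(\PP^k(f))$ via Proposition~\ref{prop: kth order ED degree sum of kth order polar degrees} and $k$-regularity, the jet-bundle exact sequences \eqref{eq: two short exact sequences}, the rank-two case of Lemma~\ref{lem: chern classes symmetric power vector bundle small rank}, and the tensor-product formula \eqref{lem: chern classes tensor product with line bundle}. Where you diverge is in how the Chern classes of $\PP^k(f)$ are assembled: the paper states the closed forms $c_1(\PP^k(f))=\binom{k+2}{2}L-\binom{k+2}{3}c_1$ and the corresponding expression for $c_2(\PP^k(f))$ and verifies them by induction on $k$ through the recursion \eqref{eq: recursive formula}, whereas you unwind the recursion into the Whitney product $c(\PP^k(f))=\prod_{j=0}^k c\bigl(\mathrm{Sym}^j\Omega_S\otimes\OO_S(1)\bigr)$ and extract the degree-one and degree-two parts in one shot, using hockey-stick identities for the diagonal sums and the identity $2\sum_{i<j}a_ia_j=\bigl(\sum_i a_i\bigr)^2-\sum_i a_i^2$ for the cross terms. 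Your intermediate outputs agree with the paper's: your $c_1(\PP^k(f))$ is literally the paper's formula, your diagonal $c_2$-contribution $\sum_j\binom{j+2}{3}=\binom{k+3}{4}$ gives $\alpha_4$, and carrying out the remaining sums does produce $\alpha_1$, $\alpha_2$, $\alpha_3$ as stated (e.g.\ the $c_1L$ terms combine to $-\binom{k+2}{2}\binom{k+2}{3}$ and the $L^2$ cross terms to $3\binom{k+3}{4}$, matching the paper's $\tfrac12\binom{k+1}{2}\binom{k+3}{2}$ and $-2k\binom{k+3}{4}$ after adding the $L\,c_1(\PP^k(f))$ contribution). The trade-off: the paper's induction keeps each step a small polynomial manipulation but requires guessing the closed form in advance, while your direct expansion derives it, at the cost of somewhat heavier one-time summations; you should still write out the $\alpha_1$ and $\alpha_3$ summations explicitly, since at present they are only asserted, but they are routine and do check out.
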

\begin{proof}
We claim that, for all $k\ge 1$,
\begin{equation}\label{eq: identity total chern class J_k surface}
\resizebox{.9\textwidth}{!}{$\begin{aligned}c_1(\PP^k(f)) &= \binom{k+2}{2}L-\binom{k+2}{3}c_1\\
c_2(\PP^k(f)) &= \frac{1}{2}\binom{k+1}{2}\binom{k+3}{2}L^2-2k\binom{k+3}{4}c_1L+\frac{1}{2}\binom{k+1}{3}\binom{k+3}{3}c_1^2+\binom{k+3}{4}c_2\,.\end{aligned}$}
\end{equation}
Assuming the claim is true, then
\[
\resizebox{\textwidth}{!}{$\begin{aligned}
\gDD_k(f) &= \mu_{k,0}(f)+\mu_{k,1}(f)+\mu_{k,2}(f) = \int_X L^2+c_1(\PP^k(f))L+c_2(\PP^k(f))\\
&= \int_X \left[1+\binom{k+2}{2}+3\binom{k+3}{4}\right]L^2 -\left[\binom{k+2}{3}+2k\binom{k+3}{4}\right]c_1L+\frac{1}{2}\binom{k+1}{3}\binom{k+3}{3}c_1^2+\binom{k+3}{4}c_2\,,
\end{aligned}$}
\]
which, after simplifying, is equal to the desired formula.
To prove the claim, consider the recursive formula \eqref{eq: recursive formula}. The first equation yields the identities $c_1(\PP^1(f))=3L-c_1$ and $c_2(\PP^1(f))=3L^2-2c_1L+c_2$, thus proving \eqref{eq: identity total chern class J_k surface} for $k=1$. Now assume \eqref{eq: identity total chern class J_k surface} true for $k-1$. Observe that $\mathrm{Sym}^k\Omega_S$ has rank $k+1$ for all $k\ge 1$.
Using Lemma \ref{lem: chern classes symmetric power vector bundle small rank}, we get the Chern classes
\begin{equation}
    c_1(\mathrm{Sym}^k\Omega_S) = -\binom{k+1}{2}c_1\,,\quad c_2(\mathrm{Sym}^k\Omega_S) = \frac{3k+2}{4}\binom{k+1}{3}c_1^2+\binom{k+2}{3}c_2\,.
\end{equation}
Applying Lemma \ref{lem: chern classes tensor product with line bundle}, we obtain that
\[
\resizebox{\textwidth}{!}{$\begin{aligned}
    c(\mathrm{Sym}^k\Omega_S\otimes\OO_S(1)) &= \sum_{i=0}^{k+1}c_i(\mathrm{Sym}^k\Omega_S\otimes\OO_S(1)) = \sum_{i=0}^{k+1}\sum_{j=0}^i\binom{k+1-j}{i-j}c_j(\mathrm{Sym}^k\Omega_S)\cdot L^{i-j}\\
    &= 1+(k+1)L+c_1(\mathrm{Sym}^k\Omega_S)+\binom{k+1}{2}L^2+kc_1(\mathrm{Sym}^k\Omega_S)L+c_2(\mathrm{Sym}^k\Omega_S)\\
    &= 1+(k+1)L-\binom{k+1}{2}c_1+\binom{k+1}{2}L^2-k\binom{k+1}{2}c_1L+\frac{3k+2}{4}\binom{k+1}{3}c_1^2+\binom{k+2}{3}c_2\,.
\end{aligned}$}
\]
Using the second exact sequence in \eqref{eq: two short exact sequences} and the induction step, we obtain that
\[
\resizebox{\textwidth}{!}{$\begin{aligned}
    c(\PP^k(f)) &= c(\mathrm{Sym}^k\Omega_S\otimes\OO_S(1))\cdot c(\PP^{k-1}(f))\\
    &= \left(1+(k+1)L-\binom{k+1}{2}c_1+\binom{k+1}{2}L^2-k\binom{k+1}{2}c_1L+\frac{3k+2}{4}\binom{k+1}{3}c_1^2+\binom{k+2}{3}c_2\right)\\
    &\quad\cdot\left(1+\binom{k+1}{2}L-\binom{k+1}{3}c_1+\frac{1}{2}\binom{k}{2}\binom{k+2}{2}L^2-2(k-1)\binom{k+2}{4}c_1L+\frac{1}{2}\binom{k}{3}\binom{k+2}{3}c_1^2+\binom{k+2}{4}c_2\right)\,.
\end{aligned}$}
\]
Expanding the product, one verifies that
\[
\resizebox{\textwidth}{!}{$\begin{aligned}
    c_1(\PP^k(f)) &= \left[(k+1)+\binom{k+1}{2}\right]L-\left[\binom{k+1}{2}+\binom{k+1}{3}\right]c_1 = \binom{k+2}{2}L-\binom{k+2}{3}c_1\,,\\
    c_2(\PP^k(f)) &= \left[(k+2)\binom{k+1}{2}+\frac{1}{2}\binom{k}{2}\binom{k+2}{2}\right]L^2\\
    &\quad-\left[k\binom{k+1}{2}+\binom{k+1}{2}^2+(k+1)\binom{k+1}{3}+2(k-1)\binom{k+2}{4}\right]c_1L\\
    &\quad+\left[\frac{3k+2}{4}\binom{k+1}{3}+\binom{k+1}{2}\binom{k+1}{3}+\frac{1}{2}\binom{k}{3}\binom{k+2}{3}\right]c_1^2+\left[\binom{k+2}{3}+\binom{k+2}{4}\right]c_2\,,
\end{aligned}$}
\]
which, after simplifying, correspond to the identities in \eqref{eq: identity total chern class J_k curve} at the step $k$.
\end{proof}

We also compute the generic $k$th-order distance degree of any $k$-regular embedding $f\colon X\hookrightarrow\P^n$ of a nonsingular projective threefold $X$. Its proof uses again the recursive formula \eqref{eq: recursive formula}, Lemma \ref{lem: chern classes symmetric power vector bundle small rank}, and is similar to Proposition \ref{prop: gEDD k-regular surface}; therefore, we omit it.

\begin{proposition}\label{prop: gEDD k-regular threefold}
Consider a $k$-regular embedding $f\colon X\hookrightarrow\P^n$ of a nonsingular projective threefold $X$. Write $c(X)=1+c_1+c_2+c_3$ and $c(\OO_X(1))=1+L$. Then
\begin{equation}\label{eq: gEDD k-regular threefold}
    \gDD_k(f) = \int_X \beta_1\,L^3+\beta_2\,c_1L^2+\beta_3\,c_1^2L+\beta_4\,c_2L+\beta_5\,c_1^3+\beta_6\,c_1c_2+\beta_7\,c_3\,,
\end{equation}
where
\begin{align}\label{eq: coefficients beta}
\begin{split}
    \beta_1 &= \frac{(k+4)(k^{2}+2\,k+3)(k^{6}+12\,k^{5}+58\,k^{4}+138\,k^{3}+157\,k^{2}+66\,k+216)}{1296}\\
    \beta_2 &= -\binom{k+3}{4}\frac{k^{6}+12\,k^{5}+58\,k^{4}+138\,k^{3}+157\,k^{2}+66\,k+72}{72}\\
    \beta_3 &= \binom{k+3}{5}\frac{k(k^{2}+6\,k+11)(5\,k^{3}+35\,k^{2}+90\,k+72)}{288}\\
    \beta_4 &= \binom{k+4}{5}\frac{k(k^{2}+6\,k+11)}{6}\\
    \beta_5 &= -\binom{k+3}{5}\frac{5\,k^{7}+65\,k^{6}+355\,k^{5}+931\,k^{4}+816\,k^{3}-1404\,k^{2}-3312\,k-1152}{3456}\\
    \beta_6 &= -\binom{k+4}{6}\frac{k^{3}+7\,k^{2}+18\,k+8}{4}\\
    \beta_7 &= -\binom{k+4}{5}\frac{k+2}{3}\,.
\end{split}
\end{align}
\end{proposition}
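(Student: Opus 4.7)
The plan is to mimic the induction argument used for surfaces in Proposition \ref{prop: gEDD k-regular surface}, carried out one dimension higher. By Proposition \ref{prop: kth order ED degree sum of kth order polar degrees} and the hypothesis that $f$ is $k$-regular, one has $\image j_k = \PP^k(f)$, so
\begin{equation*}
\gDD_k(f) = \sum_{i=0}^{3}\mu_{k,i}(f) = \int_X L^3 + c_1(\PP^k(f))\,L^2 + c_2(\PP^k(f))\,L + c_3(\PP^k(f)).
\end{equation*}
It therefore suffices to write each $c_i(\PP^k(f))$ for $i=1,2,3$ as an explicit $\Q[k]$-linear combination of the monomials $L^i, c_1L^{i-1}, c_1^2L^{i-2}, c_2L^{i-2}, c_1^3, c_1c_2, c_3$ (truncated to the appropriate codimension), plug into the displayed formula, and collect coefficients along the seven-monomial basis $\{L^3, c_1L^2, c_1^2L, c_2L, c_1^3, c_1c_2, c_3\}$.

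The first step is to apply Lemma \ref{lem: chern classes symmetric power vector bundle small rank} with $\FF = \Omega_X$, using $c_i(\Omega_X) = (-1)^i c_i$, to obtain closed expressions for $c_j(\mathrm{Sym}^k\Omega_X)$ with $j=0,1,2,3$. Since $\mathrm{Sym}^k\Omega_X$ has rank $\binom{k+2}{3}$, Lemma \ref{lem: chern classes tensor product with line bundle} then yields formulas for $c_j(\mathrm{Sym}^k\Omega_X \otimes \OO_X(1))$ for the same range of $j$, expressed in $L$ and the Chern classes of $X$.

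Next, I would conjecture closed forms in $k$ for $c_1(\PP^k(f))$, $c_2(\PP^k(f))$, $c_3(\PP^k(f))$, and verify them by induction on $k$. The base case $k=1$ is the first line of the recursion \eqref{eq: recursive formula}; the inductive step rests on the multiplicative identity
\begin{equation*}
c(\PP^k(f)) = c(\mathrm{Sym}^k\Omega_X \otimes \OO_X(1)) \cdot c(\PP^{k-1}(f)),
\end{equation*}
truncated in codimension at most $3$. Substituting the resulting expressions into the formula for $\gDD_k(f)$ above and grouping terms produces the seven coefficients $\beta_1,\ldots,\beta_7$ in \eqref{eq: coefficients beta}.

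The main obstacle is purely combinatorial bookkeeping: unlike the surface case, one must simultaneously track three Chern classes of $X$ and the corresponding cross-terms up to codimension $3$, and the closed-form coefficients in $k$ are polynomials of high degree (for instance, $\beta_1$ is a polynomial of degree $9$ in $k$). Guessing the correct ansatz for $c_i(\PP^k(f))$ is therefore the delicate part; once an ansatz is fixed, the inductive step reduces to a handful of polynomial identities in $\Q[k]$, which can be checked either by hand or symbolically via the \verb|Macaulay2| implementation of \eqref{eq: recursive formula} referenced by the authors.
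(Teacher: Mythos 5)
Your proposal follows essentially the same route as the paper's (omitted) proof: the authors state that it uses the recursive formula \eqref{eq: recursive formula}, Lemma \ref{lem: chern classes symmetric power vector bundle small rank} applied to the rank-three bundle $\Omega_X$, and an induction on $k$ exactly parallel to the surface case, which is precisely your plan. One small correction to carry out the bookkeeping: for a threefold $\rank\mathrm{Sym}^k\Omega_X=\binom{m+k-1}{k}=\binom{k+2}{2}$, not $\binom{k+2}{3}$ (the latter is the coefficient of $c_1(\Omega_X)$ in $c_1(\mathrm{Sym}^k\Omega_X)$), and this correct rank must be used when applying \eqref{lem: chern classes tensor product with line bundle}.
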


\begin{example}
We apply Propositions \ref{prop: gEDD k-regular surface} and \ref{prop: gEDD k-regular threefold} for the Segre-Veronese embedding $\nu_\bm^\bd$ defined in \eqref{eq: def Segre-Veronese embedding} and with $\bm=(1,\dots,1)\in\N^r$ and $r\in\{2,3\}$. Recall that $\nu_\bm^\bd$ is $k$-regular if and only if $k\le\min\bd$ by Corollary \ref{corol: when Segre-Veronese k-regular}. Let $h_i$ be the class of a point in the $i$th factor of $\P^\bm=(\P^1)^{\times r}$. In particular $L=\sum_{i=1}^r d_ih_i$ and $c(\P^\bm)=\prod_{i=1}^r(1+h_i)^2=\sum_{j=0}^r 2^j e_j(h_1,\dots,h_r)$, where $e_j$ is the $j$th elementary symmetric polynomial in its arguments, with $e_0\coloneqq 1$.

For $r=2$, the relations needed are $L^2 = 2d_1d_2h_1h_2$, $c_1L = 2(d_1+d_2)h_1h_2$, $c_1^2 = 8h_1h_2$, and $c_2 = 4h_1h_2$. Plugging these relations in \eqref{eq: gEDD k-regular surface} and simplifying, one verifies that
\[
\resizebox{\textwidth}{!}{$\begin{aligned}\gDD_k(\nu_\bm^\bd) = \frac{(k^2+k+2)(k^2+5k+8)}{4}d_1d_2-2\binom{k+2}{3}\binom{k+2}{2}(d_1+d_2)+\frac{4}{3}(2k^2+1)\binom{k+3}{4}\end{aligned}$}
\]
for any $k\le\min\{d_1,d_2\}$. Instead for $r=3$, using \eqref{eq: gEDD k-regular threefold}, one verifies that
\begin{equation}\label{eq: gDD 3 copies P1}
\gDD_k(\nu_\bm^\bd) =
2\left[3\,\beta_1\,e_3(\bd)+2\,\beta_2\,e_2(\bd)+2(2\,\beta_3+\beta_4)e_1(\bd)+24\,\beta_5+12\,\beta_6+4\,\beta_7\right]
\end{equation}
for any $k\le\min\{d_1,d_2,d_3\}$. For example, if $k=2\le\min\{d_1,d_2,d_3\}$, then $\gDD_2(\nu_\bm^\bd) = 8(132\,e_3(\bd)-115\,e_2(\bd)+108\,e_1(\bd)-110)$.\hfill$\diamondsuit$
\end{example}

\begin{example}\label{ex: formulas for complete intersection varieties}
Let $f\colon X\hookrightarrow\P^n$ be an embedded nonsingular complete intersection variety of dimension $m$. Let $d=(d_1,\dots,d_{n-m})$ be the vector of degrees of the polynomials defining $X$. Then
\[
c(X)=\frac{(1+L)^{n+1}}{(1+d_1L)\cdots(1+d_{n-m}L)} = \sum_{i=0}^m\left(\sum_{j=0}^i(-1)^j\binom{n+1}{i-j}e_j(d)\right)L^i\,,
\]
where $e_j(d)$ is the $j$th elementary symmetric function of $d$.
One can apply Propositions \ref{prop: gEDD k-regular curve} and \ref{prop: gEDD k-regular surface}, or more in general the recursive formula \ref{eq: recursive formula} to derive a formula for the generic $k$th-order distance degree of $f$ as a polynomial in $d$. For example, if $m=1$, then
\[
\gDD_k(f) = d_1\cdots d_{n-1}\left[k+2-\binom{k+1}{2}(n+1-d_1-\cdots-d_{n-1})\right]
\]
whenever $f$ is $k$-regular.\hfill$\diamondsuit$   
\end{example}

The formulas \eqref{eq: gEDD k-regular curve}, \eqref{eq: gEDD k-regular surface}, and \eqref{eq: gEDD k-regular threefold} have an interesting geometric interpretation in toric geometry. In the following, we adopt the notation used in \cite{Higher_duality_and_toric}. For an $m$-dimensional toric embedding $f\colon X\hookrightarrow\P^n$ with corresponding lattice polytope $P=P(f)\subseteq\R^m$, setting $L=c_1(\OO_X(1))$ one has (see \cite[Corollary 11.5]{danilov1978geometry})
\begin{equation}\label{eq: Chern classes toric}
    c_i=\sum_{F\subseteq P,\,\codim(F)=i}[F]\,,\quad \int_X c_iL^{m-i} = \sum_{F\subseteq P,\,\codim(F)=i}\Vol(F)
\end{equation}
for every $i\in\{0,\dots,m\}$, where $[F]$ denotes the class of the invariant subvariety of $X$ associated with the face $F$ of $P$, and $\Vol(F)$ indicates the lattice volume of $F$ measured with respect to the lattice induced by $\Z^m$ in the linear span of $F$, in particular $\Vol(F)$ is equal to $(\dim F)!$ times the Euclidean volume of $F$. Since the following formulas deal with toric varieties of dimension $m\le 3$, we also define
\[
\VV\coloneqq |\{\text{vertices of $P$}\}|\,,\quad\EE\coloneqq \sum_{\text{$\xi$ edge of $P$}
}\Vol(\xi)\,,\quad\FF\coloneqq \sum_{\text{$F$ facet of $P$}}\Vol(F)\,,\quad\PP\coloneqq\Vol(P)\,.
\]

\begin{corollary}\label{corol: gEDD k-regular toric surface}
Let $f\colon X\hookrightarrow\P^n$ be a $k$-regular toric embedding of a nonsingular toric surface $X$ and let $P=P(f)$ be the associated polygon. Then:
\begin{equation}\label{eq: gEDD k-regular toric surface}
\gDD_k(f) = \alpha_1\,\PP+\alpha_2\,\EE+(\alpha_4-\alpha_3)\VV+12\,\alpha_3\,,
\end{equation}
where the coefficients $\alpha_i(k)$ are displayed in \eqref{eq: coefficients alpha}.
\end{corollary}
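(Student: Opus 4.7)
The plan is to combine Proposition \ref{prop: gEDD k-regular surface} with the toric Chern class formulas \eqref{eq: Chern classes toric} and a standard identity for nonsingular complete toric surfaces. By Proposition \ref{prop: gEDD k-regular surface}, writing $c(X)=1+c_1+c_2$ and $L=c_1(\OO_X(1))$, we already have
\[
\gDD_k(f)=\int_X\alpha_1 L^2+\alpha_2 c_1L+\alpha_3 c_1^2+\alpha_4 c_2\,,
\]
so the task reduces to evaluating the four intersection numbers $\int_X L^2,\ \int_X c_1L,\ \int_X c_1^2,\ \int_X c_2$ in terms of combinatorial data of the polygon $P$.

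Three of these integrals are supplied directly by \eqref{eq: Chern classes toric}: taking $m=2$ and $i=0,1,2$ yields $\int_X L^2=\Vol(P)=\PP$, $\int_X c_1L=\sum_{\xi}\Vol(\xi)=\EE$, and $\int_X c_2=\sum_{v}\Vol(v)=\VV$, where the last equality uses that each torus-fixed point, as a zero-dimensional invariant subvariety, contributes lattice volume one. The remaining ingredient, $\int_X c_1^2$, does not appear among the formulas in \eqref{eq: Chern classes toric}, and this is the one nontrivial step of the argument.

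To handle $\int_X c_1^2$, I would appeal to Noether's formula for surfaces, $\chi(\OO_X)=\tfrac{1}{12}\int_X(c_1^2+c_2)$, together with the vanishing $H^i(X,\OO_X)=0$ for $i>0$ valid on every complete nonsingular toric variety (so that $\chi(\OO_X)=1$). This yields the identity $\int_X c_1^2=12-\int_X c_2=12-\VV$. Substituting into the expression for $\gDD_k(f)$ gives
\[
\gDD_k(f)=\alpha_1\PP+\alpha_2\EE+\alpha_3(12-\VV)+\alpha_4\VV=\alpha_1\PP+\alpha_2\EE+(\alpha_4-\alpha_3)\VV+12\alpha_3\,,
\]
which is exactly \eqref{eq: gEDD k-regular toric surface}. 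The only real obstacle is recognizing that a purely combinatorial expression is available, via Noether's formula and the triviality of the higher cohomology of $\OO_X$, for the self-intersection $c_1^2$; once this is in hand, everything reduces to bookkeeping with the coefficients $\alpha_i$ from \eqref{eq: coefficients alpha}.
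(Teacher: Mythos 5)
Your proposal is correct and follows essentially the same route as the paper's proof: Proposition \ref{prop: gEDD k-regular surface} plus the toric relations \eqref{eq: Chern classes toric} for $\int_X L^2$, $\int_X c_1L$, $\int_X c_2$, and Noether's formula together with $\chi(\OO_X)=1$ (Demazure vanishing) to obtain $\int_X c_1^2=12-\VV$. No gaps.
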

\begin{proof}
Recall that $L=c_1(\OO_X(1))$. Firstly, using the relations in \eqref{eq: Chern classes toric}, one obtains that $\int_X L^2=\PP$, $\int_X c_1L=\EE,$ and $\int_X c_2=\VV.$ Secondly, Noether's Formula $\chi(\OO_X)=(K_X^2+\chi(X))/12$ \cite[IV,\S1]{griffiths1978principles} gives $\int_X c_1^2=\int_X K_X^2=12\chi(\OO_X)-\chi(X)=12\chi(\OO_X)-\VV$. Applying \cite[Corollary 7.4]{danilov1978geometry}, a consequence of Demazure vanishing for toric varieties \cite[Proposition 6, p. 564]{demazure1970sousgroupes}, we have $\chi(\OO_X)=1$. Hence $\int_Xc_1^2=12-\VV.$ The statement descends by \eqref{eq: gEDD k-regular surface} after plugging in the relations obtained before.
\end{proof}

The formula in \eqref{eq: gEDD k-regular toric surface} coincides with the sum of the degrees of the higher-order polar classes given in \cite[Example 8.2]{piene2022higher}.

\begin{example}
For $k=1$, the formula \eqref{eq: gEDD k-regular toric surface} simplifies to $\gDD(f)=7\,\PP-3\,\EE+\VV$, which agrees with \cite[Corollary 5.11]{DHOST} when $m=2$. We also point out that, for $k=2$, the formula \eqref{eq: gEDD k-regular toric surface} simplifies to $\gDD_2(f) = 22\,\PP-24\,\EE+60$, in particular it does not depend on $\VV$.\hfill$\diamondsuit$
\end{example}

Let $f\colon X\hookrightarrow\P^n$ be the toric embedding of a nonsingular toric threefold with associated lattice polytope $P$. Denote by $K_X=c_1(\bigwedge^3\Omega_X)$ the canonical divisor of $X$. Recall the notation $L=c_1(\OO_X(1))$. If the divisor $K_X+L$ is nef \cite[Definition 1.4.1]{lazarsfeld2017positivity}, then the corresponding polytope is $P^\circ\coloneqq\conv(\operatorname{int}(P)\cap\Z^3)$. We also define
\[
\FF_1\coloneqq \sum_{\text{$F$ facet of $P^\circ$}}\Vol(F)\,,\quad\PP_1\coloneqq\Vol(P^\circ)\,.
\]

We conclude this section with a classification of $k$-regular toric embeddings of nonsingular toric threefolds, with their respective generic $2$nd-order distance degrees. This extends the computation made in \cite[Example 8.3]{piene2022higher} for $2$-regular toric embeddings of nonsingular toric threefolds.

\begin{proposition}\label{corol: gEDD 2-regular toric threefold}
Let $f\colon X\hookrightarrow\P^n$ be a $k$-regular toric embedding of a nonsingular toric threefold. Then one of the following possibilities occurs:
\begin{enumerate}
\item $k=1$ and $\gDD(f)=\gDD_1(f) = 15\PP-7\FF+1/40\EE-\VV$
\item $k\in\{2,3\}$ and $(X,f)=(\P^3,\nu_3^d)$ with $d\in\{2,3\}$. In this case
\begin{equation}
    \gDD_k(f) =
    \begin{cases}
    8 & \text{if $k=2,d=2$}\\
    370 & \text{if $k=2,d=3$.}\\
    27 & \text{if $k=3,d=3$.}
    \end{cases}
\end{equation}

\item $k=2$ and $X=\P(\EE)$ for $\EE=\OO_{\P^1}(a)\oplus\OO_{\P^1}(b)\oplus\OO_{\P^1}(c)$ with $a\ge b\ge c\ge 2$ and $f$ is the embedding defined by $2\,\xi_\EE$. In this case
\begin{equation}\label{eq: gEDD 2-regular toric threefold case 2}
    \gDD_2(f)=162(a+b+c)-154\,.    
\end{equation}

\item Otherwise
\begin{equation}\label{eq: gEDD 2-regular toric threefold case 3}
\gDD_2(f) = \beta_1\,\PP+\beta_2\,\PP_1-\beta_3\,\FF+\beta_4\,\FF_1+\beta_5\,\EE-\beta_6\,\VV-\beta_7\,.
\end{equation}
\end{enumerate}
\end{proposition}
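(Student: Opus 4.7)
The proof proceeds by first classifying $k$-regular nonsingular toric threefolds, then applying Proposition \ref{prop: gEDD k-regular threefold} in each class and translating Chern numbers into polytope invariants.

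\textbf{Classification.} By Proposition \ref{prop: characterization of globally osculating toric varieties}, $f$ is $k$-regular precisely when the jet matrix at every torus-fixed point attains rank $\binom{m+k}{k}$, equivalently when $\OO_X(1)$ is $k$-jet spanned at every torus-fixed point. For $m = 3$ this is highly restrictive, and I would invoke (or reprove via a vertex-by-vertex analysis of the lattice polytope, in the spirit of Proposition \ref{prop: characterization of globally osculating toric varieties}) the classification of nonsingular toric threefolds admitting a $k$-jet ample very ample polarization. For $k \geq 2$ the only possibilities are the Veronese embeddings $(\P^3,\nu_3^d)$ with $d \geq k$, forcing $(k,d) \in \{(2,2),(2,3),(3,3)\}$ under the small-dimensional bounds corresponding to case~(2); the $\P^2$-bundles $\P(\OO_{\P^1}(a)\oplus\OO_{\P^1}(b)\oplus\OO_{\P^1}(c))$ with $a \geq b \geq c \geq 2$ embedded by $2\xi_\EE$ (case~(3), which occurs only for $k=2$); and the complement, where $K_X+L$ is nef (case~(4)). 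No classification is needed for $k=1$, since every smooth toric embedding is $1$-regular.

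\textbf{Computation of cases (1)--(3).} For $k=1$ I use Proposition \ref{prop: gEDD k-regular threefold} together with the toric identities listed in the generic step below. For the Veronese cases I invoke Corollary \ref{corol: generic kth ED degree Veronese} with $m=3$ and evaluate at the three listed pairs $(k,d)$ to obtain $8$, $370$, and $27$. For the projective bundle $X=\P(\EE)$ with $\EE=\OO_{\P^1}(a)\oplus\OO_{\P^1}(b)\oplus\OO_{\P^1}(c)$ and $L=2\xi_\EE$, I compute the Chern classes of $T_X$ from the relative Euler sequence $0 \to \OO_X \to \pi^*\EE^\vee\otimes\OO_X(\xi_\EE) \to T_{X/\P^1} \to 0$ combined with $0 \to \pi^*T_{\P^1} \to T_X \to T_{X/\P^1} \to 0$, and reduce every triple intersection on $X$ to a polynomial in $(a,b,c)$ via the Grothendieck relation $\xi_\EE^3 = (a+b+c)\xi_\EE^2 F$ with $\xi_\EE^2 F = 1$ and $F^2 = 0$. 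Substituting into Proposition \ref{prop: gEDD k-regular threefold} at $k=2$ with $L = 2\xi_\EE$ and collecting yields $\gDD_2(f) = 162(a+b+c) - 154$.

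\textbf{Generic case (4).} Here I convert Chern numbers into polytope invariants. The formulas \eqref{eq: Chern classes toric} give directly $\int_X L^3 = \PP$, $\int_X c_1 L^2 = \FF$, $\int_X c_2 L = \EE$, $\int_X c_3 = \VV$. Hirzebruch--Riemann--Roch for threefolds yields $\int_X c_1 c_2 = 24\,\chi(\OO_X) = 24$, since $\chi(\OO_X) = 1$ by Demazure vanishing on a smooth projective toric variety. Under the nefness hypothesis, $K_X + L$ corresponds to the lattice polytope $P^\circ$, so $\int_X (K_X+L)^3 = \PP_1$ and $\int_X (K_X+L)^2 L = \FF_1$; expanding with $K_X = -c_1$ gives $\int_X c_1^2 L = \FF_1 + 2\FF - \PP$ and $\int_X c_1^3 = 3\FF_1 + 3\FF - 2\PP - \PP_1$. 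Plugging these into Proposition \ref{prop: gEDD k-regular threefold} at $k=2$ and regrouping coefficients produces \eqref{eq: gEDD 2-regular toric threefold case 3}.

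\textbf{Main obstacle.} The hardest step is the classification of $k$-regular smooth toric threefolds for $k \geq 2$, which provides the exhaustive case distinction. The projective-bundle case~(3) must be separated from~(4) precisely because its polytope fails the nefness condition of~(4): the interior polytope $P^\circ$ may collapse, so the quantities $\PP_1$ and $\FF_1$ entering the generic formula are ill-defined there, and a bespoke intersection-theoretic computation on $\P(\EE)$ is required.
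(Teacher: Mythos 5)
Your overall strategy matches the paper's: split into the special families and a generic case, evaluate the Veronese cases with Corollary \ref{corol: generic kth ED degree Veronese}, do a direct Chern class computation on $\P(\EE)$ using the relative Euler sequence and the Grothendieck relation, and in the generic case feed the toric dictionary \eqref{eq: Chern classes toric} plus Riemann--Roch ($\int_X c_1c_2=24\chi(\OO_X)=24$) into Proposition \ref{prop: gEDD k-regular threefold}. However, your case-(4) dictionary contains a genuine error: you set $\FF_1=\int_X (K_X+L)^2L$, but $\FF_1$ is by definition the sum of the lattice volumes of the facets of $P^\circ$, the polytope of the nef bundle $K_X+L$, and by the same Danilov/Ehrhart identity underlying \eqref{eq: Chern classes toric} this equals $\int_X c_1\,(K_X+L)^2=\int_X c_1(L-c_1)^2$, not $\int_X (K_X+L)^2L$. (Quick check on $(\P^3,\OO(5))$: $P^\circ$ is the unit simplex, so $\FF_1=4=\int c_1(K_X+L)^2$, whereas $\int (K_X+L)^2L=5$.) With the correct identities one solves $\int_X c_1^2L=\PP_1-\PP+\FF_1+2\FF$ and $\int_X c_1^3=2(\PP_1-\PP)+3(\FF_1+\FF)$, which differ from your expressions $\FF_1+2\FF-\PP$ and $3\FF_1+3\FF-2\PP-\PP_1$; consequently your regrouping does not reproduce \eqref{eq: gEDD 2-regular toric threefold case 3}, so the generic case as written fails.

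The second gap is the exhaustiveness of the case distinction. The paper does not classify smooth toric threefolds with a $k$-jet spanned very ample polarization, which is the (unreferenced) classification you propose to invoke or reprove; instead it splits on whether $K_X+L$ is nef and quotes the explicit list of non-nef-adjoint exceptions from polyhedral adjunction theory \cite{polyhedralAdjunction}. Among those exceptions, the ones containing lines (blow-ups at fixed points, Cayley sums) have $L\cdot\ell=1$ along a line and hence admit only $k=1$, leaving exactly $(\P^3,\OO(d))$ with $d\le 3$ and $\P(\EE)$ polarized by $2\xi_\EE$ as the possibilities with $k\ge 2$. Your proposal asserts this trichotomy but supplies no argument (and no citable source) for why no other $k$-regular threefold with $K_X+L$ non-nef can occur, so the exhaustiveness of cases (2)--(4) is left unproven. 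Also note that your claim that evaluating Corollary \ref{corol: generic kth ED degree Veronese} at $(m,k,d)=(3,2,3)$ returns the listed value should be rechecked: the formula gives $27+90+135+120$, so you cannot simply assert agreement without comment.
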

\begin{proof}
Following the list of exceptions in \cite[A1]{polyhedralAdjunction}, we see that the exceptions of $K_X+L$ being nef have $k=1$ or they are as in cases $(1)$ and $(2)$. In particular, simple blow-ups at a fixed point and Cayley sums have $k=1$ as they contain linear spaces of degree one. 

In Case (1), the value $\gDD_2(f)$ is computed applying Proposition \ref{prop: gEDD k-regular threefold} with $k=1$, giving $\gDD(f)=\gDD_1(f) = \int_X 15L^3-7c_1L^2+3c_2L-c_3$. The desired formula is obtained considering the polytope interpretation of Chern classes as in \eqref{eq: Chern classes toric}.

In Case (2), the value of $\gDD_2(f)$ is computed using \eqref{eq: EDD k polar classes Veronese}. 

Case $(3)$ deals with threefolds $\P(\EE)=\P(\OO_{\P^1}(a)\oplus \OO_{\P^1}(b)\oplus\OO_{\P^1}(c))$ with $a\ge b\ge c\ge 2$ and the $2$-regular embedding $f\colon\P(\EE)\to\P^{4a+4b+4c+5}$ defined by $\OO_\EE(2)$. Recall that $c_1=\OO_\EE(3)-(a+b+c-2)F$ and $c_2=3c_1(\OO_{\P(\EE)}(1))^2-2(a+b+c-3)c_1(\OO_{\P(\EE)}(1))F,$ where $F\cong\P^2$ is the divisor class of the fiber of the projection map $\pi\colon\P(\EE)\to\P^1.$ It follows that $\int_X L^3 = 8(a+b+c)$, $\int_X c_1L^2 = 8(a+b+c+1)$, $\int_X c_1^2L = 6(a+b+c+4)$, $\int_X c_2L = 2(a+b+c+6)$, $\int_X c_1^3 = 54$, $\int_X c_1c_2 = 24$, and $\int_X c_3 = 6$. Plugging these values in the identity \eqref{eq: gEDD k-regular threefold} for $k=2$, that is
\begin{equation}\label{eq: gEDD 2-regular threefold}
\gDD_2(f) = \int_X 176\,L^3-230\,c_1L^2+81\,c_1^2L-7\,c_1^3+54\,c_2L-20\,c_1c_2-8\,c_3\,,
\end{equation}
we derive the desired identity \eqref{eq: gEDD 2-regular toric threefold case 2}.

Finally, we assume that we are not in cases $(1)$ and $(2)$; therefore, the divisor $K_X+L$ is nef.
Recall that $L=c_1(\OO_X(1))$. Firstly, using the relations in \eqref{eq: Chern classes toric}, one obtains that $\int_X L^3=\PP$, $\int_X c_1L^2=\FF$, $\int_X c_2L=\EE$, and $\int_X c_3=\VV$.
It remains to compute the degrees of $c_1c_2$, $c_1^2L$, and $c_1^3$.
For a nonsingular threefold, the Riemann-Roch theorem gives $\frac{1}{24}\chi(\OO_X)=\int_X c_1c_2.$ Since $\chi(\OO_X)=1$ (see the proof of Corollary \ref{corol: gEDD k-regular toric surface}), we conclude that $\int_X c_1c_2=24.$ Since $c_1=-K_X$, it follows that $\PP_1 = \int_X(L-c_1)^3$ and $\FF_1 = \int_X c_1(L-c_1)^2$, from which we derive the identities $\int_X c_1^3 = 2(\PP_1-\PP)+3(\FF_1+\FF)$ and $\int_X c_1^2L = \PP_1-\PP+\FF_1+2\FF$. Plugging these values in \eqref{eq: gEDD 2-regular threefold}, we obtain \eqref{eq: gEDD 2-regular toric threefold case 3}.
\end{proof}

\begin{example}
We apply \eqref{eq: gEDD 2-regular toric threefold case 3} in a slightly more general version of \cite[Example 3.8]{Higher_duality_and_toric}. Given an integer $a\ge 2$, we consider the $2$-regular toric threefold given by the Segre-Veronese embedding of $(\P^1)^{\times 3}$ with the line bundle $\OO_{(\P^1)^{\times 3}}(a,a,a)$. The lattice polytope associated with the embedding is a cube with edges of length $a$, hence $\PP=3!\cdot a^3=6\,a^3$, $\FF=6\cdot 2!\cdot a^2=12\,a^2$, $\EE=12\,a$, and $\VV=8$. Furthermore $P^\circ$ is also a cube with edges of length $a-2$, hence $\PP_1=6(a-2)^3$ and $\FF_1=12(a-2)^2$. Applying \eqref{eq: gEDD 2-regular toric threefold case 3}, after simplification we obtain that $\gDD_2(\nu_\bm^\bd)=1056\,a^3-2760\,a^2+2592\,a-880$. The result is confirmed by the formula \eqref{eq: gDD 3 copies P1} for $k=2$ and $\bd=(a,a,a)$.\hfill$\diamondsuit$
\end{example}

\section{Tropical geometry of distance optimization}\label{sec: tropical}

In this chapter, we present a new combinatorial framework for studying distance optimization, based on tropical geometry. To enable this analysis, we introduce an additional parameter $t$ to the coefficients of our variety $X$, and replace our base field of complex numbers $\C$ with the valued field of complex Puiseux series $\C\{\{t\}\}$. 
The limiting behavior of $X$ as $t$ goes to $0$ is captured by the tropicalization $\Trop X$, a polyhedral complex that encodes many of the geometric properties of $X$. 
Studying this limiting behavior allows us to characterize both higher distance degrees and the sensitivity of critical points of the distance function with respect to the parameter $t$.

We begin by fixing some notation.
Throughout, $\CC$ denotes an algebraically closed field with a nontrivial, non-Archimedean valuation $\nu \colon \CC^* \to \R$.
We denote by $\CC^{\circ} \coloneqq \big\{ x \in \CC \mid  \nu(x) \ge 0 \big\}$  and by $\CC^{\circ \circ} \coloneqq \big\{ x \in \CC \mid  \nu(x) > 0 \big\}$ the valuation ring of $\CC$ and its unique maximal ideal, respectively. The residue field $\CC^\circ/ \CC^{\circ\circ}$ is denoted by~$\widetilde{\CC}$.

We recall that a nonzero {\em formal Puiseux series} is of the form
\[
x(t)=\sum_{k=k_0}^{\infty} c_k t^{\frac{k}{N}} \text { for some } k_0 \in \Z,\, N \in \N,\, c_k \in \C, \, c_{k_0} \neq 0\,.
\]
The valuation $\nu(x(t))= \frac{k_0}{N}$ is defined as the smallest exponent of $t$ in $x(t)$ with nonzero coefficient, it is a measure of the degree to which $x(t)$ depends on $t$.

From now on, let $f\colon X \to \P^n = \P(\CC)$ denote the closed embedding of an $m$-dimensional variety.
We denote by $\Trop X$ the tropicalization of the cone over $f(X)$.
\begin{align*}
\Trop X \coloneqq \overline{\{ \nu( x) \ \colon \  x \in (\CC^*)^{n+1} \mid \ [x] \in f(X)  \}} \subseteq \R^{n+1},
\end{align*}
This is the closure of the image under the element-wise valuation map, taken in Euclidean topology.

Recall the notation given in the introduction and at the beginning of Section \ref{sec: higher-order normal bundles}. Consider $q\in\mathrm{Sym}^2V^*$, the nonsingular quadric hypersurface $Q = \V(q)\subseteq\P(V)=\P^n$, and the squared distance function $d_u^2(x) = q(u-x)$, seen as a function $d_u^2\colon V\to\C$.
Throughout most of this section, we assume $Q=Q_\mED = \V(x_0^2 + \cdots + x_n^2)$, which is most relevant for Euclidean distance optimization. 
Note that, identifying the projective space $\P^n$ and its dual $(\P^n)^\vee$ through the reciprocity map \eqref{eq: polarity}, induces a closed embedding $\mathrm{id} \times \partial_q^{-1} \colon W_k(f) \subseteq \P^n \times \P^n$ of the conormal variety, and denote the embedded variety by $W_k(f,Q) \coloneqq \mathrm{id} \times \partial_q^{-1} (W_k(f))$.
The varieties $W_k(f,Q)$ and $W_k(f,Q_\mED)$ are related through a linear change of coordinates induced by $M_Q$.
We now show that the tropicalization $\Trop W_k(f,Q)$ contains crucial information on the $ k$th-order polar degrees of $f$ for arbitrary $k$.

\subsection{Proof of Theorem \ref{theorem: tropical description of higher-order multidegrees}}\label{proof5}
\begin{proof}[Proof of Theorem \ref{theorem: tropical description of higher-order multidegrees}]
We can express higher polar degrees of $f$ in terms of the tropicalized conormal variety $\Trop W_k(f,Q)$ purely combinatorially.
To this end, we denote for $0\le h \le n+1$ by $M^h_{n+1}$ the uniform matroid of rank $h$ on $n+1$ elements, and by $\operatorname{Berg}(M^h_{n+1})$ the corresponding Bergman fan (see \cite[\S 4.2]{maclagan2015introduction}, and for several characterizations see \cite[Proposition 3.6]{Feichtner2005}).
We claim that for every $0\le j\le m+n-m_k-1$, the {\em $j$th multidegree $\delta_{k,j}(f)$ of order $k$} is equal to the degree of the stable intersection $\Trop W_k(f,Q) \cdot \operatorname{Berg}(M^{n-j+1}_{n+1}\times M^{j+m_k-m+2}_{n+1})$.

The tropicalization of $L_1 \times L_2$ is equal to the product of the Bergman fans of the uniform matroids $M^{n-j+1}_{n+1}$ and $M^{j+m_k-m+2}_{n+1}$, respectively \cite[Example 5.2.7]{maclagan2015introduction}.
As in equation \eqref{eq: interpretation multidegrees delta_i}, the multidegree
$\delta_{k,j}(f)$ is equal to the cardinality of the finite intersection $W_k(f,Q)\cap(L_1\times L_2)$, where $L_1, L_2 \subseteq \P^n$ are generic linear spaces of dimension
$n-j$ and $j+m_k-m+1$ respectively.
It follows from \cite[Theorem 5.3.3]{osserman2013lifting} that the cardinality of $W_k(f,Q)\cap(L_1\times L_2)$ is equal to the degree of the stable intersection of $\Trop W_k(f,Q) \cdot \Trop(L_1 \times L_2)$.
Alternatively, note that the multidegree $\delta_{k,j}(f)$ is the degree of the cycle $[W_k(f,Q)] \cdot [L_1\times L_2] $ in the Chow ring of $\P^n \times \P^n$. To show equality
with the degree of the tropical cycle $\Trop W_k(f,Q) \cdot \Trop(L_1 \times L_2)$ we employ
\cite[Theorem 3.1]{sturmfels1997intersection}, together with \cite[Theorem 4.4]{tropIntersTheory}. We obtain
\[\delta_{k,j}(f)=\Trop W_k(f,Q) \cdot \operatorname{Berg}(M^{n-j+1}_{n+1}\times M^{j+m_k-m+2}_{n+1}).\]  Finally, the equality 
\[
\gDD_k(f) = \sum_{j = 0}^{ m+n-m_k-1 }
\int \Trop W_k(f,Q) \cdot \operatorname{Berg}(M^{n-j+1}_{n+1}\times M^{j+m_k-m+2}_{n+1})
\]
follows from Proposition \ref{prop: properties kth polar degrees} and Proposition \ref{prop: kth order ED degree sum of kth order polar degrees}.
\end{proof}

Going beyond Theorem \ref{theorem: tropical description of higher-order multidegrees}, for $k=1$ we can not only compute the generic Euclidean distance degree of $f$, but we can also determine the tropical critical points on $X$.

\begin{definition}
We call $w \in \R^n$ a {\em tropical critical point} of $X$ with respect to $u$, if it holds $w = \nu(x(t))$ for some critical point $x(t)$ of $d_u^2$ and $[x(t)] \in X(\CC)$.
\end{definition}

\begin{proposition}\label{prop: critical points w.r.t. u.}
Assume that the pair $(f,Q_\mED)$ is in general $1$-osculating position. Define $Z_u \coloneqq\{(x,y) \in \CC^{n+1} \times \CC^{n+1} \mid x+y = u\}$. The set of critical points of $X$ with respect to $u$ is the image of the set
\[
W_1(f,Q_\mED) \cap Z_u
\]
under the natural projection $\C^{n+1} \times \C^{n+1} \to \C^{n+1}$ to the first component $\C^{n+1}$.
\end{proposition}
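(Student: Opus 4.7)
The strategy is to unravel both sides of the asserted equality into explicit affine-geometric conditions on $\C^{n+1}\times\C^{n+1}$ and match them directly.

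First I would describe the affine cone of $W_1(f,Q_\mED)\subseteq\P^n\times\P^n$ sitting inside $\C^{n+1}\times\C^{n+1}$. Since $q(x)=x_0^2+\cdots+x_n^2$, the polarity map \eqref{eq: polarity} sends $[y]$ to the hyperplane $\{[z]\mid\langle y,z\rangle=0\}$, where $\langle\cdot,\cdot\rangle$ denotes the standard bilinear form on $\C^{n+1}$. By Definition \ref{def: higher conormal variety}, a pair $([x],H)$ lies in $W_1(f)$ precisely when $[x]=f(p)$ for some $p\in X$ and $H\supseteq\T_p(f)$; after applying $\mathrm{id}\times\partial_q^{-1}$, this becomes the condition $y\perp\widetilde\T_p(f)$, where $\widetilde\T_p(f)\subseteq\C^{n+1}$ denotes the affine cone over the projective tangent space. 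Consequently, the multi-affine lift of $W_1(f,Q_\mED)$ to $\C^{n+1}\times\C^{n+1}$ coincides with
\[
\widetilde W_1(f,Q_\mED)=\bigl\{(x,y)\in\C^{n+1}\times\C^{n+1}\,\big|\,[x]\in f(X),\ \langle y,z\rangle=0\ \forall z\in\widetilde\T_p(f)\bigr\}.
\]
Because $(f,Q_\mED)$ is in general $1$-osculating position, $f(X)$ is smooth and transverse to $Q_\mED$, so the Euclidean normal bundle has constant rank $n-m$ throughout $f(X)$ and no closure phenomena intervene.

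Next I would interpret critical points of $d_u^2$ on $C(f(X))$. At every nonzero $x\in C(f(X))$ the Zariski tangent space agrees with the affine cone over the projective tangent space, i.e.\ $T_x C(f(X))=\widetilde\T_p(f)$ with $f(p)=[x]$. Since $\nabla d_u^2(x)=2(x-u)$, the point $x$ is critical for the restriction of $d_u^2$ to $C(f(X))$ if and only if $x\in C(f(X))$ and $u-x\perp\widetilde\T_p(f)$.

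Finally I would merge the two descriptions. Intersecting with $Z_u$ forces $y=u-x$, so $(x,y)\in\widetilde W_1(f,Q_\mED)\cap Z_u$ is equivalent to the conjunction of $[x]\in f(X)$, the equation $y=u-x$, and the orthogonality $u-x\perp\widetilde\T_p(f)$, which is exactly the critical-point condition paired with the constraint $y=u-x$. Since $y$ is determined by $x$ on $Z_u$, the first-factor projection $\pi_1$ restricts to an injection on $\widetilde W_1(f,Q_\mED)\cap Z_u$ whose image is the set of critical points of $d_u^2$ on $C(f(X))$, proving the claim. The one delicate point is ensuring that the equality between $W_1(f,Q_\mED)$ and its naive affine description holds globally rather than only on the generic stratum; this is precisely what the general $1$-osculating position hypothesis delivers, since it guarantees smoothness of $f(X)$ and transversality with $Q_\mED$, so the orthogonality condition extends uniquely across all of $f(X)$ and no spurious components appear after intersecting with the transverse affine slice $Z_u$.
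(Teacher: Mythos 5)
Your argument is correct and matches the paper's approach: the paper simply cites \cite[Lemma 2.8]{DHOST}, which is exactly the statement you verify by hand, namely that $x$ on the affine cone is critical for $d_u^2$ if and only if $u=x+y$ with $(x,y)$ in the (multi-)affine cone over $W_1(f,Q_\mED)$, and intersecting with $Z_u$ then forces $y=u-x$. Your explicit unwinding of the polarity $\partial_q$, the identification $T_xC(f(X))=\widetilde\T_p(f)$, and the remark that general $1$-osculating position makes $f$ a closed embedding (so $W_1^\circ(f)=W_1(f)$ and no closure issues arise) is just a self-contained version of that citation.
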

\begin{proof}
It follows from \cite[Lemma 2.8]{DHOST} that for every critical point $x$ we can write the data vector $u$ as a sum $u = x + y$ for some pair $(x,y) $ in the cone over the conormal variety $W_1(f,Q_\mED)$.
This immediately implies the desired statement.
\end{proof}

The above statement has the following tropical analogue. However, we need the slightly stronger genericity assumption that $f$ is in general position under the torus action.
\begin{corollary}\label{corollary: tropical critical points from the tropical conormal variety}
Let $t \in \CC^{n+1}$ be a generic element of the algebraic torus with element-wise vanishing valuation: $\nu(t) = 0$. Then the set of tropical critical points of $t \cdot X$ with respect to $u$ is the image of the stable intersection
\[
\Trop W_1(t\cdot f,Q_\mED) \cdot \Trop Z_u
\]
under the natural projection $\R^{n+1}\times\R^{n+1}\to\R^{n+1}$ to the first component $\R^{n+1}$.
\end{corollary}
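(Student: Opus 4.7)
[Proof proposal for Corollary \ref{corollary: tropical critical points from the tropical conormal variety}]
The plan is to combine Proposition \ref{prop: critical points w.r.t. u.} with the compatibility of tropicalization with proper intersections and coordinate projections, using the torus factor $t$ to enforce genericity. First I would apply Proposition \ref{prop: critical points w.r.t. u.} to the translated embedding $t \cdot f\colon X \to \P^n$, noting that left multiplication by a torus element preserves the assumption that $(t\cdot f, Q_\mED)$ is in general $1$-osculating position (here one uses that $\nu(t)=0$ together with genericity of $t$). This identifies the set of critical points of $t\cdot X$ with respect to $u$ as $\pi_1\bigl( W_1(t\cdot f,Q_\mED)\cap Z_u\bigr)$, where $\pi_1\colon \CC^{n+1}\times\CC^{n+1}\to\CC^{n+1}$ is the projection to the first factor.

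Next I would tropicalize both sides. Since $\pi_1$ is a coordinate projection, it commutes with the coordinate-wise valuation map, and hence
\[
\Trop\bigl(\pi_1(W_1(t\cdot f,Q_\mED)\cap Z_u)\bigr) = \pi_1^{\mathrm{trop}}\bigl(\Trop(W_1(t\cdot f,Q_\mED)\cap Z_u)\bigr)\,,
\]
where $\pi_1^{\mathrm{trop}}\colon \R^{n+1}\times\R^{n+1}\to\R^{n+1}$ is the induced projection. By definition, the tropical critical points of $t\cdot X$ with respect to $u$ are precisely the image of the left-hand side. Thus it remains to show that
\[
\Trop\bigl(W_1(t\cdot f,Q_\mED)\cap Z_u\bigr) \;=\; \Trop W_1(t\cdot f,Q_\mED)\cdot \Trop Z_u\,,
\]
where the product on the right denotes stable intersection of balanced polyhedral complexes.

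For this last identification I would invoke the same machinery used in the proof of Theorem \ref{theorem: tropical description of higher-order multidegrees}: by \cite[Theorem 5.3.3]{osserman2013lifting}, the set-theoretic tropicalization of an intersection of two subvarieties of a torus equals the stable intersection of the tropicalizations provided the intersection is proper (i.e.\ the dimensions add correctly) and the intersection is transverse in an appropriate sense at sufficiently many points. The role of the generic torus element $t$ is exactly to guarantee this: translating $W_1(f,Q_\mED)$ by $t$ in the first factor puts $W_1(t\cdot f,Q_\mED)$ in general position with respect to the fixed affine-linear subvariety $Z_u = \{(x,y):x+y=u\}$, so the intersection is proper and no component is lost under tropicalization. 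Since $\nu(t)=0$, the tropicalization $\Trop W_1(t\cdot f,Q_\mED)$ is a translate (in fact, equal as a set under these assumptions) of $\Trop W_1(f,Q_\mED)$, but the algebraic translation is essential to make the intersection with $Z_u$ generic.

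The main obstacle will be verifying cleanly that the genericity of $t$ is sufficient to apply the lifting theorem on both factors simultaneously: one must rule out both the possibility that an irreducible component of $W_1(t\cdot f,Q_\mED)\cap Z_u$ has the wrong dimension, and the possibility of multiplicities from the stable intersection creating points in the tropical image that do not come from honest critical points. The former is a standard Bertini-type argument once one checks that the action of the torus moves $W_1(f,Q_\mED)$ through a family meeting $Z_u$ properly for generic parameter; the latter is automatic from the set-theoretic statement of \cite[Theorem 5.3.3]{osserman2013lifting}, which only records the support of the stable intersection. Once these two points are addressed, combining the three displays above yields the claim.
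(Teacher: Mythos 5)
Your argument is correct and follows the same route as the paper: apply Proposition~\ref{prop: critical points w.r.t. u.} to express the critical set as $\pi_1(W_1(t\cdot f,Q_\mED)\cap Z_u)$, then reduce to the identity $\Trop W_1(t\cdot f,Q_\mED)\cdot\Trop Z_u=\Trop(W_1(t\cdot f,Q_\mED)\cap Z_u)$ and invoke \cite[Theorem 5.3.3]{osserman2013lifting}. The paper states this quite tersely, while you spell out the compatibility of tropicalization with the coordinate projection and the role of the generic torus translate in ensuring the intersection is proper — useful detail, but not a different approach.
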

\begin{proof}
By Proposition \ref{prop: critical points w.r.t. u.}, it suffices to show the equality
\[
\Trop W_1(t \cdot f,Q_\mED) \cdot \Trop Z_u = \Trop(W_1( t \cdot f,Q_\mED) \cap Z_u)\,.
\]
The desired equality follows from \cite[Theorem 5.3.3]{osserman2013lifting}.
\end{proof}

\begin{example}\label{example: rational projective curve tropically}
As a simple example, we consider the rational projective curve
\[
f \colon\P^1 \longrightarrow \P^3\,,\ t \mapsto [1 : t^2 : t^3 : t^4]\,
\]
Here we are working in the affine patch $\{t_0\neq 0\}$ and are using the local coordinate $t=\frac{t_1}{t_0}$.
The projective dual variety is the hypersurface
\[
X^\vee = \V(4\,x_1^3x_2^2 + 27\,x_0x_2^4 - 16\,x_1^4x_3  - 144\,x_0x_1x_2^2x_3 + 128\,x_0x_1^2x_3^2 - 256\,x_0^2x_3^3)\subseteq(\P^3)^\vee\,.
\]
In Figure \ref{fig:test1} we show a green curve $u(t) = (1, t^{2}, t^{3}, t^{4})$ of data points, where $(t_0,t_1)=(1,t)$, and a red curve $x(t) \in X$ of points with a minimal Euclidean distance to $u(t)$.
We display the image of the cone over $X$ under the projection $(x_0, x_1, x_2, x_3) \mapsto(x_1, x_2, x_3)$ in $3$-dimensional space.
Drawn on logarithmic paper, Figure \ref{fig:test1} becomes approximately linear. We show the tropicalization
\[
\Trop X =  \rowspan \begin{pmatrix}
1 &1 &1 &1 \\
0 &2 &3 &4 \\
\end{pmatrix}
\]
in Figure \ref{fig:test2}.
In this example, the tropical conormal variety $\Trop W_1(f,Q_\mED)$ is of dimension $4$, and the union of the four maximal cones
\[
\sigma_1 = \R_+\cdot e_5 + L, \
\sigma_2 = \R_+\cdot e_6 + L, \
\sigma_3 = \R_+\cdot e_7 + L, \
\sigma_4 = \R_+\cdot e_8 + L,
\]
where $L$ denotes the three-dimensional lineality space
\[
L = \rowspan
\begin{pmatrix}
1 & 1& 1& 1& 0& 0 &0 & 0\\
0 & 0& 0& 0& 1& 1 &1 & 1\\
0 & 2& 3& 4& 0& -2 &-3 & -4\\
\end{pmatrix}.
\]
The tropicalization $Z_u$ is a four-dimensional polyhedral fan comprising $80$ maximal cones.
We check that the stable intersection of $\Trop Z_u$ and $\Trop W_k(f,Q_\mED)$ contains the point $(0, 2, 3, 4, 0, 4, 3, 2)$, indicating that there exists a critical point $x(t)$ with valuation $\nu(x(t)) = (0, 2, 3, 4)$. 
\begin{figure}
\centering
\begin{minipage}[t]{.5\textwidth}
\centering
\begin{overpic}[width=0.8\textwidth]{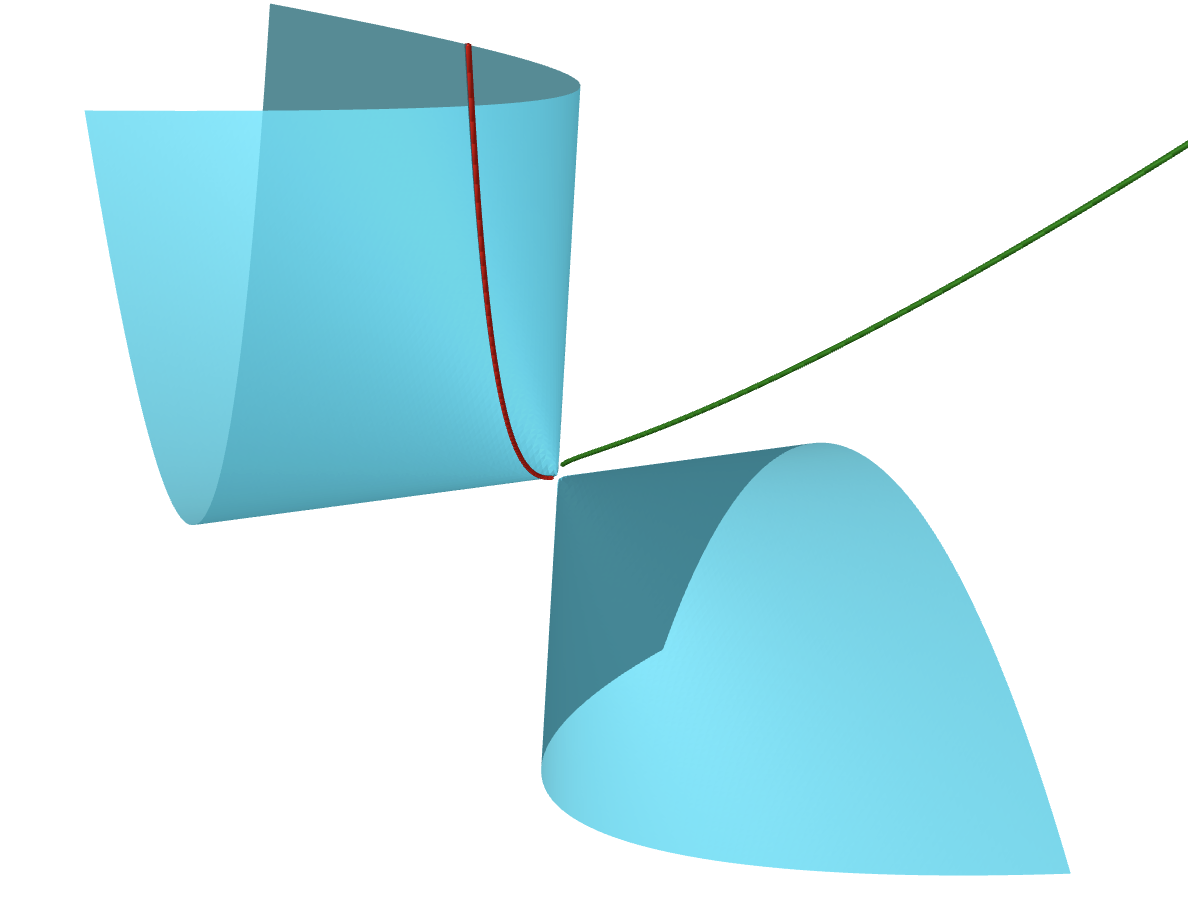}
\put (70,55) {$u(t)$}
\put (32,78) {$x(t)$}
\end{overpic}
\captionof{figure}{A one parameter family of data vectors $u(t)$ and critical points $x(t)$.}
\label{fig:test1}
\end{minipage}%
\begin{minipage}[t]{.5\textwidth}
\centering
\begin{overpic}[width=0.45\textwidth]{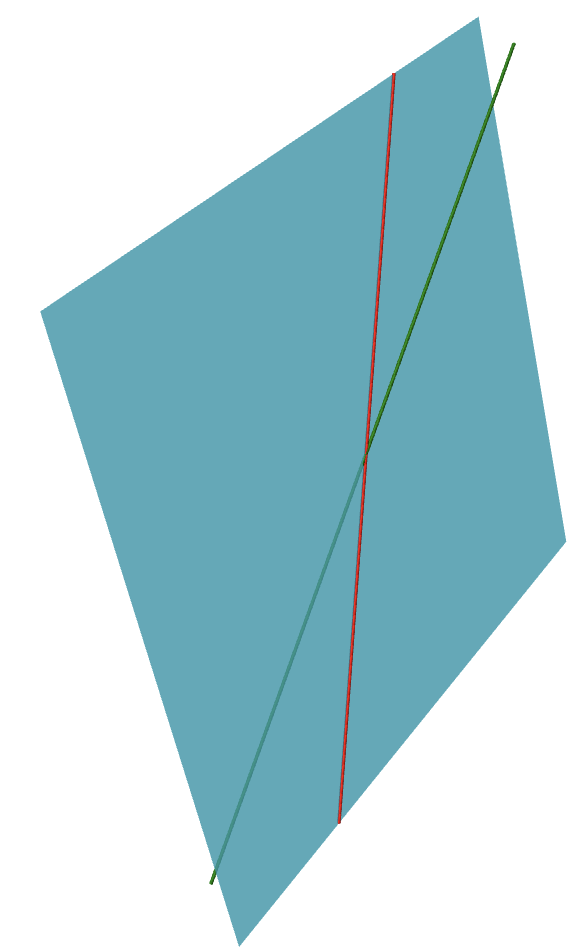}
\put (25,95) {$\nu(x(t))$}
\put (45,100) {$\nu(u(t))$}
\put (5,41) {$\Trop X$}
\end{overpic}
\captionof{figure}{The tropical picture.}
\label{fig:test2}
\end{minipage}
\end{figure}
\end{example}

\subsection{Toric varieties}

In this chapter, we characterize the generic higher-order distance degrees and polar degrees of toric varieties.
Similar to \cite{Tropical_Discriminants}, we use implicitization techniques to
tropicalize higher-order conormal varieties, based on an analogue of the Horn uniformization map.

Recall the notation given in Section \ref{subsec: gallery of toric examples}. In particular $f\colon X\to\P^n$ is a nonsingular, globally $k$-osculating, torus-equivariant embedding of a toric variety $X$ of dimension $m$. 
Again, $f$ is defined on the algebraic torus by an integer matrix $A$ of full rank.
In this section, by abuse of notation we now denote by the same letter $A$, the matrix
$\begin{pmatrix}
    \bone \\
    A
\end{pmatrix}$,
which contains the additional row $\bone=(1,\dots,1)$.
In particular, $A \in \Z^{(m+1)\times(n+1)}$.
We denote by $A^{(k)}$ the matrix $A_{\bone}^{(k)}(f)$ defined in \eqref{eq: matrix A}.
As before, the $k$th osculating space of $X$ at $\bone$, $\T_{\bone}^k(f)$, is equal to the row span of $A^{(k)}$.
The following is analogous to the Horn parametrization of the dual variety $X^\vee$, see, for example, \cite[Proposition 4.1]{Tropical_Discriminants}.
In fact, the $k$th-order dual variety is the projection of $W_k(f)$ onto the second factor. Our exposition differs in that we allow $k$ to be larger than one.

\begin{proposition}
\label{prop: parametrization for conormal variety}
The $k$th-order conormal variety $W_k(f,Q_\mED)$ is the closure of the image of the map
\begin{align*}
    \gamma\colon (\CC^*)^m \times \P(\ker(A^{(k)} )) &\longrightarrow \P^{n} \times \P^{n}\\
    (t, u) & \longmapsto (t^A \cdot \bone, t^{-A} \cdot u).
\end{align*}
\end{proposition}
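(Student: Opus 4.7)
The plan is to verify that $\gamma$ factors through $W_k(f,Q_\mED)$ and is dominant, and then to pass from dense image to equality after closure using irreducibility of the conormal variety. The one nontrivial ingredient is an explicit description of the generic osculating spaces of $f$ coming from torus equivariance.

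Since $f$ is the monomial map $x\mapsto x^A$, one has $f(t\cdot x)=t^A\cdot f(x)$ for $t$ in the open torus $T\subseteq X$, where $t^A\in(\CC^*)^{n+1}$ acts on $\P^n$ by coordinate-wise scaling. This scaling is a projective linear automorphism, so it sends osculating spaces to osculating spaces; combined with $\T_{\bone}^k(f)=\rowspan A^{(k)}$ and the observation that scaling of row vectors amounts to right-multiplication by $\operatorname{diag}(t^A)$, this yields
\[
\T_t^k(f)=\rowspan\bigl(A^{(k)}\cdot\operatorname{diag}(t^A)\bigr)\qquad\text{for every }t\in T.
\]

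Next I would unfold the defining condition of $W_k(f,Q_\mED)$. Since $Q_\mED=\V(x_0^2+\cdots+x_n^2)$, the reciprocity $\partial_{q_\mED}$ is (up to a scalar) the identity on coordinates, so $(x,v)\in W_k(f,Q_\mED)$ iff $x=f(p)$ for some $p\in X_k^\circ$ and $v$ is orthogonal to $\T_p^k(f)$ in the standard pairing. Applied at $p=t\in T$, this condition becomes $A^{(k)}\cdot(t^A\cdot v)=0$, equivalently $t^A\cdot v\in\ker A^{(k)}$. Setting $u\coloneqq t^A\cdot v$, so that $v=t^{-A}\cdot u$, shows that $\gamma(t,[u])=(t^A,\,t^{-A}\cdot u)$ lies in $W_k(f,Q_\mED)$ for every $[u]\in\P(\ker A^{(k)})$; moreover $\gamma$ descends to the projectivization of $\ker A^{(k)}$ since rescaling $u$ only rescales $t^{-A}\cdot u$.

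Finally I would compare dimensions. The source has dimension $m+\dim\P(\ker A^{(k)})=m+(n-m_k-1)=n+m-m_k-1$, which agrees with $\dim W_k(f,Q_\mED)$ by \eqref{eq: dim conormal}. Over each $t\in T$ the slice $\gamma|_{\{t\}\times-}$ parametrizes the entire fiber $\{f(t)\}\times\P(\T_t^k(f)^\perp)$ of the first projection $W_k(f,Q_\mED)\to f(X)$, and $f(T)$ is dense in $f(X_k^\circ)$, so $\gamma$ is dominant onto an irreducible component of $W_k(f,Q_\mED)$. Because $W_k(f,Q_\mED)\cong\P((\ker j_k)^\vee)$ is itself irreducible over the smooth base $X$, the closure of the image of $\gamma$ coincides with all of $W_k(f,Q_\mED)$. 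The only mildly delicate point is the torus-equivariant formula for $\T_t^k(f)$ displayed above; it rests on the global $k$-osculation hypothesis, which guarantees that $\rank A_t^{(k)}(f)=m_k+1$ uniformly across $T$ so that the formula really realizes the generic $k$th osculating space.
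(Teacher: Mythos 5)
Your proof is correct and follows essentially the same route as the paper: both identify the fiber over the torus identity with $\P(\ker A^{(k)})$ via $\T_{\bone}^k(f)=\P(\rowspan A^{(k)})$ and then spread it out using torus equivariance, concluding by irreducibility and the dimension count $n+m-m_k-1$. The only cosmetic difference is that you make the action on osculating spaces explicit (right multiplication by $\operatorname{diag}(t^A)$) and verify density fiberwise over $f(T)$, whereas the paper simply asserts the stability of $W_k(f,Q_\mED)$ under the action $(x,T)\mapsto(t^A\cdot x,\,t^{-A}\cdot T)$ and invokes equality of dimensions.
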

\begin{proof}
Both $\overline{\image\gamma}$ and $W_k(f,Q_\mED)$ are irreducible varieties of dimension $n-m_k+m-1$, and we are left with showing the inclusion $\overline{\image\gamma} \subseteq W_k(f,Q_\mED)$.
We first note that, by the construction of $W_k(f)$ we have the inclusion $\{\bone\}\times \T_{\bone}^k(f)^\perp \subseteq W_k(f,Q_\mED)$.
By \cite[Lemma 5.2]{Higher_duality_and_toric} it holds $\T_{\bone}^k(f) = \P(\rowspan A^{(k)}),$ and one checks directly that, following \eqref{eq: perp}, the orthogonal complement $\P(\rowspan A^{(k)})^\perp$ is equal to the kernel $\P(\ker(A^{(k)} ))$.
Together, we obtain the inclusion
\[
\{\bone\} \times \P(\ker(A^{(k)} )) \subseteq W_k(f,{Q_\mED})\,
\]
Now $W_k(f,Q_\mED)$ is stable under the $(\CC^*)^m$-action $t \cdot (x, T) =  (t^A \cdot x , t^{-A} \cdot T )$ for all $(x, T) \in W_k(f,Q_\mED)$. This finishes the proof.
\end{proof}

Before providing an algorithm for computing higher-order polar degrees, we recall basic results on the tropicalization of certain unirational varieties.
Let $C$ and $D$ be integer matrices of size $r \times d$ and $s \times r$ respectively. The rows of $D$ are $w_1,\dots,w_s \in \Z^r$.
We denote by $\lambda_C$ the map defined by $C$, and by $\mu_D$ the monomial map specified by $D$:
\begin{multicols}{2}
\noindent
\begin{align*}
\lambda_C \colon (\CC^*)^d &\dashrightarrow (\CC^*)^r \\
v &\longmapsto C\,v
\end{align*}
\begin{align}\label{eq: parametrization of YUV}
\begin{split}
\mu_D \colon (\CC^*)^r &\longrightarrow (\CC^*)^s \\
x &\longmapsto (x^{w_1},\dots,x^{w_s}).
\end{split}
\end{align}
\end{multicols}

The composition of these maps gives the unirational variety
$Y_{C,D} = \overline{\image( \mu_D \circ \lambda_C )}$ in $(\CC^*)^s$.
Its tropicalization $\Trop Y_{C,D}$ is obtained by tropicalizing the map $\mu_D \circ \lambda_C$.
The tropical linear space $\Trop(\image\lambda_C)$ is computed
purely combinatorially, as the {\em Bergman fan} of the matroid of $U$. An effective algorithm for computing this object was introduced in \cite{RINCON201386}. An implementation can be found in \texttt{polymake} \cite{Gawrilow2000}.
The monomial map $\mu_D$ tropicalizes to the linear map $V\colon\R^r \to \R^s$.
The following result is from \cite[Theorem 3.1]{Tropical_Discriminants} and
\cite[Theorem 5.5.1]{maclagan2015introduction}.
Linear projections of balanced polyhedral complexes, and the underlying weights, are described in \cite[Lemma 3.6.3]{maclagan2015introduction}.

\begin{lemma}
\label{lemma: tropicalizing monomial maps}
The tropical variety $\Trop Y_{C,D}$ is the image, as a balanced fan, of the Bergman fan $\Trop(\image\lambda_C)$ under the linear map $\R^r \to \R^s$ given by $V$.
\end{lemma}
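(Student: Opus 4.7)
The plan is to decompose the statement along the factorization $Y_{C,D} = \overline{\mu_D(\image \lambda_C)}$ and combine two standard tropical facts: the Bergman fan description of the tropicalization of a linear space, and the push-forward behavior of tropicalization under monomial maps.

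First, $\image \lambda_C \subseteq (\CC^*)^r$ is the intersection of a linear subspace of $\CC^r$ with the algebraic torus. By the Ardila--Klivans theorem (see \cite[Chapter 4]{maclagan2015introduction}), its tropicalization equals the Bergman fan of the realizable matroid determined by $C$, supported on a balanced polyhedral fan in $\R^r$ with all maximal cones of weight one. Second, the monomial map $\mu_D$ with exponent matrix $D$ tropicalizes to the $\Z$-linear map $V\colon \R^r \to \R^s$, and for any subvariety $Z \subseteq (\CC^*)^r$ one has the equality of weighted balanced polyhedral complexes $\Trop \overline{\mu_D(Z)} = V_* \Trop Z$. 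This is \cite[Theorem 5.5.1]{maclagan2015introduction}, with the weighted structure of the push-forward described explicitly in \cite[Lemma 3.6.3]{maclagan2015introduction}. Applying this with $Z = \image \lambda_C$ and combining with the first step yields the claim.

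The delicate point is the match of multiplicities. As sets, the inclusion $V(\Trop \image \lambda_C) \subseteq \Trop Y_{C,D}$ is immediate from $\mu_D(\image \lambda_C) \subseteq Y_{C,D}$, and the reverse inclusion follows from density of $\mu_D(\image \lambda_C)$ in $Y_{C,D}$ together with continuity of tropicalization. The nontrivial content of the push-forward theorem is that the intrinsic weight on each maximal cone $\sigma \subseteq \Trop Y_{C,D}$, defined via initial degenerations of the ideal of $Y_{C,D}$, coincides with the sum of lattice indices $[\Z^s : V(\Z^r \cap \mathrm{span}(\tau))]$ taken over maximal cones $\tau$ of the Bergman fan mapping onto $\sigma$, each weighted by the Bergman weight $m(\tau) = 1$. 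This reconciliation is the one substantive ingredient, and it applies cleanly here because the linearity of $\lambda_C$ and the monomiality of $\mu_D$ prevent any unexpected components of the image from appearing.
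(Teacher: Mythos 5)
Your proposal is correct and is essentially the paper's own justification: the paper offers no independent proof of this lemma, but derives it by combining the Bergman-fan description of the tropicalized linear space $\Trop(\image\lambda_C)$ with the push-forward result for monomial maps, citing exactly \cite[Theorem 3.1]{Tropical_Discriminants}, \cite[Theorem 5.5.1]{maclagan2015introduction}, and \cite[Lemma 3.6.3]{maclagan2015introduction}, which are the same ingredients you invoke. The only imprecision is your claim that the weighted equality $\Trop\overline{\mu_D(Z)}=V_*\Trop Z$ holds ``for any subvariety $Z$'': the multiplicity statement requires $\mu_D$ to be generically finite of degree one on $Z$ (otherwise the push-forward is the degree times the tropicalization), but this is the same implicit hypothesis under which the paper quotes the lemma, so your route matches the paper's.
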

\noindent
In particular, Lemma \ref{lemma: tropicalizing monomial maps} allows us to compute not only the support of $\Trop Y_{C,D}$, but also the multiplicities of its maximal cones \cite[Definition 3.4.3]{algorithmsInInvariantTheory}.

The following result is a direct consequence of Proposition \ref{prop: parametrization for conormal variety} together with Lemma \ref{lemma: tropicalizing monomial maps}.
\begin{proposition}   
\label{prop: description of conormal variety as Y_{C,D}}
Fix an $(n+1)\times(n+1-m_k)$ matrix $B$ whose columns span the kernel of $A^{(k)} $, called the {\em Gale dual} of $A^{(k)}$. 
Then the affine cone over $W_k(f,Q_\mED)$ is
$Y_{C,D}$, where
\begin{equation}
\label{eq: Matrices U, V}
U = \begin{pmatrix}
0 & B \\
I_{m+1} & 0  \\
\end{pmatrix}, \quad
V = \begin{pmatrix}
-A^t & 0  \\
A^t & I_{n+1} \\
\end{pmatrix}.
\end{equation}
Here $Y_{C,D} = \overline{\image( \mu_D \circ \lambda_C )}$ and $ \lambda_C$ and $\mu_D$ are defined as in equation \eqref{eq: parametrization of YUV} with $d = n+m+2-m_k$, $r = n+m+2$, and $s = 2n+2$.

Furthermore, the tropical variety $\Trop W_k(f,Q_\mED) $ is supported
on the image of $\Trop Y_{C,D}$ in $\R^{n+1}\times \R^{n+1}$. This is the Minkowski sum
\[
\Trop W_k(f,Q_\mED) = \{0\} \times \Trop(\ker A^{(k)}) + \rowspan
\begin{pmatrix}
-A & A
\end{pmatrix}\,.
\]
\end{proposition}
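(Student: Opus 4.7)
Proof proposal:

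The strategy is to combine Proposition \ref{prop: parametrization for conormal variety}, which provides a Horn-type parametrization of $W_k(f,Q_\mED)$, with Lemma \ref{lemma: tropicalizing monomial maps}, which tropicalizes unirational varieties of the form $Y_{C,D}$.

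First, I would rewrite the parametrization using the Gale dual. Since the columns of $B$ span $\ker A^{(k)}$, every $u\in\ker A^{(k)}$ takes the form $u=Bv$ for some $v\in\C^{n+1-m_k}$. Lifting the parametrization $\gamma$ of Proposition \ref{prop: parametrization for conormal variety} to affine cones and performing the harmless reparametrization $t\mapsto t^{-1}$ (which preserves the image as a set), the affine cone over $W_k(f,Q_\mED)$ is the Zariski closure of the image of the map
\[
(v_1,v_2)\in(\C^*)^{m+1}\times\C^{n+1-m_k}\longmapsto (v_1^{-A},\ v_1^{A}\cdot Bv_2)\in\C^{n+1}\times\C^{n+1}\,.
\]
The second step is to recognize this map as the composition $\mu_V\circ\lambda_U$ for the matrices in \eqref{eq: Matrices U, V}. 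The linear map $\lambda_U$ produces an intermediate tuple whose coordinates are the entries of $v_1$ followed by those of $Bv_2$. The monomial map $\mu_V$ then reads off, row-by-row, the monomials prescribed by $V$: the block $(-A^\mT,0)$ yields the first $n+1$ coordinates $v_1^{-a_j}$, while the block $(A^\mT,I_{n+1})$ yields the last $n+1$ coordinates $v_1^{a_j}(Bv_2)_j$. This identifies the affine cone over $W_k(f,Q_\mED)$ with $Y_{U,V}$.

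For the second part of the statement, I would apply Lemma \ref{lemma: tropicalizing monomial maps}: $\Trop Y_{U,V}$ is the image of the tropical linear space $\Trop(\image\lambda_U)$ under the linear map $V\colon\R^r\to\R^s$. The image $\image\lambda_U$ is the linear subspace $\C^{m+1}\times\ker A^{(k)}\subseteq\C^{m+1}\times\C^{n+1}$, whose torus tropicalization is $\R^{m+1}\times\Trop(\ker A^{(k)})$. Applying $V$ to an element $(t,y)$ of this product yields, by the block structure of $V$, the vector
\[
V(t,y)=(-A^\mT t,\ A^\mT t+y)=(0,y)+t^\mT\begin{pmatrix}-A&A\end{pmatrix}\,.
\]
The first summand lies in $\{0\}\times\Trop(\ker A^{(k)})$, and the second in $\rowspan\begin{pmatrix}-A&A\end{pmatrix}$, which establishes the claimed Minkowski-sum description.

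The main delicate point is checking the bookkeeping of block orderings and signs in $U$ and $V$, and in particular the reparametrization $t\mapsto t^{-1}$ that reconciles the signs in $V$ with the parametrization of Proposition \ref{prop: parametrization for conormal variety}; once this alignment is verified, each remaining step is a direct application of the cited results.
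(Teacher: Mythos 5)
Your proposal is correct and follows essentially the same route as the paper, which states the proposition as a direct consequence of Proposition \ref{prop: parametrization for conormal variety} and Lemma \ref{lemma: tropicalizing monomial maps}; you simply make explicit the steps the paper leaves implicit (the Gale substitution $u=Bv_2$, the inversion $t\mapsto t^{-1}$ reconciling the signs, and the computation of the image of $\R^{m+1}\times\Trop(\ker A^{(k)})$ under $V$ as the stated Minkowski sum). The block-ordering bookkeeping you flag is indeed the only delicate point, and your chosen ordering (intermediate tuple $(v_1,Bv_2)$ matching the column blocks of $V$) is the consistent one, agreeing with the form the paper itself uses in the discussion preceding Algorithm \ref{alg:tropConormal}.
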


\begin{remark}
Compare the description of $\Trop W_k(f,Q_\mED)$ in Proposition \ref{prop: description of conormal variety as Y_{C,D}} to the description of the tropicalized dual variety $\Trop X^{(k)}$, given in \cite[Corollary 4]{Tropical_Discriminants} 
for $k=1$, and for general $k$ in \cite[Theorem 5.3]{Higher_duality_and_toric}.
We recover both results by projecting $\Trop W_k(f,Q_\mED)\subseteq \R^{n+1}\times \R^{n+1}$ onto the second factor.
\end{remark}

Proposition \ref{prop: description of conormal variety as Y_{C,D}} leads to Algorithm \ref{alg:tropConormal} below for computing $\Trop W_k(f,Q_\mED)$. 
Algorithm \ref{alg:tropConormal} returns a list of pairs $(m_\tau,\tau)$, where $\tau \subseteq \R^{2n}$ is a polyhedral cone, and $m_\tau$ is a positive integer. The tropical variety $\Trop W_k(f,Q_\mED)$ is the union of all cones $\tau$, and the multiplicity of $\Trop W_k(f,Q_\mED)$ at a generic point $x$ is the sum $\sum_{x\in\tau}m_\tau$.
We note that, although the union of all cones $\tau$ forms the support of a fan, the collection of cones itself is generally not a fan. 
Together with Theorem \ref{theorem: tropical description of higher-order multidegrees}, Algorithm \ref{alg:tropConormal} allows us to compute higher-order polar degrees of toric varieties.
We provide an experimental implementation in the \verb|Julia| package \texttt{TropicalImplicitization} \cite{Rose2025}. It can be found at the supplementary website
\url{https://github.com/kemalrose/TropicalImplicitization.jl}.

\begin{example}\label{example: code demonstration}
To recover the second-order polar degrees $(\mu_{2,0}(\nu_1^3),\mu_{2,1}(\nu_1^3))=(3,3)$ of the cubic Veronese embedding $\nu_1^3 \colon \P^1 \hookrightarrow \P^3$, defined in coordinates as $[t_1 : t_2] \mapsto [t_1^3 : t_1^2 t_2 : t_1 t_2^2 : t_2^3]$, we download the \verb|Julia| software package
\texttt{TropicalImplicitization} from the source and we run the following commands:
\begin{verbatim}
A = [1 1 1 1; 0 1 2 3]
cone_list, weight_list = get_tropical_conormal_variety(A, 2)
extract_polar_degrees(cone_list, weight_list)
\end{verbatim}
This allows us to recover the 4-dimensional tropical conormal variety $\Trop W_2(f,Q_\mED)$ as the union of four maximal cones
\[
\sigma_1 = \R_+\cdot e_5 + L\,,\quad
\sigma_2 = \R_+\cdot e_6 + L\,,\quad
\sigma_3 = \R_+\cdot e_7 + L\,,\quad
\sigma_4 = \R_+\cdot e_8 + L\,.
\]
Here $L$ denotes the lineality space.
We can also just run the command 
\begin{verbatim}
compute_polar_degrees(A,2)
\end{verbatim}
The output then reads:
\begin{verbatim}
The toric variety is of degree 3.
The generic distance degree of order 2 is 6.
The dual variety of order 2 is of degree 3 and of codimension 2.
The polar degrees of order 2 are [3, 3].
\end{verbatim}
\end{example}

\begin{example}\label{ex: polar degrees O(1,1,2) with k=2}
Consider the triples $\bm=(1,1,1)$ and $\bd=(1,1,2)$. As a more challenging example, we compute the polar degrees of order $2$ of the Segre-Veronese embedding $\nu_\bm^\bd\colon\P^\bm\hookrightarrow\P^{11}$, see \eqref{eq: def Segre-Veronese embedding}. This toric embedding is defined by the $4\times 12$ matrix
\[
A = 
\begin{pmatrix}
1& 1& 1& 1& 1& 1& 1& 1& 1& 1& 1& 1\\
0& 1& 0& 1& 0& 1& 0& 1& 0& 1& 0& 1\\
0& 0& 1& 1& 0& 0& 1& 1& 0& 0& 1& 1\\
0& 0& 0& 0& 1& 1& 1& 1& 2& 2& 2& 2
\end{pmatrix}\,.
\]
We note that $\nu_\bm^\bd$ is $2$-osculating, but not $2$-regular, thanks to Corollary \ref{corol: when Segre-Veronese k-regular}.
Our software reveals that $(\mu_{2,0}(\nu_\bm^\bd),\dots,\mu_{2,3}(\nu_\bm^\bd))=(12,28,36,28)$, hence $\gDD_2(f)=\sum_{i\ge 0}\mu_{2,i}(\nu_\bm^\bd)=88$. 
Note that this number cannot be computed using \eqref{eq: gDD 3 copies P1}.
Based on Proposition \ref{prop: compare k-order defect and number of vanishing higher polar degrees}, we see that 
the second-order dual variety is of dimension $6$ and of degree $28$.
$$\codim(X_2^\vee) = m_2-m+\defect_2(X)+1 = 7-3+0+1 = 5.$$
\end{example}

Algorithm \ref{alg:tropConormal} is based on Proposition \ref{prop: description of conormal variety as Y_{C,D}} together with Lemma \ref{lemma: tropicalizing monomial maps}.
In particular, $\Trop W_k(f,Q_\mED)$ is a linear projection of the
tropicalized column span of $U = \begin{pmatrix}
B & 0 \\
0 & I_{m+1}
\end{pmatrix}$ under
$V = \begin{pmatrix}
0 & -A^t \\
I_{n+1} & A^t \\
\end{pmatrix}.$

To describe the underlying weights of $\Trop W_k(f,Q_\mED)$, let $y \in \Trop W_k(f,Q_\mED)$ be a generic point inside a top-dimensional cone $\tau \subseteq \Trop W_k(f,Q_\mED)$. Following \cite[Theorem 3.12]{TevelevSturmfels}, we can express the multiplicity $m_{\tau}$ of $\tau$ as a sum of lattice indices.
Here the sum runs over all (finitely many) points $x$ in the tropicalized column span of $U$, which $V$ maps to $y$. We denote by $\sigma_x$ a top-dimensional cone in $\Trop(\image\lambda_C)$ containing $x$, and by $\mathbb{L}_\tau$ and $\mathbb{L}_\sigma$ the linear span of
$\tau-y$ and $\sigma-x$ respectively.
The multiplicity $ m_{\tau} $ can be expressed as:
\begin{equation*}
 m_{\tau} \, = \,  \sum_{Vx = y} \text{index}
 \bigl(\,\mathbb{L}_\tau \cap \Z^{2n+2} : V(\mathbb{L}_{\sigma_x} \cap \Z^{ n+m+2 })\, \bigr). 
\end{equation*}
See also the chapter ``Tropical Implicitization Revisited'' in \cite{OSCAR-book}.

\IncMargin{1em}
\begin{algorithm}
\SetKwData{Left}{left}\SetKwData{This}{this}\SetKwData{Up}{up}
\SetKwFunction{Union}{Union}\SetKwFunction{FindCompress}{FindCompress}
\SetKwInOut{Input}{Input}\SetKwInOut{Output}{Output}
\Input{An integer matrix $A \in \Z^{(m+1)\times(n+1)}$, of full rank containing the all ones vector $(1,1,\dots,1)$ in its row span.}
\Output{The tropical variety $\Trop W_k(f,Q_\mED) \subseteq \R^{n+1} \times \R^{n+1}$}
\BlankLine
$B \to \text{Gale dual of }A^{(k)}$ \label{line: GaleDual}\\
$U \to \begin{pmatrix}
0 & B \\
I_{m+1} & 0  \\
\end{pmatrix}$\\
$V \to \begin{pmatrix}
-A^t & 0 \\
A^t & I_{n+1} \\
\end{pmatrix}$\\
$M \to \text {matroid of } U$ \label{line: 1 tropical linear space}\\
$\Trop(\image\lambda_C) \to  \text{Bergman fan of } M $ \label{line: 2 tropical linear space}\\
$\Trop W_k(f,Q_\mED) \to  \emptyset $\\
\For{$(m_\sigma, \sigma) \in \Trop(\image\lambda_C)$} {
\label{line: project trop U}
$\tau \to V   \sigma$\\
$m_{\rm lattice}  \to {\rm index}(\mathbb{L}_\tau \cap \Z^{2n+2}: V(\mathbb{L}_\sigma \cap \Z^{n+m+2}))$\\
$\Trop W_k(f,Q_\mED) \to \Trop W_k(f,Q_\mED) \cup \{(m_\sigma \cdot m_{\rm lattice}, \tau)\}$\\
}
\caption{Tropicalizing higher conormal varieties.}
\label{alg:tropConormal}
\end{algorithm}\DecMargin{1em}

\section{Higher-order distance degrees of affine morphisms}\label{sec: affine}

Keeping in mind the notations used in Section~\ref{sec: osculating}, we now investigate higher-order distance degrees and distance loci of $m$-dimensional nonsingular irreducible varieties in the affine space $\A^n$ over $\C$.
Let $\A^m$ and $\A^n$ be two affine spaces over $\C$, and let $t=(t_1,\dots,t_m)$ be a coordinate system in $\A^m$. Consider an algebraic morphism $f\colon\A^m\to\A^n$. In particular, there exist $n$ polynomials $f_1,\dots,f_n$ in $\C[t_1,\dots,t_n]$ such that $f=(f_1,\dots,f_n)$. The image $f(\A^m)$ is an irreducible variety of $\A^n$.
In Section~\ref{sec: osculating}, we defined the $k$th-order osculating spaces of projective morphisms using the jet bundles $\PP^k(\OO_X(1))$ and the morphisms $j_k$ in \eqref{eq: morphism jk}. One may repeat the same construction for affine morphisms $f\colon\A^m\to\A^n$. This is equivalent to considering the $\binom{m-1+k}{k}\times n$ matrices
\begin{equation}\label{eq: affine matrices Apk}
A_p^{(k)}(f) \coloneqq \left(\frac{1}{|\alpha|}\frac{\partial^{|\alpha|}f_i}{\partial t^\alpha}(p)\right)_{\substack{\alpha\in\N^m\\\ 1\le|\alpha|\le k\\1\le i\le n}}\,,\quad p\in\A^m\,,
\end{equation}
where, for the sake of brevity, we adopted the same notation as in \eqref{eq: matrix A}.
Notice that, differently from the projective case, the first row of $A_p^{(k)}(f)$ is not the vector of components of $f$ evaluated at $p$. This is because $f(p)$ belongs to the affine tangent space of $f$ at $f(p)$ only when $f(\A^m)$ is an affine cone in $\A^n$.
\begin{definition}\label{def: affine osculating space}
For every point $p\in\A^m$, the {\em $k$th osculating space of $f$ at $f(p)\in f(\A^m)$} is
\[
T_p^k(f) \coloneqq \operatorname{rowspan}A_p^{(k)}(f)\,.
\]
Furthermore, we denote by $U_k$ the dense open subset of $\A^m$ of points $p$ such that $\rank A_p^{(k)}(f)$ is constant. We define the {\em (affine) generic $k$-osculating dimension} of $f$ as $m_k\coloneqq\rank A_p^{(k)}(f)$, and we say that the morphism $f$ is {\em globally $k$-osculating} if $U_k=\A^m$. In this case, $m_k$ is referred to as the {\em (affine) $k$-osculating dimension} of $f$.
\end{definition}
Notice that, by construction, the $k$th osculating space $T_p^k(f)$ always passes through the origin, not necessarily through $f(p)$. Indeed, the affine $k$th osculating space of $f$ at $f(p)\in f(\A^m)$ is $f(p)+T_p^k(f)$. Furthermore, to avoid confusion, we stress that the invariant $m_k$ given in Definition \ref{def: affine osculating space} differs by one from the generic $k$-osculating dimension of Definition \ref{def: globally k-osculating}, also denoted by $m_k$.

In the following, we consider the compactification $\P^n=\A^n\cup H_\infty$, in particular, $H_\infty$ is the hyperplane at infinity of $\A^n$. For any affine subspace $L\subseteq\A^n$, we define $L_\infty\coloneqq\overline{L}\cap H_\infty$, where $\overline{L}$ is the Zariski closure of $L$ in $\P^n$.
Suppose that $\A^n$ is the complexification of a real $n$-dimensional affine space equipped with a positive-definite quadratic form $q$. We can associate with $q$ a unique nonsingular quadric hypersurface $Q\subseteq H_\infty$. The notion of affine orthogonality in $\A^n$ is then given by the notion of polarity in $H_\infty$, using the construction given at the beginning of Section~\ref{sec: higher-order normal bundles}. More precisely, given an affine space $L\subseteq\A^n$ and a point $z\in\A^n$, the {\em orthogonal space to $L$ passing through $z$} is
\[
z+L^\perp \coloneqq \langle z, (L_\infty)^\perp\rangle\cap\A^n\,,
\]
where $(L_\infty)^\perp$ is defined as in \eqref{eq: perp}, after replacing $\P^n$ with $H_\infty$, and the span is taken in $\P^n$.

\begin{definition}\label{def: affine kth order normal space}
Consider a morphism $f\colon\A^m\to\A^n$, a nonsingular quadric hypersurface $Q\subseteq H_\infty$, and let 
$p\in U_k.$
The {\em $k$th-order normal space of $(f,Q)$ at $f(p)$} is
\[
    N_p^k(f,Q) \coloneqq (T_p^k(f))^\perp \subseteq \A^n\,.
\]
When $k=1$, we call it the {\em normal space of $(f,Q)$ at $p$} and we denote it by $N_p(f,Q)$.
\end{definition}

Similarly as in the introduction, given a point $u\in\A_{\mR}^n$, we denote by $d_u$ the distance function from $u$, defined by $d_u(z)\coloneqq\sqrt{q(u-z)}$ for all $z\in\A^n$. We say that $z=f(p)\in f(\A^m)$ with $p\in U_k$ is {\em critical of order $k$} for the squared distance function $d_u^2$ if
\begin{equation}\label{eq: kth order critical affine}
    \nabla d_u^2(z)\in N_p^k(f,Q)\,.
\end{equation}

\begin{definition}\label{def: affine kth distance correspondence}
Consider a morphism $f\colon\A^m\to\A^n$ and a nonsingular quadric hypersurface $Q\subseteq H_\infty$.
The {\em (affine) $k$th-order distance correspondence of $(f,Q)$} is
\[
\DC_k(f,Q) \coloneqq \overline{\left\{(z, u) \in \A^n \times \A^n \,\middle|\, z \in f(\A^m) \text{ is critical of order } k \text{ for } d_u^2\right\}}\subseteq\A^n \times \A^n\,.
\]
\end{definition}

Denote by $\pr_1$ and $\pr_2$ the projections of $\A^n\times\A^n$ onto the first and second factor, respectively. Similarly as in \cite[Theorem 4.1]{DHOST} and in Lemma \ref{lem: higher-order proj ED correspondence is irreducible}, one shows that $\pr_1$ is locally trivial over $f(U_k)$ with fibers of rank $n-m_k$, hence $\DC_k(f,Q)$ is irreducible of dimension $m+n-m_k$ in $\A^n\times\A^n$.

\begin{definition}\label{def: affine kth oder distance locus}
Consider a morphism $f\colon\A^m\to\A^n$ and a nonsingular quadric hypersurface $Q\subseteq H_\infty$.
The {\em (affine) $k$th-order distance locus of $(f,Q)$} is
\[
\DL_k(f,Q) \coloneqq \pr_2(\DC_k(f,Q)) = \overline{\bigcup_{p\in U_k} (f(p) + N_p^k(f,Q))}\,.
\]
\end{definition}

Finally, we denote by $\varphi_{1,k}\colon\DC_k(f,Q)\to f(\A^n)$ and $\varphi_{2,k}\colon\DC_k(f,Q)\to\DL_k(f,Q)$ the surjective morphisms induced by the projections $\pr_1$ and $\pr_2$. The next definition is a higher-order version of the EDD of an affine variety.

\begin{definition}
Consider an affine morphism $f\colon\A^m\to\A^n$ and a nonsingular quadric hypersurface $Q\subseteq H_\infty$. Assume that the morphism $\varphi_{2,k}$ is generically finite. The {\em (affine) $k$th-order distance degree of $(f,Q)$} is
\[
\DD_k(f,Q)\coloneqq\deg\DL_k(f,Q)\cdot\deg\varphi_{2,k}\,,
\]
where $\deg\varphi_{2,k}=\deg\varphi_{2,k}^{-1}(u)$ for a generic $u\in\DL_k(f,Q)$.    
\end{definition}

The following fact is an immediate consequence of the previous constructions.

\begin{corollary}\label{corollary: generically finite}
If $\varphi_{2,k}$ is generically finite, then $\DL_k(f,Q)$ is an irreducible variety of dimension $m+n-m_k$ in $\A^n$.
\end{corollary}

In the following, we consider an affine version of the Illustrative Example given in the introduction.

\begin{example}
Consider the restriction of the morphism $f\colon\P^1\to\P^3$ in the Illustrative Example to the affine patch $\A^1=\{t_0\neq 0\}$. We use the coordinate $t=\frac{t_1}{t_0}$, hence $H_\infty=\{u_0=0\}\cong\P^2$. The corresponding affine morphism is then $f(t)\coloneqq(t,t^2,t^3)$. In the affine setting, the matrix
\[
    A_p^{(2)}(f) = 
    \begin{pNiceMatrix}[last-col=4]
    1&2\,t&3\,t^2&\frac{\partial f}{\partial t}\\[4pt]
    0&1&3\,t&\frac{1}{2}\frac{\partial^2 f}{\partial t^2}
    \end{pNiceMatrix}
\]
equals the bottom $2\times 3$ block of the corresponding matrix $A_p^{(2)}(f)$ in the Illustrative Example.
On the one hand $T_p^2(f)=\rowspan A_p^{(2)}(f))$ is $2$-dimensional.
On the other hand, the right kernel of $A_p^{(2)}(f)$ is one-dimensional and is generated by the vector $\eta=(3\,t^2,-3\,t,1)^\mT$.
If we consider the standard Euclidean quadric $Q_\mED\subseteq H_\infty$, then the ideal of $\DC_2(f,Q_\mED)$ is simply
\[
I(\DC_2(f,Q_\mED)) =
\left\langle
\text{$2\times 2$ minors of }
\begin{pmatrix}
3\,t^{2} & -3\,t & 1\\
t-u_1&t^{2}-u_2&t^{3}-u_3
\end{pmatrix}\right\rangle\,.
\]
Eliminating the variable $t$ from the previous ideal, one verifies that the affine second-order distance locus $\DL_2(f,Q_\mED)$ is the surface of degree $5$ in $\A^3$ cut out by the polynomial
\begin{align*}
&9\,u_2^5-27\,u_1u_2^3u_3-27\,u_1^4-45\,u_1^2u_2^2-36\,u_2^4+81\,u_1^3u_3+108\,u_1u_2^2u_3-81\,u_1^2u_3^2-9\,u_2^2u_3^2\\
&+27\,u_1u_3^3+39\,u_1^2u_2+48\,u_2^3-84\,u_1u_2u_3+9\,u_2u_3^2-4\,u_1^2-24\,u_2^2+12\,u_1u_3+4\,u_2\,.
\end{align*}
In particular, using the description of $\DL_2(f,Q_\mED)$ given in Definition \ref{def: affine kth oder distance locus}, a generic point $u\in\DL_2(f,Q_\mED)$ can be written as $u = (t_0,t_0^2,t_0^3)+\lambda(3\,t_0^2,-3\,t_0,1)$ for some $(t_0,\lambda)\in\C^2$. Plugging in this relation in $I(\DC_2(f,Q_\mED))$ and computing the $2\times 2$ minors of the above matrix, one gets the primary decomposition
\[
\langle t-t_0\rangle\cap\langle t_0\,t-2\,t_0^{2}+2,\,t^{2}+2\,t_0^{2}-1\rangle\,,
\]
where the first component gives the expected solution $t=t_0$, while eliminating the variable $t$ from the second component gives the relation $6\,t_0^4-9\,t_0^2+4=0$, which is not satisfied for a generic $t_0$. This proves that a generic $u\in\DL_2(f,Q)$ has only one second-order critical point, hence
\[
\DD_2(f,Q_\mED) = \deg\DL_2(f,Q_\mED)\cdot\deg\varphi_{2,2} = 5\cdot 1=5\,.
\]
We conclude this example comparing the surface $\DL_2(f,Q_\mED)$ with the branch locus of the second projection of $\DC_1(f,Q_\mED)\to\A^3$, namely the {\em ED discriminant} of $(f,Q)$ (see \cite[Section 7]{DHOST}), denoted by $\Sigma(f,Q)$. It corresponds to the locus of data points $u$ whose locus of first-order critical points of $d_u^2$ on $f(\A^1)$ is not a finite set of EDD pairwise distinct points. The complement of the real zero locus of $\Sigma(f,Q)$ divides the affine real space into chambers where the number of real distinct critical points of $d_u^2$ is constant. In this case, the ED discriminant of $(f,Q)$ is the sextic surface in $\A^3$ cut out by the polynomial
\begin{align*}
&26244\,u_2^2u_3^4-78732\,u_1u_3^5+73728\,u_2^5-345600\,u_1u_2^3u_3+364500\,u_1^2u_2u_3^2\\
&+62208\,u_2^3u_3^2-204120\,u_1u_2u_3^3-26244\,u_2u_3^4-84375\,u_1^4-144000\,u_1^2u_2^2\\
&-159744\,u_2^4+202500\,u_1^3u_3+437760\,u_1u_2^2u_3-271350\,u_1^2u_3^2-92160\,u_2^2u_3^2\\
&+98604\,u_1u_3^3+6561u_3^4+100800\,u_1^2u_2+137216\,u_2^3-185472\,u_1u_2u_3\\
&+45504\,u_2u_3^2-17856\,u_1^2-58368\,u_2^2+26496\,u_1u_3-7488\,u_3^2+12288\,u_2-1024\,.
\end{align*}
The two surfaces $\DL_2(f,Q_\mED)$ and $\Sigma(f,Q)$ are displayed in Figure~\ref{fig: affine 2nd order data locus and ED discriminant}.\hfill$\diamondsuit$
\begin{figure}[ht]
\centering
\begin{overpic}[width=0.45\textwidth]{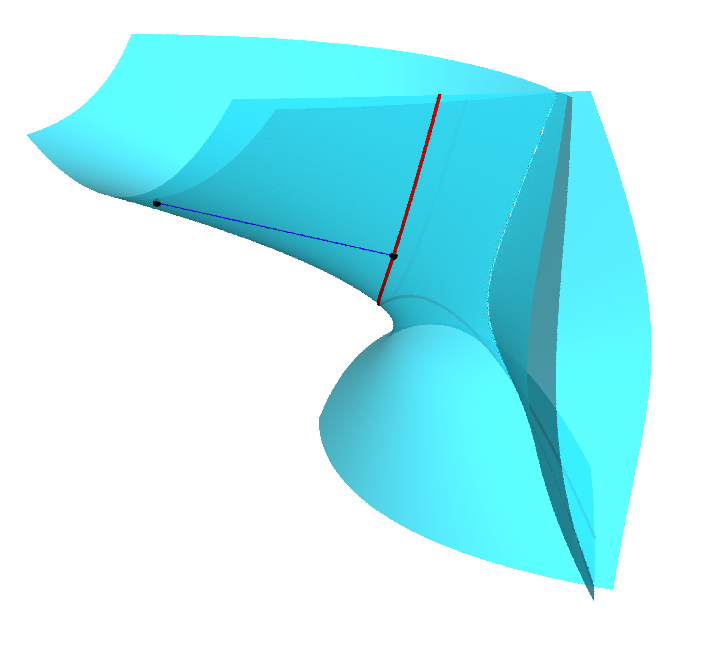}
\put (47,20) {$\DL_2(f,Q_\mED)$}
\put (44,70) {$f(\A^1)$}
\put (57,54) {\small{$f(p)$}}
\put (18,60) {\small{$u$}}
\put (19,54) {\scriptsize{$f(p)+N_{p}^2(f,Q_\mED)$}}
\end{overpic}
\begin{overpic}[width=0.45\textwidth]{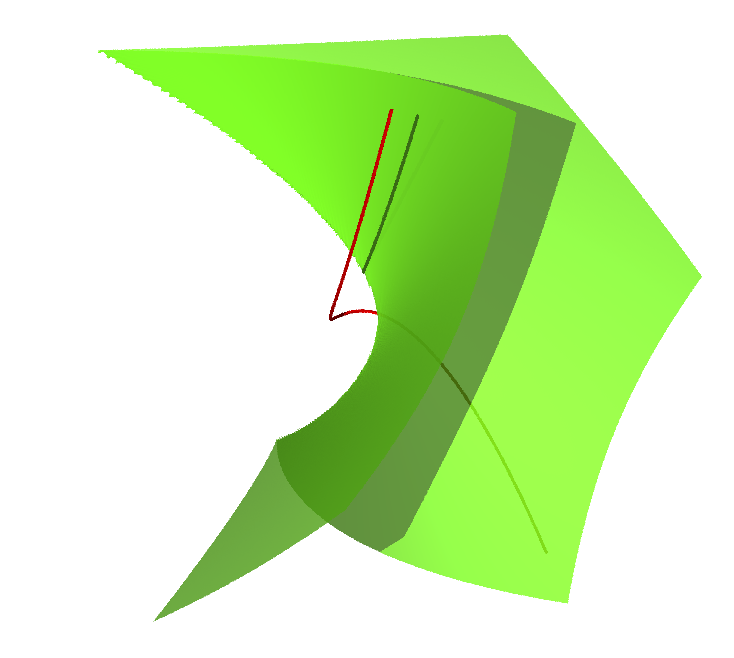}
\put (38,70) {$f(\A^1)$}
\put (47,20) {$\Sigma(f,Q_\mED)$}
\end{overpic}
\caption{The affine second-order distance locus $\DL_2(f,Q_\mED)$ (left) and the ED discriminant $\Sigma(f,Q_\mED)$ (right) of the affine twisted cubic in $\A^3$, with respect to the standard Euclidean quadric.}\label{fig: affine 2nd order data locus and ED discriminant}
\end{figure}
\end{example}

More in general, we computed the $k$th-order distance degree of the map $f(t)=(t,t^2,\ldots,t^d)\in\A^d$ for several values of $k$ and $d$, with $k\le d$, with respect to the standard Euclidean quadric $Q_\mED\subseteq H_\infty$ and for other sufficiently general quadrics $Q$. We conjecture that $\deg\varphi_{2,k}=1$ for all $k\le d$ and that
\[
\DD_k(f,Q) = \deg\DL_k(f,Q) = (k+1)d-k^2
\]
for a generic $Q\subseteq H_\infty$. It is interesting to observe that the above value is always smaller than the generic $k$th-order distance degree of the Veronese embedding $\nu_1^d\colon\P^1\hookrightarrow\P^d$, which is equal to $(k+2)d-k(k+1)$, see Example \ref{ex: gDD Veronese embedding projective line}. The difference between the two values is $d-k$. We leave for future research a systematic study of the relation between the generic $k$th-order distance degree of a projective embedding $f\colon X\hookrightarrow\P^n$ and the ``generic'' affine $k$th-order distance degree of a dehomogenization of $f$ in $\A^n$. So far, conditions yielding an equality between the two numbers are known only in the classical case $k=1$, see \cite[Theorem 6.11]{DHOST}. These conditions involve the intersections $\overline{f(\A^m)}\cap H_\infty$ and $\overline{f(\A^m)}\cap Q$. Despite this, a general formula that quantifies the possible discrepancy between the two metric invariants is still missing even for $k=1$.

A natural numerical approach to distance optimization over a parametrized variety is to apply Newton's method to the objective function $D_u = d_u^2 \circ f$.
Given an iterate $p_j$, the Newton step $p_{j+1}$ is obtained by solving the linear equation system
\[
    \nabla^2 D_u(p_j)(p_{j+1} - p_j) =  - \nabla D_u(p_j)\,.
\]
Despite the strong local convergence guarantees of Newton's method, it is common in applications to replace the Hessian by the approximation $\nabla^2 D_u(p) \approx 2J_f(p)^\mT\,J_f(p)$, leading to the {\em Gauss-Newton method} (see \cite[Chapter 10.2]{dennis1996numerical}), which is often faster and numerically more stable.
In the following proposition, we show that the second-order criticality condition in \eqref{eq: kth order critical affine} implies the equality $\nabla^2 D_u(p) = 2J_f(p)^\mT\,J_f(p)$. From this perspective, second-order distance degrees quantify the prevalence of data configurations for which distance optimization admits solutions that are not only critical but also amenable to fast, numerically stable algorithms.
\begin{proposition}\label{prop: hessian}
For $z = f(p)$, the second-order criticality condition $\nabla d_u^2(z)\in N_p^2(f,Q)$ implies $\nabla^2D_u(p) = 2\,J_f(p)^\mT\,Q\,J_f(p)$.
\end{proposition}
\begin{proof}
We regard the vectors $p,z,u$ as row vectors in $\A^n$. Recall that $d_u^2(z)=(u-z)\,Q\,(u-z)^\mT$, hence $\nabla d_u^2 = 2(u-z)\,Q$ and $\nabla^2 d_u^2 = 2\,Q$.
Using the chain rule, we compute the Hessian
\begin{align*}
    \nabla^2D_u(p) &= J_f(p)^\mT\,\nabla^2 d_u^2\bigl(f(p)\bigr)\,J_f(p)+\sum_{i=1}^n\frac{\partial d_u^2}{\partial z_i}\bigl(f(p)\bigr)\,\nabla^2 f_i(p)\\
    &= 2\,J_f(p)^\mT\,Q\,J_f(p)+2\sum_{i=1}^n (u_i-f_i(p))\,Q\,\nabla^2 f_i(p)\,.
\end{align*}
Since the symmetric form $\nabla^2 f(p)$ generates a subspace of the osculating space $T_p^2(f)$, the second summand at the right-hand side of the previous expansion vanishes if the second-order osculating condition $\nabla d_u^2(z)\in N_p^2(f,Q)$ holds.
\end{proof}
We conclude this section by revisiting \cite[Example 3.3]{DHOST} on the {\em $n$-view triangulation} problem, which consists of recovering the location of a $3$D point from its noisy $2$D projections in $n$ distinct camera images.
In particular, we compute the $2$nd-order distance locus of a special multiview variety.
By Proposition \ref{prop: hessian}, for any data point $u$ on the $2$nd-order distance locus and for any second-order critical
solution $z=f(p)$ of the distance problem with respect to $u$, the Gauss-Newton approximation becomes {\em exact} at $p$.
Thus, the explicit defining equations of the $2$nd-order distance locus provide an algebraic {\em certificate} for data configurations where distance optimization over the multiview variety admits solutions that can be solved by Gauss-Newton.

\begin{example}\label{example: 4-view variety}
Working in projective coordinates, we fix a collection of $n\ge 1$ camera matrices $A = (A_1, \dots, A_n)$, $A_i \in \R^{3 \times 4}$, with $\rank A_i=3$ for all $i\in[n]$. Multiplication by $A_i$ is well-defined away from the focal point $\ker A_i$ and models the image formation process of the $i$-th camera.
The {\em multiview variety} $Y_{n,A}$ consists of all image measurements arising from points in projective three-space. It is defined as the Zariski closure of the image of the rational map
\[
\varphi_{n,A} \colon \P^3 \dashrightarrow (\P^2)^n, \quad
y \mapsto (A_1 y, \dots, A_n y)\,.
\]
We work on the affine chart $[1:\,\cdot:\,\cdot\,]\in\P^2$ in each factor of $(\P^2)^n$ and denote by $X_{n,A} \coloneqq Y_{n,A} \cap \R^{2n}$ the set of measurements away from infinity. We also call $f_{n,A}$ an affine parametrization of $X_{n,A}$ obtained from $\varphi_{n,A}$. We will be more specific about $f_{n,A}$ in the upcoming case study.
Given a noisy measurement $u\in \R^{2n}$, the problem of {\em $n$-view triangulation} from computer vision is to find a point $z\in X_{A,n}$ with minimal Euclidean distance to $u$. The Euclidean distance degree of the affine multiview variety $X_{n,A}$ governs the algebraic complexity of this problem. This invariant was computed in \cite[Eq. (4.1)]{maxim2020euclidean} for a sufficiently generic collection of cameras $A$:
\begin{equation}\label{eq: EDD multiview variety}
    \DD(f_{n,A},Q_\mED) = \frac{9}{2}n^3-\frac{21}{2}n^2+8n-4\,.
\end{equation}
We consider the case $n=4$ and the collection $A'=(A_1',\dots,A_4')$ given in \cite[Section 4]{aholt2013hilbert}:
\[
A_1' = 
\begin{pmatrix}
0 & 1 & 0 & 0\\
0 & 0 & 1 & 0\\
0 & 0 & 0 & 1
\end{pmatrix}, \quad
A_2' = 
\begin{pmatrix}
1 & 0 & 0 & 0\\
0 & 0 & 1 & 0\\
0 & 0 & 0 & 1
\end{pmatrix}, \quad
A_3' = 
\begin{pmatrix}
1 & 0 & 0 & 0\\
0 & 1 & 0 & 0\\
0 & 0 & 0 & 1
\end{pmatrix}, \quad
A_4' = 
\begin{pmatrix}
1 & 0 & 0 & 0 \\
0 & 1 & 0 & 0 \\
0 & 0 & 1 & 0 
\end{pmatrix}.
\]
In this case, the variety $Y_{4,A'}$ is toric.
On the one hand, thanks to \cite[Remark 4.4]{aholt2013hilbert}, the collection $A'$ is {\em universal} in the sense that every multiview variety $Y_{4,A}$ with respect to a collection $A$ of four cameras in linearly general position (meaning that the four focal points $\ker A_1,\dots,\ker A_4\in\P^3$ are not coplanar, and no three of them are collinear) is isomorphic to the toric variety $Y_{4,A'}$. On the other hand, since the collection $A'$ is special, we can only conclude that $\DD(f_{4,A'},Q_\mED)\le 148$ using \eqref{eq: EDD multiview variety}. Indeed, we verified symbolically that $\DD(f_{4,A'},Q_\mED)=5$, while $\DD(f_{A'},Q)=9$ for a sufficiently general quadric $Q\subseteq H_\infty$.

A defining binomial prime ideal of $Y_{4,A'}$ has been presented in \cite[Proposition 4.1]{aholt2013hilbert}. From this implicit description, we derive the following parametric description of $X_{4,A'}$ as the Zariski closure of the image of the monomial map
\begin{align*}
\varphi \colon (\C^*)^3 \to (\P^2)^4\,,\quad (t_1, t_2, t_3)&\mapsto
\left(\left[1:\frac{1}{t_1}:\frac{t_2}{t_1}\right], \left[1:\frac{t_3}{t_2}:t_3\right], \left[1:\frac{t_1t_3}{t_2}:t_3\right], \left[1:\frac{t_1t_3}{t_2}:\frac{t_3}{t_2}\right]\right)\,.
\end{align*}
Dehomogenizing the monomial map above with respect to the affine chart $[1:\,\cdot:\,\cdot\,]\in\P^2$ in each factor of $(\P^2)^4$, we obtain the affine morphism $f_{4,A'}\colon (\C^\ast)^3 \to\C^8$, $f(t_1,t_2,t_3)=(u_1,\dots,u_8)$ where
\[
(u_1,u_2,u_3,u_4,u_5,u_6,u_7,u_8)
=
\left(\frac{1}{t_1},\frac{t_2}{t_1},\frac{t_3}{t_2},t_3,\frac{t_1t_3}{t_2},t_3,\frac{t_1t_3}{t_2},\frac{t_3}{t_2}\right)\,.
\]
Observe that the image of $f_{4,A'}$ is contained in the $5$-dimensional linear subspace
\[
L \coloneqq\{u_4=u_6,\ u_5=u_7,\ u_3=u_8\}\subseteq \A^8\,,
\]
in particular $T^2_p(f_{4,A'})\subseteq L$ for all $p\in(\C^\ast)^3$.
We now show that $T^2_p(f_{4,A'})=L$ for all $p\in(\C^\ast)^3$, in particular the affine generic $2$-osculating dimension $m_2$ is equal to $5$, see Definition~\ref{def: affine osculating space}. Recall that $T^2_p(f_{4,A'})=\rowspan A^{(2)}_p(f_{4,A'})$, where $A^{(k)}_p(f_{4,A'})$ is defined in \ref{eq: affine matrices Apk}.
Consider the $5\times 5$ minor of $A^{(2)}_p(f_{4,A'})$
\[
    M = 
    \begin{pNiceMatrix}[last-col=6,first-row=2]
    u_1&u_2&u_3&u_4&u_5&\\[4pt]
    -\frac{1}{t_1^2}&-\frac{t_2}{t_1^2}&0&0&\frac{t_3}{t_2}&\frac{\partial f}{\partial t_1}\\[4pt]
    0&\frac{1}{t_1}&-\frac{t_3}{t_2^2}&0&-\frac{t_1t_3}{t_2^2}&\frac{\partial f}{\partial t_2}\\[4pt]
    0&0&\frac{1}{t_2}&1&\frac{t_1}{t_2}&\frac{\partial f}{\partial t_3}\\[4pt]
    \frac{1}{t_1^3}&\frac{t_2}{t_1^3}&0&0&0&\frac{1}{2}\frac{\partial^2 f}{\partial t_1^2}\\[4pt]
    0&0&\frac{t_3}{t_2^3}&0&\frac{t_1t_3}{t_2^3}&\frac{1}{2}\frac{\partial^2 f}{\partial t_2^2}
    \end{pNiceMatrix}\,.
\]
One verifies that $\det M = -\frac{t_3^2}{t_1^4t_2^4}\neq 0$ for all $p=(t_1,t_2,t_3)\in(\C^\ast)^3$.
Therefore $\rank A^{(2)}_p(f_{4,A'})\ge 5$ for all $p\in(\C^\ast)^3$, and since $\rank A^{(2)}_p(f_{4,A'})\le\dim L=5$, we conclude that the second-order osculating spaces are globally constant, namely $T^2_p(f_{4,A'})=L$ for all $p\in(\C^\ast)^3$, and that $m_2=5$.

Let $Q=Q_\mED\subseteq H_\infty$ be the standard Euclidean quadric. By Definition \ref{def: affine kth order normal space}, $N^2_p(f_{4,A'},Q_\mED)=(T^2_p(f_{4,A'}))^\perp=L^\perp\subseteq \A^8$, which is independent of $p$ and equals the $3$-plane $L^\perp=\langle e_4-e_6,\ e_5-e_7,\ e_3-e_8\rangle$.
Consequently, by Definition~\ref{def: affine kth oder distance locus}, the second-order data locus admits the simple geometric description
\[
\DL_2(f_{4,A'},Q_\mED)=\overline{\bigcup_{p\in U_2}\left(f_{4,A'}(p)+N^2_p(f_{4,A'},Q_\mED)\right)}
=\overline{\bigcup_{p\in(\C^\ast)^3}\left(f_{4,A'}(p)+L^\perp\right)}\,,
\]
i.e., $\DL_2(f_{4,A'},Q_\mED)$ is a ruled variety obtained by translating the fixed $3$-plane $L^\perp$ along the multiview image $X_{4,A'}\subseteq L$.
Equivalently, $u\in \DL_2(f_{4,A'},Q_\mED)$ if and only if there exist $p=(t_1,t_2,t_3)\in(\C^\ast)^3$ and
$(\lambda_1,\lambda_2,\lambda_3)\in\C^3$ such that
\begin{equation}\label{eq: parametrize data locus L}
u=f_{4,A'}(t_1,t_2,t_3)+\lambda_1(e_4-e_6)+\lambda_2(e_5-e_7)+\lambda_3(e_3-e_8)\,.
\end{equation}
Since the map $f_{4,A'}$ contains denominators in $t_1$ and $t_2$, we consider additional variables $v_1$ and $v_2$ such that $v_1t_1=v_2t_2=1$.
Eliminating the $t_i$'s, $v_i$'s and $\lambda_i$'s from the ideal generated by these relations and the equations in \eqref{eq: parametrize data locus L} yields the prime ideal of $\DL_2(f_{4,A'},Q_\mED)\subseteq\A^8$, which is minimally generated by the polynomials
\[
u_3+u_8-u_1(u_5+u_7)\,,\quad
u_4+u_6-u_2(u_5+u_7)\,,\quad u_1(u_4+u_6)-u_2(u_3+u_8)\,.
\]
We verified that $\codim\DL_2(f_{4,A'},Q_\mED)=2$ and $\deg\DL_2(f_{4,A'},Q_\mED)=3$, therefore $\DL_2(f_{4,A'},Q_\mED)$ is not a complete intersection.
Moreover, on the dense open subset where $u_1u_2\neq 0$ one reconstructs uniquely
\[
t_1=\frac1{u_1},\qquad t_2=\frac{u_2}{u_1},\qquad t_3=\frac{u_4+u_6}{2},
\]
and then $\lambda_1,\lambda_2,\lambda_3$ are uniquely determined by the coordinate differences. Therefore the map $\varphi_{2,2}\colon\DC_2(f_{4,A'},Q_\mED)\to\DL_2(f_{4,A'},Q_\mED)$ is generically one-to-one, and the hypothesis of Corollary \ref{corollary: generically finite} applies. This confirms that $\DL_2(f_{4,A'},Q_\mED)$ is irreducible of dimension $6$ and
\[
\DD_2(f_{4,A'},Q_\mED) = \deg\DL_2(f_{4,A'},Q_\mED)\cdot\deg\varphi_{2,2} = 3\cdot 1=3\,.
\]
Denote by $\bar{f}_{4,A'}$ the projective morphism associated with $f_{4,A'}$ whose image is the projective closure $\overline{X_{4,A'}}\subseteq\P^8$ of $X_{4,A'}$.
It is interesting to compare $\DD_k(f_{4,A'},Q_\mED)$ with $\DD_k(\bar{f}_{4,A'},Q_\mED)$ for $k\in\{1,2\}$. We verified symbolically that $\DD_1(\bar{f}_{4,A'},Q_\mED)=2$ and $\DD_2(\bar{f}_{4,A'},Q_\mED)=3$. In particular, notice that in this case $\DD_1(\bar{f}_{4,A'},Q_\mED)=2<5=\DD_1(f_{4,A'},Q_\mED)$, a phenomenon similar to \cite[Eq. (6.3)]{DHOST}.
\hfill$\diamondsuit$
\end{example}

\bibliographystyle{alphaurl}
\bibliography{biblio}

\newcommand{\etalchar}[1]{$^{#1}$}
\begin{thebibliography}{DRHNP13}

\bibitem[ACGH85]{arbarello1985geometry}
E.~Arbarello, M.~Cornalba, P.~A. Griffiths, and J.~Harris.
\newblock {\em Geometry of algebraic curves. {V}ol. {I}}, volume 267 of {\em Grundlehren der mathematischen Wissenschaften}.
\newblock Springer-Verlag, New York, 1985.
\newblock \href {https://doi.org/10.1007/978-1-4757-5323-3} {\path{doi:10.1007/978-1-4757-5323-3}}.

\bibitem[AK24]{amari2024adversarial}
E.~Aamari and A.~Knop.
\newblock Adversarial manifold estimation.
\newblock {\em Found. Comput. Math.}, 24(1):1--97, 2024.
\newblock \href {https://doi.org/10.1007/s10208-022-09588-2} {\path{doi:10.1007/s10208-022-09588-2}}.

\bibitem[AMR19]{araujo2019defectivity}
C.~Araujo, A.~Massarenti, and R.~Rischter.
\newblock On non-secant defectivity of {S}egre-{V}eronese varieties.
\newblock {\em Trans. Amer. Math. Soc.}, 371(4):2255--2278, 2019.
\newblock \href {https://doi.org/10.1090/tran/7306} {\path{doi:10.1090/tran/7306}}.

\bibitem[AST13]{aholt2013hilbert}
C.~Aholt, B.~Sturmfels, and R.~Thomas.
\newblock A {H}ilbert scheme in computer vision.
\newblock {\em Canad. J. Math.}, 65(5):961--988, 2013.
\newblock \href {https://doi.org/10.4153/CJM-2012-023-2} {\path{doi:10.4153/CJM-2012-023-2}}.

\bibitem[Ban38]{banach1938uber}
S.~Banach.
\newblock {\"Uber homogene Polynome in $(L^2)$}.
\newblock {\em Studia Math.}, 7:36--44, 1938.

\bibitem[BCGI07]{bernardi2007osculating}
A.~Bernardi, M.~V. Catalisano, A.~Gimigliano, and M.~Id\`a.
\newblock Osculating varieties of {V}eronese varieties and their higher secant varieties.
\newblock {\em Canad. J. Math.}, 59(3):488--502, 2007.
\newblock \href {https://doi.org/10.4153/CJM-2007-021-6} {\path{doi:10.4153/CJM-2007-021-6}}.

\bibitem[BF03]{ballico2003secant}
E.~Ballico and C.~Fontanari.
\newblock On the secant varieties to the osculating variety of a {V}eronese surface.
\newblock {\em Cent. Eur. J. Math.}, 1(3):315--326, 2003.
\newblock \href {https://doi.org/10.2478/BF02475212} {\path{doi:10.2478/BF02475212}}.

\bibitem[BH75]{BrysonHo}
A.~E. Bryson, Jr. and Y.~C. Ho.
\newblock {\em Applied optimal control}.
\newblock Hemisphere Publishing Corp., Washington, DC; distributed by Halsted Press [John Wiley \& Sons, Inc.], New York-London-Sydney, 1975.
\newblock Optimization, estimation, and control, Revised printing.

\bibitem[BKS24]{breiding2024metric}
P.~Breiding, K.~Kohn, and B.~Sturmfels.
\newblock {\em Metric algebraic geometry}, volume~53 of {\em Oberwolfach Seminars}.
\newblock Birkh\"auser/Springer, Cham, [2024] \copyright 2024.
\newblock \href {https://doi.org/10.1007/978-3-031-51462-3} {\path{doi:10.1007/978-3-031-51462-3}}.

\bibitem[BRW25]{breiding2025critical}
P.~Breiding, K.~Ranestad, and M.~Weinstein.
\newblock Critical curvature of algebraic surfaces in three-space.
\newblock {\em Acta Univ. Sapientiae Math.}, 17(1), 2025.
\newblock \href {https://doi.org/10.1007/s44426-025-00001-3} {\path{doi:10.1007/s44426-025-00001-3}}.

\bibitem[BW24]{brandt2024voronoi}
M.~Brandt and M.~Weinstein.
\newblock Voronoi cells in metric algebraic geometry of plane curves.
\newblock {\em Math. Scand.}, 130(1):59--85, 2024.

\bibitem[CGG02]{catalisano2002secant}
M.~V. Catalisano, A.~V. Geramita, and A.~Gimigliano.
\newblock On the secant varieties to the tangential varieties of a {V}eronesean.
\newblock {\em Proc. Amer. Math. Soc.}, 130(4):975--985, 2002.
\newblock \href {https://doi.org/10.1090/S0002-9939-01-06251-7} {\path{doi:10.1090/S0002-9939-01-06251-7}}.

\bibitem[CP05]{cazals2005estimating}
F.~Cazals and M.~Pouget.
\newblock Estimating differential quantities using polynomial fitting of osculating jets.
\newblock {\em Comput. Aided Geom. Design}, 22(2):121--146, 2005.
\newblock \href {https://doi.org/10.1016/j.cagd.2004.09.004} {\path{doi:10.1016/j.cagd.2004.09.004}}.

\bibitem[CRSW22]{cifuentes2022voronoi}
D.~Cifuentes, K.~Ranestad, B.~Sturmfels, and M.~Weinstein.
\newblock Voronoi cells of varieties.
\newblock {\em J. Symbolic Comput.}, 109:351--366, 2022.
\newblock \href {https://doi.org/10.1016/j.jsc.2020.07.009} {\path{doi:10.1016/j.jsc.2020.07.009}}.

\bibitem[CS13]{cartwright2013number}
D.~Cartwright and B.~Sturmfels.
\newblock The number of eigenvalues of a tensor.
\newblock {\em Linear Algebra Appl.}, 438(2):942--952, 2013.
\newblock Tensors and Multilinear Algebra.
\newblock URL: \url{https://www.sciencedirect.com/science/article/pii/S0024379511004629}, \href {https://doi.org/10.1016/j.laa.2011.05.040} {\path{doi:10.1016/j.laa.2011.05.040}}.

\bibitem[Dan78]{danilov1978geometry}
V.~I. Danilov.
\newblock The geometry of toric varieties.
\newblock {\em Uspekhi Mat. Nauk}, 33(2(200)):85--134, 247, 1978.

\bibitem[DDRP14]{Higher_duality_and_toric}
A.~Dickenstein, S.~Di~Rocco, and R.~Piene.
\newblock Higher order duality and toric embeddings.
\newblock {\em Ann. Inst. Fourier (Grenoble)}, 64(1):375--400, 2014.
\newblock \href {https://doi.org/10.5802/aif.2851} {\path{doi:10.5802/aif.2851}}.

\bibitem[DDRP24]{dickenstein2024interpolation}
A.~Dickenstein, S.~Di~Rocco, and R.~Piene.
\newblock Interpolation of toric varieties.
\newblock {\em New York J. Math.}, 30:1498--1516, 2024.

\bibitem[DEF{\etalchar{+}}25]{OSCAR-book}
W.~Decker, C.~Eder, C.~Fieker, M.~Horn, and M.~Joswig, editors.
\newblock {\em The {C}omputer {A}lgebra {S}ystem {OSCAR}: {A}lgorithms and {E}xamples}, volume~32 of {\em Algorithms and {C}omputation in {M}athematics}.
\newblock Springer, 1 edition, 2025.
\newblock URL: \url{https://link.springer.com/book/9783031621260}, \href {https://doi.org/10.1007/978-3-031-62127-7} {\path{doi:10.1007/978-3-031-62127-7}}.

\bibitem[Dem70]{demazure1970sousgroupes}
M.~Demazure.
\newblock Sous-groupes alg\'ebriques de rang maximum du groupe de {C}remona.
\newblock {\em Ann. Sci. \'Ecole Norm. Sup. (4)}, 3:507--588, 1970.
\newblock URL: \url{http://www.numdam.org/item?id=ASENS_1970_4_3_4_507_0}.

\bibitem[DFS07]{Tropical_Discriminants}
A.~Dickenstein, E.~M. Feichtner, and B.~Sturmfels.
\newblock Tropical discriminants.
\newblock {\em J. Amer. Math. Soc.}, 20(4):1111--1133, 2007.
\newblock \href {https://doi.org/10.1090/S0894-0347-07-00562-0} {\path{doi:10.1090/S0894-0347-07-00562-0}}.

\bibitem[DHO{\etalchar{+}}16]{DHOST}
J.~Draisma, E.~Horobe\c{t}, G.~Ottaviani, B.~Sturmfels, and R.~R. Thomas.
\newblock The {E}uclidean distance degree of an algebraic variety.
\newblock {\em Found. Comput. Math.}, 16(1):99--149, 2016.
\newblock \href {https://doi.org/10.1007/s10208-014-9240-x} {\path{doi:10.1007/s10208-014-9240-x}}.

\bibitem[DR99]{dirocco1999generation}
S.~Di~Rocco.
\newblock Generation of {$k$}-jets on toric varieties.
\newblock {\em Math. Z.}, 231(1):169--188, 1999.
\newblock \href {https://doi.org/10.1007/PL00004722} {\path{doi:10.1007/PL00004722}}.

\bibitem[DRGS26]{dirocco2024Relative}
S.~Di~Rocco, L.~Gustafsson, and L.~Sodomaco.
\newblock Conditional euclidean distance optimization via relative tangency.
\newblock {\em Math. Comp.}, 95:477--524, 2026.
\newblock URL: \url{https://doi.org/10.1137/19M1265776}, \href {https://doi.org/10.1090/mcom/4047} {\path{doi:10.1090/mcom/4047}}.

\bibitem[DRHNP13]{polyhedralAdjunction}
S.~Di~Rocco, C.~Haase, B.~Nill, and A.~Paffenholz.
\newblock Polyhedral adjunction theory.
\newblock {\em Algebra Number Theory}, 7(10):2417--2446, 2013.
\newblock \href {https://doi.org/10.2140/ant.2013.7.2417} {\path{doi:10.2140/ant.2013.7.2417}}.

\bibitem[DRRS25]{github}
S.~Di~Rocco, K.~Rose, and L.~Sodomaco.
\newblock {O}sculating {G}eometry and {H}igher-{O}rder {D}istance {L}oci - {G}it{H}ub repository, 2025.
\newblock Available at \url{https://github.com/kemalrose/Higher-order-distance-degrees}.

\bibitem[DRS01]{dirocco2001line}
S.~Di~Rocco and A.~J. Sommese.
\newblock Line bundles for which a projectivized jet bundle is a product.
\newblock {\em Proc. Amer. Math. Soc.}, 129(6):1659--1663, 2001.
\newblock \href {https://doi.org/10.1090/S0002-9939-00-05875-5} {\path{doi:10.1090/S0002-9939-00-05875-5}}.

\bibitem[DS96]{dennis1996numerical}
J.~E. Dennis, Jr. and Robert~B. Schnabel.
\newblock {\em Numerical methods for unconstrained optimization and nonlinear equations}, volume~16 of {\em Classics in Applied Mathematics}.
\newblock Society for Industrial and Applied Mathematics (SIAM), Philadelphia, PA, 1996.
\newblock \href {https://doi.org/10.1137/1.9781611971200} {\path{doi:10.1137/1.9781611971200}}.

\bibitem[dSL08]{desilva2008tensor}
V.~de~Silva and L.-H. Lim.
\newblock Tensor rank and the ill-posedness of the best low-rank approximation problem.
\newblock {\em SIAM J. Matrix Anal. Appl.}, 30(3):1084--1127, 2008.
\newblock \href {https://doi.org/10.1137/06066518X} {\path{doi:10.1137/06066518X}}.

\bibitem[EY36]{eckart1936approximation}
C.~Eckart and G.~Young.
\newblock The approximation of one matrix by another of lower rank.
\newblock {\em Psychometrika}, 1(3):211--218, 1936.
\newblock \href {https://doi.org/10.1007/BF02288367} {\path{doi:10.1007/BF02288367}}.

\bibitem[FIK{\etalchar{+}}18]{fefferman2018fitting}
C.~Fefferman, S.~Ivanov, Y.~Kurylev, M.~Lassas, and H.~Narayanan.
\newblock Fitting a putative manifold to noisy data.
\newblock In S\'ebastien Bubeck, Vianney Perchet, and Philippe Rigollet, editors, {\em Proceedings of the 31st Conference On Learning Theory}, volume~75 of {\em Proceedings of Machine Learning Research}, pages 688--720. PMLR, 06--09 Jul 2018.
\newblock URL: \url{https://proceedings.mlr.press/v75/fefferman18a.html}.

\bibitem[FMN16]{fefferman2016testing}
C.~Fefferman, S.~Mitter, and H.~Narayanan.
\newblock Testing the manifold hypothesis.
\newblock {\em J. Amer. Math. Soc.}, 29(4):983--1049, 2016.
\newblock \href {https://doi.org/10.1090/jams/852} {\path{doi:10.1090/jams/852}}.

\bibitem[FO14]{friedland2014number}
S.~Friedland and G.~Ottaviani.
\newblock The number of singular vector tuples and uniqueness of best rank-one approximation of tensors.
\newblock {\em Found. Comput. Math.}, 14(6):1209--1242, 2014.
\newblock \href {https://doi.org/10.1007/s10208-014-9194-z} {\path{doi:10.1007/s10208-014-9194-z}}.

\bibitem[FS97]{sturmfels1997intersection}
W.~Fulton and B.~Sturmfels.
\newblock Intersection theory on toric varieties.
\newblock {\em Topology}, 36(2):335--353, 1997.
\newblock \href {https://doi.org/10.1016/0040-9383(96)00016-X} {\path{doi:10.1016/0040-9383(96)00016-X}}.

\bibitem[FS05]{Feichtner2005}
E.~M. Feichtner and B.~Sturmfels.
\newblock Matroid polytopes, nested sets and {B}ergman fans.
\newblock {\em Port. Math. (N.S.)}, 62(4):437--468, 2005.

\bibitem[Ful98]{fulton1998intersection}
W.~Fulton.
\newblock {\em Intersection theory}, volume~2 of {\em Ergebnisse der Mathematik und ihrer Grenzgebiete. 3. Folge.}
\newblock Springer-Verlag, Berlin, second edition, 1998.
\newblock \href {https://doi.org/10.1007/978-1-4612-1700-8} {\path{doi:10.1007/978-1-4612-1700-8}}.

\bibitem[GH78]{griffiths1978principles}
P.~Griffiths and J.~Harris.
\newblock {\em Principles of algebraic geometry}.
\newblock Pure and Applied Mathematics. Wiley-Interscience [John Wiley \& Sons], New York, 1978.

\bibitem[GJ00]{Gawrilow2000}
E.~Gawrilow and M.~Joswig.
\newblock polymake: a framework for analyzing convex polytopes.
\newblock In {\em Polytopes---combinatorics and computation ({O}berwolfach, 1997)}, volume~29 of {\em DMV Sem.}, pages 43--73. Birkh\"auser, Basel, 2000.
\newblock \href {https://doi.org/10.1007/978-3-0348-8438-9_2} {\path{doi:10.1007/978-3-0348-8438-9_2}}.

\bibitem[Gro67]{grothendieck1967elements}
A.~Grothendieck.
\newblock {\'E}l\'ements de g\'eom\'etrie alg\'ebrique. {IV}. \'etude locale des sch\'emas et des morphismes de sch\'emas {IV}.
\newblock {\em Inst. Hautes \'Etudes Sci. Publ. Math.}, 32:5--361, 1967.
\newblock URL: \url{https://www.numdam.org/item/PMIHES_1967__32__5_0/}, \href {https://doi.org/10.1007/BF02732123} {\path{doi:10.1007/BF02732123}}.

\bibitem[GS97]{GS}
D.~Grayson and M.~Stillman.
\newblock Macaulay 2--a system for computation in algebraic geometry and commutative algebra, 1997.

\bibitem[Hol88]{holme1988geometric}
A.~Holme.
\newblock The geometric and numerical properties of duality in projective algebraic geometry.
\newblock {\em Manuscripta Math.}, 61(2):145--162, 1988.
\newblock \href {https://doi.org/10.1007/BF01259325} {\path{doi:10.1007/BF01259325}}.

\bibitem[Hor17]{horobet2017data}
E.~Horobe\c{t}.
\newblock The data singular and the data isotropic loci for affine cones.
\newblock {\em Comm. Algebra}, 45(3):1177--1186, 2017.
\newblock \href {https://doi.org/10.1080/00927872.2016.1172632} {\path{doi:10.1080/00927872.2016.1172632}}.

\bibitem[Hor24]{horobet2024critical}
E.~Horobe\c{t}.
\newblock The critical curvature degree of an algebraic variety.
\newblock {\em J. Symbolic Comput.}, 121:Paper No. 102259, 12, 2024.
\newblock \href {https://doi.org/10.1016/j.jsc.2023.102259} {\path{doi:10.1016/j.jsc.2023.102259}}.

\bibitem[HR22]{horobet2022data}
E.~Horobe\c{t} and J.~I. Rodriguez.
\newblock Data loci in algebraic optimization.
\newblock {\em J. Pure Appl. Algebra}, 226(12):Paper No. 107144, 15, 2022.
\newblock \href {https://doi.org/10.1016/j.jpaa.2022.107144} {\path{doi:10.1016/j.jpaa.2022.107144}}.

\bibitem[HS18]{SturmfelsHelmer}
M.~Helmer and B.~Sturmfels.
\newblock Nearest points on toric varieties.
\newblock {\em Math. Scand.}, 122(2):213--238, 2018.
\newblock \href {https://doi.org/10.7146/math.scand.a-101478} {\path{doi:10.7146/math.scand.a-101478}}.

\bibitem[HW19]{horobet2019offset}
E.~Horobe\c{t} and M.~Weinstein.
\newblock Offset hypersurfaces and persistent homology of algebraic varieties.
\newblock {\em Comput. Aided Geom. Design}, 74:101767, 14, 2019.
\newblock \href {https://doi.org/10.1016/j.cagd.2019.101767} {\path{doi:10.1016/j.cagd.2019.101767}}.

\bibitem[Kat12]{tropIntersTheory}
E.~Katz.
\newblock Tropical intersection theory from toric varieties.
\newblock {\em Collect. Math.}, 63(1):29--44, 2012.
\newblock \href {https://doi.org/10.1007/s13348-010-0014-8} {\path{doi:10.1007/s13348-010-0014-8}}.

\bibitem[Kle86]{kleiman1986tangency}
S.~L. Kleiman.
\newblock Tangency and duality.
\newblock In {\em Proceedings of the 1984 {V}ancouver conference in algebraic geometry}, volume~6 of {\em CMS Conf. Proc.}, pages 163--225. Amer. Math. Soc., Providence, RI, 1986.

\bibitem[KWW25]{kiani2025hardness}
B.~T. Kiani, J.~Wang, and M.~Weber.
\newblock Hardness of learning neural networks under the manifold hypothesis.
\newblock In {\em Proceedings of the 38th International Conference on Neural Information Processing Systems}, NIPS '24, Red Hook, NY, USA, 2025. Curran Associates Inc.

\bibitem[Laz04]{lazarsfeld2017positivity}
R.~Lazarsfeld.
\newblock {\em Positivity in algebraic geometry. {I}}, volume~48 of {\em Results in Mathematics and Related Areas. 3rd Series. A Series of Modern Surveys in Mathematics}.
\newblock Springer-Verlag, Berlin, 2004.
\newblock Classical setting: line bundles and linear series.
\newblock \href {https://doi.org/10.1007/978-3-642-18808-4} {\path{doi:10.1007/978-3-642-18808-4}}.

\bibitem[Lim05]{lim2005singular}
L.-H. Lim.
\newblock Singular values and eigenvalues of tensors: a variational approach.
\newblock In {\em 1st IEEE International Workshop on Computational Advances in Multi-Sensor Adaptive Processing, 2005.}, pages 129--132. IEEE, 2005.

\bibitem[LM99]{lanteri1999higher}
A.~Lanteri and R.~Mallavibarrena.
\newblock Higher order dual varieties of projective surfaces.
\newblock {\em Comm. Algebra}, 27(10):4827--4851, 1999.
\newblock \href {https://doi.org/10.1080/00927879908826733} {\path{doi:10.1080/00927879908826733}}.

\bibitem[MM10]{mordohai2010dimensionality}
P.~Mordohai and G.~Medioni.
\newblock Dimensionality estimation, manifold learning and function approximation using tensor voting.
\newblock {\em J. Mach. Learn. Res.}, 11:411--450, 2010.

\bibitem[MR19]{massarenti2019defectivity}
A.~Massarenti and R.~Rischter.
\newblock Non-secant defectivity via osculating projections.
\newblock {\em Ann. Sc. Norm. Super. Pisa Cl. Sci. (5)}, 19(1):1--34, 2019.

\bibitem[MRW20a]{maxim2020defect}
L.~G. Maxim, J.~I. Rodriguez, and B.~Wang.
\newblock Defect of {E}uclidean distance degree.
\newblock {\em Adv. in Appl. Math.}, 121:102101, 22, 2020.
\newblock \href {https://doi.org/10.1016/j.aam.2020.102101} {\path{doi:10.1016/j.aam.2020.102101}}.

\bibitem[MRW20b]{maxim2020euclidean}
L.~G. Maxim, J.~I. Rodriguez, and B.~Wang.
\newblock Euclidean distance degree of the multiview variety.
\newblock {\em SIAM J. Appl. Algebra Geom.}, 4(1):28--48, 2020.
\newblock \href {https://doi.org/10.1137/18M1233406} {\path{doi:10.1137/18M1233406}}.

\bibitem[MS15]{maclagan2015introduction}
D.~Maclagan and B.~Sturmfels.
\newblock {\em Introduction to tropical geometry}, volume 161 of {\em Graduate Studies in Mathematics}.
\newblock American Mathematical Society, Providence, RI, 2015.
\newblock \href {https://doi.org/10.1090/gsm/161} {\path{doi:10.1090/gsm/161}}.

\bibitem[OO13]{oeding2013eigenvectors}
L.~Oeding and G.~Ottaviani.
\newblock Eigenvectors of tensors and algorithms for {W}aring decomposition.
\newblock {\em J. Symbolic Comput.}, 54:9--35, 2013.
\newblock \href {https://doi.org/10.1016/j.jsc.2012.11.005} {\path{doi:10.1016/j.jsc.2012.11.005}}.

\bibitem[OP13]{osserman2013lifting}
B.~Osserman and S.~Payne.
\newblock Lifting tropical intersections.
\newblock {\em Doc. Math.}, 18:121--175, 2013.

\bibitem[OS20]{ottaviani2020distance}
G.~Ottaviani and L.~Sodomaco.
\newblock The distance function from a real algebraic variety.
\newblock {\em Computer Aided Geometric Design}, 82:101927, oct 2020.
\newblock URL: \url{https://doi.org/10.1016%2Fj.cagd.2020.101927}, \href {https://doi.org/10.1016/j.cagd.2020.101927} {\path{doi:10.1016/j.cagd.2020.101927}}.

\bibitem[OSS14]{ottaviani2014exact}
G.~Ottaviani, P.-J. Spaenlehauer, and B.~Sturmfels.
\newblock Exact solutions in structured low-rank approximation.
\newblock {\em SIAM J. Matrix Anal. Appl.}, 35(4):1521--1542, 2014.
\newblock \href {https://doi.org/10.1137/13094520X} {\path{doi:10.1137/13094520X}}.

\bibitem[Ott13]{ottaviani2013five}
G.~Ottaviani.
\newblock Five lectures on projective invariants.
\newblock {\em Rend. Semin. Mat. Univ. Politec. Torino}, 71(1):119--194, 2013.

\bibitem[Pie78]{piene1978polar}
R.~Piene.
\newblock Polar classes of singular varieties.
\newblock {\em Ann. Sci. \'{E}cole Norm. Sup. (4)}, 11(2):247--276, 1978.
\newblock URL: \url{http://www.numdam.org/item?id=ASENS_1978_4_11_2_247_0}.

\bibitem[Pie83]{Higherdual}
R.~Piene.
\newblock A note on higher order dual varieties, with an application to scrolls.
\newblock In {\em Singularities, {P}art 2 ({A}rcata, {C}alif., 1981)}, volume~40 of {\em Proc. Sympos. Pure Math.}, pages 335--342. Amer. Math. Soc., Providence, RI, 1983.
\newblock \href {https://doi.org/10.1090/pspum/040.2/713259} {\path{doi:10.1090/pspum/040.2/713259}}.

\bibitem[Pie22]{piene2022higher}
R.~Piene.
\newblock Higher order polar and reciprocal polar loci.
\newblock In {\em Facets of algebraic geometry. {V}ol. {II}}, volume 473 of {\em London Math. Soc. Lecture Note Ser.}, pages 238--253. Cambridge Univ. Press, Cambridge, 2022.

\bibitem[Qi05]{Qi05}
L.~Qi.
\newblock Eigenvalues of a real supersymmetric tensor.
\newblock {\em J. Symbolic Comput.}, 40(6):1302--1324, 2005.
\newblock \href {https://doi.org/10.1016/j.jsc.2005.05.007} {\path{doi:10.1016/j.jsc.2005.05.007}}.

\bibitem[Rin13]{RINCON201386}
F.~Rinc\'on.
\newblock Computing tropical linear spaces.
\newblock {\em J. Symbolic Comput.}, 51:86--98, 2013.
\newblock \href {https://doi.org/10.1016/j.jsc.2012.03.008} {\path{doi:10.1016/j.jsc.2012.03.008}}.

\bibitem[RST25]{Rose2025}
K.~Rose, B.~Sturmfels, and S.~Telen.
\newblock Tropical {I}mplicitization {R}evisited.
\newblock In {\em The computer algebra system {OSCAR}}, pages 429--450. Springer, Cham, [2025] \copyright 2025.
\newblock URL: \url{https://doi.org/10.1007/978-3-031-62127-7_17}, \href {https://doi.org/10.1007/978-3-031-62127-7\_17} {\path{doi:10.1007/978-3-031-62127-7\_17}}.

\bibitem[Sev02]{severi1902intersezioni}
F.~Severi.
\newblock Sulle intersezioni delle variet\`a algebriche e sopra i loro caratteri e singolarit\`a proiettive.
\newblock {\em Mem. R. Acc. Sc. Torino}, II, 52:61--118, 1902.

\bibitem[ST08]{TevelevSturmfels}
B.~Sturmfels and J.~Tevelev.
\newblock Elimination theory for tropical varieties.
\newblock {\em Math. Res. Lett.}, 15(3):543--562, 2008.
\newblock \href {https://doi.org/10.4310/MRL.2008.v15.n3.a14} {\path{doi:10.4310/MRL.2008.v15.n3.a14}}.

\bibitem[Stu93]{algorithmsInInvariantTheory}
B.~Sturmfels.
\newblock {\em Algorithms in invariant theory}.
\newblock Texts and Monographs in Symbolic Computation. Springer-Verlag, Vienna, 1993.
\newblock \href {https://doi.org/10.1007/978-3-7091-4368-1} {\path{doi:10.1007/978-3-7091-4368-1}}.

\bibitem[SW89]{seber1989nonlinear}
G.~A.~F. Seber and C.~J. Wild.
\newblock {\em Nonlinear regression}.
\newblock Wiley Series in Probability and Mathematical Statistics: Probability and Mathematical Statistics. John Wiley \& Sons, Inc., New York, 1989.
\newblock \href {https://doi.org/10.1002/0471725315} {\path{doi:10.1002/0471725315}}.

\bibitem[Tod37]{todd1937arithmetical}
J.~A. Todd.
\newblock The {A}rithmetical {I}nvariants of {A}lgebraic {L}oci.
\newblock {\em Proc. London Math. Soc. (2)}, 43(3):190--225, 1937.
\newblock \href {https://doi.org/10.1112/plms/s2-43.3.190} {\path{doi:10.1112/plms/s2-43.3.190}}.

\bibitem[Zha02]{zha2002optimal}
X.~F. Zha.
\newblock Optimal pose trajectory planning for robot manipulators.
\newblock {\em Mech. Mach. Theory}, 37(10):1063--1086, 2002.
\newblock \href {https://doi.org/10.1016/S0094-114X(02)00053-8} {\path{doi:10.1016/S0094-114X(02)00053-8}}.

\end{thebibliography}
\end{document}